\newtheorem{Definition}{Definition}[subsection]
\newtheorem{Theorem}{Theorem}
\newtheorem{Proposition}[Definition]{Proposition}
\newtheorem{Lemma}[Definition]{Lemma}
\newtheorem{Remark}{Remark}
\newtheorem{Corollary}[Definition]{Corollary}
\newtheorem*{Claim}{Claim}
\newtheorem*{Example}{Example}
\numberwithin{equation}{subsection}
\begin{document}
\title{\textbf{Rapid mixing for compact group extensions of hyperbolic flows}}
\date{\today}
\author{Mark Pollicott and Daofei Zhang\thanks{The first author is partly supported by ERC-Advanced Grant 833802-Resonances. The second author is supported by the Warwick Mathematics Institute Centre for Doctoral Training, and gratefully acknowledges funding from ERC-Advanced Grant 833802-Resonances.}}

\newcommand{\Addresses}{{
  \bigskip
  \footnotesize

  M.~Pollicott, \text{Department of Mathematics, University of Warwick, Coventry, CV4 7AL, UK.}\par\nopagebreak
  \textit{E-mail}: \texttt{masdbl@warwick.ac.uk}

  \medskip

  D.~Zhang, \text{Department of Mathematics, University of Warwick, Coventry, CV4 7AL, UK.}\par\nopagebreak
  \textit{E-mail}: \texttt{Daofei.Zhang@warwick.ac.uk}
}}

\maketitle

\abstract{In this article, we  give explicit  conditions for compact group extensions of hyperbolic flows 
(including  geodesic flows on negatively curved manifolds)
to exhibit quantifiable rates of mixing (or decay of correlations) with respect to  the natural probability measures,  which are locally the product of a Gibbs measure for a H\"older potential and the Haar measure.  More precisely, we show that the mixing rate with respect to H\"older functions will be faster than any given polynomial (i.e., rapid mixing). 
We also give error estimates on the equidistribution of the holonomy around closed orbits.
 In particular, these results   apply to some frame flows for  manifolds with negative sectional curvatures.  
}
\bigskip

\thispagestyle{empty}
\tableofcontents
\thispagestyle{empty}

\bigskip

\section{Introduction}\label{introduction}
\bigskip

Hyperbolic flows have been the subject of  considerable attention  over the past six  decades and the modern theory dates back to the seminal works of Smale and Anosov in the 1960s where they appeared as the natural generalizations of geodesic flows on negatively curved manifolds.  The ergodic properties of such flows developed soon afterwards  via  the work of Sinai, Ruelle and Bowen.  One of the most significant aspects of their study is the speed of mixing (or the rate of decay of correlations) with respect to Gibbs measures for H\"older continuous functions.   There are examples for which the mixing may be arbitrarily slow \cite{Pol85}.  But in  contrast to this there are positive results that show 
rapid   (i.e., super polynomial) mixing  under some mild Diophantine condition \cite{Dol98b}.

On the other hand, 
one of the classical constructions in smooth ergodic theory is that of  compact group extensions of hyperbolic diffeormorphisms. 
In this setting, one can consider the  invariant probability measure given locally by the product of a Gibbs measure for a H\"older continuous potential and the Haar measure for the group.
 For such discrete maps it is known that one  can again get  rapid mixing under some suitable Diophantine condition \cite{Dol02}.  Therefore, it is very natural to try to extend these  ideas to the context of compact group extensions of  hyperbolic flows, as we shall in this text.
A classical example  of a hyperbolic flow is the geodesic flow on a $n$-dimensional compact manifold ($n\geq 3$) with  negative sectional curvatures and  the associated frame flow, which is a $SO(n-1)$ extension of the geodesic flow, is a natural  compact group extension. 


To formulate our results more precisely, we now introduce some notation. 
Let $M$ be a compact smooth manifold, and let $g_t: \Lambda \to \Lambda\subset M$ be a $C^\infty$ hyperbolic flow. Let  $\mu_{\Phi}$ be the Gibbs measure of a H\"older continuous potential $\Phi: \Lambda \to \mathbb R$.
Let $G$ be a compact connected Lie group, and let $\widehat M$ be a smooth $G$-bundle over $M$.   
We can denote the natural  projection by $\varrho: \widehat M \to M$ then $ G = \varrho^{-1}(x)$ for all $x\in M$.  The dynamical system of interest is a smooth $G$-extension, denoted as $f_t: \widehat \Lambda \to \widehat \Lambda$, of $g_{t}:\Lambda\to\Lambda$, where $\widehat{\Lambda}:=\varrho^{-1}(\Lambda)$.  Let $\widehat \mu_{\Phi}$ be the $f_{t}$-invariant measure which is the local product measure of $\mu_{\Phi}$ and the normalized Haar measure $\hbox{Haar}_G$ on $G$.

   \begin{figure}[h!]
          \centerline{
    \begin{tikzpicture}[thick,scale=0.70, every node/.style={scale=1.5}]
\draw[->, black, thick] (1.5,0)-- (3.5,0);
\draw[->, black, thick] (1.5,4)-- (3.5,4);
 \node at (0, 0) {$(\Lambda, \mu_\Phi)$};
  \node at (5, 0) {$(\Lambda, \mu_\Phi)$};
   \node at (0, 4) {$(\widehat\Lambda, \widehat \mu_\Phi)$};
  \node at (5, 4) {$(\widehat\Lambda,\widehat \mu_\Phi)$};
  \draw[->, black, thick] (0,3)-- (0,1);
    \draw[->, black, thick] (5,3)-- (5,1);
      \node at (2.5, 5.1) {$f_t$};
            \node at (2.5, -0.6) {$g_t$};
            \node at (-0.5, 2.0) {$\rho$};
              \node at (5.5, 2.0) {$\rho$};
\end{tikzpicture}
}
\caption{The commutative diagram for the flows $f_t$ and $g_t$}
\end{figure}

\begin{Example}
A simple   class of examples are skew product flows. In this case, $\widehat \Lambda = \Lambda \times G$ and 
$f_t(x,u) = (g_t(x), \omega(w,t) u)$, where $\omega: \Lambda \times \mathbb R \to G$ is a smooth cocycle
satisfying $\omega(x, t + s) = \omega(g_s(x),t) \omega(x, s)$ (with $x \in \Lambda$ and $s,t \in \mathbb R$)
and $\widehat \mu_{\Phi} = \mu_{\Phi} \times \hbox{\rm Haar}_G$.
\end{Example}

We will be interested in the asymptotic behaviour of the following quantity.

\begin{Definition}
For $E,F : \widehat M \to \mathbb C$, their \emph{correlation function} is defined by 
$$
\rho_{E,F}(t): = \int E\circ f_t  \cdot \overline{F} d\widehat \mu_{\Phi}  - \int E d\widehat \mu_{\Phi} \int \overline{F} d\widehat \mu_{\Phi}, \hbox{ for } t \in \mathbb R.
$$
\end{Definition}

The flow $f_t : \widehat \Lambda \to \widehat \Lambda$ is  said to be  \emph{strong mixing} with respect to the measure $\widehat \mu_{\Phi}$  when  $\rho_{E,F}(t) \to 0$ as $t \to +\infty$ for any $E$ and $F\in L^{2}(\widehat{\mu}_{\Phi})$.
Fundamental progress on when such flows $f_t: \widehat \Lambda \to \widehat \Lambda$ are ergodic  and Bernoulli (and thus strong mixing) was made  by Brin and others when $G$ is a Lie group  using  an   accessibility hypothesis which can be defined via the strong stable and strong unstable manifolds for the flow 
$f_t$  \cite{Bri82}.  In particular, this is expressed in terms of the density of the associated Brin group $H \subset G$ whose elements are given by holonomies induced by $f_t$ along stable and unstable manifolds of the underlying hyperbolic flow $g_t$.
\footnote{The Brin group is defined up to conjugacy, but the property we use is conjugacy invariant.}
In this article, we  will show that the same hypotheses  always  lead to stronger  effective estimates for $\rho_{E,F}(t)$ of the following form.

\begin{Definition}\label{Def 1.2}
We say that the flow $f_{t}$ is \emph{rapidly mixing} (with respect to the measure $\widehat \mu_{\Phi}$) if 
for any $n\in\mathbb{N}^{+}$ there exist $C_1>0$ and $k\in\mathbb{N}^{+}$ such that
for any
$E, F\in C^{k}(\widehat{M})$ we have
\begin{equation}\label{1.0.1}
|\rho_{E,F}(t)| \leq C_1||E||_{C^{k}}||F||_{C^{k}}\frac{1}{t^{n}}, \hbox{ for all  } t > 0. 
\end{equation}
\end{Definition}

We  obviously need  to impose some  additional hypotheses on the underlying hyperbolic flow  $g_t$ to ensure it is rapidly mixing as well with respect to $\mu_{\Phi}$\footnote{The definition is an analogous manner to Definition \ref{Def 1.2}.}
(particularly since 
 clearly for $f_t$ to be rapidly mixing we necessarily require that $g_t$ is rapidly mixing).
The technical condition  we actually  use  in the proofs of our results is a Diophantine hypothesis (defined in \S \ref{subsec 5.1}).  However, for clarity of exposition for the present we can consider  the better known, but stronger,    condition of density of the Brin group  when we additionally assume $G$ is semi-simple.

\begin{Theorem}\label{thm1}
Assume that  $G$ is semi-simple  and  the Brin group for the flow $f_t$ is dense in $G$
(i.e., $\overline H = G$) then $f_{t}$ is rapidly mixing with respect to $\widehat \mu_{\Phi}$.
\end{Theorem}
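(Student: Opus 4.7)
The plan is to reduce the question to uniform spectral estimates for a family of twisted transfer operators, one for each irreducible unitary representation of $G$, and then recombine using Peter--Weyl theory. To begin, I would code the hyperbolic flow $g_t$ via a Markov section in the standard Bowen--Ratner fashion, so that $g_t$ becomes a suspension flow of a subshift of finite type $\sigma$ under a H\"older roof function $r$. The compact group extension $f_t$ lifts to a suspension of a skew product $\widehat{\sigma}(x,u) = (\sigma(x), \theta(x)u)$ on $\widehat{\Sigma}=\Sigma\times G$ under the same roof. The invariant measure $\widehat{\mu}_\Phi$ then pulls back to $\mu_{\widetilde{\Phi}}\times\mathrm{Haar}_G$ after the usual normalization of $\Phi$.

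Next, decompose $L^2(G,\mathrm{Haar}_G)$ using Peter--Weyl as $\bigoplus_{\pi\in\widehat{G}}V_\pi\otimes V_\pi^*$. For $E,F\in C^k(\widehat{M})$ one can expand their fibre-wise Fourier series so that the correlation function splits as a sum over $\pi\in\widehat{G}$ of ``matrix coefficient'' correlations. The key object is the twisted transfer operator
\[
(\mathcal{L}_{s,\pi}h)(x) = \sum_{\sigma y = x} e^{-sr(y)+\widetilde{\Phi}(y)}\,\pi(\theta(y))\, h(y),
\]
acting on $V_\pi$-valued H\"older functions; the Laplace transform $\widehat{\rho}_{E,F}(s)$ is then expressed via resolvents of $\mathcal{L}_{s,\pi}$ as in the Pollicott--Ruelle--Dolgopyat framework. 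The task becomes showing that each resolvent has no poles in a strip $\{\mathrm{Re}(s)\ge -\epsilon\}$ and that its norm grows at most polynomially in $|\mathrm{Im}(s)|$, with constants that themselves grow only polynomially in representation-theoretic invariants of $\pi$ (such as $\dim\pi$ and the Casimir eigenvalue $c(\pi)$).

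The central ingredient, and the main obstacle, is an effective Dolgopyat-type $L^2$-contraction for $\mathcal{L}_{s,\pi}^N$ which is \emph{uniform} in the representation $\pi$. In the untwisted $\pi=\mathrm{triv}$ case, the Diophantine condition on $r$ (implied for rapidly mixing $g_t$, which we need anyway) supplies the oscillatory cancellation. For non-trivial $\pi$, we exploit the density of the Brin group $H$ together with the semi-simplicity of $G$: since $\overline{H}=G$ and $G$ is semi-simple, a theorem going back to Kazhdan--Margulis-type arguments guarantees that for every non-trivial $\pi$, the image $\pi(H)$ is bounded away from the identity in a quantitative way, and moreover the restriction of $\pi$ to the closure of $H$ has no almost-invariant vectors with a rate polynomially controlled by $c(\pi)$. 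This yields the Diophantine hypothesis of \S\ref{subsec 5.1} (to be formalized there) and in turn the uniform $L^2$-Dolgopyat contraction, where the difficulty is to track explicitly how the constants depend on $\pi$ so that summing over $\widehat{G}$ remains convergent.

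Finally, assuming the uniform resolvent bounds, I would carry out the standard contour-shift/Paley--Wiener argument: express $\rho_{E,F}(t)$ as an inverse Laplace transform, shift the contour from $\mathrm{Re}(s)=c>0$ into $\mathrm{Re}(s)=-\epsilon$, and integrate by parts $n$ times, absorbing the $|\mathrm{Im}(s)|^n$ factors using the polynomial resolvent bounds and using $k$ derivatives on $E,F$ to gain the required $|\mathrm{Im}(s)|^{-k}$ decay in the integrand. Summing the resulting $\pi$-by-$\pi$ bounds against $\dim\pi$ and $c(\pi)^{-k/2}$ (which converges for $k$ large by Weyl's dimension formula and the Weyl law for the Casimir on $G$) gives \eqref{1.0.1} for a suitable $k=k(n)$, completing the proof.
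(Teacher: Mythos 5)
Your overall architecture coincides with the paper's: Markov coding of $g_t$, the skew-product suspension model for $f_t$, a Peter--Weyl decomposition of the correlation function, twisted transfer operators $\mathcal{L}_{s,\pi}$ expressing the Laplace transform, a Dolgopyat-type estimate uniform in $\pi$ with constants polynomial in representation-theoretic data, and a contour shift followed by summation over $\widehat{G}$. Two of your intermediate claims, however, are stated in a form that either would fail or hides the actual work.

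First, you ask for a pole-free \emph{fixed} strip $\{\mathrm{Re}(s)\ge-\epsilon\}$ with polynomially bounded resolvents. That is too strong: a uniform strip together with polynomial growth in $|\mathrm{Im}(s)|$ would give exponential mixing, which cannot follow from these hypotheses, since the base flow $g_t$ is only assumed rapidly mixing and the Diophantine property of the Brin group only yields contractions of the form $\|\mathcal{L}_{ib,\pi}^{n_{b_\pi}}\|_{b_\pi}\le 1-b_\pi^{-C}$ with $n_{b_\pi}\asymp\log b_\pi$ and $b_\pi=|b|+C|\lambda_\pi|^{1+m_G/2}$ (Proposition \ref{Dolgopyat type estimate for rapid mixing of compact group extension}). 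This gives analyticity and polynomial bounds only in a region $|\mathrm{Re}(s)|\le b_\pi^{-C'}$ whose width shrinks polynomially in $|\mathrm{Im}(s)|$ and in $|\lambda_\pi|$; the inverse Laplace transform must be taken along the curves $\mathcal{C}_\pi$, and combined with differentiation in the flow direction (your integration by parts) it yields precisely superpolynomial, not exponential, decay. Second, the group-theoretic input is not a Kazhdan--Margulis statement about $\pi(H)$, and ``no almost-invariant vectors for the closure of $H$'' is vacuous since $\overline H=G$; what is needed is a \emph{finite} subset $\Gamma\subset H$ with a quantitative bound $\sup_{g\in\Gamma}\|h-\pi(g)h\|\ge\delta\|h\|/|\lambda_\pi|^{C}$ for all nontrivial $\pi$ and all $h\in H_\pi$ (Definition \ref{Definition of Diophantine condition on Brin group}). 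The paper obtains this from Dolgopyat's theorem that in a semi-simple compact group every sufficiently fine $\varepsilon$-net is Diophantine (Lemma \ref{Lemma 2.2.2}), choosing the net inside the dense subgroup $H$, and then --- a step your sketch omits entirely --- transports this finite set to the symbolic model via the stable/unstable twists along closed chains with vanishing temporal displacement (Lemmas \ref{Symbolic coordinate of stable and unstable twists for compact group extension flow} and \ref{Diophantine subset of extension flow to suspension flow}), because the cancellation argument in the transfer-operator estimate is carried out on symbolic preimages, not on the manifold.
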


A special case of Theorem \ref{thm1} was previously established in \cite{Fie05}
for typical cocycles and a special class of functions.
We will present a 
more general result
than Theorem \ref{thm1}
 under a milder (but more technical) Diophantine hypothesis appears in Theorem \ref{Rapid mixing for compact group extension of hyperbolic flow}.

Under the same hypothesis as in Theorem \ref{thm1}, we also obtain the superpolynomial equidistribution, namely, a superpolynomial error term in the equidistribution of the holonomy around closed orbits. Let $\tau$ represent a closed orbit of $g_{t}$, and denote its least period as $\ell_{\tau}$. Each $\tau$ induces a conjugacy class $[\tau]$ in $G$ which is called the holonomy class of $\tau$. For any $T>0$, let $\pi(T)$ be the collection of prime closed orbits $\tau$ with $\ell_{\tau}\le T$. 

\begin{Theorem}\label{thm2}
Assume that $G$ is semi-simple and the Brin group for $f_t$ is dense in $G$ (i.e., $\overline H = G$).
For any $n\in\mathbb{N}^{+}$  there exist $C_2>0$ and $k\in\mathbb{N}^{+}$ such that for any class function $F\in C^{k}(G)$,
\begin{equation}\label{1.0.2}
\bigg|\dfrac{1}{\#\pi(T)}\sum_{
\ell_\tau \leq T
}F([\tau])-\int_{G}F(g)d(\hbox{\rm Haar}_G)(g)\bigg|\le C_2||F||_{C^{k}}\frac{1}{{T^{n}}}, 
\hbox{  for  all $T>1$, }
\end{equation}
where $\hbox{\rm Haar}_G$ is the normalized Haar measure on $G$.
\end{Theorem}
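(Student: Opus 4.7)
My strategy is to reduce Theorem \ref{thm2} to Theorem \ref{thm1} via harmonic analysis on $G$, encoded through a family of twisted dynamical $L$-functions. Since $F$ is a class function on the compact connected Lie group $G$, the Peter--Weyl theorem gives an expansion $F = \sum_{\pi \in \widehat G} c_\pi \chi_\pi$, where $\chi_\pi$ is the character of the irreducible unitary representation $\pi$; the $C^{k}$-regularity of $F$ and the Casimir operator on $G$ translate into the Sobolev-type decay $|c_\pi| \leq C\|F\|_{C^{k}}(1+\lambda_\pi)^{-k/2}$, where $\lambda_\pi$ is the Casimir eigenvalue. The trivial representation contributes exactly $\int_G F\, d\mathrm{Haar}_G$, i.e. the term subtracted on the left of \eqref{1.0.2}. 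Hence it suffices to show, for each non-trivial $\pi$, an estimate
$$\Bigl|\sum_{\ell_\tau \leq T}\chi_\pi([\tau])\Bigr| \leq P(\pi)\, \#\pi(T)\, T^{-n},$$
in which the constant $P(\pi)$ grows at most polynomially in $\dim\pi$ and $\lambda_\pi$, so that the sum against $|c_\pi|$ converges once $k$ is taken sufficiently large.

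For each fixed non-trivial $\pi$, I would pass to a Bowen--Ratner Markov section for $g_t$, which realises $g_t$ as a suspension flow over a subshift of finite type, lift the holonomy cocycle, and study the resulting family of twisted transfer operators $\mathcal L_{s,\pi}$ acting on vector-valued H\"older functions. Applying Theorem \ref{thm1} to the $\pi$-isotypic observables on $\widehat M$ (matrix coefficients of $\pi$ composed with the local fibre trivialisation and paired with a H\"older test function on $M$) supplies rapidly decaying correlations for these modes. Translated to $\mathcal L_{s,\pi}$, this yields, by the Dolgopyat machinery already invoked for Theorem \ref{thm1}, a zero- and pole-free meromorphic extension of the twisted Ruelle $L$-function $L_\pi(s) = \prod_\tau \det(I - \pi([\tau])e^{-s\ell_\tau})^{-1}$ to a half-plane $\mathrm{Re}(s) > h - \delta$ strictly past the topological entropy $h$ of $g_t$, with only polynomial growth of $|\log L_\pi(s)|$ on vertical lines. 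A smoothed contour integration of $-L_\pi'/L_\pi$ against a cut-off tailored to $T$, in the Parry--Pollicott style, then converts this analytic information into the pointwise counting estimate above, and the prime orbit theorem for $g_t$ provides $\#\pi(T) \asymp e^{hT}/T$.

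The main obstacle, and the step where the compact-group structure really has to be exploited, is the uniformity in $\pi$. Theorem \ref{thm1} is stated for individual H\"older observables, with a constant that in principle may depend badly on the $C^{k}$-norm as $\pi$ varies over $\widehat G$. To close the Peter--Weyl re-summation I need (i) the $C^{k}$-norms of the matrix-coefficient observables to grow at most polynomially in $\lambda_\pi$, which follows from Weyl's dimension formula together with standard heat-kernel and Sobolev-embedding bounds on $G$, and (ii) the Dolgopyat/Diophantine cancellation underlying Theorem \ref{thm1} to be stable across the family $\{\pi_{ij}\}$ with only polynomial loss in the representation parameters, which should follow by inspecting the proof of Theorem \ref{thm1} and verifying that the Diophantine condition of \S\ref{subsec 5.1} is uniform over $\widehat G$. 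With these uniformities in hand, fixing $n$ and taking $k$ sufficiently large makes the $\pi$-series absolutely convergent and yields \eqref{1.0.2} with the stated dependence on $\|F\|_{C^{k}}$.
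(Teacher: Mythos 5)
Your overall architecture is the same as the paper's: expand the class function via Peter--Weyl with Sobolev decay of the coefficients, estimate the character sum for each non-trivial $\pi$ through a twisted $L$-function $L_\pi(s)=\prod_\tau\det(I-\pi([\tau])e^{-s\ell_\tau})^{-1}$, push a contour in the Parry--Pollicott style, keep all constants polynomial in the representation parameter, and resum. However, two steps of your plan would fail as stated. First, you claim a zero- and pole-free extension of $L_\pi$ to a \emph{fixed} half-plane $\mathrm{Re}(s)>h-\delta$. The Diophantine/Dolgopyat estimate available here (the analogue of Corollary \ref{Corollary 3.3.5}) only contracts $\mathcal L_{s,\pi}$ after $O(\log b_\pi)$ iterates with loss $1-b_\pi^{-C}$, where $b_\pi=|b|+C|\lambda_\pi|^{1+m_G/2}$; this yields non-vanishing and polynomial bounds on $L_\pi'/L_\pi$ only in a region of width $b_\pi^{-C}$ shrinking polynomially in $|\mathrm{Im}(s)|$ and in $|\lambda_\pi|$ (this is exactly Lemma \ref{Lemma 3.6.3}). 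A fixed-width strip would give an exponentially small error term, which is strictly stronger than what the hypotheses deliver (the underlying flow is only rapidly, not exponentially, mixing); conversely, the shrinking region is sufficient for the superpolynomial estimate, which is what the paper's contour argument with the smoothed counting functions $N_{\pi,k}$ extracts. So you should replace the fixed half-plane by the curve $\mathcal C_\pi=\{a=1-b_\pi^{-C}\}$ and accept only a $T^{-n}$ saving.

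Second, your logical route runs backwards: you propose to deduce the spectral/analytic control of $\mathcal L_{s,\pi}$ from Theorem \ref{thm1} applied to $\pi$-isotypic observables. Decay of correlations for smooth observables does not formally yield the operator-norm contraction $\|\mathcal L_{ib,\pi}^{n_{b_\pi}}\|_{b_\pi}\le 1-b_\pi^{-C}$ that the $L$-function argument needs; in the paper both Theorem \ref{thm1} and Theorem \ref{thm2} are derived as two separate consequences of the same Dolgopyat-type estimate (Proposition \ref{Dolgopyat type estimate for rapid mixing of compact group extension}), and your acknowledgement that one must ``inspect the proof of Theorem \ref{thm1}'' for uniformity in $\pi$ is effectively an admission that the transfer-operator estimate must be used directly. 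Finally, a complete proof also needs the comparison between closed orbits of the symbolic suspension and of $g_t$ (Bowen's correction for orbits meeting the boundary of the Markov section, Lemma \ref{Lemma 3.6.2}) together with the prime orbit theorem; this step is absent from your outline, though it is standard.
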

\vspace{5pt}

This problem was previously studied in \cite{SarWak99} in the particular case of constant negative curvature geodesic flows.  Theorem \ref{thm2}  improves on results in \cite{Pol08} where only a restricted class of  test functions $F$ were considered.

We now  recall  one of the principal  examples.

\begin{Example}
\footnote{This motivates our prior use of the notation $g_t$ for the hyperbolic flow and $f_t$ for the compact group extension flow as a useful mnemonic even when we are working in greater generality}
Let  $V$ be a compact manifold of negative
curvature with dimension $d =\dim(V) \geq 3$, and let $M$ be the unit tangent bundle of $V$.
The geodesic flow $g_t: M \to M$ is Anosov (and thus hyperbolic).
Let $\widehat M$ be the frame bundle over $V$ which is a $SO(d-1)$-bundle over $M$, and let $f_t: \widehat M \to \widehat M$ be the frame flow.  
It is {\color{black} known} that the Brin group $H \subset SO(d-1)$ is  dense in the following cases:
\begin{itemize}
{\color{black}
\item $d$ is odd and $d \neq 7$ \cite{Bri75b} \cite{Bri80}; 
\item $d$ is even or $d=7$, and $V$ has  negative sectional  curvatures satisfying a suitable pinching condition,  \cite{Bri75b} \cite{Bri80}, \cite{Cek21}.
}
\end{itemize}
{\color{black} When $d\geq 4$ then $SO(d-1)$ is a semi-simple Lie group.}
Thus, by Theorems \ref{thm1} and \ref{thm2}, in {\color{black} both  of the cases above, provided  $d \geq 4$},  the frame flow $f_t$ is rapidly mixing (i.e., \eqref{1.0.1} holds)  and enjoys superpolynomial equidistribution as in \eqref{1.0.2}.
\end{Example}

Even stronger  results exist in the  very special case  of  frame flows for constant negative curvature manifolds 
 with respect to the normalized volume \cite{SarWak99}.  
 {\color{black} In both  of the cases above (when $d\geq 3$) rapid mixing with 
 respect to the normalized volume has been established by Lefeuvre and Ceki\'c \cite{CF} used a very different method.}
 As a statistical property, the mixing rate of frame flows not only has its own importance but can also be used to solve problems in other fields, such as the surface subgroup conjecture of Waldhausen proved in \cite{Kah12}.

The proof of Theorems \ref{thm1} and \ref{thm2} requires a non-trivial extension of  a   classical  approach involving  a symbolic model for both 
the flows $g_t$ and $f_t$ and the use of transfer operators.  
Although the technical details are challenging, the strategy is fairly straightforward. 
We briefly summarize the main steps in the approach:

\begin{enumerate}
\item
Using symbolic dynamics, it is easy to see that one can replace the correlation function $\rho(t)$  by  that  for an associated symbolic model for $f_t$ (in \S \ref{subsec 6.5});   

\item Then, we decompose the correlation function $\rho(t)=\sum_{\pi}\rho_{\pi}(t)$ in terms of irreducible representations $\pi$ (in \S \ref{subsec 7.1}); We replace  $\rho_{\pi}(t)$ by a function $\chi_{\pi}(t)$ whose Laplace transform $\widehat \chi_{\pi} (s)$ is easier  to analyze, but which leads to the same conclusion (see \S \ref{subsec 7.1}).
 
\item
The Laplace transform of each term associated to an irreducible representation can be expressed in terms of  families of transfer operators (adapting a traditional approach for suspension flows) (in \S \ref{subsec 7.2});

\item
The assumption that the Brin group $H \subset   G$  is dense 
and $G$ is semi-simple implies that a technical Diophantine property for the flow $f_t$ (which is in turn implies that  a corresponding Diophantine property for the the symbolic flow) (see \S \ref{subsec 6.6});

\item 
The symbolic Diophantine property implies estimates on families of transfer operators (in \S \ref{subsec 7.3}). Thus, the Laplace transform can be analytically extended to a specific region in the left half-plane. This implies a decay rate of the correlation function, which depends on the shape of the region (see \S \ref{subsec 7.4}).
\end{enumerate}

A key idea that makes this analysis work is the particular choice of norm used on each of the components associated to the irreducible representations of $G$.

\S \ref{sec 2} is about preliminaries. In \S \ref{subsec 2.1} we present some basic, but useful, results in the representation theory of compact groups, and in \S \ref{subsec 2.2} we introduce the concept of Diophantine subsets that will be used in the proof of Theorems \ref{thm1} and \ref{thm2}. In \S\S\S \ref{subsec 2.3}, \ref{subsec 2.4} and \ref{subsec 2.5}, we present some general constructions  in ergodic theory and dynamical systems that will serve us well in later sections.  

In \S \ref{sec 3} we describe some of the useful  properties of hyperbolic flows, and in \S \ref{sec 4} we briefly discuss the basic properties of compact group extensions of hyperbolic flows.

In \S \ref{sec 5}, we present the stronger version of our results where Theorem \ref{thm1} follows from Theorem \ref{Rapid mixing for compact group extension of hyperbolic flow} and Theorem \ref{thm2} follows from Theorem \ref{Superpolynomial equidistribution for compact group extension of hyperbolic flow}.

In \S \ref{sec 6}, we introduce the appropriate symbolic models used in the subsequent proofs. In particular, we can deduce rapid mixing of the extension flow (Theorem \ref{Rapid mixing for compact group extension of hyperbolic flow}) to that of a symbolic flow (Theorem \ref{Rapid mixing for symbolic flow of compact group extension}). 

In \S \ref{sec 7}, we present the proof of Theorem \ref{Rapid mixing for symbolic flow of compact group extension}. Finally, in  \S \ref{sec 8} we prove Theorem \ref{Superpolynomial equidistribution for compact group extension of hyperbolic flow}.

\bigskip
While this work was being completed we learned from Thibault Lefeuvre that he and 
Mihajlo Ceki\'c have  independently  established related  results in the context of  compact group extensions 
of volume preserving  Anosov flows \cite{CF}.   Their approach is completely different to ours, and we are grateful to them for both  sharing their elegant notes and for clarifying for us the issues  in the case of frame flows when $d=3$.

\section{Preliminaries}\label{sec 2}

\subsection{Representation theory of compact Lie groups}\label{subsec 2.1}

Before embarking on the dynamical aspects, we will consider some basic 
results  from the representation theory  of compact Lie groups. Our primary references are  \cite{App14} and \cite{Sug71}. 

Let $G$ be a compact connected Lie group. A representation of $G$ on $H_{\pi}$ is a strongly continuous homomorphism $\pi$ from $G$ to the group of bounded invertible operators on a normed linear space $H_{\pi}$. The dimension of $H_{\pi}$ is referred to as the dimension of $\pi$, denoted by $\dim_{\pi}$. A representation $\pi$ is termed unitary if $H_{\pi}$ is a complex separable Hilbert space, and the image of $\pi$ is contained in $U(H_{\pi})$, where $U(H_{\pi})$ denotes the set of unitary operators on $H_{\pi}$. Throughout, when we mention a representation, it implies a unitary representation. A special case is the left regular representation defined by
$$L:G\to U(L^{2}(G)),\quad L(g)f(g^{\prime})=f(g^{-1}g^{\prime})$$
where the inner product on $L^{2}(G)$ is with respect to the Haar measure. A representation $\pi$ of $G$ on $H_{\pi}$ is termed irreducible if there is no non-trivial invariant subspace of $\pi$. Two irreducible representations, $\pi_{1}$ and $\pi_{2}$ of $G$ on $H_{1}$ and $H_{2}$ respectively, are considered equivalent if there exists a unitary operator $T: H_{1} \to H_{2}$ such that $T\pi_{1}(g) = \pi_{2}(g)T$ for any $g \in G$. The set of all equivalent irreducible representations of $G$ is denoted by $\widehat{G}$ and is commonly referred to as the unitary dual of $G$. The notation $\pi \neq 1$ signifies that $\pi$ is not the trivial representation. As $G$ is compact, every irreducible representation is finite-dimensional, and $\widehat{G}$ is countable. Furthermore, for each element $\pi \in \widehat{G}$, $H_{\pi}$ is an invariant subspace of the left regular representation $L$, and $\pi$ is the restriction of the left regular representation $L$ to $H_{\pi}$, namely $\pi = L|_{H_{\pi}}$.

Fix an inner product on the Lie algebra $\mathfrak{g}$, and let $|\cdot|$ be the associated norm. Let $\lambda_{\pi}$ be the highest weight of $\pi$, which is a purely imaginary-valued linear form on $\mathfrak{g}$ and can thus be considered an element in $\mathfrak{g}$. It is worth mentioning that $\inf_{\pi\not=1}|\lambda_{\pi}|>0$, and we will use this fact to ensure the quantity $b_{\pi}$ considered in \S \ref{sec 7} is large enough for any $b\in\mathbb{R}$ and any $1\not=\pi\in\widehat{G}$. Given $\pi \in \widehat{G}$, since $\dim_{\pi} < \infty$, we can regard $\pi$ as a mapping from $G$ to the unitary matrices $U(\dim_{\pi})$. For a matrix $A \in GL(n)$, we use $\text{Tr}(A)$ to denote its trace. The Hilbert-Schmidt norm of $A$ is denoted by $|A|_{HS}$, defined as $|A|_{HS} = \text{Tr}(AA^{*})^{1/2}$. We use $|A|_{2}$ to denote the $2$-norm of $A$, so $|A|_{2} \leq |A|_{HS}$. For the representation $\pi$, for future convenience,  instead of using $||\cdot||_{L^{2}}$ we use $||\cdot||$ to represent the $L^{2}$ norm on $H_{\pi}$. We have $||\pi(g)||= |\pi(g)|_{2}$, where $||\pi(g)||$ is the operator norm of $\pi(g)$ acting on $H_{\pi}$. In particular, we have $||\pi(g)||\leq |\pi(g)|_{HS}$.

Let $\dim_{G}$ be the dimension of $G$, and let $r_{G} \in \mathbb{N}$ be the rank of $G$. Denote by $m_{G} := \frac{\dim_{G} - r_{G}}{2} \in \mathbb{N}$. We begin with a corollary of the famous Weyl’s dimensional formula, also found in \cite[Corollary 2.5.2]{App14}.

\begin{Lemma}\label{Lemma 2.7.1}
There exists $C_{3}\ge1$ such that for any $\pi$ we have $\dim_{\pi}\le C_{3}|\lambda_{\pi}|^{m_{G}}$.
\end{Lemma}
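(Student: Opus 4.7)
The plan is to invoke the Weyl dimension formula and control each factor in the resulting product. Fix a maximal torus and a system of positive roots, and let $\Phi^+$ denote the set of positive roots of $\mathfrak{g}$, with $\rho = \tfrac{1}{2}\sum_{\alpha\in\Phi^+}\alpha$ the Weyl vector. By Weyl's theorem, for any irreducible representation $\pi$ of $G$ with highest weight $\lambda_\pi$,
$$
\dim_\pi \;=\; \prod_{\alpha\in\Phi^+}\frac{\langle \lambda_\pi+\rho,\alpha\rangle}{\langle \rho,\alpha\rangle}.
$$
The key structural observation is that $|\Phi^+| = (\dim_G - r_G)/2 = m_G$, so the product has exactly $m_G$ factors.

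Next, I would estimate each factor linearly in $|\lambda_\pi|$. By Cauchy--Schwarz applied to the numerator,
$$
\frac{\langle \lambda_\pi+\rho,\alpha\rangle}{\langle \rho,\alpha\rangle}
\;\le\; \frac{|\alpha|}{\langle \rho,\alpha\rangle}\bigl(|\lambda_\pi|+|\rho|\bigr),
$$
and since the paper has already recorded that $\delta := \inf_{\pi\neq 1}|\lambda_\pi| > 0$, one may choose a single constant $C_0$ (depending only on $G$) with $|\lambda_\pi|+|\rho| \le C_0|\lambda_\pi|$ for every non-trivial $\pi$. Multiplying the $m_G$ bounds and defining
$$
C_3 \;:=\; \max\!\left\{\,1,\; C_0^{m_G}\prod_{\alpha\in\Phi^+}\frac{|\alpha|}{\langle \rho,\alpha\rangle}\,\right\}
$$
yields $\dim_\pi \le C_3 |\lambda_\pi|^{m_G}$ for all $\pi\neq 1$. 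The trivial representation is automatically harmless in the sequel since the statement is used via the quantifier $\pi\neq 1$ (consistent with the discussion preceding the lemma and the invocation of $\inf_{\pi\neq 1}|\lambda_\pi|>0$).

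There is no substantial obstacle here; the lemma is essentially a bookkeeping consequence of the Weyl dimension formula, matching the cited \cite[Corollary 2.5.2]{App14}. The only point requiring any care is ensuring the constant $C_3$ can be taken uniformly over $\widehat G\setminus\{1\}$, which is precisely what the lower bound $\delta>0$ provides by allowing us to absorb the contribution of $\rho$ (and the normalising denominators $\langle\rho,\alpha\rangle$, which are strictly positive for $\alpha\in\Phi^+$) into a single multiplicative constant.
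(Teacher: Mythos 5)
Your argument is correct and is exactly the route the paper takes: the paper offers no proof of its own but cites \cite[Corollary 2.5.2]{App14}, which is precisely this bookkeeping consequence of the Weyl dimension formula (with $|\Phi^{+}|=m_{G}$ factors, each bounded linearly in $|\lambda_{\pi}|$ using $\inf_{\pi\neq1}|\lambda_{\pi}|>0$ to absorb $\rho$). Your remark about the trivial representation is the right reading of the statement, since every later use of the exponent $m_{G}\ge1$ bound is for $\pi\neq1$.
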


For a $\pi\in\widehat{G}$, we define its differential or derived representation by
$$d\pi(X)=\dfrac{d}{dt}\pi(\exp(tX))\bigg|_{t=0},\quad X\in\mathfrak{g}.$$
Clearly, $d\pi(X)$ is an operator on $H_{\pi}$. We then define $|d\pi|_{\infty}:=\sup_{|X|=1}||d\pi(X)||$. By definition and the Differential Mean Value theorem, we obtain the following result.

\begin{Lemma}\label{Lemma 2.7.2}
For any $g,g^{\prime}\in G$, we have $||\pi(g)-\pi(g^{\prime})||\le|d\pi|_{\infty}d_{G}(g,g^{\prime})$.
\end{Lemma}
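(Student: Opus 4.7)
The plan is to prove this by combining the homomorphism property of $\pi$, the unitarity of $\pi$, and a standard Mean Value argument along a smooth path in $G$ connecting $g$ and $g'$. Because the metric $d_G$ on the compact Lie group $G$ is the Riemannian distance associated to the bi-invariant inner product on $\mathfrak g$, we can express $d_G(g,g')$ as the infimum of lengths $\int_0^1 |\gamma'(s)|\,ds$ over smooth paths $\gamma:[0,1]\to G$ with $\gamma(0)=g$ and $\gamma(1)=g'$.

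First I would fix such a path $\gamma$ and introduce its right-logarithmic derivative $X(s):=\gamma'(s)\gamma(s)^{-1}\in\mathfrak g$, which by bi-invariance of the metric satisfies $|X(s)|=|\gamma'(s)|$. Differentiating the identity $\gamma(s+\varepsilon)=\exp(\varepsilon X(s)+O(\varepsilon^2))\gamma(s)$ and using that $\pi$ is a homomorphism, one obtains
\[
\frac{d}{ds}\pi(\gamma(s))=d\pi(X(s))\,\pi(\gamma(s)).
\]
By the fundamental theorem of calculus in the Banach space of bounded operators on $H_\pi$, this yields
\[
\pi(g')-\pi(g)=\int_0^1 d\pi(X(s))\,\pi(\gamma(s))\,ds.
\]

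Next I would take operator norms and bound the integrand. Since $\pi$ is unitary we have $\|\pi(\gamma(s))\|=1$, and by the definition of $|d\pi|_\infty$ we have $\|d\pi(X(s))\|\le |d\pi|_\infty\,|X(s)|$. Therefore
\[
\|\pi(g')-\pi(g)\|\le |d\pi|_\infty\int_0^1 |X(s)|\,ds=|d\pi|_\infty\cdot\mathrm{length}(\gamma).
\]
Taking the infimum over all admissible paths $\gamma$ produces the desired inequality $\|\pi(g)-\pi(g')\|\le|d\pi|_\infty\,d_G(g,g')$.

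There is essentially no serious obstacle: the only point requiring a little care is the passage from the derived representation at the identity, $d\pi$, to the derivative of $\pi$ at an arbitrary point of $G$, which is handled by the homomorphism identity $\pi(\gamma(s+\varepsilon))=\pi(\exp(\varepsilon X(s)+O(\varepsilon^2)))\pi(\gamma(s))$ together with right-equivariance. Everything else is a direct application of the Riemannian Mean Value inequality.
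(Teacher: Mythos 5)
Your proof is correct and is essentially the paper's argument: the paper simply invokes "the Differential Mean Value theorem" together with the definition of $|d\pi|_{\infty}$, and your path-integration of $\frac{d}{ds}\pi(\gamma(s))=d\pi(X(s))\pi(\gamma(s))$, using unitarity of $\pi(\gamma(s))$ and taking the infimum over paths, is exactly the fleshed-out version of that one-line justification.
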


We can bound $|d\pi(X)|_{HS}$ by the norm of the highest weight $\lambda_{\pi}$ of $\pi$ as follows (see also \cite[Theorem 3.4.1]{App14}).

\begin{Lemma}\label{Lemma 2.7.3}
For any $\pi$ and any $X\in\mathfrak{g}$, we have $|d\pi(X)|_{HS}\le C_{3}|\lambda_{\pi}|^{1+m_{G}/2}|X|$.
\end{Lemma}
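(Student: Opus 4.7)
The plan is to compute $|d\pi(X)|_{HS}^{2}$ via the weight decomposition of $d\pi$. First I reduce to the case where $X$ lies in a fixed Cartan subalgebra $\mathfrak{t} \subset \mathfrak{g}$. Since $G$ is compact and connected, every element of $\mathfrak{g}$ is $\mathrm{Ad}$-conjugate to an element of $\mathfrak{t}$, so I may write $X = \mathrm{Ad}(g)H$ for some $g \in G$ and $H \in \mathfrak{t}$. Taking the inner product on $\mathfrak{g}$ to be $\mathrm{Ad}$-invariant (as we may, since $G$ is compact), we have $|H| = |X|$. Unitarity of $\pi$ then gives $d\pi(X) = \pi(g)\, d\pi(H)\, \pi(g)^{-1}$, and the Hilbert--Schmidt norm is invariant under unitary conjugation, so $|d\pi(X)|_{HS} = |d\pi(H)|_{HS}$. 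Hence it suffices to prove the bound for $H \in \mathfrak{t}$.

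Next, decompose $H_{\pi} = \bigoplus_{\mu} H_{\pi}(\mu)$ into weight spaces under the $\mathfrak{t}$-action, where $\mu$ ranges over the (purely imaginary) weights of $\pi$ and each has multiplicity $m_{\mu} := \dim H_{\pi}(\mu)$. On each $H_{\pi}(\mu)$ the skew-Hermitian operator $d\pi(H)$ acts as the scalar $\mu(H) \in i\mathbb{R}$, so
\[
|d\pi(H)|_{HS}^{2} \;=\; \sum_{\mu} m_{\mu}\, |\mu(H)|^{2} \;\le\; |H|^{2} \sum_{\mu} m_{\mu}\, |\mu|^{2},
\]
by Cauchy--Schwarz applied to the pairing $\mu(H) = \langle \mu, H \rangle$ (using the identification of weights with elements of $\mathfrak{g}$ via the inner product, as discussed in the text).

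The key classical input is that every weight $\mu$ of $\pi$ satisfies $|\mu| \le |\lambda_{\pi}|$: the set of weights lies in the convex hull of the Weyl orbit $W\lambda_{\pi}$, the Weyl group acts by isometries on $\mathfrak{t}$ (relative to the $\mathrm{Ad}$-invariant inner product), and the convex function $|\cdot|^{2}$ on a compact convex set attains its maximum at an extreme point, each of which has norm $|\lambda_{\pi}|$. Combined with $\sum_{\mu} m_{\mu} = \dim_{\pi}$ and Lemma \ref{Lemma 2.7.1}, this yields
\[
|d\pi(H)|_{HS}^{2} \;\le\; \dim_{\pi}\, |\lambda_{\pi}|^{2}\, |H|^{2} \;\le\; C_{3}\, |\lambda_{\pi}|^{m_{G}+2}\, |X|^{2},
\]
and taking square roots gives the claimed bound, after enlarging $C_{3}$ to absorb the resulting $\sqrt{C_{3}}$.

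The only step requiring more than bookkeeping is the norm bound on weights via convexity; everything else is a packaging of standard facts together with the previously established dimension bound. I would expect no serious technical obstacle in writing out the full proof, though one must be careful to fix the inner product on $\mathfrak{g}$ to be $\mathrm{Ad}$-invariant so that the $|\cdot|$ appearing in $|\lambda_\pi|$, in the identification $\mathfrak{g}^{*} \cong \mathfrak{g}$, and in the Weyl-orbit argument are all consistent.
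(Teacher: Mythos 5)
Your argument is correct. Note, though, that the paper does not prove this lemma at all: it simply cites \cite[Theorem 3.4.1]{App14}, where the standard route (going back to Sugiura) is via the Casimir element: for an orthonormal basis $\{X_{i}\}$ of $\mathfrak{g}$ one has $\sum_{i}d\pi(X_{i})^{2}=-\kappa_{\pi}I$ with $\kappa_{\pi}=\langle\lambda_{\pi},\lambda_{\pi}+2\rho\rangle\le C|\lambda_{\pi}|^{2}$ (using $\inf_{\pi\neq1}|\lambda_{\pi}|>0$), whence $|d\pi(X)|_{HS}^{2}\le\kappa_{\pi}\dim_{\pi}$ for unit $X$, and the dimension bound of Lemma \ref{Lemma 2.7.1} finishes the estimate. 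Your weight-space argument replaces the Casimir computation by the convexity fact that every weight lies in the convex hull of the Weyl orbit of $\lambda_{\pi}$, so $|\mu|\le|\lambda_{\pi}|$; this is an equally standard input and yields exactly the same exponent, so the two proofs are of comparable length and depth — yours avoids the eigenvalue formula for the Casimir, the cited one avoids the weight-polytope lemma. Your caveat about the inner product is handled correctly: the paper only fixes \emph{some} inner product on $\mathfrak{g}$, but since all norms on the finite-dimensional space $\mathfrak{g}$ are equivalent, passing to an $\mathrm{Ad}$-invariant one (needed both for $|H|=|X|$ after conjugation and for the Weyl group to act by isometries) only changes the constant, which $C_{3}$ absorbs.
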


Now, by Lemmas \ref{Lemma 2.7.2}, \ref{Lemma 2.7.3} and the inequality $||d\pi(X)||\le |d\pi(X)|_{HS}$, we have the following.

\begin{Corollary}\label{Corollary 2.7.4}
For any $1\not=\pi\in\widehat{G}$, any $g$ and $g^{\prime}\in G$, we have $||\pi(g)-\pi(g^{\prime})||\le C_{3}|\lambda_{\pi}|^{1+m_{G}/2} d_{G}(g,g^{\prime})$.
\end{Corollary}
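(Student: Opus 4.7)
The proof is essentially a direct chaining of the two preceding lemmas together with the norm comparison $\|d\pi(X)\| \le |d\pi(X)|_{HS}$, as the statement itself advertises. The plan is to first bound the quantity $|d\pi|_\infty$ appearing in Lemma \ref{Lemma 2.7.2} by a multiple of $|\lambda_\pi|^{1+m_G/2}$, and then feed this into Lemma \ref{Lemma 2.7.2}.

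More concretely, I would argue as follows. By definition $|d\pi|_\infty = \sup_{|X|=1} \|d\pi(X)\|$. Since the operator $2$-norm is dominated by the Hilbert--Schmidt norm, for any unit vector $X \in \mathfrak g$ we have $\|d\pi(X)\| \le |d\pi(X)|_{HS}$. Applying Lemma \ref{Lemma 2.7.3} to $X$ with $|X|=1$ then yields
\[
\|d\pi(X)\| \le |d\pi(X)|_{HS} \le C_3 |\lambda_\pi|^{1+m_G/2},
\]
and taking the supremum over such $X$ gives $|d\pi|_\infty \le C_3 |\lambda_\pi|^{1+m_G/2}$.

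Next I would invoke Lemma \ref{Lemma 2.7.2}, which furnishes $\|\pi(g)-\pi(g')\| \le |d\pi|_\infty \, d_G(g,g')$. Substituting the bound obtained above immediately gives
\[
\|\pi(g)-\pi(g')\| \le C_3 |\lambda_\pi|^{1+m_G/2} d_G(g,g'),
\]
which is the desired estimate. The hypothesis $\pi \ne 1$ is only needed to guarantee $|\lambda_\pi|>0$ (as noted after the introduction of $\lambda_\pi$), so that the right-hand side is a genuine, nonzero quantity when invoked later; otherwise both sides simply vanish since $\pi(g)=\mathrm{Id}$ for the trivial representation. There is no real obstacle here: every ingredient has already been established, and the argument is just the concatenation above, so no further technical work is required.
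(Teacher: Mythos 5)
Your argument is correct and is precisely the route the paper takes: it combines Lemma \ref{Lemma 2.7.2}, Lemma \ref{Lemma 2.7.3}, and the inequality $||d\pi(X)||\le|d\pi(X)|_{HS}$ to bound $|d\pi|_{\infty}$ by $C_{3}|\lambda_{\pi}|^{1+m_{G}/2}$ and conclude. No gaps.
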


We will utilize the following estimate multiple times, whose proof can be found in \cite[Theorem 3.2.1]{App14}. Recall that $r_{G}$ is the rank of $G$.

\begin{Lemma}\label{Lemma 2.7.5}
For any $x>r_{G}$, we have $\sum_{\pi\not=1}|\lambda_{\pi}|^{-x}$ converges.
\end{Lemma}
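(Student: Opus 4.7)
The plan is to reduce the convergence question to a standard integral by counting highest weights inside a ball and using that the weights lie in an $r_G$-dimensional discrete set.

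First, I would invoke the Cartan--Weyl classification: the nontrivial irreducible unitary representations of the compact connected Lie group $G$ are parametrised by their highest weights $\lambda_\pi$, which form a discrete subset of the closed positive Weyl chamber inside the dual $\mathfrak{t}^*$ of a Cartan subalgebra $\mathfrak{t}$. Crucially, $\dim_{\mathbb{R}} \mathfrak{t}^* = r_G$, and the set of all analytically integral weights is a full-rank lattice in $\mathfrak{t}^*$.

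Next, I would set
$$N(R) := \#\{\pi\in\widehat{G}\setminus\{1\}:\ |\lambda_\pi|\le R\}$$
and prove the counting estimate $N(R)\le C\,R^{r_G}$ for some constant $C>0$ and all $R\ge 1$. This follows because the highest weights lie in a lattice of bounded density in the $r_G$-dimensional space $\mathfrak{t}^*$, so the number of lattice points inside a Euclidean ball of radius $R$ is bounded by a constant times its volume.

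With the counting estimate in hand, I would combine it with the lower bound $\delta := \inf_{\pi\not=1}|\lambda_\pi|>0$ (already noted in the text) and the identity $|\lambda_\pi|^{-x} = x\int_{|\lambda_\pi|}^{\infty} t^{-x-1}\,dt$. Swapping sum and integral (Fubini/Tonelli, all terms positive) gives
$$\sum_{\pi\not=1}|\lambda_\pi|^{-x} \;=\; x\int_{\delta}^{\infty} t^{-x-1}\,N(t)\,dt \;\le\; Cx\int_{\delta}^{\infty} t^{r_G-x-1}\,dt,$$
and the right-hand side is finite exactly when $x>r_G$.

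The only nontrivial step is the lattice counting bound $N(R)\lesssim R^{r_G}$; everything else is routine measure-theoretic manipulation. That bound is standard from the theory of weight lattices of compact Lie groups, and one could cite it directly from \cite{App14} or \cite{Sug71}.
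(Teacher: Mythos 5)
Your proposal is correct, and it is exactly the standard argument behind the result the paper relies on: the paper itself offers no proof, merely citing \cite[Theorem 3.2.1]{App14}, and that theorem is proved by the same lattice-point count you describe (highest weights of inequivalent irreducibles are distinct dominant weights in a full-rank lattice of the $r_{G}$-dimensional space $\mathfrak{t}^{*}$, so $N(R)=O(R^{r_{G}})$, and comparison with $\int_{\delta}^{\infty}t^{r_{G}-x-1}\,dt$ gives convergence for $x>r_{G}$). The only implicit ingredient, the bijection $\pi\mapsto\lambda_{\pi}$ from the highest-weight theorem, is standard and makes counting representations equivalent to counting lattice points, so no gap remains.
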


We now shift our focus to the decomposition of $L^{2}(G)$, specifically the Peter-Weyl theorem. It is well-known that the Hilbert spaces $H_{\pi}\subset L^{2}(G)$ of $\pi\in\widehat{G}$ are pairwise orthogonal. Additionally, $\{H_{\pi}:\pi\in\widehat{G}\}$ forms the decomposition of $L^{2}(G)$ into invariant subspaces concerning the left regular representation $L$. Given $F\in L^{2}(G)$ and $\pi\in\widehat{G}$, we define the Fourier coefficient as
$$\widehat{F}(\pi):=\int_{G}\pi(g^{-1})F(g)dg$$
which is a matrix-valued integral. The function $\text{Tr}(\widehat{F}(\pi)\pi)$ is $C^{\infty}$ and belongs to $H_{\pi}$. The projection of $F$ onto each $H_{\pi}$ is equal to $\dim_{\pi}\text{Tr}(\widehat{F}(\pi)\pi)$. Thus, we have the following \cite[Theorem 2.3.1]{App14}.

\begin{Lemma}\label{Lemma 2.7.6}
For any $F\in L^{2}(G)$, we can write $F=\sum_{\pi\in\widehat{G}}F_{\pi}$ where $F_{\pi}=\dim_{\pi}\text{Tr}(\widehat{F}(\pi)\pi)$ and the convergence is in the $L^{2}$ sense.
\end{Lemma}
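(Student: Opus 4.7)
The plan is to derive this as an explicit form of the Peter--Weyl theorem, whose content is already invoked in the paragraph preceding the statement: namely that the spaces $H_\pi$ are pairwise orthogonal invariant subspaces of $L^2(G)$ under the left regular representation $L$, and that $L^2(G)$ is the Hilbert direct sum $L^2(G)=\bigoplus_{\pi\in\widehat G} H_\pi$. Granted this, any $F\in L^2(G)$ admits the orthogonal expansion $F=\sum_{\pi\in\widehat G}F_\pi$ (with convergence in $L^2$), where $F_\pi$ is the orthogonal projection of $F$ onto $H_\pi$. The task therefore reduces to identifying $F_\pi$ with the closed-form expression $\dim_\pi\,\mathrm{Tr}(\widehat F(\pi)\pi)$.

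For this I would use the Schur orthogonality relations, which assert that the matrix coefficients $\pi_{ij}(g)=[\pi(g)]_{ij}$ for a fixed irreducible $\pi$ of dimension $\dim_\pi$ satisfy
\[
\int_G \pi_{ij}(g)\overline{\pi_{kl}(g)}\,dg=\frac{1}{\dim_\pi}\delta_{ik}\delta_{jl},
\]
so that the functions $\{\sqrt{\dim_\pi}\,\pi_{ij}\}_{i,j=1}^{\dim_\pi}$ form an orthonormal basis of $H_\pi$. Expanding the projection in this basis gives
\[
F_\pi(g)=\dim_\pi\sum_{i,j}\Bigl(\int_G F(h)\overline{\pi_{ij}(h)}\,dh\Bigr)\pi_{ij}(g).
\]

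To finish, I would match this with $\dim_\pi\,\mathrm{Tr}(\widehat F(\pi)\pi(g))$. Since $\pi$ is unitary, $\pi(h^{-1})=\pi(h)^*$, so the matrix entries of $\widehat F(\pi)=\int_G \pi(h^{-1})F(h)\,dh$ are $[\widehat F(\pi)]_{ji}=\int_G \overline{\pi_{ij}(h)}F(h)\,dh$. Taking the trace of $\widehat F(\pi)\pi(g)$ then yields
\[
\mathrm{Tr}\bigl(\widehat F(\pi)\pi(g)\bigr)=\sum_{i,j}[\widehat F(\pi)]_{ji}\,\pi_{ij}(g)=\sum_{i,j}\Bigl(\int_G F(h)\overline{\pi_{ij}(h)}\,dh\Bigr)\pi_{ij}(g),
\]
which, multiplied by $\dim_\pi$, agrees with the projection formula above.

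No serious obstacle arises: the deep input is the Peter--Weyl theorem itself (already acknowledged in the text via the reference to \cite{App14}), together with Schur orthogonality. The only care needed is bookkeeping with indices and with the placement of the inverse in the definition of $\widehat F(\pi)$ so that the factor $\dim_\pi$ appears correctly and no spurious complex conjugations remain.
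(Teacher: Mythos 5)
Your proof is correct: the identification of the projection onto $H_{\pi}$ with $\dim_{\pi}\mathrm{Tr}(\widehat{F}(\pi)\pi)$ via Schur orthogonality, combined with the Peter--Weyl orthogonal decomposition $L^{2}(G)=\bigoplus_{\pi\in\widehat{G}}H_{\pi}$, is exactly the standard argument. The paper gives no proof of its own and simply cites \cite[Theorem 2.3.1]{App14}, which proceeds along the same lines, so your write-up matches the intended route.
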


We are interested in highly regular $F$. In this case, we will observe that the convergence in Lemma \ref{Lemma 2.7.6} is uniform and absolute. To achieve this, we require the following decay of Fourier coefficients \cite[Equation (3.3.14)]{App14}.

\begin{Lemma}\label{Lemma 2.7.7}
For any $F\in C^{2n}(G)$ and any $\pi\not=1$, we have $|\widehat{F}(\pi)|_{HS}\le |\widehat{\Delta^{n}F}(\pi)|_{HS}|\lambda_{\pi}|^{-2n}$, where $\Delta$ is the Laplace-Beltrami operator.
\end{Lemma}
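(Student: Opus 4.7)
The strategy is to exploit the fact that the matrix coefficients of an irreducible representation $\pi$ are simultaneous eigenfunctions of the Laplace--Beltrami operator $\Delta$, so that repeated applications of $\Delta$ inside the Fourier coefficient act as scalar multiplication by a power of the Casimir eigenvalue of $\pi$.

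First, I would write out the Fourier coefficient entrywise. For $1 \le i,j \le \dim_\pi$, the $(i,j)$-entry of $\widehat{F}(\pi) = \int_G \pi(g^{-1}) F(g)\,dg$ is $\int_G \overline{\pi_{ji}(g)} F(g)\,dg$, using unitarity $\pi(g^{-1}) = \pi(g)^*$. Thus each entry is an $L^2(G)$-inner product of $F$ against a matrix coefficient of $\pi$. Since $\Delta$ is self-adjoint on $L^2(G)$ with respect to the Haar measure, we can move $\Delta^n$ off of $F$ and onto the matrix coefficient:
\begin{equation*}
\widehat{\Delta^n F}(\pi)_{ij} \;=\; \int_G \overline{\Delta^n \pi_{ji}(g)}\, F(g)\,dg.
\end{equation*}

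Next, I invoke the standard fact (Peter--Weyl plus the Casimir operator) that each matrix coefficient of $\pi$ is an eigenfunction of $\Delta$ with a common eigenvalue $-\kappa_\pi$, so $\Delta \pi_{ji}(g) = -\kappa_\pi \pi_{ji}(g)$. By Freudenthal's formula (writing the Casimir eigenvalue in terms of the highest weight and the half-sum of positive roots $\rho$), one has $\kappa_\pi = |\lambda_\pi|^2 + 2\langle \lambda_\pi, \rho\rangle$ in the appropriate (real) normalisation; since $\lambda_\pi$ is dominant and $\rho$ is the half-sum of positive roots, the cross term is non-negative, giving the crucial inequality $\kappa_\pi \ge |\lambda_\pi|^2$. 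Iterating the eigenvalue identity, $\Delta^n \pi_{ji} = (-\kappa_\pi)^n \pi_{ji}$, and substituting back yields
\begin{equation*}
\widehat{\Delta^n F}(\pi) \;=\; \kappa_\pi^{\,n}\, \widehat{F}(\pi),
\end{equation*}
so $\widehat{F}(\pi) = \kappa_\pi^{-n}\,\widehat{\Delta^n F}(\pi)$.

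Finally, taking the Hilbert--Schmidt norm of both sides and using $\kappa_\pi \ge |\lambda_\pi|^2 > 0$ (valid because $\pi \neq 1$ guarantees $|\lambda_\pi| > 0$, as noted earlier in \S\ref{subsec 2.1}), we obtain
\begin{equation*}
|\widehat{F}(\pi)|_{HS} \;=\; \kappa_\pi^{-n}\, |\widehat{\Delta^n F}(\pi)|_{HS} \;\le\; |\lambda_\pi|^{-2n}\, |\widehat{\Delta^n F}(\pi)|_{HS},
\end{equation*}
which is the claimed bound. The main obstacle, or really the only non-trivial input, is the Casimir identity and the inequality $\kappa_\pi \ge |\lambda_\pi|^2$; everything else is a self-adjointness calculation. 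Since the precise identification of the Casimir eigenvalue with the highest weight is standard and recorded in \cite[Theorem 3.3.3]{App14}, one could also simply cite this and carry out the eigenfunction computation without rederiving Freudenthal's formula.
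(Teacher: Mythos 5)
The paper offers no proof of this lemma at all: it is quoted directly from \cite[Equation (3.3.14)]{App14}, and your argument is exactly the standard Sugiura/Casimir derivation underlying that citation (self-adjointness of $\Delta$, matrix coefficients as eigenfunctions with eigenvalue $-\kappa_\pi$, and $\kappa_\pi=|\lambda_\pi|^2+2\langle\lambda_\pi,\rho\rangle\ge|\lambda_\pi|^2$), so it is correct. The only cosmetic points are that substituting the eigenvalue actually gives $\widehat{\Delta^nF}(\pi)=(-\kappa_\pi)^n\widehat{F}(\pi)$ rather than $\kappa_\pi^n\widehat{F}(\pi)$ (irrelevant after taking Hilbert--Schmidt norms), and that the argument tacitly uses that the inner product on $\mathfrak g$ is $\mathrm{Ad}$-invariant so that the Laplace--Beltrami operator of the bi-invariant metric acts as the Casimir, which is the setting of \cite{App14} in which $|\lambda_\pi|$ is defined.
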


By the Parseval equality of $\Delta^{n}F$, an elementary argument gives $|\widehat{\Delta^{n}F}(\pi)|_{HS}\le \dim_{G}||F||_{C^{2n}}$. Now, by Lemmas \ref{Lemma 2.7.1} and \ref{Lemma 2.7.7}, for any $g\in G$, we can bound
$$
\begin{aligned}
|F_{\pi}(g)|\le\dim_{\pi}|\widehat{F}(\pi)|_{HS}|\pi(g)|_{HS}=\dim_{\pi}^{3/2}|\widehat{F}(\pi)|_{HS}\le C_{3}^{3/2}|\lambda_{\pi}|^{3m_{G}/2-2n}\dim_{G}||F||_{C^{2n}}.
\end{aligned}
$$
In particular, if we choose $2n>\dim_{G}$, then $2n-3m_{G}/2>r_{G}$ and thus the convergence in Lemma \ref{Lemma 2.7.6} is uniform and absolute by Lemma \ref{Lemma 2.7.5}. By the above argument, we have the following.

\begin{Lemma}\label{Lemma 2.7.8}
For any integer $n\ge1$, any $F\in C^{2n}(G)$ and any $\pi\not=1$, we have $|F_{\pi}|_{\infty}\le C_{3}||F||_{C^{2n}}|\lambda_{\pi}|^{-(2n-3m_{G}/2)}$. In particular, if $2n>\dim_{G}$, we have $\sum_{\pi}F_{\pi}$ converges to $F$ uniformly and absolutely.
\end{Lemma}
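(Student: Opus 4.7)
The plan is to chain together the four preceding lemmas of this subsection to bound $F_\pi$ pointwise, and then invoke the summability result Lemma \ref{Lemma 2.7.5} to upgrade this to uniform absolute convergence. The calculation is essentially the one already displayed above the statement, so the task is just to organize it cleanly and verify the exponent arithmetic.

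First, I would start from the Peter--Weyl expression $F_\pi(g) = \dim_\pi \operatorname{Tr}(\widehat{F}(\pi)\pi(g))$ from Lemma \ref{Lemma 2.7.6}. Applying the Cauchy--Schwarz inequality for the Hilbert--Schmidt inner product gives the pointwise bound
\[
|F_\pi(g)| \le \dim_\pi \, |\widehat{F}(\pi)|_{HS} \, |\pi(g)|_{HS}.
\]
Since $\pi(g)$ is unitary on $H_\pi$, one has $|\pi(g)|_{HS} = \dim_\pi^{1/2}$, so the right-hand side simplifies to $\dim_\pi^{3/2}|\widehat{F}(\pi)|_{HS}$.

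Next I would transfer regularity from $\pi$ to $F$ by Lemma \ref{Lemma 2.7.7}, which yields $|\widehat{F}(\pi)|_{HS} \le |\widehat{\Delta^n F}(\pi)|_{HS} \, |\lambda_\pi|^{-2n}$. Combined with the elementary Parseval bound $|\widehat{\Delta^n F}(\pi)|_{HS} \le \dim_G \, \|F\|_{C^{2n}}$ noted in the paragraph before the lemma, and with the Weyl-type dimension estimate $\dim_\pi \le C_3 |\lambda_\pi|^{m_G}$ from Lemma \ref{Lemma 2.7.1}, I obtain
\[
|F_\pi(g)| \le C_3^{3/2} |\lambda_\pi|^{3m_G/2} \cdot \dim_G \|F\|_{C^{2n}} \cdot |\lambda_\pi|^{-2n} = C \, \|F\|_{C^{2n}} \, |\lambda_\pi|^{-(2n - 3m_G/2)},
\]
for an absolute constant $C$, which is the first claim of the lemma (absorbing constants into $C_3$ using the convention $C_3 \ge 1$).

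Finally, for the uniform absolute convergence when $2n > \dim_G$, I would check the exponent condition needed by Lemma \ref{Lemma 2.7.5}: since $\dim_G = 2m_G + r_G$ by the definition $m_G = (\dim_G - r_G)/2$, the hypothesis $2n > \dim_G$ gives
\[
2n - \tfrac{3m_G}{2} > 2m_G + r_G - \tfrac{3m_G}{2} = \tfrac{m_G}{2} + r_G \ge r_G,
\]
so $\sum_{\pi \neq 1} |\lambda_\pi|^{-(2n - 3m_G/2)}$ converges. Combined with the pointwise bound, this gives uniform absolute convergence of $\sum_\pi F_\pi$, which must equal $F$ by the $L^2$ convergence already established in Lemma \ref{Lemma 2.7.6}. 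The only mildly delicate point is the arithmetic at the end, which is routine; there is no real obstacle beyond assembling these ingredients in the right order.
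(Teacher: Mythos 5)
Your proof is correct and follows essentially the same route as the paper: the paper's argument is precisely the displayed computation preceding the lemma, combining $|F_\pi(g)|\le \dim_\pi^{3/2}|\widehat F(\pi)|_{HS}$ with Lemma \ref{Lemma 2.7.7}, the Parseval bound $|\widehat{\Delta^n F}(\pi)|_{HS}\le \dim_G\|F\|_{C^{2n}}$, Lemma \ref{Lemma 2.7.1}, and then the exponent check $2n>\dim_G\Rightarrow 2n-3m_G/2>r_G$ together with Lemma \ref{Lemma 2.7.5}. Your exponent arithmetic and the identification of the uniform limit with $F$ via the $L^2$ convergence of Lemma \ref{Lemma 2.7.6} are exactly as intended.
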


Recall that class functions $F$ are functions in $L^{2}(G)$ that satisfy $F(hgh^{-1})=F(g)$. For any $\pi\in\widehat{G}$, we can associate $\pi$ with a $C^{\infty}$ class function $\xi_{\pi}$ called the character of $\pi$, defined by $\xi_{\pi}(g)=\text{Tr}(\pi(g))$. The decomposition of a class function is more concise.

\begin{Lemma}\label{Lemma 2.7.9}
For any class function $F\in L^{2}(G)$ and any $\pi$, we have $F_{\pi}=f_{\pi}\xi_{\pi}$, i.e., $F=\sum_{\pi}f_{\pi}\xi_{\pi}$ where $f_{\pi}=\int_{G}F(g)\xi_{\pi}(g^{-1})dg$. Furthermore, for any integer $n\ge1$, any class function $F\in C^{2n}(G)$ and any $\pi\not=1$, we have $|f_{\pi}|\le C_{3}||F||_{C^{2n}}|\lambda_{\pi}|^{-(2n-3m_{G}/2)}$.
\end{Lemma}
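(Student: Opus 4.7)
The plan is to reduce $F_\pi$ to a scalar multiple of the character $\xi_\pi$ via Schur's lemma, and then derive the decay estimate from what has already been established for $\widehat F(\pi)$. First I would exploit the class-function property $F(hgh^{-1})=F(g)$ together with the bi-invariance of the Haar measure. Performing the change of variables $g\mapsto hgh^{-1}$ in the defining integral $\widehat F(\pi)=\int_G F(g)\pi(g^{-1})\,dg$ yields
$$\widehat F(\pi)=\int_G F(hgh^{-1})\pi(hg^{-1}h^{-1})\,dg=\pi(h)\widehat F(\pi)\pi(h^{-1}),$$
so that $\pi(h)\widehat F(\pi)=\widehat F(\pi)\pi(h)$ for every $h\in G$.

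Since $\pi$ is irreducible, Schur's lemma then forces $\widehat F(\pi)=c_\pi\,\mathrm{Id}_{H_\pi}$ for some scalar $c_\pi\in\mathbb C$. Taking the trace on both sides gives $\dim_\pi\cdot c_\pi=\mathrm{Tr}(\widehat F(\pi))=\int_G F(g)\,\xi_\pi(g^{-1})\,dg=f_\pi$, so $c_\pi=f_\pi/\dim_\pi$. Substituting this into the formula $F_\pi(g)=\dim_\pi\mathrm{Tr}(\widehat F(\pi)\pi(g))$ from Lemma \ref{Lemma 2.7.6} immediately gives $F_\pi(g)=\dim_\pi c_\pi\,\xi_\pi(g)=f_\pi\xi_\pi(g)$, which is the first assertion.

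For the decay estimate, I would note that since $\widehat F(\pi)=c_\pi\,\mathrm{Id}_{H_\pi}$, we have $|\widehat F(\pi)|_{HS}=|c_\pi|\sqrt{\dim_\pi}$ and hence
$$|f_\pi|=\dim_\pi|c_\pi|=\sqrt{\dim_\pi}\,|\widehat F(\pi)|_{HS}.$$
Lemma \ref{Lemma 2.7.1} gives $\sqrt{\dim_\pi}\le\sqrt{C_3}\,|\lambda_\pi|^{m_G/2}$, while Lemma \ref{Lemma 2.7.7}, combined with the Parseval-type bound $|\widehat{\Delta^n F}(\pi)|_{HS}\le\dim_G\|F\|_{C^{2n}}$ already recorded before Lemma \ref{Lemma 2.7.8}, yields $|\widehat F(\pi)|_{HS}\le\dim_G\|F\|_{C^{2n}}|\lambda_\pi|^{-2n}$. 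Multiplying these produces a bound of the form $|f_\pi|\le\text{const}\cdot\|F\|_{C^{2n}}|\lambda_\pi|^{-(2n-m_G/2)}$, which is stronger than the stated exponent $-(2n-3m_G/2)$ (they differ by a positive power of the uniformly-bounded-below quantity $|\lambda_\pi|$) and hence implies it after absorbing the constant into $C_3$.

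No genuine obstacle arises: the only non-bookkeeping step is the application of Schur's lemma, which is standard; the rest is a direct reassembly of the estimates already proved for $\widehat F(\pi)$ in this subsection.
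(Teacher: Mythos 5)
Your proof is correct, but it takes a different route from the paper's. For the identity $F_{\pi}=f_{\pi}\xi_{\pi}$ the paper simply cites the standard reference (\cite[Section 2.4]{App14}), whereas you derive it directly: conjugation-invariance of $F$ plus bi-invariance of Haar measure shows $\widehat{F}(\pi)$ commutes with every $\pi(h)$, Schur's lemma makes it scalar, and the trace identifies the scalar as $f_{\pi}/\dim_{\pi}$ — all of which is sound. For the decay estimate the paper argues via $|f_{\pi}|=\|f_{\pi}\xi_{\pi}\|_{L^{2}}=\|F_{\pi}\|_{L^{2}}\le|F_{\pi}|_{\infty}$ (using $\|\xi_{\pi}\|_{L^{2}}=1$) and then quotes Lemma \ref{Lemma 2.7.8} verbatim, so it gets the exponent $-(2n-3m_{G}/2)$ with no extra work; you instead exploit the fact that $\widehat{F}(\pi)$ is scalar to write $|f_{\pi}|=\sqrt{\dim_{\pi}}\,|\widehat{F}(\pi)|_{HS}$ and combine Lemmas \ref{Lemma 2.7.1} and \ref{Lemma 2.7.7} with the Parseval bound, which yields the sharper exponent $-(2n-m_{G}/2)$; your reduction of this to the stated bound via $\inf_{\pi\neq1}|\lambda_{\pi}|>0$ is legitimate. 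The only cosmetic discrepancy is the constant: your argument produces $\sqrt{C_{3}}\dim_{G}(\inf_{\pi\neq1}|\lambda_{\pi}|)^{-m_{G}}$ rather than literally $C_{3}$, so one must enlarge $C_{3}$ accordingly, exactly as you note — the paper does the same kind of absorption elsewhere, so this is not a gap. In short, your version is self-contained and slightly stronger; the paper's is shorter because it recycles Lemma \ref{Lemma 2.7.8} and the orthonormality of characters.
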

\begin{proof}
For the first part, we refer the reader to \cite[Section 2.4]{App14}. We are interested in the decay of the coefficient $f_{\pi}$. It is well-known that ${\xi_{\pi}:\pi\in\widehat{G}}$ forms an orthogonal basis for class functions. In particular, $||\xi_{\pi}||_{L^{2}}=1$ for any $\pi$. Thus, by Lemma \ref{Lemma 2.7.8}, for any class function $F\in C^{2n}(G)$ and any $\pi\not=1$, we have $|f_{\pi}|=||f_{\pi}\xi_{\pi}||_{L^{2}}\le|F_{\pi}|_{\infty}\le C_{1}||F||_{C^{2n}}|\lambda_{\pi}|^{-(2n-3m_{G}/2)}$, which completes the proof.
\end{proof}

We will need the following estimate of $\xi_{\pi}$ in the proof of the superpolynomial equidistribution theorem.

\begin{Lemma}\label{Lemma 2.7.10}
For any $\pi\in\widehat{G}$, we have $|\xi_{\pi}|_{\infty}\le C_{3}|\lambda_{\pi}|^{m_{G}}$.
\end{Lemma}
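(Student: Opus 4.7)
The plan is essentially a one-line argument combining unitarity with the Weyl dimension bound from Lemma \ref{Lemma 2.7.1}.

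First I would recall that $\xi_\pi(g) = \mathrm{Tr}(\pi(g))$, where $\pi(g)$ is a unitary $\dim_\pi \times \dim_\pi$ matrix. Since all eigenvalues of a unitary matrix have modulus one, the trace is bounded in absolute value by the sum of the moduli of the eigenvalues, giving $|\xi_\pi(g)| \leq \dim_\pi$ for every $g \in G$. Taking the supremum over $g$ yields $|\xi_\pi|_\infty \leq \dim_\pi$.

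Then I would invoke Lemma \ref{Lemma 2.7.1}, which states that $\dim_\pi \leq C_3|\lambda_\pi|^{m_G}$, and combine these two inequalities to conclude $|\xi_\pi|_\infty \leq C_3|\lambda_\pi|^{m_G}$, as required.

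There is no real obstacle here: the only non-trivial ingredient is Weyl's dimension formula (already packaged as Lemma \ref{Lemma 2.7.1}), and the unitarity bound on the trace is immediate. One minor point worth being careful about is writing the trace bound directly from unitarity rather than invoking Hilbert--Schmidt norms, since the latter would give the weaker estimate $|\mathrm{Tr}(\pi(g))| \leq \dim_\pi^{1/2} |\pi(g)|_{HS} = \dim_\pi$, which is in fact sharp in this case but adds unnecessary steps.
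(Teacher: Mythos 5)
Your proof is correct and follows essentially the same route as the paper: bound $|\xi_\pi(g)|=|\mathrm{Tr}(\pi(g))|\le\dim_\pi$ and then invoke Lemma \ref{Lemma 2.7.1}. The paper happens to obtain the trace bound via Cauchy--Schwarz with the Hilbert--Schmidt norm (the step you flag as unnecessary), but this yields the identical estimate $\dim_\pi$, so the two arguments differ only cosmetically.
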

\begin{proof}
Denote by $E_{n}$ the identity $n\times n$ matrix. By definition, 
$$|\xi_{\pi}(g)|=|\text{Tr}(\pi(g))|\le|E_{\dim_{\pi}}|_{HS}|\pi(g)|_{HS}=\dim_{\pi}$$
which completes the proof by Lemma \ref{Lemma 2.7.1}.
\end{proof}

\subsection{Diophantine subsets}\label{subsec 2.2}

A dynamical version of the following Diophantine condition 
for a subset $\Gamma \subset G$ will be the key to the proof of Theorems \ref{thm1} and \ref{thm2}.

\begin{Definition}\label{Definition of Diophantine condition on Brin group}
We say a subset $\Gamma\subset G$ is Diophantine if there exist $C>0$  and $\delta>0$ such that for any $1\not=\pi\in\widehat{G}$ and any $h\in H_{\pi}$ there exists $g\in\Gamma$ such that 
$$||h-\pi(g)h||_{L^{2}}\ge ||h||_{L^{2}}\dfrac{\delta}{|\lambda_{\pi}|^{C}}.$$
\end{Definition}

Note that the above Diophantine property of subsets is invariant under a conjugacy, namely if $\Gamma^{\prime}$ is another subgroup of $G$ and $\Gamma^{\prime}$ is conjugate to $\Gamma$ then $\Gamma^{\prime}$ is also Diophantine. It was observed in \cite{Dol02} that  if $\Gamma$ is Diophantine then any group containing it will be dense, and it also shown that generic two-point set is Diophantine. We will relate this to the dynamics of compact group extensions of hyperbolic flows in a later section. 

\begin{Remark}\label{Remark 2}
	We should  comment on how this general Diophantine condition relates to the classical Diophantine approximation problem.
	If we assume $G=\mathbb{T}^{1}$, then each $\pi$ associated a $m\in\mathbb{Z}$, and $\pi:G\to U(\mathbb{C})$, $\pi(e^{i\theta})=e^{im\theta}$. Each $H_{\pi}$ is one-dimensional, and its basis is $h_{m}:S^{1}\to\mathbb{C}$, $h_{m}(z)=z^{m}$. Assume $\Gamma=\{e^{im\theta}\}$ is a single point set. Then the Diophantine condition in Definition \ref{Definition of Diophantine condition on Brin group} means that $|1-e^{i2\pi m\theta}|\ge \delta/|m|^{C}$ which says $\theta$ is a bad Diophantine number since in this case we have $|m\theta-q|\ge\delta/|m|^{C}$. 
\end{Remark}

The following result gives us a precise example of Diophantine subsets, see also \cite[Theorem A.3]{Dol02}.

\begin{Lemma}\label{Lemma 2.2.2}
Assume $G$ is semi-simple. Then any $\varepsilon$-net of $G$ with small sufficiently $\varepsilon>0$ is Diophantine.
\end{Lemma}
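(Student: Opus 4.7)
The plan is to combine an averaging argument over Haar measure with Corollary~\ref{Corollary 2.7.4} (Lipschitz continuity of $\pi$), splitting cases by the size of the highest weight $|\lambda_\pi|$. Fix any nontrivial irreducible $\pi\in\widehat G$ and a unit vector $h\in H_\pi$. By Schur orthogonality, $\int_G \pi(g)\,dg = 0$ on $H_\pi$, so
$$\int_G \|\pi(g)h - h\|^2\,dg \;=\; 2\|h\|^2 - 2\,\mathrm{Re}\Bigl\langle \int_G \pi(g)h\,dg,\;h\Bigr\rangle \;=\; 2\|h\|^2,$$
and hence some $g^{*}=g^{*}(\pi,h)\in G$ satisfies $\|\pi(g^{*})h - h\| \ge \sqrt{2}\|h\|$. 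In the low-frequency regime, Corollary~\ref{Corollary 2.7.4} then gives $\|\pi(g)h - h\| \ge \|h\|$ for every $g$ in the ball $B(g^{*}, r_\pi)$ of radius $r_\pi := (\sqrt{2}-1)/(C_3|\lambda_\pi|^{1+m_G/2})$. Thus whenever $\varepsilon \le r_\pi$ — equivalently, whenever $|\lambda_\pi|$ lies below a threshold $M_\varepsilon$ depending only on $\varepsilon$ and $G$ — the $\varepsilon$-net $\Gamma$ meets $B(g^{*}, r_\pi)$, and the inequality of Definition~\ref{Definition of Diophantine condition on Brin group} holds trivially, using $\inf_{\pi\ne 1}|\lambda_\pi|>0$ from \S~\ref{subsec 2.1} to absorb any $|\lambda_\pi|^C$.

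The substantive case, and the main obstacle, is the high-frequency regime $|\lambda_\pi| > M_\varepsilon$, where $C_3|\lambda_\pi|^{1+m_G/2}\varepsilon \gg 1$ and the Lipschitz transfer above is useless. Here the semi-simplicity of $G$ is essential: the kernel of any nontrivial irreducible lies in the finite centre, and $\dim_\pi$ grows polynomially in $|\lambda_\pi|$ by Weyl (Lemma~\ref{Lemma 2.7.1}). A Schur orthogonality computation $\int_G |\langle\pi(g)h,h\rangle|^2\,dg = 1/\dim_\pi$ bounds the ``bad set''
$$B_\pi(h,\eta) := \{\,g\in G : \|\pi(g)h - h\| < \eta\,\}$$
in Haar measure by $O(1/\dim_\pi)$, uniformly in $h$ and in small $\eta$, so $B_\pi(h,\eta)$ is polynomially small in $|\lambda_\pi|$. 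Following the strategy of \cite[Theorem~A.3]{Dol02} I would complement this with a ``universal'' Diophantine finite subset of $G$ built from generic Cartan exponents in enough distinct maximal tori — so that by semi-simplicity no nonzero $h\in H_\pi$ is a zero-weight vector with respect to all of them — together with a Borel--Cantelli argument against the weight lattice (summable via Lemmas~\ref{Lemma 2.7.1} and \ref{Lemma 2.7.5}) to produce the target rate $\delta/|\lambda_\pi|^C$. For small enough $\varepsilon$, $\Gamma$ contains $\varepsilon$-close approximations to each of these universal elements.

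The hardest step is this last transfer at high frequency: the naive Lipschitz perturbation loss $O(|\lambda_\pi|^{1+m_G/2}\varepsilon)$ completely dominates the target rate $\delta/|\lambda_\pi|^C$, so the proof cannot proceed by simple continuity. Instead it must combine the structural ``bad set is small'' bound with a quantitative equidistribution statement for $\varepsilon$-nets that rules out the possibility of $\Gamma$ concentrating entirely in $B_\pi(h,\eta)$. This is precisely where the full strength of semi-simplicity — Weyl-type dimension growth, polynomial summability from Lemma~\ref{Lemma 2.7.5}, and the near-faithfulness of every nontrivial irreducible — is needed to close the argument uniformly across all $\pi\in\widehat G$.
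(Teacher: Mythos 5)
First, a point of comparison: the paper does not actually prove Lemma \ref{Lemma 2.2.2} at all — it defers entirely to \cite[Theorem A.3]{Dol02}. So your proposal is not being measured against an internal argument; it has to stand on its own as a proof, and as written it does not. Your low-frequency step is correct: the averaging identity $\int_G\pi(g)\,dg=0$ on $H_{\pi}$ together with Corollary \ref{Corollary 2.7.4} handles every $\pi$ with $|\lambda_{\pi}|$ below an $\varepsilon$-dependent threshold, where the inequality of Definition \ref{Definition of Diophantine condition on Brin group} then holds trivially. (One small inaccuracy there: Lemma \ref{Lemma 2.7.1} is an \emph{upper} bound on $\dim_{\pi}$; the lower bound $\dim_{\pi}\gtrsim|\lambda_{\pi}|$ that your Schur computation needs for semi-simple $G$ is true but requires its own Weyl-dimension-formula argument, which you do not give.)

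The genuine gap is exactly the step you yourself call the hardest, which you describe but never carry out. For large $|\lambda_{\pi}|$ what the lemma demands is that the good set $\{g\in G:\|h-\pi(g)h\|\ge\delta\|h\|/|\lambda_{\pi}|^{C}\}$ contains a ball of radius $\varepsilon_{0}$, with $\varepsilon_{0},\delta,C$ independent of $\pi$ and $h$; equivalently, the bad set must fail to be $\varepsilon_{0}$-dense. Neither of your ingredients yields this. The bound $\mathrm{Haar}(B_{\pi}(h,\eta))=O(1/\dim_{\pi})$ is only a measure bound, and a set of arbitrarily small measure can still meet every ball of radius $\varepsilon_{0}$; moreover there is no ``quantitative equidistribution statement for $\varepsilon$-nets'' to appeal to, since an $\varepsilon$-net is an arbitrary set subject only to $\varepsilon$-density — it could a priori sit entirely inside the bad set unless the good set contains a whole $\varepsilon$-ball. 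And transferring from a ``universal'' Diophantine finite set to nearby net points reintroduces precisely the Lipschitz loss $O(|\lambda_{\pi}|^{1+m_{G}/2}\varepsilon)$ that you correctly identify as fatal. So the high-frequency case is in effect reduced to ``follow the strategy of \cite{Dol02}'', i.e.\ to the very result being proved; the structural statement that would close an $\varepsilon$-net argument (for instance, a fixed-radius ball of group elements — built, say, from one-parameter subgroups $\exp(tX)$ with badly approximable weight exponents, using several conjugate maximal tori to exclude simultaneous concentration on zero-weight spaces — on which $\|h-\pi(g)h\|$ is bounded below by a fixed negative power of $|\lambda_{\pi}|$) is exactly what is missing. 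As it stands your text is an accurate diagnosis of the difficulty rather than a proof; for this paper the correct move is the authors' one, namely to cite \cite[Theorem A.3]{Dol02}, or else to write out that argument in full.
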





\subsection{Pressure and equilibrium states}\label{subsec 2.3}

In the remainder  of this section, we collect together some background material from ergodic theory. We formulate this in a general context. In particular, in this subsection, we introduce pressure and Gibbs measure
(or equilibrium states). A standard reference for this material is \cite{Wal82}.
 
We assume $X$ is a compact metric space, and $T:X\to X$ is continuous. Let $\mathscr{B}$ be the Borel $\sigma$-algebra.
 Denote by $M(X)$ the collection of probability measures with respect to $\mathscr{B}$, and denote by $M(X,T)$ the elements of $M(X)$ which are $T$-invariant. Thus, $T$ is a measure-preserving map on the probability space $(X, \mathscr{B}, \mu)$ for $\mu\in M(X,T)$. Let $h_{\mu}(T)\in[0,\infty]$ denote  the entropy of the measure $\mu$. 

\begin{Definition}
For a continuous function $\Phi\in C(X,\mathbb{R})$, its 
\emph{pressure} $P(\Phi)$ is the supremum of $h_{\mu}(T)+\int_{X}\Phi d\mu$ taken over all $\mu\in M(X,T)$.\end{Definition}

 In the case where $\Phi\equiv0$, we call $h_{\text{top}}(T):=P(0)$
the topological entropy of $T$.

\begin{Definition}
If $\mu\in M(X,T)$ such that $h_{\mu}(T)+\int_{X}\Phi d\mu=P(\Phi)$, then we call $\mu$
an 
\emph{equilibrium state} 
or \emph{Gibbs measure}
for $\Phi$. In the case where $\Phi\equiv0$, we call $\mu$ a measure of maximal entropy of $T$.
\end{Definition}

For a continuous semi-flow\footnote{This means $\{\phi_{t}\}_{t\ge0}$ is a collection of continuous maps, and $\phi_{t+s}=\phi_{t}\circ\phi_{s}$ for $s,t \geq 0$.} $\phi_{t}:X\to X$ on a compact metric space $X$, the definitions of pressures and equilibrium states are as follows.

\begin{Definition}
For a continuous function $\Phi\in C(X,\mathbb{R})$, we define its 
\emph{pressure} $P(\Phi)$ with respect to $\phi_{t}$ to be the pressure of $\Phi$ with respect to the time-1 map $\phi_{1}$. We call $\mu\in M(X,T)$ an equilibrium state of $\phi_{t}$ if it is an 
\emph{equilibrium state} for  $\phi_{1}$. In the case where $\Phi\equiv0$, we call $h_{\text{top}}(\phi_{1})=P(0)$ the topological entropy of $\phi_{t}$ and call $\mu$ a measure of maximal entropy for $\phi_{t}$.
\end{Definition}

There are two particularly   important equilibrium state measures. The first is the measure of maximal entropy (MME)  associated with constant potentials. This measure holds a strong connection with the system's periodic orbit distribution. The second is the Sinai-Ruelle-Bowen (SRB) measure, representing the Gibbs measure of the geometric potential when  the system is an attractor. This measure 
is closely  related to the  statistical properties of the system.

\subsection{Skew products}\label{subsec 2.4}

 Let $T:X\to X$ be a continuous map on a compact metric space $X$, and let $G$ be a compact Lie group. Given a continuous function $\Theta:X\to G$, we can define the skew product $\widehat{T}$ as follows:
$$
\widehat{T}:\widehat{X}\to\widehat{X},\quad \widehat{T}(x,g)=(T(x),\Theta(x)g)
$$
where $\widehat{X}=X\times G$. Obviously, $\widehat{T}:\widehat{X}\to\widehat{X}$ is still a continuous map on the compact metric space $\widehat{X}$ with respect to the product metric. Let Haar$_{G}$ be the normalized Haar measure on $G$. If $\mu\in M(X,T)$, then it is easy to see that $\widehat{\mu}:=\mu\times\text{Haar}_{G}\in M(\widehat{X},\widehat{T})$.

\subsection{Suspension semi-flows}\label{subsec 2.5}

Let $T:X\to X$ be a continuous map on a compact metric space $X$, and let $r:X\to\mathbb{R}$ be a continuous function with $\inf_{x\in X}r(x)>0$. We define the suspension space as $X_{r}:=\{(x,u):0\le u\le r(x)\}/\sim$, where $(x,r(x))\sim(T(x),0)$. We also define the suspension semi-flow $\phi_{t}$ by 
$$
\phi_{t}:X_{r}\to X_{r},\quad \phi_{t}(x,u)=(x,u+t),\quad t\ge0,
$$
with respect to $\sim$ on $X_{r}$. If $T$ is a bijection, then with the same definition, $\phi_{t}$ becomes a flow. In the above definition, we call $\phi_{t}:X_{r}\to X_{r}$ a suspension semi-flow (or flow) of $r$ and $T$. Obviously, $\phi_{t}:X_{r}\to X_{r}$ is a continuous semi-flow on the compact metric space $\widehat{X}$ with respect to the product metric. Each $\mu\in M(X,T)$ induces a natural probability measure $\mu\times\text{Leb}$ on $\widehat{X}_{r}$ by
$$
\int_{X_{r}}Fd(\mu\times\text{Leb})=\dfrac{1}{\int_{X}rd\mu}\int_{X}\int_{0}^{r(x)}F(x,u)dud\mu(x),\quad F\in C(X_{r}).
$$ 
It can be shown that $\mu\times\text{Leb}$ is $\phi_{t}$-invariant \cite{Par90}.

\section{Hyperbolic flows and their properties}\label{sec 3}

In this section, we recall the definition of a hyperbolic flow and some of its properties. Appropriate references are \cite{Par90} and \cite{Fis19}. 

\subsection{Hyperbolic flows, Gibbs measures and u-s paths}\label{subset 3.1}

 Let $M$ be a smooth compact Riemannian manifold, and let $TM$ be its tangent space. Let $g_{t}:M\to M$ be a $C^{1}$ flow, and let $\Lambda\subset M$ be a compact $g_{t}$-invariant set. 

\begin{Definition}\label{Definition of hyperbolic flow}
We say $g_{t}:\Lambda\to \Lambda$ is a hyperbolic flow if 
\begin{enumerate}
\item There is a $Dg_{t}$-invariant splitting $T_{\Lambda}M=E^{s}\oplus E^{c}\oplus E^{u}$ over $\Lambda$, and exist constant $C>0$ and $\delta>0$ such that $E^{c}$ is the line bundle tangent to $g_{t}$, and $||Dg_{t}|_{E^{s}}||,\  ||Dg_{-t}|_{E^{u}}||\le Ce^{-\delta t}$ for all $t\ge0$.
\item $\Lambda$ consists of more than a single closed orbit, and closed orbits in $\Lambda$ are dense.
\item $g_{t}$ is transitive on $\Lambda$,\footnote{It means for any open sets $U,V$  and sufficiently  large $t>0$  we have $U\cap g_{t}V\not=\emptyset$.} and there is an open set $\mathcal{U}\supset\Lambda$ such that $\Lambda=\bigcap_{t=-\infty}^{\infty}g_{t}\mathcal{U}$.
\end{enumerate}
\end{Definition}

In the case where $\Lambda=M$, $g_t$ is referred to as an Anosov flow. 

The subbundles $E^{s}$ and $E^{u}$ are called the 
\emph{stable and unstable subbundles}, respectively, and they are always integrable. The integral manifolds of $E^{s}$ and $E^{u}$ are called the \emph{stable and unstable manifolds}, denoted by $W^{s}$ and $W^{u}$, respectively. They are $g_{t}$-invariant foliations, and can be written as  
$$W^{s}(x)=\{y\in M:d(g_{t}(y),g_{t}(x))\to0\ as\ t\to\infty\}$$
and 
$$W^{u}(x)=\{y\in M:d(g_{-t}(y),g_{-t}(x))\to0\ as\ t\to\infty\}.$$
The local versions of these objects are also useful. Given $\varepsilon>0$ small, the local stable manifold of $x$ of size $\varepsilon$ is defined by
$$W^{s}_{\varepsilon}(x)=\{y\in W^{s}(x):\sup_{t\ge0}d(g_{t}(y),g_{t}(x))\le\varepsilon\}.$$
Similarly, the local unstable manifold of $x$ of size $\varepsilon$ is defined by
$$W^{u}_{\varepsilon}(x)=\{y\in W^{u}(x):\sup_{t\ge0}d(g_{-t}(y),g_{-t}(x))\le\varepsilon\}.$$
When we don't need to emphasize their size, we will write them as $W^{s}_{loc}$ and $W^{u}_{loc}$. They are exponentially convergent under the action of $g_{t}$ or $g_{-t}$. Usually, we assume that $g_{t}$ is $C^{2}$ and topologically mixing which means for any two non-empty open sets $U,V$ of $\Lambda$ and large enough $t>0$ we have $U\cap g_{t}V\not=\emptyset$. It is well-known that for any H\"older potential $\Phi$ on $\Lambda$, i.e. it is a H\"older continuous real-valued function, there exists a unique equilibrium state $\mu_{\Phi}$ on $\Lambda$ which is called the 
\emph{Gibbs measure} of $\Phi$ \cite{Bow75}. We will use the following concept of u-s paths to define the Brin group in \S \ref{subsec 4.2}.

\begin{Definition}\label{Definition of closed chain and u-s path}
A u-s path of $g_{t}$ is a set of finitely many points $\{x_{i}\}_{i=0}^{p}\subset\Lambda$ so that $x_{i+1}\in W^{s}(x_{i})$ or $W^{u}(x_{i})$ for each $0\le i\le p-1$. A closed chain $W$ at $x$ is a u-s path $\{x_{i}\}_{i=0}^{p}$ so that $x_{0}=x_{p}=x$.
\end{Definition}

It is worth mentioning that one can also use u-s paths to describe the non-integrability of an Anosov flow \cite{Pla72} \cite{Fis19}, and we will do this in \S \ref{subsec 3.4}. 

    \begin{figure}[h!]
          \centerline{
    \begin{tikzpicture}[thick,scale=0.60, every node/.style={scale=0.85}]
\draw[<->, black] (0,0)-- (1,1);
\draw[-, black] (1,1)-- (2,2);
\draw[-, black] (-1,-1)-- (0,0);
\draw[<-, black] (1,3)-- (2,2);
\draw[-, black] (1,3)-- (0,4);
\draw[->, black] (-1,5)-- (0,4);
\draw[<->, black] (0,6)-- (1,7);
\draw[-, black] (1,7)-- (2,8);
\draw[-, black] (-1,5)-- (0,6);
\draw[<-, black] (1,9)-- (2,8);
\draw[-, black] (1,9)-- (0,10);
\draw[->, black] (-1,11)-- (0,10);
\draw[<->, black] (-3,9)-- (-2,10);
\draw[-, black] (-2,10)-- (-1,11);
\draw[-, black] (-4,8)-- (-3,9);
\draw[<-, black] (-5,9)-- (-4,8);
\draw[-, black] (-6,10)-- (-5,9);
\draw[->, black] (-7,11)-- (-6,10);
\draw[<->, black] (-9,9)-- (-8,10);
\draw[-, black] (-8,10)-- (-7,11);
\draw[-, black] (-9,9)-- (-10,8);
\draw[->, black] (-10,8)-- (-7,5);
\draw[-, black] (1,3)-- (0,4);
\draw[->, black] (-1,-1)-- (-4,2);
\draw[-, black] (-7,5)-- (-4,2);
 \node at (-1, -1.6) {$x_0 = x_p$};
 \node at (1.5, 0) {$W^u(x_0)$};
  \node at (2.6, 2) {$x_1$};
   \node at (1.5, 4) {$W^s(x_1)$};
     \node at (2.7, 8) {$x_{i-1}$};
          \node at (-0.9, 11.5) {$x_{i}$};
           \node at (1.7, 10) {$W^s(x_{i-1})$};
                      \node at (-3.3, 10) {$W^u(x_{i})$};
                      \node at (-3.8, 7.7) {$x_{i+1}$};
                        \node at (-6.6, 8.9) {$W^s(x_{i+1})$};
                                           \node at (-10.6, 7.7) {$x_{p-1}$};
                                           \node at (-8, 4) {$W^s(x_{p-1})$};
\end{tikzpicture}
}
\caption{A closed chain $W=\{x_{i}\}_{i=0}^{p}$ on $M$ }
\end{figure}
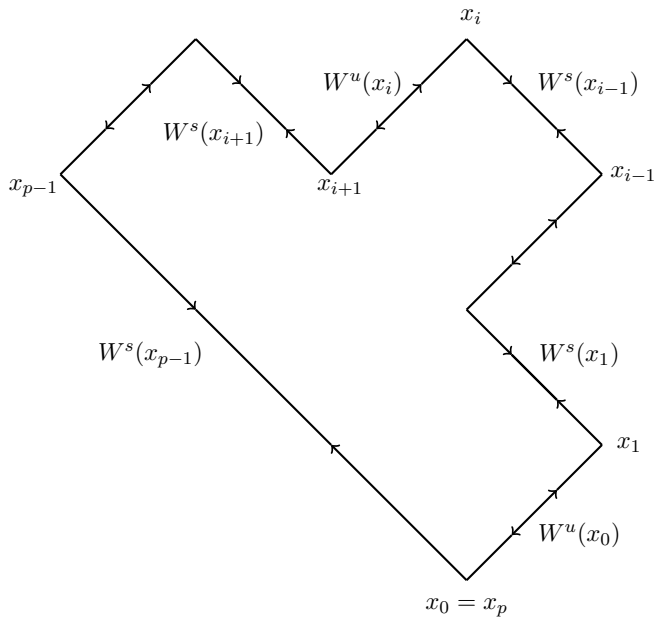

\subsection{Rate of mixing for hyperbolic flows} \label{subsec 3.2}

Recall that for a $g_{t}$-invariant measure $\mu$ on $\Lambda$ and two test functions $E,F\in L^{2}(\mu)$, their correlation function is defined by 
$$\rho_{E,F}(t)=\int_{\Lambda}E\circ g_{t}\cdot\overline{F}d\mu-\int_{\Lambda}Ed\mu\int_{\Lambda}\overline{F}d\mu
.$$
It is well-known that $g_{t}$ is always mixing with respect to Gibbs measures, namely $\rho_{E,F}(t)\to0$ as $t\to\infty$ for any $E,F\in L^{2}(\mu)$ \cite{Bow75}. A central question in the theory of dynamical systems is to characterize the rate of mixing of $g_{t}$ with respect to Gibbs measures, which is also called the decay of correlations. 

\begin{Definition}\label{Definition of rapid mixing}
Let $\mu_{\Phi}$ be the Gibbs measure of a H\"older potential $\Phi$ on $\Lambda$.
We say $g_{t}$ is rapidly mixing with respect to $\mu_{\Phi}$ if for each $n\in\mathbb{N}^{+}$ there exist $C_4>0$ and $k\in\mathbb{N}^{+}$ such that for any $t>0$ and any $E,F\in C^{k}(M)$, we have $|\rho_{E,F}(t)|\le C_4||E||_{C^{k}}||F||_{C^{k}}t^{-n}$.
\end{Definition}

It's essential to recognize that not every hyperbolic flow is mixing with respect to Gibbs measures. For example, constant height suspensions of hyperbolic diffeomorphisms. Although a hyperbolic flow under this condition shows mixing with respect to Gibbs measures \cite{Bow75}, there are instances where mixing occurs at an arbitrarily slow rate \cite{Pol85}. We are interested in hyperbolic flows $g_{t}$ that are at least rapidly mixing. In particular, this leads us to consider two types of hyperbolic flows. One type is hyperbolic flows that satisfying a Diophantine condition on closed orbits, and we will introduce them in the next subsection. The other type is jointly non-integrable Anosov flows which will be introduced in \S \ref{subsec 3.4}.

\subsection{A Diophantine condition on closed orbits}\label{subsec 3.3}

We consider compact group extensions of a class of hyperbolic flows which are  rapidly mixing. In \cite{Dol98b}, it is proven that if one can choose two closed orbits with a specific period ratio known as a bad Diophantine number, then the hyperbolic flow is rapidly mixing with respect to Gibbs measures. Using the method in \cite{Dol98b}, it is proven in \cite{Pol01} that one can also choose three closed orbits which satisfy the following similar Diophantine condition to achieve the same effect and the proof is simpler. 

\begin{Definition}\label{Definition of Diophantine for hyperbolic flow}
We say a hyperbolic flow $g_{t}$ is Diophantine if we can choose three closed orbits with periods $l_{1},l_{2},l_{3}$ such that $\alpha:=\frac{l_{1}-l_{2}}{l_{2}-l_{3}}$ is a bad Diophantine number, which means there exists $C_5>0$ and $\delta>0$ such that $|q\alpha-p|\ge \delta|q|^{-C_5}$ for any $q\in\mathbb{Z}$ and any $p\in\mathbb{Z}$.
\end{Definition}

Note that we may assume the Diophantine exponent $C_{5}>1$. Then, it is known that the set of bad Diophantine numbers with exponent $C_{5}>1$ has full Lebesgue measure \cite{Bug05}.

\subsection{Jointly non-integrable Anosov flows}\label{subsec 3.4}

Another type of rapidly mixing hyperbolic flows are  jointly non-integrable Anosov flows. Let $g_{t}:M\to M$ be an Anosov flow. In general, the codimension one subbundle $E^{s}\oplus E^{u}$ is not integrable, and we say that $g_t$ is jointly non-integrable if this is the case. It is convenient to characterize joint non-integrability using u-s paths of $g_{t}$ as shown in \cite{Pla72} and \cite{Fis19}. For a point $x\in M$, consider the collection $\mathcal{W}_{\delta}(x)$ of all u-s paths $W=\{x_{i}\}_{i=0}^{4}$ with $x_{0}=x$,
$$x_{1}\in W^{u}_{\delta}(x_{0}),\quad x_{2}\in W^{s}_{\delta}(x_{1}),\quad x_{3}\in W^{u}_{\delta}(x_{2}),\quad x_{4}\in W^{s}_{\delta}(x_{3})\quad\text{and}\quad x_{4}\in g_{t_{W}}(x_{0})$$
for some $t_{W}\in\mathbb{R}$. Then, $g_{t}$ is jointly non-integrable if and only if there exist $x\in M$ and small $\delta>0$ such that $t_{W}\not=0$ for any u-s path $W\in \mathcal{W}_{\delta}(x)$.

Joint non-integrability always implies topological mixing, and they are equivalent if $g_{t}$ is codimension one \cite[Theorem 3.7]{Pla72}. Dolgopyat proved in \cite{Dol98b} that if $g_{t}$ is a jointly non-integrable Anosov flow, then it is rapidly mixing with respect to Gibbs measures. 

\section{Compact group extensions}\label{sec 4}

After introducing the necessary concepts about hyperbolic flows, we now shift our focus to the research subject in this article: compact group extensions of hyperbolic flows. These flows are no longer hyperbolic and represent one of the simplest examples of partially hyperbolic flows.

\subsection{Compact group extensions of hyperbolic flows}\label{subsec 4.1}

 Let $M$ be a compact manifold, and let $g_{t}:\Lambda\to\Lambda\subset M$ be a hyperbolic flow. Let $G$ be a compact connected Lie group, and let $\widehat{M}$ be a smooth $G$-bundle over $M$ with the projection $\pi:\widehat{M}\to M$. We consider a $C^{2}$ flow $f_{t}$ on $\widehat{M}$ and denote $\widehat{\Lambda}:=\pi^{-1}(\Lambda)$.

\begin{Definition}\label{Definition of compact group extension}
We say $f_{t}:\widehat{\Lambda}\to\widehat{\Lambda}$ is a $G$-extension of $g_{t}$ if 
\begin{enumerate}
\item For any $t\in\mathbb{R}$, we have $\pi\circ f_{t}=g_{t}\circ\pi$ on $\widehat{\Lambda}$.
\item For any $g\in G$ and any $t\in\mathbb{R}$, we have $f_{t}\circ R_{g}=R_{g}\circ f_{t}$ on $\widehat{\Lambda}$, where $R_{g}$ is the right action of $g$.
\end{enumerate}
\end{Definition}

It is known \cite{Bri75a} that $f_{t}$ has two foliations called the 
\emph{stable foliation} and \emph{unstable foliation} of $f_t$, denoted by $W^{s}_{f_{t}}$ and $W^{u}_{f_{t}}$, respectively, which correspond to the stable foliation $W^{s}$ and unstable foliation $W^{u}$ of $g_t$.  They are $f_{t}$-invariant and are also exponentially convergent under the action of $f_{t}$ or $f_{-t}$. For any point $\widehat{x}\in\pi^{-1}(x)$, we have $\pi(W^{j}_{f_{t}}(\widehat{x}))=W^{j}(x)$ and $W^{j}_{f_{t}}(R_{g}(\widehat{x}))=R_{g}W^{j}_{f_{t}}(\widehat{x})$ for $j=s,u$. This means $W^{j}_{f_{t}}(\widehat{x})$ is the graph of a map from $W^{j}(x)$ to its fibre for $j=s,u$. 

The Gibbs measure $\mu_{\Phi}$ on $\Lambda$ of a H\"older potential $\Phi$ naturally induces a probability measure $\widehat{\mu}_{\Phi}$ on $\widehat{\Lambda}$ which is the local product of $\mu_{\Phi}$ and the normalized Haar measure Haar$_{G}$ on $G$. By definition, we have $\widehat{\mu}_{\Phi}=\int_{\Lambda}\text{Haar}_{x}d\mu_{\Phi}(x)$, where $\text{Haar}_{x}$ is the push-forward of Haar$_{G}$ on $G$ to the fibre $\pi^{-1}(x)$. 

The following result is easily proved.

\begin{Lemma}\label{Invariance of local product measure}
$f_{t}$ preserves the measure $\widehat{\mu}_{\Phi}$.
\end{Lemma}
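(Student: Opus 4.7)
\textbf{Proof plan for Lemma \ref{Invariance of local product measure}.} The plan is to exploit the disintegration $\widehat\mu_\Phi = \int_\Lambda \text{Haar}_x\,d\mu_\Phi(x)$ and show two things: (i) on each fibre, $f_t$ transports $\text{Haar}_x$ to $\text{Haar}_{g_t(x)}$, and (ii) the base flow $g_t$ preserves $\mu_\Phi$. Combining these with the semi-conjugacy $\pi\circ f_t = g_t\circ \pi$ via Fubini will give the conclusion.

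First I would check that $\text{Haar}_x$ is a well-defined measure on the fibre $\pi^{-1}(x)$, independent of the chosen trivialisation. Given any base point $\widehat x\in \pi^{-1}(x)$, the torsor structure gives a bijection $\phi_{\widehat x}:G\to \pi^{-1}(x)$, $g\mapsto \widehat x\,g$, and $\text{Haar}_x$ is defined as $(\phi_{\widehat x})_*\text{Haar}_G$. A different choice $\widehat x\,h$ produces $\phi_{\widehat x h}=\phi_{\widehat x}\circ L_h$, so the two pushforwards coincide by the left-invariance of $\text{Haar}_G$. The key observation is then that property~(2) of Definition~\ref{Definition of compact group extension}, $f_t\circ R_g = R_g\circ f_t$, gives
\[
f_t(\widehat x\,g)=f_t(\widehat x)\,g,\qquad g\in G,
\]
so that under the identifications $\phi_{\widehat x}$ of $\pi^{-1}(x)$ with $G$ and $\phi_{f_t(\widehat x)}$ of $\pi^{-1}(g_t(x))$ with $G$, the restriction $f_t\colon\pi^{-1}(x)\to\pi^{-1}(g_t(x))$ becomes the identity on $G$. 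Consequently $(f_t)_*\text{Haar}_x = \text{Haar}_{g_t(x)}$.

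With this in hand, for any test function $F\in C(\widehat\Lambda)$ I would write
\[
\int_{\widehat\Lambda} F\circ f_t\,d\widehat\mu_\Phi
=\int_\Lambda \!\!\int_{\pi^{-1}(x)} F(f_t(\widehat y))\,d\text{Haar}_x(\widehat y)\,d\mu_\Phi(x)
=\int_\Lambda \!\!\int_{\pi^{-1}(g_t(x))} F(\widehat z)\,d\text{Haar}_{g_t(x)}(\widehat z)\,d\mu_\Phi(x),
\]
applying the fibrewise invariance established above. Finally, the $g_t$-invariance of $\mu_\Phi$ (it is a Gibbs measure of a H\"older potential) allows a change of variables $x\mapsto g_t(x)$ in the outer integral, yielding $\int F\,d\widehat\mu_\Phi$.

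Since each step is a short standard manipulation, there is no genuine obstacle here; the only point worth highlighting is the correct use of the $G$-equivariance of $f_t$ to reduce the fibrewise action of $f_t$ to the identity map of $G$, so that bi-invariance of $\text{Haar}_G$ does the work. This is precisely where the two defining properties of a $G$-extension in Definition~\ref{Definition of compact group extension} are both needed: property~(1) to map fibres to fibres, and property~(2) to make that map Haar-preserving.
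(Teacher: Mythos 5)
Your proof is correct, and since the paper states this lemma without proof ("easily proved"), your argument is exactly the intended standard one: the equivariance $f_t\circ R_g=R_g\circ f_t$ makes $f_t$ the identity on fibres after identifying them with $G$ via the right action, so $(f_t)_*\mathrm{Haar}_x=\mathrm{Haar}_{g_t(x)}$, and the $g_t$-invariance of the Gibbs measure $\mu_{\Phi}$ finishes the computation. The well-definedness check for $\mathrm{Haar}_x$ via bi-invariance of the Haar measure is a worthwhile detail the paper leaves implicit.
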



\subsection{Rapid mixing and superpolynomial equidistribution}\label{subsec 4.2}

We are interested in the rate of mixing of $f_{t}$ with respect to $\widehat{\mu}_{\Phi}$. By analogy with the case of hyperbolic flows, for $E,F\in L^{2}(\widehat{\mu}_{\Phi})$ their correlation function is defined as 
$$\rho_{E,F}(t)=\int_{\widehat{\Lambda}}E\circ f_{t}\cdot\overline{F}d\widehat{\mu}_{\Phi}-\int_{\widehat{\Lambda}}Ed\widehat{\mu}_{\Phi}\int_{\widehat{\Lambda}}\overline{F}d\widehat{\mu}_{\Phi}.$$
Then the statement of $f_{t}$ is  rapidly mixing with respect to $\widehat{\mu}_{\Phi}$ is an analogous manner to Definition
\ref{Definition of  rapid mixing}.

We can also look at error terms of the equidistribution theorem. Let $\tau$ represent a closed orbit of $g_{t}$, and denote its period as $\ell_{\tau}$. Via the extension flow $f_{t}$, each $\tau$ induces a conjugacy class $[\tau]$ in $G$ which is called the holonomy class of $\tau$. For any $T>0$, let $\pi(T)$ be the collection of prime closed orbits $\tau$ with $\ell_{\tau}\le T$.

\begin{Definition}\label{Definition of rapid equidistribution}
We say $f_{t}$ satisfies the superpolynomial equidistribution if for any $n\in\mathbb{N}^{+}$ there exist $k\in\mathbb{N}^{+}$ and $C_6>0$ such that for any class function $F\in C^{k}(G)$ and any $T>0$,
$$\bigg|\dfrac{1}{\#\pi(T)}\sum_{\tau\in\pi(T)}F([\tau])-\int_{G}Fd\text{\rm (Haar)}_{G}(g)\bigg|\le C_6||F||_{C^{k}}{T^{-n}}.$$
\end{Definition}

\subsection{The Brin group}\label{subsec 4.3}

In Theorems \ref{thm1} and \ref{thm2}, we gave assumptions on the Brin group of $f_{t}$ so that $f_{t}$ enjoys rapid  mixing with respect to $\widehat{\mu}_{\Phi}$. 
In this subsection, we provide a precise definition of the Brin group. We beginning with the following definitions of stable and unstable twists which are used to define the Brin group.

\begin{Definition}\label{Definition of stable twist}
For $x,y\in \Lambda$ with $y\in W^{s}(x)$, let $\varphi_{x}$ and $\varphi_{y}$ be the trivializations at $x$ and $y$ respectively. Consider the point $(x,e)$ in the fibre $\pi^{-1}(x)$. Then the point in the graph $W^{s}_{f_{t}}(x,e)$ that passing through the fibre $\pi^{-1}(y)$ is equal to $(y,\mathcal{T}^{s}(x,y))$. We call $\mathcal{T}^{s}(x,y)$ the stable twist of $x$ and $y$.
\end{Definition}

The unstable twist $\mathcal{T}^{u}(x,y)$ of $y\in W^{u}(x)$ can be defined in an analogous manner. Obviously, $\mathcal{T}^{s}(x,y)$ depends on the trivializations at $x$ and $y$. Since  $W^{s}_{f_{t}}(R_{g}(\widehat{x}))=R_{g}W^{s}_{f_{t}}(\widehat{x})$, for any point $(x,g)$ in $\pi^{-1}(x)$, we have the point in the graph $W^{s}_{f_{t}}(x,g)$ that passing through the fibre $\pi^{-1}(y)$ is equal to $(y,\mathcal{T}^{s}(x,y)g)$. We will express the stable twist $\mathcal{T}^{s}$ and unstable twist $\mathcal{T}^{u}$ precisely in the symbolic model constructed in \S \ref{subsec 6.6}.

Given a closed chain $W=\{x_{i}\}_{i=1}^{p}$ of $g_{t}$ at $x\in\Lambda$ and trivializations $\varphi_{x_{i}}$ at $x_{i}$, the twist induced by $W$ is defined as $g_{W}:=\mathcal{T}^{\delta_{x_{p-1},x_{p}}}(x_{p-1},x_{p})\cdots\mathcal{T}^{\delta_{x_{0},x_{1}}}(x_{0},x_{1})$ where $\delta_{x,y}=s$ if $y\in W^{s}(x)$ and $\delta_{x,y}=u$ if $y\in W^{u}(x)$. Although $\mathcal{T}^{s}$ and $\mathcal{T}^{u}$ depend on the trivializations, it is not the case for $g_{W}$.

\begin{Lemma}\label{Independence of closed chain for local coordiante}
For any closed chain $W=\{x_{i}\}_{i=1}^{p}$ of $g_{t}$ at $x$, we have $g_{W}$ is independent of the choice of the trivialization $\varphi_{x_{i}}$ at $x_{i}$ for $1\le i\le p-1$.
\end{Lemma}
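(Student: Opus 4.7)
The plan is to track how each individual twist $\mathcal{T}^{s}$ or $\mathcal{T}^{u}$ transforms under a change of trivialization at an endpoint, and then observe that along a closed chain these transformations telescope and cancel in pairs. The key identity I will establish first is that if the trivialization at a point $y$ is changed via $\varphi'_{y}(\hat p)=h_{y}\,\varphi_{y}(\hat p)$ for some $h_{y}\in G$ (and similarly at $x$), then the stable or unstable twist transforms according to the conjugation-type formula
\[
\mathcal{T}^{\delta}_{\mathrm{new}}(x,y)=h_{y}\,\mathcal{T}^{\delta}(x,y)\,h_{x}^{-1},\qquad \delta\in\{s,u\}.
\]

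To derive this formula I would first note that trivializations are equivariant with respect to the right $G$-action on fibres, so any two trivializations at the same point differ by left multiplication by a fixed element of $G$. The ``identity point'' in the new trivialization at $x$ is therefore $R_{h_{x}^{-1}}\varphi_{x}^{-1}(e)$. Applying property (2) of Definition \ref{Definition of compact group extension}, namely $W^{\delta}_{f_{t}}(R_{g}\hat{x})=R_{g}W^{\delta}_{f_{t}}(\hat{x})$, transports the whole stable or unstable leaf by $R_{h_{x}^{-1}}$, multiplying the $\varphi_{y}$-coordinate of the point passing through $\pi^{-1}(y)$ on the right by $h_{x}^{-1}$. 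Reading the resulting coordinate in the new trivialization at $y$ then multiplies on the left by $h_{y}$, giving precisely the displayed formula.

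With this rule in hand, I apply it to the product $g_{W}=\mathcal{T}^{\delta_{p}}(x_{p-1},x_{p})\cdots\mathcal{T}^{\delta_{1}}(x_{0},x_{1})$ associated to the closed chain $W$. Since the trivialization at $x_{0}=x_{p}=x$ is held fixed, one has $h_{0}=h_{p}=e$, while $h_{1},\dots,h_{p-1}$ may be arbitrary. Substituting into each factor gives
\[
g_{W}^{\mathrm{new}}=\bigl(h_{p}\,\mathcal{T}^{\delta_{p}}\,h_{p-1}^{-1}\bigr)\bigl(h_{p-1}\,\mathcal{T}^{\delta_{p-1}}\,h_{p-2}^{-1}\bigr)\cdots\bigl(h_{1}\,\mathcal{T}^{\delta_{1}}\,h_{0}^{-1}\bigr),
\]
and each interior $h_{i}^{-1}h_{i}=e$ cancels, leaving $h_{p}\,\mathcal{T}^{\delta_{p}}\cdots\mathcal{T}^{\delta_{1}}\,h_{0}^{-1}=g_{W}$.

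The main obstacle is the first step: one must unwind the conventions for trivializations, the right $G$-action, and the definition of $\mathcal{T}^{s}$ and $\mathcal{T}^{u}$ carefully enough that the factors in the transformation formula appear in the correct order. Once the conjugation formula is correctly established, the remainder of the argument is purely formal telescoping, and the fact that the chain is closed together with the fixed trivialization at the basepoint $x$ supplies exactly the boundary condition $h_{0}=h_{p}=e$ needed for the cancellation to be complete.
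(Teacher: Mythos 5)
Your proof is correct and follows essentially the same route as the paper's: record how each stable/unstable twist transforms under a change of trivialization at its endpoints, then telescope the product over the closed chain, using that the trivialization at the basepoint $x_{0}=x_{p}$ is fixed so the boundary factors are the identity. The only difference is cosmetic — you write the transformed twist as $h_{y}\mathcal{T}^{\delta}(x,y)h_{x}^{-1}$ while the paper records it as $g_{i+1}^{-1}\mathcal{T}\,g_{i}$, a matter of convention for the coordinate-change element that does not affect the cancellation — and you supply the derivation of this formula from the $R_{g}$-equivariance of the leaves, which the paper leaves implicit.
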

\begin{proof}
Suppose, for each $1\le i\le p-1$, $\varphi^{\prime}_{x_{i}}$ is another trivialization at $x_{i}$. Let $g_{i}$ be the difference of the coordinate transformation from $\varphi_{x_{i}}$ to $\varphi^{\prime}_{x_{i}}$, i.e. $\varphi^{\prime}_{x_{i}}\varphi_{x_{i}}^{-1}(x_{i},e)=(x_{i},g_{i})$. Then by definition, the stable or unstable twist with respect to $\varphi^{\prime}_{x_{i}}$ is $g_{i+1}^{-1}\mathcal{T}^{j_{x_{i+1},x_{i}}}(x_{i},x_{i+1})g_{i}$. Note that $g_{0}=g_{p}=e$. Therefore the twist induced by $W$ with respect to  $\varphi^{\prime}_{x_{i}}$ is 
$$
\begin{aligned}
g_{p}^{-1}\mathcal{T}^{j_{x_{p-1},x_{p}}}(x_{p-1},x_{p})g_{p-1}\cdots g_{1}^{-1}\mathcal{T}^{j_{x_{0},x_{1}}}(x_{0},x_{1})g_{0}=\mathcal{T}^{j_{x_{p-1},x_{p}}}(x_{p-1},x_{p})\cdots\mathcal{T}^{j_{x_{0},x_{1}}}(x_{0},x_{1})
\end{aligned}
$$
which completes the proof.
\end{proof}

It is straightforward to verify that all twists induced by closed chains \(W\) at \(x\) form a subgroup of \(G\). In a series of papers by Brin (e.g., \cite{Bri75a}, \cite{Bri75b}, \cite{Bri82}, and \cite{Bri84}), this subgroup plays an irreplaceable role in proving the ergodicity of frame flows and more general partially hyperbolic systems. Therefore, the subgroup is commonly referred to as the Brin group.

\begin{Definition}\label{Definition of Brin group}
	For a point \(x \in \Lambda\) and a trivialization \(\varphi_{x}\) at \(x\), the Brin group \(H(x) \subset G\) at \(x\) is the set of all twists \(\{g_{W}\}\) of closed chains \(W\) at \(x\).
\end{Definition}

Although the definition of the Brin group involves trivializations, \(H(x)\) is independent of the choice of trivializations \(\varphi_{x}\) at \(x\). More precisely, changing the trivialization will result in a new Brin group, which is conjugate to the old one as follows.

\begin{Lemma}\label{Independence of Brin group for local coordinate}
Let $x\in\Lambda$, and let $H(x)$ and $H^{\prime}(x)$ be the Brin groups at $x$ with respect to the trivializations $\varphi_{x}$ and $\varphi^{\prime}_{x}$ respectively. Denote by $\tilde{g}$ the difference of the coordinate transformation from $\varphi_{x}$ to $\varphi^{\prime}_{x}$, namely $(x,\tilde{g})=\varphi^{\prime}_{x}\varphi_{x}^{-1}(x,e)$, then $H(x)$ and $H^{\prime}(x)$ are conjugate by $\tilde{g}$ which means $H(x^{\prime})=\tilde{g}^{-1}H(x)\tilde{g}$.
\end{Lemma}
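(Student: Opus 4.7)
The plan is to reduce the statement to the single-step bookkeeping already performed in Lemma~\ref{Independence of closed chain for local coordiante}, only now keeping track of the endpoint contribution that was previously trivial. Fix a closed chain $W=\{x_{i}\}_{i=0}^{p}$ at $x$ (so $x_{0}=x_{p}=x$), together with intermediate trivializations $\varphi_{x_{i}}$ at $x_{i}$ for $1\le i\le p-1$. Let $g_{W}$ denote the twist of $W$ computed using $\varphi_{x}$ at the endpoints, and let $g'_{W}$ denote the twist computed using $\varphi'_{x}$ at the endpoints, with the intermediate trivializations held fixed.

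First I would re-run the telescoping calculation from the proof of Lemma~\ref{Independence of closed chain for local coordiante}, this time setting $g_{i}=e$ for $1\le i\le p-1$ (the intermediate trivializations are unchanged) and $g_{0}=g_{p}=\tilde{g}$ (the endpoint trivialization changes by $\tilde g$ on both sides of the chain, since $x_{0}=x_{p}=x$). Each individual twist transforms as $\mathcal{T}^{\delta}(x_{i},x_{i+1})\mapsto g_{i+1}^{-1}\mathcal{T}^{\delta}(x_{i},x_{i+1})g_{i}$, and the intermediate factors telescope, leaving
$$
g'_{W}=g_{p}^{-1}\,\mathcal{T}^{\delta_{x_{p-1},x_{p}}}(x_{p-1},x_{p})\cdots \mathcal{T}^{\delta_{x_{0},x_{1}}}(x_{0},x_{1})\,g_{0}=\tilde{g}^{-1}g_{W}\tilde{g}.
$$
Thus at the level of individual chains the two twists differ precisely by conjugation by $\tilde g$.

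Second, I would vary $W$ over all closed chains at $x$. By Definition~\ref{Definition of Brin group}, $H(x)=\{g_{W}\}$ and $H^{\prime}(x)=\{g'_{W}\}$, and the collection of closed chains at $x$ is determined by the underlying dynamics and does not depend on any trivialization. The previous step then yields $H^{\prime}(x)=\tilde{g}^{-1}H(x)\tilde{g}$, as required.

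There is no genuine obstacle here. The only substantive difference from Lemma~\ref{Independence of closed chain for local coordiante} is that the two boundary factors $g_{0}$ and $g_{p}$, which previously cancelled against the identity, are now both equal to $\tilde g$; this is exactly what produces the nontrivial conjugation on the right-hand side. One should also observe that the invariance of $g_{W}$ under the choice of intermediate trivializations (Lemma~\ref{Independence of closed chain for local coordiante}) ensures there is no ambiguity in comparing $g_{W}$ and $g'_{W}$: the intermediate trivializations may be chosen in any convenient way and the two twists will still differ by conjugation by $\tilde g$.
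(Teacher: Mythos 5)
Your argument is correct and is exactly what the paper has in mind: its proof simply says the lemma follows from the proof of Lemma \ref{Independence of closed chain for local coordiante}, and your computation with $g_{0}=g_{p}=\tilde{g}$ and the intermediate $g_{i}=e$ is precisely that derivation, yielding $g'_{W}=\tilde{g}^{-1}g_{W}\tilde{g}$ and hence $H'(x)=\tilde{g}^{-1}H(x)\tilde{g}$. No issues.
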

\begin{proof}
This can be easily obtained from the proof of Lemma \ref{Independence of closed chain for local coordiante}.
\end{proof}

\section{Statements of  the main results}\label{sec 5}

In this section we present statements of stronger versions of Theorems \ref{thm1} and \ref{thm2}
and, because of their importance as examples, revisit frame flows as an application.

\subsection{Theorems under a Diophantine condition}\label{subsec 5.1}

Let $M$ be a compact manifold, and let $g_{t}:\Lambda\to\Lambda\subset M$ be a hyperbolic flow. Let $G$ be a compact connected Lie group, and let $\widehat{M}$ be a smooth $G$-bundle over $M$ with the projection $\pi:\widehat{M}\to M$. The dynamical system of interest is a $C^{\infty}$ $G$-extension, denoted as $f_{t}:\widehat{\Lambda}\to \widehat{\Lambda}$, of $g_{t}:\Lambda\to\Lambda$, where $\widehat{\Lambda}:=\pi^{-1}(\Lambda)$. Let $\Phi$ be a H\"older potential on $\Lambda$, and let $\mu_{\Phi}$ be its Gibbs measure. Then, $\mu_{\Phi}$ lifts to a $f_t$-invariant measure $\widehat{\mu}_{\Phi}$ on $\widehat{\Lambda}$ which is a local product  measure  combining $\mu_{\Phi}$ with the normalized Haar measure Haar$_{G}$ on $G$. By Lemma \ref{Invariance of local product measure}, the extension flow $f_{t}$ preserves $\widehat{\mu}_{\Phi}$. 

Note that the mixing rate of $f_{t}$ with respect to $\widehat{\mu}_{\Phi}$ is not superior to the mixing rate of $g_{t}$ with respect to $\mu_{\Phi}$. Therefore, the underlying hyperbolic flow $g_{t}$ we should consider must have a corresponding  mixing rate. Specifically, The hyperbolic flow $g_{t}$ considered is those defined in \cite{Dol98b} and \cite{Pol01}.
Throughout this article, we make the assumption that $g_{t}$ is either Diophantine (see Definition \ref{Definition of Diophantine for hyperbolic flow})  or a jointly non-integrable Anosov flow. As established in \cite{Dol98b} and \cite{Pol01}, hyperbolic flows of this type demonstrate rapid mixing with respect to $\mu_{\Phi}$.

Theorems \ref{thm1} and \ref{thm2} are consequences of the following two results with a Diophantine  hypothesis. Recalling the definition of a Diophantine subset in Definition \ref{Definition of Diophantine condition on Brin group}.

\begin{Theorem} \label{Rapid mixing for compact group extension of hyperbolic flow} 
If there exists a finite subset $\Gamma\subset H$ such that $\Gamma$ is Diophantine, then $f_{t}$ is rapidly mixing with respect to $\widehat{\mu}_{\Phi}$.
\end{Theorem}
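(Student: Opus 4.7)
The plan is to follow the six-step blueprint sketched in the introduction, making each step quantitative in the highest weight $|\lambda_\pi|$ of the irreducible representations. First I would pass to symbolic dynamics: choosing a Markov section for $g_t$ produces a subshift of finite type $\sigma : \Sigma \to \Sigma$ with a H\"older roof $r$, and the extension $f_t$ becomes (up to a semi-conjugacy that preserves correlations at the rates we care about) the suspension of a skew product $\widehat{\sigma}(x,g) = (\sigma x, \Theta(x) g)$ for a H\"older cocycle $\Theta : \Sigma \to G$ built from the stable/unstable twists of Definition \ref{Definition of stable twist}. Second, for test functions $E, F \in C^k(\widehat{M})$ I would apply the Peter--Weyl decomposition of Lemma \ref{Lemma 2.7.6}, write $\rho_{E,F}(t) = \sum_{\pi \neq 1} \rho_\pi(t)$ (the $\pi = 1$ piece cancels with the product of integrals), and control the coefficients via Lemma \ref{Lemma 2.7.8}: each $|E_\pi|_\infty, |F_\pi|_\infty \leq C_3 \|E\|_{C^{2n}} |\lambda_\pi|^{-(2n - 3 m_G / 2)}$.

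Third, for each $\pi \neq 1$ I would replace $\rho_\pi$ by a truncated/convolved variant $\chi_\pi$ whose Laplace transform $\widehat{\chi}_\pi(s)$ is absolutely convergent on $\mathrm{Re}(s) > 0$ and admits the standard suspension-flow representation
\begin{equation*}
\widehat{\chi}_\pi(s) = \text{(explicit term)} + \sum_{n \geq 0} \bigl\langle \mathcal{L}_{s,\pi}^n u_{E,\pi},\; v_{F,\pi} \bigr\rangle,
\end{equation*}
where the twisted transfer operator acts on $H_\pi$-valued H\"older functions by
\begin{equation*}
(\mathcal{L}_{s,\pi} u)(x) = \sum_{\sigma y = x} e^{\Phi(y) - s r(y)} \pi(\Theta(y))\, u(y).
\end{equation*}
Fourth, I would translate the hypothesis that $\Gamma \subset H$ is a Diophantine finite subset into a symbolic version: elements of $\Gamma$ are twists along closed chains, and in the symbolic model these are represented by products of the cocycle $\Theta$ along finite words with prescribed endpoints. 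Definition \ref{Definition of Diophantine condition on Brin group} then yields, for every $\pi \neq 1$ and every unit $h \in H_\pi$, a symbolic word $w$ with $\|h - \pi(\Theta_w) h\| \geq \delta |\lambda_\pi|^{-C}$. This is exactly the non-concentration input required by the Dolgopyat mechanism.

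Fifth, combining this symbolic Diophantine property with the rapid mixing of $g_t$ (Definition \ref{Definition of Diophantine for hyperbolic flow} or joint non-integrability, as in \cite{Dol98b, Pol01}) I would establish operator bounds $\|\mathcal{L}_{s,\pi}^n\|_{C^\alpha \to C^\alpha} \leq C\, |\mathrm{Im}(s)|^{-\eta n}$ valid for $|\mathrm{Re}(s)| \leq c\, |\lambda_\pi|^{-C'}$ and $|\mathrm{Im}(s)| \geq b_\pi$, where $b_\pi$ grows polynomially in $|\lambda_\pi|$ (this is where $\inf_{\pi \neq 1} |\lambda_\pi| > 0$ from \S \ref{subsec 2.1} is needed). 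Using a standard contour deformation of the Paley--Wiener type, this analytic extension converts into polynomial decay $|\rho_\pi(t)| \leq C |\lambda_\pi|^{\kappa} \|E_\pi\|_{C^k} \|F_\pi\|_{C^k} t^{-m}$ for any prescribed $m$. Sixth, using Lemma \ref{Lemma 2.7.8} to bound $\|E_\pi\|_{C^k}, \|F_\pi\|_{C^k}$ by a negative power of $|\lambda_\pi|$ (after spending enough derivatives of $E, F$) and Lemma \ref{Lemma 2.7.5} to sum over $\widehat{G}$, I obtain the claimed rapid mixing bound.

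The main obstacle will be Step 5: producing Dolgopyat-style contraction estimates for $\mathcal{L}_{s,\pi}^n$ with explicit polynomial-in-$|\lambda_\pi|$ control, uniformly over all $\pi \in \widehat{G}$. The Diophantine non-concentration has scale only $|\lambda_\pi|^{-C}$, so the oscillatory cancellation available at each step is weak and must be compensated by iterating the operator proportionally many times; at the same time one must avoid losing powers of $\dim_\pi$. This forces a careful choice of norm on the $H_\pi$-valued H\"older spaces (the ``key idea'' flagged in the introduction), using the $L^2$-operator norm on $H_\pi$ together with the Lipschitz bound of Corollary \ref{Corollary 2.7.4} to control variations of $\pi(\Theta(y))$ by $C_3 |\lambda_\pi|^{1 + m_G/2}$ times the variation of $\Theta$. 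Only with this balanced choice does the final sum over $\pi$ converge, completing the proof.
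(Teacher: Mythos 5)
Your route is essentially the paper's own (symbolic model, Peter--Weyl decomposition, modified correlation $\chi_\pi$, Laplace transform expressed through twisted transfer operators, a Dolgopyat-type contraction in a $b_\pi$-weighted Lipschitz norm with $b_\pi$ polynomial in $|\lambda_\pi|$, and summation over $\widehat G$ via Lemmas \ref{Lemma 2.7.8} and \ref{Lemma 2.7.5}), but there is one genuine error in Step 2: the claim that ``the $\pi=1$ piece cancels with the product of integrals.'' It does not. The trivial-representation component $E_1(x,u)=\int_G E(x,g,u)\,dg$ is the fibrewise average, a nonconstant function on the base; normalizing $\int E\,d\widehat\mu_\Phi=0$ only forces $E_1$ to have zero mean, not to vanish. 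Consequently $\rho_{E_1,F_1}(t)$ is exactly a correlation function of the underlying flow $g_t$ (symbolically, of $\psi_t$), and no Diophantine information about $\Gamma\subset H$ can control it: a compact group extension of a slowly mixing hyperbolic flow is itself slowly mixing. The paper handles this term separately, using the standing hypothesis of \S \ref{subsec 5.1} that $g_t$ is Diophantine or a jointly non-integrable Anosov flow, so that $\rho_{E_1,F_1}$ decays superpolynomially by \cite{Dol98b}, \cite{Pol01}. In your plan the base-flow hypothesis is instead fed into the bounds for the twisted operators $\mathcal L_{s,\pi}$, where the paper never needs it: the cancellation Lemma \ref{Lemma 3.4.4} uses only the group Diophantine condition, because the symbolic closed chains of Definition \ref{Definition of Brin group in the symbolic level} satisfy $\sum_i\Delta^{\delta}(x_i,x_{i+1})=0$, so the phases $e^{-ib\Delta}$ telescope away and both large $|b|$ and large $|\lambda_\pi|$ are absorbed into the single parameter $b_\pi$. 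Rerouting the base hypothesis to the $\pi=1$ term closes the gap.

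Two further quantitative points in Steps 5--6 need repair or elaboration. The bound you target, $\|\mathcal L_{s,\pi}^n\|\le C|\mathrm{Im}(s)|^{-\eta n}$, is far stronger than this mechanism yields (it would give exponential mixing); what is actually provable, and sufficient, is a contraction by a factor $1-b_\pi^{-C}$ per block of roughly $\log b_\pi$ iterates in the norm $\|h\|_{b_\pi}=\max\{\|h\|_\infty,|h|_{\mathrm{Lip}}/(C' b_\pi)\}$ (Proposition \ref{Dolgopyat type estimate for rapid mixing of compact group extension} and Corollary \ref{Corollary 3.3.5}), which extends $\widehat\chi_\pi$ analytically only to $|\mathrm{Re}(s)|\le b_\pi^{-C''}$ with a bound that \emph{grows} polynomially in $b_\pi$ there. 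Because of that growth, a direct contour shift does not produce decay in $t$; one must first integrate by parts $k_2$ times in the flow variable, using the $C^{k_2}$ regularity of $E_\pi$ in $u$ and the vanishing derivatives of $\chi_\pi$ at $0$, to gain $|s|^{-k_2}$ before applying the inverse Laplace transform along $\mathcal C_\pi$; so regularity must be budgeted in both the group direction (for convergence of $\sum_\pi$) and the flow direction. Finally, the representation of $\widehat\chi_\pi$ over the two-sided shift is not literally a single series $\sum_n\langle\mathcal L_{s,\pi}^n u,v\rangle$: since the test functions depend on past coordinates, one needs the Sinai-type decomposition into locally constant pieces and the auxiliary operators $M_{\pi,p}(s)$ of Lemma \ref{Lemma 3.3.3}. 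These are technical rather than conceptual corrections, but the $\pi=1$ issue as written would make the final estimate fail.
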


\begin{proof}[\textbf{Proof of Theorem \ref{thm1} assuming Theorem \ref{Rapid mixing for compact group extension of hyperbolic flow}}]
If the Brin group $H$ is dense in $G$, and considering the compactness of $G$, we can identify a finite $\varepsilon$-net $\Gamma$ with $\varepsilon>0$ is suitably small. Thus, by Lemma \ref{Lemma 2.2.2}, $\Gamma$ is Diophantine. Then, the result follows from Theorem \ref{Rapid mixing for compact group extension of hyperbolic flow}.
\end{proof}


Under the same hypothesis, we also obtain the superpolynomial equidistribution. Let $\tau$ represent a closed orbit of $g_{t}$, and denote its least period as $\ell_{\tau}$. Each $\tau$ induces a conjugacy class $[\tau]$ in $G$ which is called the holonomy class of $\tau$. For any $T>0$, let $\pi(T)$ be the collection of prime closed orbits $\tau$ with $\ell_{\tau}\le T$. 

\begin{Theorem}\label{Superpolynomial equidistribution for compact group extension of hyperbolic flow}
If there exists a finite subset $\Gamma\subset H$ such that $\Gamma$ is Diophantine, then $f_{t}$ enjoys the superpolynomial equidistribution.
\end{Theorem}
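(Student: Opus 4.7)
My plan is to prove Theorem \ref{Superpolynomial equidistribution for compact group extension of hyperbolic flow} by reducing the class-function estimate to per-representation twisted prime orbit sums and then exploiting the Diophantine hypothesis to give polynomial control in the spectral parameter $|\lambda_\pi|$, at which point Peter--Weyl decay wins.

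\textbf{Step 1: Reduction to characters.} Using Lemma \ref{Lemma 2.7.9}, any class function $F\in C^{2n}(G)$ decomposes as
\[
F = f_1 + \sum_{\pi\neq 1} f_\pi\,\xi_\pi,\qquad f_1 = \int_G F\,d(\hbox{Haar}_G),\qquad |f_\pi|\leq C_3\|F\|_{C^{2n}}|\lambda_\pi|^{-(2n-3m_G/2)}.
\]
Substituting into the left-hand side of \eqref{1.0.2} kills the trivial representation, and it suffices to bound each twisted trace sum
\[
S_\pi(T) := \sum_{\ell_\tau\leq T} \xi_\pi([\tau])
\]
by $\#\pi(T)\cdot\mathcal{E}_\pi(T)$ where $\mathcal{E}_\pi(T)$ grows at most polynomially in $|\lambda_\pi|$ and decays superpolynomially in $T$.

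\textbf{Step 2: Twisted zeta functions and transfer operators.} For each nontrivial $\pi\in\widehat{G}$, I introduce the twisted Ruelle-type zeta function
\[
Z_\pi(s) := \prod_\tau \det\bigl(I - \pi([\tau])\,e^{-s\ell_\tau}\bigr)^{-1},
\]
whose logarithmic derivative encodes $\sum_\tau \ell_\tau\xi_\pi([\tau])e^{-s\ell_\tau}$. Passing to the symbolic model constructed in \S\ref{sec 6}, $Z_\pi$ can be expressed in terms of a family of twisted transfer operators $\mathcal{L}_{s,\pi}$ acting on vector-valued H\"older functions taking values in $H_\pi$, exactly as in the analysis of correlations in \S\ref{sec 7}. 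The finite Diophantine set $\Gamma\subset H$ yields, via the same symbolic Diophantine property used to prove Theorem \ref{Rapid mixing for compact group extension of hyperbolic flow}, uniform Dolgopyat-type contraction estimates for $\mathcal{L}_{s,\pi}^n$ on a strip to the left of the critical line, with quantitative dependence that is at most polynomial in $|\lambda_\pi|$ and in $|\operatorname{Im} s|$.

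\textbf{Step 3: Contour shift and per-representation error.} Using Perron's formula applied to the logarithmic derivative of $Z_\pi$, I write
\[
\sum_{\ell_\tau\leq T}\ell_\tau\xi_\pi([\tau]) = \frac{1}{2\pi i}\int_{c-i\infty}^{c+i\infty} \frac{Z'_\pi(s)}{Z_\pi(s)}\,\frac{e^{sT}}{s}\,ds
\]
for $c$ slightly larger than the topological pressure $h=h_{\mathrm{top}}(g_t)$. Shifting the contour past $\operatorname{Re}(s)=h$ into the region where $\mathcal{L}_{s,\pi}$ satisfies the contraction estimates produced in Step 2, and using the absence of poles on $\operatorname{Re}(s)=h$ for $\pi\neq 1$ (again a consequence of the Diophantine condition), yields $|S_\pi(T)| \leq A\,|\lambda_\pi|^B\,\#\pi(T)\,T^{-n}$ for absolute constants $A,B$ depending on $n$ but not on $\pi$. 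Here the prime orbit theorem with superpolynomial error for $g_t$ (coming from the hypothesis that $g_t$ is Diophantine or jointly non-integrable Anosov, cf.\ \cite{Dol98b,Pol01}) is used to pass from the Chebyshev-type sum to $S_\pi(T)/\#\pi(T)$.

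\textbf{Step 4: Summation over $\widehat G$ and main obstacle.} Combining Steps 1 and 3,
\[
\Bigl|\tfrac{1}{\#\pi(T)}\sum_{\ell_\tau\leq T}F([\tau]) - \textstyle\int_G F\,d(\hbox{Haar}_G)\Bigr|
\leq A\,\|F\|_{C^{2n}}\,T^{-n}\sum_{\pi\neq 1}|\lambda_\pi|^{B - (2n-3m_G/2)}.
\]
Choosing $n$ so that $2n - 3m_G/2 - B > r_G$, Lemma \ref{Lemma 2.7.5} makes the series converge and yields the desired inequality with $k=2n$. The principal obstacle is the uniform-in-$\pi$ Dolgopyat bound in Step 2: the classical scalar estimate must be upgraded to twisted operators whose image representation $\pi$ has arbitrarily large highest weight, and every constant produced by the Lasota--Yorke/cancellation argument has to be tracked as a polynomial in $|\lambda_\pi|$ (using Corollary \ref{Corollary 2.7.4} and Lemma \ref{Lemma 2.7.3} for the H\"older moduli of $\pi(g)$). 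Once this quantitative symbolic estimate is in hand --- mirroring the work needed in \S\ref{sec 7} for Theorem \ref{Rapid mixing for symbolic flow of compact group extension} --- the rest of the argument is a standard Tauberian contour shift.
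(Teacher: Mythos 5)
Your outline coincides with the paper's own proof in all essential respects: decompose the class function via Lemma \ref{Lemma 2.7.9}, bound for each nontrivial $\pi$ the twisted orbit sum through the $L$-function $L_\pi(s)=\prod_{\tau'}\det(I-\pi([\tau'])e^{-sh_{top}\ell_{\tau'}})^{-1}$, obtain a pole-free region of width $b_\pi^{-C}$ with a bound $|L'_\pi/L_\pi|\le C b_\pi^{C}$ from the $\pi$-uniform Dolgopyat estimate (Corollary \ref{Corollary 3.3.5}), shift the contour, and sum over $\widehat G$ using Lemma \ref{Lemma 2.7.5} against the Peter--Weyl decay of $f_\pi$. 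So the strategy is the same; two points in your Step 3 need repair. First, the raw Perron formula with kernel $e^{sT}/s$ is not usable here: on the shifted contour the only available bound on $L'_\pi/L_\pi$ grows polynomially in $|\operatorname{Im} s|$, so the integral is not absolutely convergent and the contour shift as written is unjustified. The paper circumvents this exactly as one must, by working with the $k$-fold smoothed counting function $N_{\pi,k}(x)=\sum_{e^{h_{top}\ell_{\tau'}}\le x}\xi_\pi([\tau'])\Lambda_{\tau'}(x-e^{\ell_{\tau'}})^{k}$ and the kernel $M_{x,k}(s)=x^{s+k}/\prod_{j=0}^{k}(s+j)$ with $k$ chosen large relative to the polynomial exponent (Lemmas \ref{Lemma 3.6.4}, \ref{Lemma 3.6.5}), and then recovering $\Psi_\pi$ and $\Phi_\pi$ by partial summation at the cost of a harmless power of $T$. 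Second, you do not need (and should not invoke) a superpolynomial-error prime orbit theorem for $g_t$: the paper only uses the ordinary prime orbit theorem to normalize by $\#V(T)$, and what is actually required at this stage is Bowen's comparison between closed orbits of the symbolic flow and those of $g_t$ (the exponentially small count of orbits meeting the boundary of the Markov sections, Lemma \ref{Lemma 3.6.2}), which your argument passes over silently when it moves from the symbolic transfer-operator expression of $Z_\pi$ back to geometric closed orbits. With the smoothed contour argument in place of raw Perron and the boundary-orbit comparison added, your proof matches the paper's.
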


\begin{proof}[\textbf{Proof of Theorem \ref{thm2} assuming Theorem \ref{Superpolynomial equidistribution for compact group extension of hyperbolic flow}}] This follows the same lines as the 
proof of Theorem \ref{thm1} assuming Theorem \ref{Rapid mixing for compact group extension of hyperbolic flow}
\end{proof}

We can   compare  our theorems  with prior results. In \cite{Fie05}, the authors studied rapid mixing for skew products of hyperbolic flows. They proved that, for a typical cocycle, the skew product flow is rapidly mixing 
for  special case of regular $G$-equivariant test functions associated with a fixed unitary representation $\pi:G\to U(d)$.  Theorem \ref{Rapid mixing for compact group extension of hyperbolic flow} is evidently stronger since it applies to all regular test functions. 
In \cite{Pol08} the authors investigated the error term of equidistribution for $\mathbb{T}^{1}$-extensions of hyperbolic flows. The authors proved that each non-trivial character satisfies the equidistribution theorem with a polynomial error term. Therefore, our Theorem \ref{Superpolynomial equidistribution for compact group extension of hyperbolic flow} generalizes their result by obtaining a stronger superpolynomial error term for any class function and any compact Lie group.



\subsection{Applications to frame flows}\label{sec 1.3}

The main examples of compact group extensions of hyperbolic flows are frame flows of compact manifolds of negative curvature. Applying Theorems \ref{thm1}, \ref{thm2}, \ref{Rapid mixing for compact group extension of hyperbolic flow}, and \ref{Superpolynomial equidistribution for compact group extension of hyperbolic flow}, we can prove that a substantial class of frame flows is rapidly mixing and satisfies the superpolynomial equidistribution. As a statistical property, the mixing rate of frame flows not only has its own importance but can also be used to solve problems in other fields, such as the surface subgroup conjecture of Waldhausen proved by Kahn and Markovic in \cite{Kah12}. 

Consider a smooth compact $d$-dimensional oriented manifold $V$ of negative curvature with $d \geq 3$. Let $M = T^{1}V$ denote the unit tangent bundle over $V$. The geodesic flow $g_{t}$ acts on $M$, translating every unit vector $(x,v)$ in $M$ by the parallel transportation  along the geodesic of $(x,v)$ at a distance $t$. It is well-known that $g_{t}$ is a smooth jointly non-integrable Anosov flow. A frame on $V$ is represented by a point $(x,v_{1},\cdots,v_{d})$ with $x \in V$ and $v_{i} \in T^{1}{x}V$, satisfying $(v_{i},v_{j}) = \delta_{ij}$ for each $1 \leq i,j \leq d$. Let $\widehat{M}$ be the set of positively oriented orthonormal frames on $V$. Notably, $\widehat{M}$ forms a smooth $SO(d-1)$-bundle over $M$, where the bundle projection is defined as the projection taking a frame to its first vector. The frame flow $f_{t}$ acts on $\widehat{M}$, translating every frame in $\widehat{M}$ along the geodesic of $(x,v_{1})$ at a distance $t$. It is known that $f_{t}$ is a smooth $SO(d-1)$-extension of $g_{t}$ \cite{Bri82}.

The geodesic flow $g_{t}$ exhibits exponential mixing with respect to the Liouville measure, which serves as the Gibbs measure derived from the geometric potential. Additionally, the frame flow $f_{t}$ preserves the local product measure of the Liouville measure and the normalized Haar measure on $SO(d-1)$. The ergodic properties of the frame flow have undergone extensive investigation \cite{Bri82} \cite{Bur03} \cite{Cek21}. 
{\color{black} As mentioned in the Introduction},  to date it has been established that $f_{t}$ is ergodic and mixing with respect to the local product measure in the following cases:
\begin{itemize}
{\color{black}
\item $d$ is odd and $d \neq 7$ \cite{Bri75b} \cite{Bri80}; 
\item $d$ is even or $d=7$, and $V$ has  negative sectional  curvatures satisfying a suitable pinching condition,  \cite{Bri75b} \cite{Bri80}, \cite{Cek21}.
}
\end{itemize}

Let $\tau$ represent a closed geodesic of $g_{t}$, and denote its length as $\ell_{\tau}$. The parallel translation along the closed geodesic induces a conjugacy class $[\tau]$ in $G$ which is called the holonomy class of $\tau$. For any $T>0$, let $\pi(T)$ denote the collection of prime closed geodesics $\tau$ with $\ell_{\tau}\le T$. We have the following result for frame flows.

\begin{Theorem}
{\color{black} Let $d \geq 4$ and} let $f_{t}$ be a frame flow in one of the above cases. Then:
\begin{enumerate}
\item it is rapidly mixing with respect to the local product measures of Gibbs measures and the normalized Haar measure Haar$_{G}$ on $G$; and 
\item it satisfies the superpolynomial equidistribution, namely for any $n\ge1$ there exist an integer $k\ge1$ and a constant $C_8>0$ such that for any class function $F\in C^{k}(G)$ and any $T>0$,
$$\bigg|\dfrac{1}{\#\pi(T)}\sum_{\tau\in\pi(T)}F([\tau])-\int_{G}Fd(\text{\rm Haar})_{G}(g)\bigg|\le C_8||F||_{C^{k}}{T^{-n}}.$$
\end{enumerate}
\end{Theorem}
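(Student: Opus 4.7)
The proof will be a direct verification that the hypotheses of Theorem \ref{thm1} and Theorem \ref{thm2} hold for the frame flow $f_t: \widehat M \to \widehat M$, so that both conclusions follow immediately. There are three things to check: (i) the base flow $g_t$ is a hyperbolic flow in the sense required by \S \ref{subsec 5.1}; (ii) the structure group $G = SO(d-1)$ is a compact connected semi-simple Lie group; and (iii) the Brin group $H \subset SO(d-1)$ is dense.

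For (i), the geodesic flow $g_t: T^1 V \to T^1 V$ on a compact negatively curved manifold is a smooth Anosov flow, and the codimension-one weak subbundle $E^s \oplus E^u$ fails to be integrable, so $g_t$ is jointly non-integrable in the sense of \S \ref{subsec 3.4}. By the result of Dolgopyat recalled at the end of \S \ref{subsec 3.4}, this places $g_t$ in the class of hyperbolic flows covered by the standing assumption of \S \ref{subsec 5.1}. For (ii), I would observe that for $d \geq 4$ we have $d-1 \geq 3$, and $SO(n)$ for $n \geq 3$ is a compact connected semi-simple Lie group (it is simple for $n = 3$ and $n \geq 5$, and $SO(4)$ is a product of two copies of $SU(2)/\{\pm 1\}$, hence semi-simple). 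The frame flow $f_t$ is a smooth $SO(d-1)$-extension of $g_t$ by construction, with $\widehat \mu$ the local product of the Gibbs measure $\mu_{\Phi}$ on $M$ and the normalized Haar measure on $SO(d-1)$, so the setup of \S \ref{sec 4} is in force.

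For (iii), I would invoke precisely the results of Brin \cite{Bri75b}, \cite{Bri80} and Ceki\'c \cite{Cek21} cited in the Introduction: in the odd-dimensional case $d \neq 7$ the Brin group $H(x)$ is dense in $SO(d-1)$ unconditionally, while in the remaining cases (even $d$, or $d = 7$) density holds under the stated pinching condition on the sectional curvatures. In both situations $\overline H = SO(d-1)$, so the hypothesis of Theorems \ref{thm1} and \ref{thm2} is satisfied.

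With (i)--(iii) in hand, the first assertion follows by applying Theorem \ref{thm1} with $G = SO(d-1)$ and any H\"older potential $\Phi$ on $M$ (in particular the geometric potential, whose Gibbs measure is the Liouville measure), and the second assertion follows by applying Theorem \ref{thm2} to the same data. I do not anticipate any genuine obstacle here: the real work lies in Theorems \ref{Rapid mixing for compact group extension of hyperbolic flow} and \ref{Superpolynomial equidistribution for compact group extension of hyperbolic flow} and in the cited density results for $H$. The only minor care needed is to note explicitly that the restriction $d \geq 4$ is imposed so that $SO(d-1)$ is semi-simple, which is why the case $d = 3$ (where $SO(2) = \mathbb{T}^1$ is abelian) is excluded from this formulation and must instead be handled by a direct Diophantine hypothesis on a single group element.
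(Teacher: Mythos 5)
Your proposal is correct and follows essentially the same route as the paper: verify that $SO(d-1)$ is semi-simple for $d\geq 4$, cite Brin, Brin--Gromov and Ceki\'c for density of the Brin group in the two cases, and then apply Theorems \ref{thm1} and \ref{thm2} (the paper likewise relies on the geodesic flow being a jointly non-integrable Anosov flow, which you make explicit). The only trivial quibble is your description of $SO(4)$ as a product of two copies of $SU(2)/\{\pm 1\}$ rather than $(SU(2)\times SU(2))/\{\pm(1,1)\}$, which does not affect the semi-simplicity claim.
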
\begin{proof}

For $d\ge4$, the group $SO(d-1)$ is semi-simple. In this scenario, if $d$ is odd and $d\neq7$, it has been established in \cite{Bri80} that $\overline{H(x)}=SO(n-1)$ for any $x\in M$. Moreover, it is proven in \cite{Cek21} that if $d$ is even or $d=7$ and the negative curvature satisfies a specific pinching condition, then $\overline{H(x)}=SO(d-1)$ for any $x\in M$. Consequently, the result follows from Theorems \ref{thm1} and \ref{thm2}.
\end{proof}

  \begin{figure}[h!]
          \centerline{
    \begin{tikzpicture}[thick,scale=0.70, every node/.style={scale=1}]
\draw (0,0) ellipse (200pt and 100pt);
\draw[->, red, ultra thick] (0,-3.53) --(3,-3.53);
\draw (0,-3.53) ellipse (25pt and 60pt);
\draw[->, blue, ultra thick] (0,-3.53) --(0,-1.4);
\draw[->, blue, ultra thick] (0,-3.53) --(0.8,-4.4);
   \node at (3, -4.1) {$v_1$};
      \node at (0, -1) {$v_2$};
           \node at (1.2, -4.6) {$v_3$};
\draw (0,3.53) ellipse (25pt and 60pt);
\draw[->, blue, ultra thick] (0,3.53) --(0,1.4);
\draw[->, blue, ultra thick] (0,3.53) --(0.8,4.4);
   \node at (-3, 4.1) {$\phi_t(v_1)$};
   \node at (0,4) {$x'$};
      \node at (0, 1) {$v_2'$};
           \node at (1.2, 4.6) {$v_3'$};
                 \node at (0.0, -4.0) {$x$};
           \draw[->, red, ultra thick] (0,3.53) --(-3,3.53);
\end{tikzpicture}
}
\caption{The frame flow when $n=3$ where $f_t (x,v_1,v_2, v_3) = (x',\phi_t(v_1), v_2',v_3')$}
\end{figure}
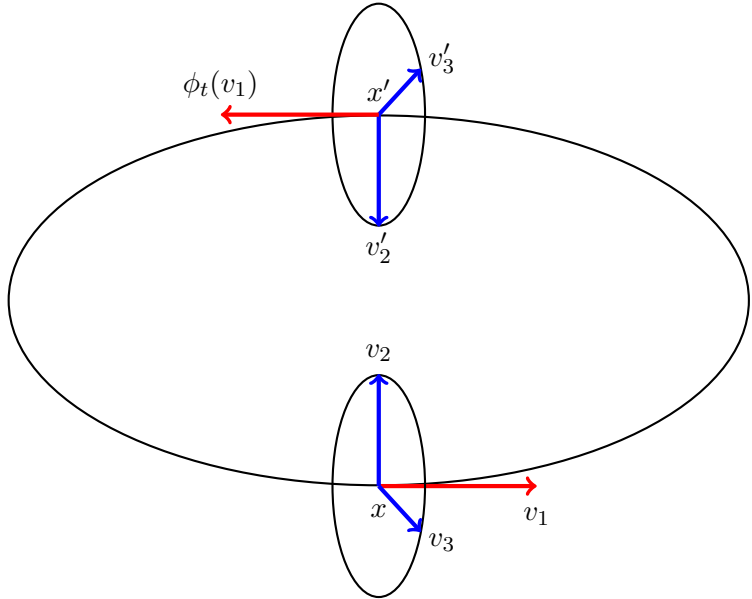

\section{Markov partitions and symbolic models}\label{sec 6}

In this section, we review classical subshifts of finite type and their associated symbolic models, with \cite{Par90} serving as a helpful  reference. 
Markov partitions of the underlying hyperbolic flows
play a crucial role in establishing that suspensions of subshifts serve as symbolic models for hyperbolic flows. Similarly, suspensions of skew products over subshifts emerge as symbolic models for compact group extensions of hyperbolic flows. This section aims to provide estimates and useful  insights into these symbolic models.

\subsection{Subshifts of finite type}\label{subsec 6.1}

Let $A$ be an $N\times N$ matrix of zeros and ones. We assume $A$ is aperiodic, meaning that some power of $A$ is a positive matrix. Let $X := X_{A}$ be the two-sided symbolic space associated with $A$, defined as $X = \{x=(x_{i})_{i=-\infty}^{\infty}\in\{1,\cdots,N\}^{\mathbb{Z}}: A(x_{i}, x_{i+1}) = 1\}$, and let $\sigma:X\to X$ be the two-sided subshift, defined as $(\sigma(x))_{i} = x_{i+1}$. 

\begin{Definition}
Given $\lambda \in (0, 1)$, we can define a metric $d_{\lambda}$ on $X$ by $d_{\lambda}(x, y) := \lambda^{N(x,y)}$, where $N(x,y) = \min\{|i| : x_{i} \neq y_{i}\}$.
\end{Definition}

We say $\lambda\in(0,1)$ is a metric constant on $X$ means that $X$ is assigned the metric $d_{\lambda}$. It can be proved that the metric $d_{\lambda}$ is compatible with the Tychonov product topology on $X$ where $\{1,\cdots,N\}$ is given the discrete topology. Then, by the Tychonov theorem, $(X, d_{\lambda})$ is a compact metric space \cite{Bow08}. It is not difficult to show that $\sigma:X\to X$ is topologically mixing, which means for any two open sets $U,V\subset X$ and any large enough $n\in\mathbb{N}^{+}$ we have $U\cap\sigma^{n}V\not=\emptyset$, if and only if $A$ is aperiodic. 

\begin{Definition}
Let $\lambda\in(0,1)$ be a metric constant on $X$.
\begin{itemize}
\item Denote by $F_{\lambda}(X)$ the Banach space of all Lipschitz continuous complex-valued functions on $(X,d_{\lambda})$ with respect to the Lipschitz norm $||\cdot||_{\text{Lip}}:=|\cdot|_{\text{Lip}} + |\cdot|_{\infty}$, where $|\cdot|_{\text{Lip}}$ is the Lipschitz semi-norm and $|\cdot|_{\infty}$ is the supremum norm. 

\item For a compact Lie group $G$, we define the space $F_{\lambda}(X, G)$ to be the set of functions from $(X,d_{\lambda})$ to $(G,d_{G})$ that are Lipschitz continuous.
\end{itemize}
\end{Definition}

It is important to note that, given a Banach space $(B, ||\cdot||)$, we can similarly define the Banach space $F_{\lambda}(X, B)$ with the norm $||\cdot||_{\text{Lip}}$. For instance, in \S \ref{subsec 7.1}, we will consider $B$ to be the Hilbert space $H_{\pi}\subset L^{2}(G)$ associated to a unitary representation $\pi$ of a compact Lie group $G$.  

\begin{Definition}
For each $n\in\mathbb{N}^{+}$, the $n$-cylinders are sets of the form $[x_{0}\cdots x_{n-1}]_{n} = \{y \in X : y_{i} = x_{i}, |i| \le n-1\}$. For each $n\in\mathbb{N}^{+}$, let $F_{n}(X)$ be the set of functions in $F_{\lambda}(X)$ that are constant on $n$-cylinders. 
\end{Definition}

The functions in \(F_{n}(X)\) are referred to as locally constant functions. All of the above objects can be similarly defined for a one-sided subshift of finite type \(\sigma:X^{+}\to X^{+}\), where \(X=\{x=(x_{i})_{i\ge0}\in\{1,\ldots,N\}^{\mathbb{N}} : A(x_{i}, x_{i+1})=1\}\), using the same notations but replacing \(X\) with \(X^{+}\).

For a potential \(\varphi\in F_{\lambda}(X,\mathbb{R})\), it is well-known \cite{Bow08} that there exists a unique equilibrium state of \(\varphi\) on \(X\), which is called the Gibbs measure of \(\varphi\) on \(X\), denoted as \(\mu_{\varphi}\). By adding  a coboundary \(h\circ\sigma-h\) if necessary, we can assume \(\varphi\) depends only on future coordinates, and thus \(\varphi\in F_{\lambda}(X^{+},\mathbb{R})\). Then, there exists a unique equilibrium state of \(\varphi\) on \(X^{+}\), which is called the Gibbs measure of \(\varphi\) on \(X^{+}\), denoted as \(\mu\). Let \(\Pi_{+}:X\to X^{+}\) be the coordinate projection. It is known that \(\Pi_{+}^{*}\mu_{\varphi}=\mu\) \cite{Bow75}. For each \(n\in\mathbb{N}^{+}\), let \(\varphi_{n}=\sum_{i=0}^{n-1}\varphi\circ\sigma^{i}\). The Gibbs measure \(\mu\) satisfies the following Gibbs property, see also \cite{Bow75}.

\begin{Lemma}\label{Gibbs property}
There exists $C_{9} \geq 1$ such that 
$$C_{9}^{-1}\le \dfrac{\mu[x_{0}\cdots x_{n-1}]_{n}}{e^{\varphi_{n}(x)-nP(\varphi)}}\le C_{9},$$
for any $n\in\mathbb{N}^{+}$ and any $x\in X^{+}$ where $P(\varphi)$ is the pressure of $\varphi$.
\end{Lemma}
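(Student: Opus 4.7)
The plan is to derive the Gibbs property from the Ruelle--Perron--Frobenius (RPF) theorem applied to the transfer operator $L_{\varphi}\colon C(X^{+})\to C(X^{+})$ defined by $(L_{\varphi}f)(y)=\sum_{\sigma z=y}e^{\varphi(z)}f(z)$. Since $\varphi\in F_{\lambda}(X^{+},\mathbb{R})$ and the shift is topologically mixing (equivalently, $A$ is aperiodic), the RPF theorem yields a simple leading eigenvalue equal to $e^{P(\varphi)}$, a strictly positive Lipschitz eigenfunction $h$ with $L_{\varphi}h=e^{P(\varphi)}h$, and a probability measure $\nu$ of full support with $L_{\varphi}^{*}\nu=e^{P(\varphi)}\nu$. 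After normalizing so that $\int h\,d\nu=1$, the unique equilibrium state satisfies $d\mu=h\,d\nu$, and $h$ is bounded above and below away from zero on the compact space $X^{+}$.

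Next, I would iterate the dual identity to obtain $\int (L_{\varphi}^{n}g)\,d\nu=e^{nP(\varphi)}\int g\,d\nu$ for every $g\in C(X^{+})$, and apply it to $g=h\cdot\mathbf{1}_{[x_{0}\cdots x_{n-1}]_{n}}$. Unfolding the definition $L_{\varphi}^{n}g(y)=\sum_{\sigma^{n}z=y}e^{\varphi_{n}(z)}g(z)$, the indicator kills every preimage except $z=(x_{0},\ldots,x_{n-1},y_{0},y_{1},\ldots)$, which is admissible iff $A(x_{n-1},y_{0})=1$, yielding
$$L_{\varphi}^{n}\bigl(h\,\mathbf{1}_{[x_{0}\cdots x_{n-1}]_{n}}\bigr)(y)=h(z)\,e^{\varphi_{n}(z)}\,\mathbf{1}_{\{A(x_{n-1},y_{0})=1\}}(y).$$

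To replace $e^{\varphi_{n}(z)}$ by $e^{\varphi_{n}(x)}$ up to a multiplicative constant, the key estimate is bounded distortion: for any two points $z,z'\in X^{+}$ sharing their first $n$ coordinates, the Lipschitz regularity of $\varphi$ and the geometric shape of $d_{\lambda}$ give the telescoping bound
$$|\varphi_{n}(z)-\varphi_{n}(z')|\le|\varphi|_{\mathrm{Lip}}\sum_{k=0}^{n-1}\lambda^{n-k}\le\frac{\lambda}{1-\lambda}|\varphi|_{\mathrm{Lip}},$$
uniformly in $n$. Together with the two-sided bounds on $h$, this yields $L_{\varphi}^{n}(h\,\mathbf{1}_{[x_{0}\cdots x_{n-1}]_{n}})(y)\asymp e^{\varphi_{n}(x)}$ whenever $A(x_{n-1},y_{0})=1$, with implicit constants independent of $n$, $x$, and $y$. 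Since $\nu$ has full support and $x_{n-1}$ has at least one successor $j$ by aperiodicity of $A$, one has $\nu\{y:A(x_{n-1},y_{0})=1\}\ge\nu[j]\ge\min_{1\le i\le N}\nu[i]>0$, and integrating the previous display against $\nu$ gives
$$\mu[x_{0}\cdots x_{n-1}]_{n}=\int h\,\mathbf{1}_{[x_{0}\cdots x_{n-1}]_{n}}\,d\nu=e^{-nP(\varphi)}\int L_{\varphi}^{n}\bigl(h\,\mathbf{1}_{[x_{0}\cdots x_{n-1}]_{n}}\bigr)\,d\nu\asymp e^{\varphi_{n}(x)-nP(\varphi)},$$
which is exactly the desired Gibbs inequality for a suitable $C_{9}\ge 1$.

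The argument is classical (going back to Bowen) and presents no real obstacle: the RPF theorem is invoked as a black box, and the only quantitative step is the telescoping distortion bound, which is immediate from the Lipschitz hypothesis on $\varphi$. The only subtle point is the uniform lower bound on $\nu$ of the one-cylinder $[j]$, which one must extract from the RPF construction (full support of $\nu$ plus finiteness of the alphabet) rather than from the Gibbs property itself, to avoid circularity.
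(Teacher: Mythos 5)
Your proof is correct. Note that the paper does not prove this lemma at all: it is stated as a known property of Gibbs measures with a pointer to Bowen--Ruelle \cite{Bow75}, so there is no internal argument to compare against. What you write is precisely the classical derivation that the citation stands for: Ruelle--Perron--Frobenius for $\mathcal{L}_{\varphi}$ on the mixing one-sided shift, the identification $d\mu=h\,d\nu$ with $\mathcal{L}_{\varphi}^{*}\nu=e^{P(\varphi)}\nu$, the telescoping bounded-distortion estimate $|\varphi_{n}(z)-\varphi_{n}(x)|\le\frac{\lambda}{1-\lambda}|\varphi|_{\mathrm{Lip}}$ for points agreeing in the first $n$ coordinates, and the uniform lower bound $\min_{i}\nu[i]>0$ coming from full support and finiteness of the alphabet. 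The only items you use as black boxes (existence of $h$ and $\nu$, positivity and boundedness of $h$, and the fact that $h\,d\nu$ is the unique equilibrium state) are exactly the content of the cited RPF/Bowen theory, so this is an acceptable level of appeal given that the paper itself defers the whole lemma to that source; your distortion computation and the treatment of the admissibility constraint $A(x_{n-1},y_{0})=1$ in the lower bound are both accurate.
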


\subsection{Suspensions of two-sided subshifts}\label{subsec 6.2}

Let $\sigma:X\to X$ be a two-sided subshift. 
For each $n\in\mathbb{Z}$ and any $x\in X$, we define 
$$W^{s}_{n}(x):=\{y\in X:y_{i}=x_{i},\ \forall i\ge n\}\quad\text{and}\quad W^{u}_{n}(x):=\{y\in X:y_{i}=x_{i},\ \forall i\le-n\}.$$
Let $W^{s}(x)=\bigcup_{n\in\mathbb{Z}}W^{s}_{n}(x)$ and $W^{u}(x)=\bigcup_{n\in\mathbb{Z}}W^{u}_{n}(x)$. Then 
$$W^{s}(x)=\{y\in X:d_{\lambda}(\sigma^{n}(x),\sigma^{n}(y))\to 0\text{ as }n\to\infty\} \hbox{ and }$$
$$  W^{u}(x)=\{y\in X:d_{\lambda}(\sigma^{-n}(x),\sigma^{-n}(y))\to0\text{ as }n\to\infty\}.$$  Given $r\in F_{\lambda}(X,\mathbb{R}^{+})$, let $\psi_{t}:X_{r}\to X_{r}$ be the suspension flow of $\sigma$ and $r$ defined in \S \ref{subsec 2.5}. For each $n\in\mathbb{N}^{+}$, let $r_{n}=\sum_{i=0}^{n-1}r\circ\sigma^{i}$. 

\begin{Definition}\label{Definition 6.2.1}
	For any $x,y\in X$ with $y\in W^{s}(x)$, we define $\Delta^{s}(x,y):=\lim_{n\to\infty} r_{n}(y)-r_{n}(x)$.\footnote{The limit exists because $y\in W^{s}(x)$ which means $d_{\lambda}(\sigma^{n}(y),\sigma^{n}(x))=O(\lambda^{n})$.} Similarly, for any $x,y\in X$ with $y\in W^{u}(x)$, we define $\Delta^{u}(x,y):=\lim_{n\to\infty} r_{n}(\sigma^{-n}(x))-r_{n}(\sigma^{-n}(y))$. 
\end{Definition}

The quantities \(\Delta^{s}\) and \(\Delta^{u}\) will be utilized to define the Brin group at the symbolic level, as illustrated in the next subsection. In the proof of Theorem \ref{Rapid mixing for compact group extension of hyperbolic flow}, we need to estimate the convergence rate in the limit \(\Delta^{u}\) as follows.

\begin{Lemma}\label{Lemma 6.2.2}
There exists $C_{10}>0$ such that for any $k\in\mathbb{N}^{+}$, any $n\in\mathbb{N}$ and any $x,y\in X$ with $y\in W^{u}_{n}(x)$,
$$|\Delta^{u}(x,y)-[r_{k}(\sigma^{-k}(x))-r_{k}(\sigma^{-k}(y))]|\le C_{10}\lambda^{k-n}.$$
\end{Lemma}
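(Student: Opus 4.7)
The plan is to rewrite the quantity of interest as the tail of an absolutely convergent series and then bound each term using the Lipschitz regularity of $r$ together with the exponential shrinking of $d_{\lambda}(\sigma^{-j}(x),\sigma^{-j}(y))$ as $j$ grows.

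First, reindexing the Birkhoff sum gives, for each $N\in\mathbb{N}^{+}$,
$$r_{N}(\sigma^{-N}(x))-r_{N}(\sigma^{-N}(y))=\sum_{j=1}^{N}\bigl[r(\sigma^{-j}(x))-r(\sigma^{-j}(y))\bigr].$$
Once absolute convergence is justified by the pointwise bound below, letting $N\to\infty$ identifies $\Delta^{u}(x,y)$ with the full series, and hence
$$\Delta^{u}(x,y)-\bigl[r_{k}(\sigma^{-k}(x))-r_{k}(\sigma^{-k}(y))\bigr]=\sum_{j=k+1}^{\infty}\bigl[r(\sigma^{-j}(x))-r(\sigma^{-j}(y))\bigr].$$

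Second, I would estimate $d_{\lambda}(\sigma^{-j}(x),\sigma^{-j}(y))$. The hypothesis $y\in W^{u}_{n}(x)$ means $x_{i}=y_{i}$ for all $i\le-n$, so the shifted sequences $\sigma^{-j}(x)$ and $\sigma^{-j}(y)$ coincide at every coordinate $i\le j-n$. Therefore $N(\sigma^{-j}(x),\sigma^{-j}(y))\ge\max(0,j-n+1)$, and setting $L:=|r|_{\mathrm{Lip}}$ yields
$$|r(\sigma^{-j}(x))-r(\sigma^{-j}(y))|\le L\,\lambda^{\max(0,\,j-n+1)}.$$

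Finally, I would split the tail sum according to whether $k\ge n$ or $k<n$. When $k\ge n$, every $j\ge k+1$ satisfies $j-n+1\ge 1$, so the tail is a pure geometric series bounded by $\tfrac{L\lambda^{2}}{1-\lambda}\,\lambda^{k-n}$. When $k<n$, the indices $k+1\le j\le n-1$ contribute at most $L$ each, while the indices $j\ge n$ again form a geometric tail; since $\lambda^{k-n}\ge 1$ in this regime and $\sup_{m\ge 0}m\lambda^{m}<\infty$, the linear factor $(n-k)$ gets absorbed into a universal multiple of $\lambda^{k-n}$. Taking $C_{10}$ to be the maximum of the constants produced in the two cases completes the argument. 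The only mildly delicate point is checking uniformity in the vacuous-looking regime $k<n$; the rest is a direct application of Lipschitz continuity and the definition of $d_{\lambda}$.
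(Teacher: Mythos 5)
Your proof is correct and takes essentially the same route as the paper: write the difference as the tail $\sum_{j\ge k+1}\bigl[r(\sigma^{-j}(x))-r(\sigma^{-j}(y))\bigr]$, bound each term via the Lipschitz continuity of $r$ and the coordinate agreement coming from $y\in W^{u}_{n}(x)$, and sum a geometric series. The only difference is that the paper skips your case split $k\ge n$ versus $k<n$ by using the cruder uniform bound $|r(\sigma^{-j}(x))-r(\sigma^{-j}(y))|\le |r|_{Lip}\lambda^{j-n}$ for every $j$ (valid because $d_{\lambda}\le 1$, so $\lambda^{j-n}\ge 1$ when $j\le n$), which gives $C_{10}=|r|_{Lip}/(1-\lambda)$ in one line.
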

\begin{proof}
	We compute,
	$$
	\begin{aligned}
		&|\Delta^{u}(x,y)-[r_{k}(\sigma^{-k}(x))-r_{k}(\sigma^{-k}(y))]|\\
		=&\bigg|\sum_{i=1}^{\infty}[r(\sigma^{-i}(x))-r(\sigma^{-i}(y))]-\sum_{i=1}^{k}[r(\sigma^{-i}(x))-r(\sigma^{-i}(y))]\bigg|\\
		=&\bigg|\sum_{i=k+1}^{\infty}[r(\sigma^{-i}(x))-r(\sigma^{-i}(y))]\bigg|\\
		\le&|r|_{Lip}\lambda^{k-n}\dfrac{1}{1-\lambda}.
	\end{aligned}
	$$
	Therefore, we can set $C_{10}=|r|_{Lip}\frac{1}{1-\lambda}$ to complete the proof. 
\end{proof}

Below, we would like to introduce a class of test functions on $X_{r}$ that will be used in \S \ref{subsec 7.3}.

\begin{Definition}\label{Definition of test function space of symbolic flow for hyperbolic flow}
	For a map $E$ from $X_{r}$ to a Banach space $(B, ||\cdot||)$, define 
	$$|E|_{\lambda}:=\sup_{x\not=y}\sup_{ u\in[0,\min\{r(x),r(y)\}]}\dfrac{||E(x,u)-E(y,u)||}{d_{\lambda}(x,y)}.$$
	Define $||E||_{\lambda}:=||E||_{\infty}+|E|_{\lambda}$ where $||E||_{\infty}:=\sup_{(x,u)}||E(x,u)||$. Let $F_{\lambda}(X_{r},B)$ be the set of all maps $E$ from $X_{r}$ to $B$ with $||E||_{\lambda}<\infty$.
\end{Definition}

In \S \ref{subsec 7.3}, we will consider functions belong to $F_{\lambda}(X_{r}, C^{k}(G))$ and $F_{\lambda}(X_{r}, H_{\pi})$ where $G$ is a compact Lie group and $H_{\pi}\subset L^{2}(G)$ is the Hilbert space associated to a representation $\pi\in\widehat{G}$.

\subsection{Suspensions of skew products over two-sided subshifts}\label{subsec 6.3}

Let $\sigma:X\to X$ be a two-sided subshift. Given a compact Lie group $G$ and $\Theta\in F_{\lambda}(X,G)$, let $\widehat{\sigma}:\widehat{X}\to\widehat{X}$ be the skew product defined in \S \ref{subsec 2.4}. Given $r\in F_{\lambda}(X,\mathbb{R}^{+})$, we can think of $r$ is a continuous function on $\widehat{X}$ by setting $r(x,g)=r(x)$. Then, let $\phi_{t}:\widehat{X}_{r}\to\widehat{X}_{r}$
be the suspension flow of $r$ and $\widehat{\sigma}$ defined in \S \ref{subsec 2.5}. 

For a potential $\varphi\in F_{\lambda}(X,\mathbb{R})$ with its Gibbs measure $\mu_{\varphi}$, we introduced in \S \ref{subsec 2.4} that  $\widehat{\mu}_{\varphi}:=\mu_{\varphi}\times$Haar is a $\widehat{\sigma}$-invariant probability measure on $\widehat{X}$. Then, let $\widehat{\mu}_{\varphi}\times$Leb be the natural $\phi_{t}$-invariant probability measure induced by $\widehat{\mu}_{\varphi}$ on $\widehat{X}_{r}$. Denote by $\rho_{E,F}$ the correlation function of test functions $E$ and $F\in C(\widehat{X}_{r})$.
We can also study the rate of mixing of $\phi_{t}$ with respect to $\widehat{\mu}_{\varphi}\times$Leb. However, since the symbolic space $X$ lacks differential structure, the test functions $E$ and $F$ we should consider are a little different from the case of $f_{t}$.

\begin{Definition}\label{Definition of test function space of symbolic flow for compact group}
We will use the following notation.
\begin{itemize}
	\item For a map $E$ from $\widehat{X}_{r}$ to a Banach space $(B, ||\cdot||)$, define 
	$$|E|_{\lambda}:=\sup_{g\in G}\sup_{x\not=y}\sup_{ u\in[0,\min\{r(x),r(y)\}]}\dfrac{||E(x,g,u)-E(y,g,u)||}{d_{\lambda}(x,y)}.$$
	Define $||E||_{\lambda}:=||E||_{\infty}+|E|_{\lambda}$ where $||E||_{\infty}:=\sup_{(x,g,u)}||E(x,g,u)||$.
	
	\item For each $k\ge1$, let $F^{k}_{\lambda}(\widehat{X}_{r})$ be the set of all complex-valued functions $E$ on $\widehat{X}_{r}$ which are $C^{k}$ with respect to $(g,u)$ and $||E||_{\lambda,k}<\infty$ where $||E||_{\lambda,k}:=\sup_{k_{1}+k_{2}\le k}||\frac{\partial^{k_{1}+k_{2}}E}{\partial g^{k_{1}}\partial u^{k_{2}}}||_{\lambda}$.
\end{itemize}
\end{Definition}

This leads to the following definition of rapid mixing which is appropriate at the symbolic level.

\begin{Definition}\label{Definiiton 6.3.2}
We say $\phi_{t}$ is \emph{rapidly mixing} with respect to $\widehat{\mu}_{\varphi}\times$Leb if for any $n\in\mathbb{N}^{+}$ there exist $C_{11}\ge1$ and integer $k\in\mathbb{N}^{+}$ such that for any $E,F\in F_{\lambda}^{k}(\widehat{X}_{r})$ and any $t>0$ we have $|\rho_{E,F}(t)|\le C_{11}||E||_{\lambda,k}||F||_{\lambda,k}t^{-n}$.
\end{Definition}

In \S \ref{subsec 6.5}, via a Markov partition for the hyperbolic flow $g_{t}$, we can think of $f_{t}$ as being modelled by the above suspension flow $\phi_{t}$ and we will reduce rapid mixing of $f_{t}$ with respect to $\widehat{\mu}_{\Phi}$ to rapid mixing of $\phi_{t}$ with respect to $\widehat{\mu}_{\varphi}\times$Leb. 

We now introduce
some (symbolic) Diophantine conditions ensuring that \(\phi_{t}\) exhibits rapid mixing. For each \(n\in\mathbb{N}^{+}\), let \(\Theta_{n}=\Theta\circ\sigma^{n-1}\cdots\Theta\). By 
analogy to  \(\Delta^{s}\) and \(\Delta^{u}\), we can also define the quantities \(\Theta^{s}\) and \(\Theta^{u}\) as follows.

\begin{Definition}\label{Definition 6.3.3}
For any $x,y\in X$ with $y\in W^{s}(x)$, let $\Theta^{s}(x,y):=\lim_{n\to\infty}(\Theta_{n}(y))^{-1}\Theta_{n}(x)$.
The limit exists because $y\in W^{s}(x)$ which means $d_{\lambda}(\sigma^{n}(y),\sigma^{n}(x))=O(\lambda^{n})$. Similarly, for any $x,y\in X$ with $y\in W^{u}(x)$, let $\Theta^{u}(x,y):= \lim_{n\to\infty}\Theta_{n}(\sigma^{-n}(y))(\Theta_{n}(\sigma^{-n}(x)))^{-1}$.
\end{Definition}

As mentioned earlier, an estimate of the convergence rate in the limit \(\Delta^{u}\) will be employed to prove Theorem \ref{Rapid mixing for compact group extension of hyperbolic flow}. Similarly, an estimate of the convergence rate in the limit \(\Theta^{u}\) is also required as follows.

\begin{Lemma}\label{Lemma 6.3.4}
There exists $C_{12}>0$ such that for any $k\in\mathbb{N}^{+}$, any $n\in\mathbb{N}$ and any $x,y\in X$ with $y\in W^{u}_{n}(x)$,
$$d_{G}(\Theta^{u}(x,y),\Theta_{k}(\sigma^{-k}(y))(\Theta_{k}(\sigma^{-k}(x)))^{-1})\le C_{12}\lambda^{k-n}.$$
\end{Lemma}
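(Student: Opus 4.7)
The plan is to mirror the telescoping argument in the proof of Lemma \ref{Lemma 6.2.2}, adapted to the noncommutative setting by exploiting the bi-invariance of a suitably chosen metric on $G$. Since $G$ is a compact Lie group, I would fix a bi-invariant Riemannian metric $d_G$ (constructed by averaging a left-invariant one over $G$). Setting $a_j := \Theta_j(\sigma^{-j}(y))\,\Theta_j(\sigma^{-j}(x))^{-1}$, Definition \ref{Definition 6.3.3} gives $\Theta^u(x,y) = \lim_{j\to\infty} a_j$, so the target estimate amounts to bounding $d_G(\Theta^u(x,y), a_k) \le C_{12}\lambda^{k-n}$.

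The first step is a short computation with the cocycle relation $\Theta_{j+1}(\sigma^{-j-1}(z)) = \Theta_j(\sigma^{-j}(z)) \cdot \Theta(\sigma^{-j-1}(z))$, which yields
$$a_{j+1} = \Theta_j(\sigma^{-j}(y)) \cdot \bigl[\Theta(\sigma^{-j-1}(y))\Theta(\sigma^{-j-1}(x))^{-1}\bigr] \cdot \Theta_j(\sigma^{-j}(x))^{-1}.$$
Since $a_j$ differs from $a_{j+1}$ only by inserting the bracketed factor between the same left and right pieces, bi-invariance of $d_G$ together with the Lipschitz continuity of $\Theta$ gives
$$d_G(a_{j+1}, a_j) = d_G\bigl(\Theta(\sigma^{-j-1}(y)),\ \Theta(\sigma^{-j-1}(x))\bigr) \le |\Theta|_{\mathrm{Lip}}\,d_\lambda(\sigma^{-j-1}(y), \sigma^{-j-1}(x)).$$
Because $y \in W^u_n(x)$ means $y_i = x_i$ for all $i \le -n$, the sequences $\sigma^{-j-1}(y)$ and $\sigma^{-j-1}(x)$ agree in all coordinates of index $\le j+1-n$, so $d_\lambda(\sigma^{-j-1}(y), \sigma^{-j-1}(x)) \le \lambda^{j+1-n}$ whenever $j \ge n-1$. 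A telescoping sum from $j=k$ to $\infty$ then produces $d_G(\Theta^u(x,y), a_k) \le \tfrac{|\Theta|_{\mathrm{Lip}}}{1-\lambda}\lambda^{k+1-n}$ for $k \ge n-1$; the remaining range $k < n-1$ is handled trivially since $\lambda^{k-n} \ge 1$ and $d_G \le \mathrm{diam}(G)$. Choosing $C_{12}$ to absorb all constants finishes the argument.

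The main obstacle compared to the scalar case in Lemma \ref{Lemma 6.2.2} is the noncommutativity of $G$: the discrepancy between $a_{j+1}$ and $a_j$ is sandwiched between $\Theta_j(\sigma^{-j}(y))$ on the left and $\Theta_j(\sigma^{-j}(x))^{-1}$ on the right, which are \emph{not} inverses of each other, so without a bi-invariant metric there is no clean way to strip these intertwining factors. Once bi-invariance is secured the estimate collapses to a geometric series identical in spirit to the one in Lemma \ref{Lemma 6.2.2}.
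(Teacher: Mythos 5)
Your proof is correct and follows essentially the same route as the paper's: both use the cocycle identity for $\Theta_{j}$, the (bi-)invariance of $d_{G}$ to strip the common outer factors, and the Lipschitz continuity of $\Theta$ together with a geometric series to obtain the bound $|\Theta|_{\mathrm{Lip}}\lambda^{k-n}/(1-\lambda)$. The only difference is organizational — you telescope over consecutive $a_{j}$ while the paper cancels the $\Theta_{k}$ blocks at once and estimates the remaining tail product $d_{G}(\Theta_{p-k}(\sigma^{-p}(y)),\Theta_{p-k}(\sigma^{-p}(x)))$ — so no further comment is needed.
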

\begin{proof}
We compute,
$$
\begin{aligned}
	&d_{G}(\Theta^{u}(x,y),\Theta_{k}(\sigma^{-k}(y))(\Theta_{k}(\sigma^{-k}(x)))^{-1})\\
	=&\lim_{p\to\infty}d_{G}(\Theta_{p}(\sigma^{-p}(y))(\Theta_{p}(\sigma^{-p}(x)))^{-1},\Theta_{k}(\sigma^{-k}(y))(\Theta_{k}(\sigma^{-k}(x)))^{-1})\\
	=&\lim_{p\to\infty}d_{G}((\Theta_{k}(\sigma^{-k}(y)))^{-1}\Theta_{p}(\sigma^{-p}(y)),(\Theta_{k}(\sigma^{-k}(x)))^{-1}\Theta_{p}(\sigma^{-p}(x)))\\
	=&\lim_{p\to\infty}d_{G}(\Theta_{p-k}(\sigma^{-p}(y)),\Theta_{p-k}(\sigma^{-p}(x)))\le |\Theta|_{\lambda}\lambda^{k-n}\dfrac{1}{1-\lambda}.
\end{aligned}
$$
Therefore, we can set $C_{12}=|\Theta|_{Lip}\frac{1}{1-\lambda}$ to complete the proof. 
\end{proof}

In the symbolic model $\phi_{t}:\widehat{X}_{r}\to\widehat{X}_{r}$, we can define the 
analogue of the 
Brin group as follows. We say $V=\{x_{i}\}_{i=0}^{p}$ is a closed chain at $x\in X$ if $x_{p}=x_{0}=x$ and $x_{i+1}\in W^{s}(x_{i})$ or $W^{u}(x_{i})$. Given $n_{0}\in\mathbb{Z}$ and $p_{0}\in\mathbb{N}^{+}$, we say $V$ is of size $n_{0}$ and length $p_{0}$ if $x_{i+1}\in W^{s}_{n_{0}}(x_{i})$ or $W^{u}_{n_{0}}(x_{i})$ and $p\le p_{0}$. Again each closed chain $V=\{x_{i}\}_{i=0}^{p}$ induces a twist 
$$g_{V}:=\Theta^{\delta_{x_{p-1},x_{p}}}(x_{p-1},x_{p})\cdots\Theta^{\delta_{x_{0},x_{1}}}(x_{0},x_{1})$$ where $\delta_{x,y}=s$ if $y\in W^{s}(x)$ and $\delta_{x,y}=u$ if $y\in W^{u}(x)$. The following definition  plays the role of the u-s paths  at the symbolic level.

\begin{Definition}\label{Definition of Brin group in the symbolic level}
For any $x\in X$ and each $n_{0}\in\mathbb{Z},\ p_{0}\in\mathbb{N}$, we define $\Gamma(x, n_{0},p_{0})$ to be the set $\{g_{V}\}$ of all closed chains $V=\{x_{i}\}_{i=0}^{p}$ at $x$ of size $n_{0}$ and length $p_{0}$ which satisfying $\sum_{i=0}^{p-1}\Delta^{\delta_{x_{i},x_{i+1}}}(x_{i},x_{i+1})=0$.
\end{Definition}

\subsection{Markov sections of hyperbolic flows}\label{subsec 6.4}

A Markov section of a hyperbolic flow allows us to build a bridge between the suspension flow introduced in the previous subsection 
 and the hyperbolic flow.


We follow \cite{Bow73} in  introducing  Markov sections of hyperbolic flows. We start by introducing the local product coordinate. Fix a hyperbolic flow $g_{t}:\Lambda\to\Lambda$. For a sufficiently small $\delta>0$ and any two points $x,y\in\Lambda$ with $d(x,y)<\delta$, the intersection $W^{s}_{\delta}(x)\cap \bigcup_{|t|\le\delta}g_{t}W^{u}_{\delta}(y)$ consists of a single point which belongs to $\Lambda$, and we denote it by $[x,y]$ which is usually called the local product coordinate of $x$ and $y$. A subset $B\subset W^{s}_{\delta}(z)\cap\Lambda$ is called proper if it is closed and $\overline{B^{o}}=B$ with respect to the induced topology on $W^{s}_{\delta}(z)\cap\Lambda$. A proper subset in $W^{u}_{\delta}(z)\cap\Lambda$ is defined similarly.

\begin{Definition}
A \emph{parallelogram} $R$ is a set of the form $R=[U,S]$ where $U\subset W^{u}_{\delta}(z)\cap\Lambda$ and $S\subset W^{s}_{\delta}(z)\cap\Lambda$ are proper subsets.
\end{Definition}

By definition, a parallelogram $R$ is a local cross section of the flow $g_{t}$. A parallelogram $R$ has disintegration with leaves $[x,S]$, namely $R=\cup_{x\in U}[x,S]$. Obviously, $[x,S]\subset W^{s}_{\delta}(x)\cap\Lambda$. Thus, we can  denote $[x,S]$ as $W^{s}(x,R)$.
We denote by $W^{u}(x,R) = [U,x]$ where $x\in S$. Two parallelograms $R$ and $R^{\prime}$ are called parallel if there exists a continuous function $r$ on $R$ such that $R^{\prime}=\cup_{x\in R}g_{r(x)}(x)$. Obviously, the translation of $R$ along the direction of the flow is always parallel to $R$, namely $g_{t}R$ is parallel to $R$.

Let $\{R_{i}\}_{i=1}^{N}$ be finitely many parallelograms with $R_{i}\cap R_{j}=\emptyset$ for $i\not=j$. Fix a small $\varepsilon>0$, we assume $\cup_{i}\cup_{|t|\le\varepsilon}g_{t}R_{i}=\Lambda$. Let $R=\cup_{i}R_{i}$ and $P:R\to R$ correspond to the first return map of $g_{t}$ on $R$.
\footnote{The map actually is the first return map on $\hbox{\rm int}(R) \cap P^{-1}\hbox{\rm int}(R)$ which will be sufficient to define the corresponding map at the symbolic level}

\begin{Definition}
$\{R_{i}\}_{i=1}^{N}$ is called a Markov section of $g_{t}$ if whenever $x\in R_{i}^{o}\cap P^{-1}R_{j}^{o}\not=\emptyset$, 
$$
PW^{s}(x,R_{i})^{o}\subset W^{s}(P(x),R_{j})^{o}\quad\text{and}\quad P^{-1}W^{u}(P(x),R_{j})^{o}\subset W^{u}(x,R_{i})^{o}.
$$
\end{Definition}

Given a Markov section $\{R_{i}\}_{i=1}^{N}$, let $r:R^{o} \cap  P^{-1}R^{o}\to\mathbb{R}^{+}$ be the first return time function for $P: R^{o} \cap  P^{-1}R^{o} \mapsto R^{o}$. 
We call the set $R_{i}^{r}:=
\overline{\cup_{x\in R^{o} \cap  P^{-1}R^{o}}\cup_{0\le t< r(x)}g_{t}(x)}$ a parallelepiped, and we call $\{R_{i}^{r}\}_{i=1}^{N}$ a Markov partition. The maximal value of the sizes of $R_{i}^{r}$, $1\le i\le N$, is called the size of $\{R_{i}^{r}\}_{i=1}^{N}$. The following well-known result is attributed to Bowen \cite{Bow73}.\footnote{ In the case where $g_{t}$ is an Anosov flow, the same result was obtained in \cite{Rat73}.}

\begin{Lemma}\label{Markov partition of hyperbolic flows}
There exists a Markov partition $\{R_{i}^{r}\}_{i=1}^{N}$ of $g_{t}:\Lambda\to\Lambda$ of arbitrarily small size.
\end{Lemma}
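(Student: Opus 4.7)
The plan is to follow Bowen's original construction from \cite{Bow73}, adapted to the hyperbolic (rather than just Anosov) setting on the locally maximal invariant set $\Lambda$. The construction proceeds in two main stages: first produce a covering by local cross-sections (pre-Markov parallelograms) whose diameters are controlled by a given parameter, then refine these parallelograms by cutting along stable and unstable manifolds to obtain the Markov property.

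First, I would fix the desired size $\varepsilon > 0$ and choose auxiliary parameters $0 < \beta \ll \alpha \ll \varepsilon$, where $\alpha$ will control the diameters of the parallelograms transverse to the flow and $\beta$ controls the flow-direction spacing. Using compactness of $\Lambda$, pick a $\beta$-dense finite set $\{p_1,\dots,p_N\} \subset \Lambda$. At each $p_i$ form the parallelogram $\tilde R_i = [U_i, S_i]$ with $U_i \subset W^{u}_{\alpha}(p_i)\cap\Lambda$ and $S_i \subset W^{s}_{\alpha}(p_i)\cap\Lambda$ chosen as proper subsets, using the local product coordinate $[\cdot,\cdot]$ available for $g_t$ from hyperbolicity. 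By taking $\beta$ small compared with the flow speed and $\alpha$, one checks that $\bigcup_i\bigcup_{|t|\le\varepsilon} g_t \tilde R_i = \Lambda$, so the first return map $\tilde P$ to $\tilde R = \bigsqcup_i \tilde R_i$ (after perturbing to ensure pairwise disjointness) is well-defined on a residual subset, with a bounded return time function.

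The $\tilde R_i$ need not satisfy the Markov property: the image $\tilde P W^{s}(x, \tilde R_i)$ may ``stick out'' of the stable leaf of the target parallelogram $\tilde R_j$. To fix this, I would refine. For each ordered pair $(i,j)$ with $\tilde R_i^{o} \cap \tilde P^{-1} \tilde R_j^{o} \ne \emptyset$, define the rectangular piece $\tilde R_{ij} = \tilde R_i \cap \tilde P^{-1} \tilde R_j$; then saturate the stable boundary of $\tilde R_{ij}$ within $\tilde R_i$ by following stable leaves, and saturate the unstable boundary inside $\tilde R_j$ by following unstable leaves, using that $W^s$ and $W^u$ are $P$-invariant foliations inside parallelograms. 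Taking countable intersections of such saturations over all itineraries of bounded length, and then closing up, yields proper sub-parallelograms $R_k$ of the $\tilde R_i$ with the property that the stable boundary of any $R_k$ is mapped by $P$ into the stable boundary of its image, and symmetrically for the unstable boundary. This is exactly the Markov property; moreover, by construction the $R_k$ have pairwise disjoint interiors and still cover $\Lambda$ up to flow-time $\le \varepsilon$. Setting $R_k^{r} = \overline{\bigcup_{x \in R_k^{o} \cap P^{-1}R^{o}} \bigcup_{0 \le t < r(x)} g_t(x)}$ produces the desired Markov partition, and its size is bounded by a constant times $\max(\alpha,\varepsilon)$, which was arbitrary.

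The main obstacle is the refinement step and verifying that the saturation procedure terminates in a finite collection of parallelograms with non-empty interior: one must show that the total stable and unstable ``cuts'' accumulate only on a measure-zero set and that the resulting $R_k$ remain proper (i.e., $\overline{R_k^{o}} = R_k$) with controlled geometry. The classical way to handle this is to invoke the shadowing (specification) property for the hyperbolic flow $g_t$ to show that the Bowen equivalence relation $x \sim y$ iff $x$ and $y$ have the same symbolic future and past itinerary under $\tilde P$ has closed equivalence classes of diameter tending to $0$; combining this with hyperbolic contraction ensures convergence of the refinement, exactly as in \cite{Bow73}. Because the argument is purely local and only uses the hyperbolic splitting, the local product structure, topological transitivity, and density of periodic orbits — all of which are part of Definition~\ref{Definition of hyperbolic flow} — it goes through verbatim on the locally maximal set $\Lambda$ rather than only on a manifold.
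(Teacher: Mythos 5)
The paper does not prove this lemma at all: it is quoted as Bowen's classical theorem, with the proof deferred entirely to \cite{Bow73} (and \cite{Rat73} in the Anosov case). Your sketch is a faithful outline of exactly that construction of Bowen — proper parallelogram cross-sections via the local product structure, followed by the Sinai--Bowen refinement along stable and unstable leaves to force the Markov property — so it takes essentially the same route the paper relies on, just spelled out rather than cited.
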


Note that, for a Markov section $\{R_{i}\}_{i=1}^{N}$, some parallelograms may be parallel. For the corresponding Markov partition $\{R_{i}^{r}\}_{i=1}^{N}$, they cover $\Lambda$ and have disjoint pairwise interiors. 

We can naturally view $\Lambda$ as the suspension space of $R$ under the function $r$ and view $g_{t}$ is the corresponding suspension flow of $P:R\to R$ and $r$. We can directly study the dynamics of $P$ and the suspension flow. But, it may be more convenient to view $R$ as a two-sided symbolic space and view $P:R\to R$ as a two-sided subshift. To this end, we can define a transition $N\times N$ matrix $A$ of zeros and ones by
$$
A(i,j)=\begin{cases}
1,\quad & \text{if}\ R^{o}_{i}\cap P^{-1}R_{j}^{o}\not=0;\\
0,\quad &\text{otherwise.}
\end{cases} 
$$
We can always assume $A$ is aperiodic, that is $A^{N}>0$ for some $N\in\mathbb{N}^{+}$. Let $\sigma:X\to X$ be the two-sided subshift. By \cite{Bow73}, we have the following result.

\begin{Lemma}\label{Semi-conjugacy of poincare map and subshifts}
There exists a Lipschitz continuous surjection $\Pi:X\to R$ such that $P:R\to R$ and $\sigma:X\to X$ are semi-conjugated ,i.e. $\Pi\circ\sigma=P\circ\Pi$.
\end{Lemma}

By considering $r\circ\Pi$, we can think of $r$ is a roof function on $X$, and $r$ only depends on future coordinates, i.e. $r(x)=r(y)$ whenever $x_{i}=y_{i}$ for any $i\ge0$. By choosing the metric constant $\lambda\in(0,1)$ on $X$ close sufficiently to 1, we can assume $r$ belongs to $F_{\lambda}(X,\mathbb{R})$. Let $\psi_{t}:X_{r}\to X_{r}$ be the suspension flow of $\sigma$ and $r$. The suspension space $X_{r}$ is a symbolic coordinate system of $\Lambda$ via $\Pi_{r}:X_{r}\to\Lambda$ defined by $\Pi_{r}(x,u)=g_{u}(\Pi(x))$, and $\psi_{t}$ is semi-conjugated to $g_{t}$ by $\Pi_{r}$.

Recall that $\mu_{\Phi}$ is the Gibbs measure of a H\"older potential $\Phi$ on $\Lambda$. By choosing the metric constant $\lambda\in(0,1)$ on $X$ close sufficiently to 1, we can assume the function $\varphi$ defined by $\varphi(x)=\int_{0}^{r(x)}(\Phi\circ\Pi_{r})(x,u)du-P(\Phi)r(x)$ belongs to $F_{\lambda}(X,\mathbb{R})$. Let $\mu_{\varphi}$ be the Gibbs measure of $\varphi$, and let $\mu_{\varphi}\times$Leb be the $g_{t}$-invariant probability measure induced by $\mu_{\varphi}$ on $X_{r}$. The following result is well-known \cite{Bow75}.

\begin{Lemma}\label{Gibbs measure and symbolic measure}
We have $\Pi_{r}^{*}(\mu_{\varphi}\times\text{Leb})=\mu_{\Phi}$.
\end{Lemma}

Using the symbolic coordinate system $X_{r}$, we can express the stable manifold $W^{s}$ and unstable manifold $W^{u}$ of $g_{t}$ as follows. Recalling the definitions of $\Delta^{s}$ and $\Delta^{u}$ in Definition \ref{Definition 6.2.1}. 

\begin{Lemma}\label{Symbolic coordinate for stable and unstable manifold}
For any $z,z^{\prime}\in\Lambda$ with $z^{\prime}\in W^{s}(z)$, if they have symbolic coordinates $(x,u)$ and $(x^{\prime},u^{\prime})$ respectively, then $x^{\prime}\in W^{s}(x)$ and $u^{\prime}=u+\Delta^{s}(x,x^{\prime})$. Similarly, for any $z,z^{\prime}\in\Lambda$ with $z^{\prime}\in W^{u}(z)$, if they have symbolic coordinates $(x,u)$ and $(x^{\prime},u^{\prime})$ respectively, then $x^{\prime}\in W^{u}(x)$ and $u^{\prime}=u+\Delta^{u}(x,x^{\prime})$. 
\end{Lemma}
\begin{proof}
For any point $z\in\Lambda$, we can write $z=g_{u}(\Pi(x))$ where $x\in X$ and $0\le u<r(x)$. Therefore the symbolic coordinate of $z$ is $(x,u)\in X_{r}$. Assume $z^{\prime}\in W^{s}(z)$ and $z^{\prime}$ has symbolic coordinate  $(x^{\prime},u^{\prime})\in X_{r}$. By definition, we have
$$
\begin{aligned}
&\lim_{t\to\infty}d(g_{t+u}(\Pi(x)),g_{t+u^{\prime}}(\Pi(x^{\prime})))\\
=&\lim_{t\to\infty}d(\psi_{t}(x,u),\psi_{t}(x^{\prime},u^{\prime}))=\lim_{t\to\infty}d((x,u+t),(x^{\prime},u^{\prime}+t))\\
=&\lim_{n\to\infty}d((\sigma^{n}(x),u+t-r_{n}(x)),(\sigma^{n}(x^{\prime}),u^{\prime}+t-r_{n}(x^{\prime})))=0
\end{aligned}
$$
which implies $\lim_{n\to\infty}d_{\lambda}(\sigma^{n}(x),\sigma^{n}(x^{\prime}))=0$ and  $\lim_{n\to\infty}|u-r_{n}(x)-u^{\prime}+r_{n}(x^{\prime})|=0$. Therefore $x^{\prime}\in W^{s}(x)$ and $u^{\prime}=u+\Delta^{s}(x,x^{\prime})$. 

The proof of the case of $W^{u}$ is similar to the above. Assume $z^{\prime}\in W^{u}(z)$, and $z,z^{\prime}\in\Lambda$ have symbolic coordinate $(x,u), (x^{\prime},u^{\prime})\in X_{r}$. By definition, we have
$$
\begin{aligned}
&\lim_{t\to\infty}d(g_{-t+u}(\Pi(x)),g_{-t+u^{\prime}}(\Pi(x^{\prime})))\\
=&\lim_{t\to\infty}d(\psi_{-t}(x,u),\psi_{-t}(x^{\prime},u^{\prime}))=\lim_{t\to\infty}d((x,u-t),(x^{\prime},u^{\prime}-t))\\
=&\lim_{n\to\infty}d((\sigma^{-n}(x),u-t+r_{n}(\sigma^{-n}(x))),(\sigma^{-n}(x^{\prime}),u^{\prime}-t+r_{n}(\sigma^{-n}(x^{\prime}))))=0
\end{aligned}
$$
which implies $\lim_{n\to-\infty}d_{\lambda}(\sigma^{-n}(x),\sigma^{-n}(x^{\prime}))=0$ and  $\lim_{n\to\infty}|u+r_{n}(\sigma^{-n}(x))-u^{\prime}-r_{n}(\sigma^{-n}(x^{\prime}))|=0$. Therefore $x^{\prime}\in W^{u}(x)$ and $u^{\prime}=u+\Delta^{u}(x,x^{\prime})$. 
\end{proof}

\subsection{A symbolic model for the extension flow}\label{subsec 6.5}

In this subsection, our goal is to address the rapid mixing issue for the extension flow $f_{t}$ by connecting it to the analogous problem presented by a symbolic model. In broad terms, this symbolic model is realized as a suspension flow of a skew product over a two-sided subshift
as we outline below.

Following subsection \ref{subsec 6.4}, for a hyperbolic flow $g_{t}$
we can assume there is a  Markov section $\{R_{i}\}_{i=1}^{N}$ for $g_{t}$ with arbitrarily small size. Let $R:=\bigcup_{i}R_{i}$, and define $P:R^{o} \cap  P^{-1}R^{o}\to R^o$ using  the first return map, with $r:R^{o} \cap  P^{-1}R^{o} \to\mathbb{R}^{+}$ representing the first return time. This Markov section $\{R_{i}\}_{i=1}^{N}$ for the underlying hyperbolic flow $g_{t}$  allows us to choose  a finite number of local cross-sections $\{\widehat R_{i}\}_{i=1}^{N}$ for the extension flow $f_{t}$ with $R_{i} = \varrho(\widehat R_{i})$ ($i=1, \cdots, N$), where $\varrho:\widehat{M}\to M$  is the bundle projection. Given that the parallelepipeds $\{R_{i}^{r}\}_{i=1}^{N}$ form a partition of $\Lambda$, it follows that $\{\widehat R_{i}^{r}\}_{i=1}^{N}$ form a partition of $\widehat{\Lambda}$. By invoking Lemma \ref{Markov partition of hyperbolic flows}, we can ensure that each parallelepiped $R_{i}^{r}$ is sufficiently small.

For each $\widehat R_{i}^{r}$, we can (locally) trivialize it as $R_{i}^{r}\times G$. Consequently, we can equate the local cross-sections $\{\widehat R_{i}\}_{i=1}^{N}$ with $\{R_{i}\times G\}_{i=1}^{N}$. Let $\widehat{R}:=R\times G$. By construction, the function corresponding to the first return time function on $\widehat{R}$ is $r$. Let $\widehat{P}:\widehat{R}\to \widehat{R}$ be the map corresponding to the first return map. We define a function $\Theta:R\to G$ using the formula:
\begin{equation}\label{3.2.1}
\widehat{P}(w,e)=f_{r(w)}(w,e)=(P(w),\Theta(w)),\quad w\in R,
\end{equation}
where $e\in G$ is the identity element. Now, we can code  $P:R\to R$ by  a two-sided subshift $\sigma:X\to X$ via the semi-conjugacy $\Pi:X\to R$. By a slight abuse of notation, we can write $r$ and $\Theta$ as functions on $X$  representing  $r\circ\Pi$ and $\Theta\circ\Pi$  on $R^{o} \cap  P^{-1}R^{o}$. Since $r\in F_{\lambda}(X,\mathbb{R}^{+})$, according to \eqref{3.2.1} and Lemma \ref{Semi-conjugacy of poincare map and subshifts}, we deduce that $\Theta\in F_{\lambda}(X,G)$. Let $\phi_{t}:\widehat{X}_{r}\to\widehat{X}_{r}$ be the suspension flow of $\widehat{\sigma}:\widehat{X}\to\widehat{X}$ and $r$, as defined in \S \ref{subsec 6.3}. Define $\widehat{\Pi}_{r}:\widehat{X}_{r}\to\widehat{\Lambda}$ as $\widehat{\Pi}_{r}(x,g,u)=f_{u}(\Pi(x),g)$. 

\begin{Lemma}\label{Semi-conjugacy of extension flow and symbolic flow}
We have $r\in F_{\lambda}(X^{+},\mathbb{R}^{+})$, $\Theta\in F_{\lambda}(X^{+},G)$ and $\widehat{\Pi}_{r}$ is a semi-conjugacy between $f_{t}$ and $\phi_{t}$.
\end{Lemma}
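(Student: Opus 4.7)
The plan is to verify the three assertions separately: (i) $r\in F_\lambda(X^+,\mathbb{R}^+)$, (ii) $\Theta\in F_\lambda(X^+,G)$, and (iii) $\widehat{\Pi}_r$ intertwines $\phi_t$ with $f_t$. The first claim is essentially a recapitulation of what was established in \S\ref{subsec 6.4}: the Markov property of $\{R_i\}$ guarantees that the first-return time $r$ on $R^o\cap P^{-1}R^o$ is constant on the local stable leaves $W^s(w,R_i)$ of each parallelogram, so passing through the semi-conjugacy $\Pi$ makes $r$ depend only on future coordinates; the Lipschitz bound is then obtained, as before, by choosing the metric constant $\lambda\in(0,1)$ close enough to $1$.

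For (ii) the key point is that $\Theta$ also depends only on future coordinates when the fiberwise trivialization of each $\widehat{R}_i^r=R_i^r\times G$ is chosen so that the stable leaves of $f_t$ in $\widehat{R}_i$ are horizontal, i.e.\ of the form $W^s(w,R_i)\times\{g\}$. Such a trivialization exists because the stable foliation $W^s_{f_t}$ of $f_t$ projects to $W^s$ and is equivariant under the right $G$-action (Lemma \ref{Invariance of local product measure} and the discussion in \S\ref{subsec 4.1}). In these coordinates, if $x,y\in X$ satisfy $x_i=y_i$ for $i\geq 0$ then $\Pi(y)\in W^s(\Pi(x),R_{x_0})$, and the Markov property forces $f_{r(\Pi(x))}(\Pi(y),e)$ and $f_{r(\Pi(y))}(\Pi(y),e)$ to lie on the same horizontal stable leaf in $\widehat R_{x_1}$, so the $G$-components coincide, giving $\Theta(x)=\Theta(y)$. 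The Lipschitz bound then follows from the $C^2$-smoothness of $f_t$, the Lipschitz property of $r$, and the Lipschitz property of $\Pi$ (Lemma \ref{Semi-conjugacy of poincare map and subshifts}), again adjusting $\lambda$ upward if necessary.

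For (iii) I verify directly that $f_t\circ\widehat{\Pi}_r=\widehat{\Pi}_r\circ\phi_t$. For $(x,g,u)\in\widehat X_r$ with $u+t<r(x)$ we compute
\[
f_t(\widehat{\Pi}_r(x,g,u))=f_t(f_u(\Pi(x),g))=f_{u+t}(\Pi(x),g)=\widehat{\Pi}_r(x,g,u+t)=\widehat{\Pi}_r(\phi_t(x,g,u)),
\]
and at the roof we must check that $\widehat{\Pi}_r$ respects the identification $(x,g,r(x))\sim(\sigma(x),\Theta(x)g,0)$. Using \eqref{3.2.1}, the semi-conjugacy $\Pi\circ\sigma=P\circ\Pi$, and the definition of $\widehat P$,
\[
\widehat{\Pi}_r(x,g,r(x))=f_{r(x)}(\Pi(x),g)=\widehat P(\Pi(x),g)=(P(\Pi(x)),\Theta(x)g)=(\Pi(\sigma(x)),\Theta(x)g)=\widehat{\Pi}_r(\sigma(x),\Theta(x)g,0),
\]
so the flows agree on a fundamental domain and hence everywhere.

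The main obstacle is the proof in (ii) that $\Theta$ is well-defined as a function of future coordinates alone; this hinges on selecting fiberwise trivializations over the $\widehat R_i$ so that the stable manifolds of $f_t$ become horizontal sections. Once this compatibility between the Markov structure of $g_t$ and the $G$-bundle trivializations is arranged, both the regularity statements and the semi-conjugacy follow almost formally. The remaining steps, including the semi-conjugacy at the roof and the Lipschitz estimates via tuning $\lambda$, are straightforward.
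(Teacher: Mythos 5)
Your proof is correct and follows essentially the same route as the paper: $r$ is made to depend only on future coordinates via the Markov construction and the flow-invariance of the stable foliation, $\Theta$ is made future-dependent by adjusting the fibre trivializations (the paper phrases this as a $G$-coordinate change $g\mapsto\Theta(x)g$, while you realize it by choosing trivializations in which the $f_t$-stable leaves are horizontal — the same device), and the semi-conjugacy is then the direct computation from $\widehat{P}(w,g)=f_{r(w)}(w,g)=(P(w),\Theta(w)g)$ combined with $\Pi\circ\sigma=P\circ\Pi$. The only slip is cosmetic: you compare $f_{r(\Pi(x))}(\Pi(y),e)$ with $f_{r(\Pi(y))}(\Pi(y),e)$, which are literally the same point once $r$ is constant on stable leaves; the intended (and correct) comparison is between the images of $(\Pi(x),e)$ and $(\Pi(y),e)$, which your horizontality argument does handle.
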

\begin{proof}
By  the construction of the Markov section and since the stable foliation $W^{s}$ is $g_{t}$-invariant, we  can assume that  $r$ is constant on each leaf which means it depends solely on future coordinates. Thus, $r\in F_{\lambda}(X^{+},\mathbb{R}^{+})$. Furthermore, using a  $G$-coycle,    if necessary, we ensure that $\Theta$ also depends only on future coordinates. For instance, for each $x\in X$, a coordinate transformation $g\to g^{\prime}=\Theta(x)g$ suffices. Thus, we  can assume  $\Theta\in F_{\lambda}(X^{+},G)$.

Using  \eqref{3.2.1} and the definition of a compact group extension, for any $(w,g)\in\widehat{R}$, we obtain
\begin{equation}\label{3.2.2}
\widehat{P}(w,g)=f_{r(w)}(w,g)=(P(w),\Theta(w)g)
\end{equation}
indicating that $\widehat{P}$ is a skew product of $P$. Through a direct application of \eqref{3.2.2} and Lemma \ref{Semi-conjugacy of poincare map and subshifts}, $\phi_{t}$ is semi-conjugated to $f_{t}$ through $\widehat{\Pi}_{r}$.
\end{proof}

In order to relate  the rapid mixing of $f_{t}$ to that of $\phi_{t}$, it is necessary  to assess the smoothness of $\widehat{\Pi}_{r}$. 

\begin{Lemma}\label{Smoothness of semi-conjugate}
There exists $C_{13}>0$ such that for each $k\in\mathbb{N}^{+}$ and any $F\in C^{k}(\widehat{M})$ we have $F\circ\widehat{\Pi}_{r}\in F^{k-1}_{\lambda}(\widehat{X}_{r})$ and $||F\circ\widehat{\Pi}_{r}||_{\lambda,k-1}\le C_{13}||F||_{C^{k}}$.
\end{Lemma}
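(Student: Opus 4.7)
The plan is to unpack $F\circ\widehat\Pi_r(x,g,u)=F(f_u(\Pi(x),g))$ and estimate each of the three ingredients separately: the semi-conjugacy $\Pi\colon X\to R$ (only Lipschitz, not smooth, in $x$), the flow $f_u$ and the right $G$-action (both $C^\infty$ in $(u,g)$ and smooth in the base point on $\widehat M$), and finally $F\in C^k(\widehat M)$. The loss of one derivative from $k$ to $k-1$ in the target norm $\|\cdot\|_{\lambda,k-1}$ will come from the fact that we spend one order of smoothness of $F$ to convert the Lipschitz modulus of $\widehat\Pi_r$ in $x$ into a uniform bound.

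First, I would check that the map $(g,u)\mapsto \widehat\Pi_r(x,g,u)=f_u(\Pi(x),g)$ is $C^\infty$ uniformly in $x$. Indeed, $f_t$ is a $C^\infty$ flow on $\widehat M$ and the right $G$-action is smooth, so for $u$ ranging over the bounded interval $[0,\sup r]$ and for $g$ in the compact group $G$, every mixed partial derivative of $(g,u)\mapsto f_u(\,\cdot\,,g)$ is bounded uniformly in the base point. Applying the chain rule, for $k_1+k_2\le k$ I would write
\[
\frac{\partial^{k_1+k_2}}{\partial g^{k_1}\partial u^{k_2}}\bigl(F\circ\widehat\Pi_r\bigr)(x,g,u)=\sum_{\alpha}\bigl(D^{|\alpha|}F\bigr)\bigl(\widehat\Pi_r(x,g,u)\bigr)\cdot\Phi_\alpha(x,g,u),
\]
where $|\alpha|\le k_1+k_2$ and the coefficients $\Phi_\alpha$ are polynomial expressions in the mixed $(g,u)$-derivatives of $f_u(\Pi(x),g)$. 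In particular each $\Phi_\alpha$ is bounded by an absolute constant, independently of $x$, so the $C^0$ part of $\|\cdot\|_{\lambda,k-1}$ is controlled by $\|F\|_{C^{k-1}}$ times a constant.

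Next I would handle the Lipschitz-in-$x$ seminorm $|\cdot|_\lambda$. The key Lipschitz estimate is that $\widehat\Pi_r(\,\cdot\,,g,u)\colon X\to\widehat M$ is Lipschitz with a constant uniform in $(g,u)$: this combines the Lipschitz continuity of $\Pi$ from Lemma \ref{Semi-conjugacy of poincare map and subshifts} (after choosing $\lambda$ close enough to $1$), the smoothness of the $G$-action (so $\widehat x\mapsto \widehat x\cdot g$ is a uniform Lipschitz map for $g\in G$ compact), and the smoothness of $f_u$ in the base point (so $\widehat x\mapsto f_u(\widehat x,\cdot)$ is uniformly Lipschitz for $u\in[0,\sup r]$). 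Call the resulting constant $L$. Then for $x,y\in X$ and $|\alpha|\le k-1$ the mean value inequality for $D^{|\alpha|}F$ yields
\[
\bigl|(D^{|\alpha|}F)(\widehat\Pi_r(x,g,u))-(D^{|\alpha|}F)(\widehat\Pi_r(y,g,u))\bigr|\le \|F\|_{C^{|\alpha|+1}}\cdot L\cdot d_\lambda(x,y).
\]
Combined with the fact that each coefficient $\Phi_\alpha$ is itself Lipschitz in $x$ with a uniform constant (it is a smooth function of $(g,u)$ composed with the Lipschitz map $x\mapsto\Pi(x)$), this gives a Lipschitz estimate in $x$ for every mixed $(g,u)$-derivative of order at most $k-1$, with constant absorbed into $C_{13}\|F\|_{C^k}$ since $|\alpha|+1\le k$. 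Summing over $k_1+k_2\le k-1$ produces the stated bound $\|F\circ\widehat\Pi_r\|_{\lambda,k-1}\le C_{13}\|F\|_{C^k}$.

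The main obstacle, though a mild one, is bookkeeping the loss of one derivative: one has to verify carefully that the $(k-1)$-th order $(g,u)$-derivatives of $F\circ\widehat\Pi_r$ are genuinely Lipschitz in $x$ (not merely continuous), which forces the use of the $C^k$ norm of $F$ via the one-derivative mean value step. A secondary issue is that $\widehat{\Pi}_r$ is defined using an identification of each $\widehat R_i$ with $R_i\times G$; one must check this trivialisation is smooth, so that derivatives in $g$ on $\widehat X_r$ correspond, up to bounded smooth coefficients, to $G$-action derivatives on $\widehat M$. Both issues are routine once the chain rule expansion above is written out.
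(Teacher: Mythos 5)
Your proposal is correct and follows essentially the same route as the paper, whose proof is just the two-line observation that $(F\circ\widehat{\Pi}_{r})(x,g,u)=F(f_{u}(\Pi(x),g))$ together with the $C^{\infty}$ smoothness of $f_{t}$ and the Lipschitz continuity of $\Pi$. You have simply written out the chain-rule and mean-value details (including the correct explanation of the loss of one derivative) that the paper leaves implicit.
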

\begin{proof}
By definition,
$$(F\circ\widehat{\Pi}_{r})(x,g,u)=F(f_{u}(\Pi(x),g)).$$
Then the result follows from $f_{t}$ being $C^{\infty}$ and $\Pi$ being Lipschitz continuous.
\end{proof}

Recall that the potential $\Phi$ defined on $\Lambda$ induces a potential $\varphi \in F_{\lambda}(X,\mathbb{R})$. Let $\widehat{\mu}_{\varphi}\times Leb$ represent the natural $\phi_{t}$-invariant measure, as defined in \S \ref{subsec 2.5}. According to Lemma \ref{Semi-conjugacy of extension flow and symbolic flow}, the semi-conjugacy $\widehat{\Pi}_{r}$ gives rise to a $f_{t}$-invariant measure $\widehat{\Pi}_{r}^{*}(\widehat{\mu}_{\varphi}\times Leb)$ on $\widehat{\Lambda}$. 
The next lemma shows the coincidence of these measures.

\begin{Lemma}\label{Coincide of two measures}
We have $\widehat{\Pi}_{r}^{*}(\widehat{\mu}_{\varphi}\times Leb)=\widehat{\mu}_{\Phi}$.
\end{Lemma}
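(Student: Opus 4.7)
The plan is to compare $\widehat{\Pi}_{r}^{*}(\widehat{\mu}_{\varphi}\times \mathrm{Leb})$ and $\widehat{\mu}_{\Phi}$ by testing both measures against a continuous function $F\in C(\widehat{\Lambda})$ and showing the resulting integrals coincide. I will reduce the question, via the $G$-equivariance of $\widehat{\Pi}_{r}$, to the already known identity $\Pi_{r}^{*}(\mu_{\varphi}\times\mathrm{Leb})=\mu_{\Phi}$ from Lemma \ref{Gibbs measure and symbolic measure}, together with the local-product description $\widehat{\mu}_{\Phi}=\int_{\Lambda}\mathrm{Haar}_{x}\,d\mu_{\Phi}(x)$ recorded in \S \ref{subsec 4.1}.

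First, unfold the definitions. Using the formula for the suspension measure from \S \ref{subsec 2.5} and the product structure $\widehat{\mu}_{\varphi}=\mu_{\varphi}\times\mathrm{Haar}_{G}$, the push-forward integral becomes
$$\int_{\widehat{\Lambda}}F\,d\widehat{\Pi}_{r}^{*}(\widehat{\mu}_{\varphi}\times\mathrm{Leb})=\frac{1}{\int r\,d\mu_{\varphi}}\int_{X}\int_{G}\int_{0}^{r(x)}F\bigl(f_{u}(\Pi(x),g)\bigr)\,du\,dg\,d\mu_{\varphi}(x).$$
Because $f_{t}$ is a $G$-extension (Definition \ref{Definition of compact group extension}), $f_{u}$ commutes with the right action $R_{g}$, and in the local trivialization on $\widehat R_{i}$ one has $(\Pi(x),g)=R_{g}(\Pi(x),e)$, hence $f_{u}(\Pi(x),g)=R_{g}\bigl(f_{u}(\Pi(x),e)\bigr)$.

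Second, introduce the fiber average $\bar F:\Lambda\to\mathbb{C}$ by $\bar F(z):=\int_{G}F(R_{g}\widehat{z})\,dg$ for an arbitrary lift $\widehat z\in\varrho^{-1}(z)$; this is well defined since Haar is bi-invariant and the fiber is the $R_{g}$-orbit of any lift. Swapping the $g$- and $u$-integrals by Fubini, the previous display equals
$$\frac{1}{\int r\,d\mu_{\varphi}}\int_{X}\int_{0}^{r(x)}\bar F\bigl(g_{u}(\Pi(x))\bigr)\,du\,d\mu_{\varphi}(x)=\int_{\Lambda}\bar F\,d\bigl(\Pi_{r}^{*}(\mu_{\varphi}\times\mathrm{Leb})\bigr),$$
since $\varrho\circ \widehat{\Pi}_{r}(x,g,u)=g_{u}(\Pi(x))=\Pi_{r}(x,u)$.

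Third, apply Lemma \ref{Gibbs measure and symbolic measure} to rewrite this as $\int_{\Lambda}\bar F\,d\mu_{\Phi}$, and then use the disintegration $\widehat{\mu}_{\Phi}=\int_{\Lambda}\mathrm{Haar}_{x}\,d\mu_{\Phi}(x)$ together with $\bar F(x)=\int_{\varrho^{-1}(x)}F\,d\mathrm{Haar}_{x}$ to conclude $\int_{\Lambda}\bar F\,d\mu_{\Phi}=\int_{\widehat{\Lambda}}F\,d\widehat{\mu}_{\Phi}$. Since $F\in C(\widehat{\Lambda})$ is arbitrary, the two measures agree.

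The only delicate point is the interplay between the trivializations used to define $\widehat{R}=R\times G$ (and hence $\widehat{\Pi}_{r}$) and the local product structure defining $\widehat{\mu}_{\Phi}$: I would verify that, on each parallelepiped $\widehat R_{i}^{r}$, the identification $\mathrm{Haar}_{z}$ with the push-forward of $\mathrm{Haar}_{G}$ via the chosen trivialization is exactly the one implicit in the integral $\int_{G}F(R_{g}\widehat z)\,dg$. This is essentially a bookkeeping check because Haar measure is bi-invariant, so any ambiguity in the trivialization disappears after integrating over $G$; no Jacobian issues arise. Everything else is a direct unfolding of definitions.
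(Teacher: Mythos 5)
Your proof is correct and follows essentially the same route as the paper's: both unfold the suspension measure, apply Fubini, use the projection identity $\varrho\circ\widehat{\Pi}_{r}(x,g,u)=\Pi_{r}(x,u)$ together with $G$-equivariance and Haar invariance to identify the inner fiber integral, and then invoke Lemma \ref{Gibbs measure and symbolic measure} and the local-product description of $\widehat{\mu}_{\Phi}$. Your explicit fiber-average $\bar F$ and the trivialization remark are just a slightly more structured packaging of the same computation.
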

\begin{proof}
Recalling that each parallelepiped $R_{i}^{r}$ is sufficiently small, and $\{R_{i}^{r}\times G\}_{i=1}^{N}$ form a partition of $\widehat{\Lambda}$. We aim to demonstrate the equality: $$\int_{\widehat{\Lambda}}Fd\widehat{\mu}_{\Phi}=\int_{\widehat{\Lambda}}Fd\widehat{\Pi}_{r}^{*}(\widehat{\mu}_{\varphi}\times Leb),\quad\text{for any } F\in C(\widehat{\Lambda}).$$
Leveraging Lemma \ref{Gibbs measure and symbolic measure}, we start by expressing the left-hand side of the equation,
\begin{equation}\label{3.2.3}
\begin{aligned}
\int_{\widehat{\Lambda}}Fd\widehat{\mu}_{\Phi}=&\sum_{i=1}^{N}\int_{R_{i}^{r}\times G}Fd(\mu_{\Phi}\times\text{Haar}_{G})\\
=&\int_{X_{r}\times G}F(\Pi_{r}(x,u),g)d((\mu_{\varphi}\times Leb)\times\text{Haar}_{G})((x,u),g)\\
=&\int_{G}\int_{X}\int_{0}^{r(x)}F(\Pi_{r}(x,u),g)dud\mu_{\varphi}(x)d\text{Haar}_{G}(g)\\
=&\int_{X}\int_{0}^{r(x)}\int_{G}F(\Pi_{r}(x,u),g)d\text{Haar}_{G}(g)dud\mu_{\varphi}(x).
\end{aligned}
\end{equation}
By definition, for the right-hand side of the equation, we have
\begin{equation}\label{3.2.4}
\begin{aligned}
\int_{\widehat{\Lambda}}Fd\widehat{\Pi}_{r}^{*}(\widehat{\mu}_{\varphi}\times Leb)=&\int_{\widehat{X}_{r}}F\circ\widehat{\Pi}_{r}d(\widehat{\mu}_{\varphi}\times Leb)\\
=&\int_{X}\int_{G}\int_{0}^{r(x)}F(\widehat{\Pi}_{r}(x,g,u))dud\text{Haar}_{G}(g)d\mu_{\varphi}(x)\\
=&\int_{X}\int_{0}^{r(x)}\int_{G}F(\widehat{\Pi}_{r}(x,g,u))d\text{Haar}_{G}(g)dud\mu_{\varphi}(x).
\end{aligned}
\end{equation}
By the definitions of $\widehat{\Pi}_{r}$ and a compact group extension, we have
\begin{equation}\label{3.2.5}
\pi(\widehat{\Pi}_{r}(x,g,u))=\pi(f_{u}(\Pi(x),g))=g_{u}(\Pi(x))=\Pi_{r}(x,u)
\end{equation}
where $\pi:\widehat{M}\to M$ is the bundle projection. Then, by \eqref{3.2.5} and the definitions of a compact group extension, we can establish
\begin{equation*}
\int_{G}F(\widehat{\Pi}_{r}(x,g,u))d\text{Haar}_{G}(g)=\int_{G}F(\Pi_{r}(x,u),g)d\text{Haar}_{G}(g).
\end{equation*}
Now, we substitute this result into \eqref{3.2.3}, and then combine it with \eqref{3.2.4} to complete the proof.
\end{proof}

This allows us to identify correlation functions for $f_t$ and $\phi_t$ on suitable classes of functions.

\begin{Lemma}\label{Suspension flow to extension flow: rapid mixing}
If $\phi_{t}$ is rapidly mixing with respect to $\widehat{\mu}_{\varphi}\times Leb$, then $f_{t}$ is rapidly mixing with respect to $\widehat{\mu}_{\Phi}$.
\end{Lemma}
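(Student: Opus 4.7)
The plan is to transfer the correlation estimate from the symbolic model back to the extension flow via the semi-conjugacy $\widehat{\Pi}_r$. Given $E, F \in C^k(\widehat{M})$, define their pullbacks $\widetilde{E} := E \circ \widehat{\Pi}_r$ and $\widetilde{F} := F \circ \widehat{\Pi}_r$ on $\widehat{X}_r$. By Lemma \ref{Smoothness of semi-conjugate}, these belong to $F_\lambda^{k-1}(\widehat{X}_r)$ with $\|\widetilde{E}\|_{\lambda, k-1} \leq C_{13}\|E\|_{C^k}$ and likewise for $\widetilde{F}$. The first step is thus a regularity bookkeeping: whatever $k$ is needed at the symbolic level to achieve polynomial decay of order $n$, we pick up one extra derivative at the smooth level.

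Next I would verify that the correlation functions agree under the semi-conjugacy. Using $\widehat{\Pi}_r \circ \phi_t = f_t \circ \widehat{\Pi}_r$ (Lemma \ref{Semi-conjugacy of extension flow and symbolic flow}) together with $\widehat{\Pi}_r^*(\widehat{\mu}_\varphi \times \mathrm{Leb}) = \widehat{\mu}_\Phi$ (Lemma \ref{Coincide of two measures}), a straightforward change of variables gives
\begin{equation*}
\int_{\widehat{X}_r} \widetilde{E} \circ \phi_t \cdot \overline{\widetilde{F}} \, d(\widehat{\mu}_\varphi \times \mathrm{Leb}) = \int_{\widehat{\Lambda}} E \circ f_t \cdot \overline{F} \, d\widehat{\mu}_\Phi,
\end{equation*}
and similarly for the product of integrals. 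Therefore $\rho_{\widetilde{E},\widetilde{F}}^{\phi}(t) = \rho_{E,F}^{f}(t)$ for all $t$.

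Finally, I apply the hypothesis that $\phi_t$ is rapidly mixing with respect to $\widehat{\mu}_\varphi \times \mathrm{Leb}$ (Definition \ref{Definiiton 6.3.2}). Fix $n \in \mathbb{N}^+$. The hypothesis yields an integer $k_0 \in \mathbb{N}^+$ and a constant $C_{11} > 0$ such that for all $t > 0$,
\begin{equation*}
|\rho_{\widetilde{E},\widetilde{F}}^{\phi}(t)| \leq C_{11} \|\widetilde{E}\|_{\lambda, k_0} \|\widetilde{F}\|_{\lambda, k_0} \, t^{-n}.
\end{equation*}
Setting $k := k_0 + 1$ and combining with the regularity bound from Lemma \ref{Smoothness of semi-conjugate} gives
\begin{equation*}
|\rho_{E,F}^{f}(t)| \leq C_{11} C_{13}^2 \|E\|_{C^k} \|F\|_{C^k} \, t^{-n},
\end{equation*}
which is exactly the rapid mixing statement for $f_t$ with constant $C_1 := C_{11} C_{13}^2$.

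There is essentially no hard step here: the lemma is a purely formal transfer statement, and all the substantive work has been absorbed into the earlier identifications of the semi-conjugacy (Lemma \ref{Semi-conjugacy of extension flow and symbolic flow}), the measure (Lemma \ref{Coincide of two measures}), and the regularity (Lemma \ref{Smoothness of semi-conjugate}). The only point requiring mild care is matching the function-space norms, ensuring that the $C^k$-norm on $\widehat{M}$ dominates the $\|\cdot\|_{\lambda, k-1}$-norm on $\widehat{X}_r$ with a uniform constant; this follows because $\widehat{\Pi}_r$ is Lipschitz in $x$ and smooth in $(g,u)$, so every mixed partial derivative of order $\leq k-1$ in $(g,u)$ of $F \circ \widehat{\Pi}_r$ is again Lipschitz in $x$ with constant controlled by $\|F\|_{C^k}$.
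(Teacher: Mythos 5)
Your proof is correct and follows essentially the same route as the paper, whose own proof simply cites Lemmas \ref{Semi-conjugacy of extension flow and symbolic flow}, \ref{Smoothness of semi-conjugate} and \ref{Coincide of two measures}; your write-up just spells out the change of variables, the norm bookkeeping with the index shift $k = k_0+1$, and the resulting constant, all of which are the intended content of that one-line argument.
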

\begin{proof}
The result follows from Lemmas \ref{Semi-conjugacy of extension flow and symbolic flow}, \ref{Smoothness of semi-conjugate} and \ref{Coincide of two measures}.
\end{proof}

\subsection{Relating the Diophantine assumptions for $f_t$ and the symbolic model}\label{subsec 6.6}

In this subsection, we present a result regarding the rapid mixing of $\phi_{t}$ under a Diophantine assumption. Additionally, we will demonstrate that the Diophantine condition on the Brin group of $f_{t}$ implies the Diophantine assumption of $\phi_{t}$, thereby proving Theorem \ref{Rapid mixing for compact group extension of hyperbolic flow} through the aid of Lemma \ref{Suspension flow to extension flow: rapid mixing}. 
We can express $f_{t}$ as a local skew product flow. Specifically, we can represent $f_{t}(z,g)=(g_{t}(z),S(z,t)g)$ whenever $|t|$ is sufficiently small, where $S$ is a cocycle function.

For $z^{\prime}\in W^{\delta_{z,z^{\prime}}}(z)$, where $\delta_{x,y}=s$ if $y\in W^{s}(x)$ and $\delta_{x,y}=u$ if $y\in W^{u}(x)$, if $z$ has a symbolic coordinate $(x,u)$ and $z^{\prime}$ has a symbolic coordinate $(x^{\prime},u^{\prime})$, as demonstrated in Lemma \ref{Symbolic coordinate for stable and unstable manifold}, we have shown that $u^{\prime}=u+\Delta^{\delta_{x,x^{\prime}}}(x,x^{\prime})$. We can express their stable twist $\mathcal{T}^{s}(z,z^{\prime})$ and unstable twist $\mathcal{T}^{u}(z,z^{\prime})$ as follows. Recalling the definitions of $\Delta^{s}, \Delta^{u}$ in Definition \ref{Definition 6.2.1} and the definitions of $\Theta^{s},\Theta^{u}$ in Definition \ref{Definition 6.3.3}. The following result will be used to prove the Diophantine assumption on the Brin group of $f_{t}$ implies a similar Diophantine assumption of $\phi_{t}$ in Lemma \ref{Diophantine subset of extension flow to suspension flow}.

\begin{Lemma}\label{Symbolic coordinate of stable and unstable twists for compact group extension flow}
For any $(z,g)\in\widehat{\Lambda}$ and $(z^{\prime},\mathcal{T}^{\delta_{z,z^{\prime}}}(z,z^{\prime})g)\in W^{\delta_{z,z^{\prime}}}_{f_{t}}(z,g)$ with $d(z,z^{\prime})$ small enough, and assume $z,z^{\prime}$ have symbolic coordinate $(x,u)$ and $(x^{\prime},u+\Delta^{\delta_{x,x^{\prime}}}(x,x^{\prime}))$ respectively, then we have $\mathcal{T}^{\delta_{z,z^{\prime}}}(z,z^{\prime})=S(\Pi(x^{\prime}),u+\Delta^{\delta_{x,x^{\prime}}}(x,x^{\prime}))\Theta^{\delta_{x,x^{\prime}}}(x,x^{\prime})S^{-1}(\Pi(x),u)$.
\end{Lemma}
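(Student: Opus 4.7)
The plan is to work out both sides in a common coordinate system provided by the semiconjugacy $\widehat{\Pi}_{r}:\widehat{X}_{r}\to\widehat{\Lambda}$, and to match the stable (or unstable) twist $\mathcal{T}^{\delta}$ on $\widehat{\Lambda}$ with the corresponding twist $\Theta^{\delta}$ on $\widehat{X}$ up to a conjugation by the cocycle $S$ that arises from the flow direction.

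First I would express $(z,e)$ as the image under $\widehat{\Pi}_{r}$ of a point in $\widehat{X}_{r}$. Since $z=g_{u}(\Pi(x))$ and $f_{u}(\Pi(x),g_{0})=(g_{u}(\Pi(x)),S(\Pi(x),u)g_{0})$ by the local skew-product form of $f_{t}$, choosing $g_{0}=S^{-1}(\Pi(x),u)$ yields $(z,e)=\widehat{\Pi}_{r}(x,S^{-1}(\Pi(x),u),u)$. The same recipe with $x'$ in place of $x$ and the shift $u'=u+\Delta^{\delta_{x,x'}}(x,x')$ from Lemma \ref{Symbolic coordinate for stable and unstable manifold} expresses the pair $(z',\ast)$ on the correct fiber of $\widehat{\Pi}_{r}$.

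Next I would identify the symbolic stable/unstable manifolds of $\phi_{t}$ explicitly. A direct computation using $\phi_{t}(x,g,u)=(\sigma^{n}(x),\Theta_{n}(x)g,u+t-r_{n}(x))$ for appropriate $n$ shows that two points $(x,g,u)$ and $(x',g',u')$ lie on the same $\phi_{t}$-stable manifold precisely when $x'\in W^{s}(x)$, $u'=u+\Delta^{s}(x,x')$, and $g'=\Theta^{s}(x,x')g$; the three conditions are just the three coordinate-wise convergences required for $\phi_{t}$-trajectories to asymptotically coincide, and the group condition is exactly the definition of $\Theta^{s}$ in Definition \ref{Definition 6.3.3}. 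The analogous statement for $W^{u}$ follows by the symmetric argument using $\phi_{-t}$ and $\Theta^{u}$.

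Finally I would transport this symbolic picture back to $\widehat{\Lambda}$. Since $\widehat{\Pi}_{r}$ semiconjugates $\phi_{t}$ with $f_{t}$ (Lemma \ref{Semi-conjugacy of extension flow and symbolic flow}), it maps $\phi_{t}$-stable leaves into $f_{t}$-stable leaves. Applying $\widehat{\Pi}_{r}$ to the symbolic stable manifold of $(x,S^{-1}(\Pi(x),u),u)$ through the point $(x',\Theta^{s}(x,x')S^{-1}(\Pi(x),u),u+\Delta^{s}(x,x'))$ yields, via $\widehat{\Pi}_{r}(x',g',u')=f_{u'}(\Pi(x'),g')$ and the skew-product formula for $f_{u'}$, the point
\[
\bigl(z',\,S(\Pi(x'),u+\Delta^{s}(x,x'))\,\Theta^{s}(x,x')\,S^{-1}(\Pi(x),u)\bigr)\in W^{s}_{f_{t}}(z,e).
\]
Matching this with the defining equality $(z',\mathcal{T}^{s}(z,z'))\in W^{s}_{f_{t}}(z,e)$ from Definition \ref{Definition of stable twist} gives the claimed formula; the unstable case is identical. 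The main technical point to check carefully is that the semiconjugacy $\widehat{\Pi}_{r}$ really does send $\phi_{t}$-stable (respectively unstable) leaves into the corresponding $f_{t}$-leaves — this requires the smallness hypothesis $d(z,z')$ small in the statement, so that the local skew-product description of $f_{t}$ used to define $S$ is valid throughout the argument.
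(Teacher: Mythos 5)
Your proposal is correct and follows essentially the same route as the paper: both express $(z,g)$ and the twisted point over $z'$ in symbolic coordinates via $\widehat{\Pi}_{r}$ (with the $S^{-1}(\Pi(\cdot),\cdot)$ correction in the group coordinate), and both exploit the forward/backward asymptotic characterization of the stable/unstable leaves together with the definitions of $\Delta^{s},\Delta^{u},\Theta^{s},\Theta^{u}$. The only cosmetic difference is the direction: the paper starts from the defining asymptotic relation $d(f_{\pm t}(z,g),f_{\pm t}(z',\mathcal{T}^{\delta}(z,z')g))\to 0$ and reads off the formula, while you build the symbolic leaf point and push it forward, identifying it with $(z',\mathcal{T}^{\delta}(z,z'))$ through the graph property of $W^{\delta}_{f_{t}}$ — which the paper's setup (Definition \ref{Definition of stable twist} and \S\ref{subsec 4.1}) indeed provides.
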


\begin{proof}
We first prove the case where $\delta_{z,z^{\prime}}=s$, i.e., $z^{\prime}\in W^{s}(z)$. If $d(z,z^{\prime})$ is sufficiently small, then $u+\Delta^{s}(x,x^{\prime})$ is also  small. Utilizing the definition of the semi-conjugacy $\widehat{\Pi}_{r}:\widehat{X}_{r}\to\widehat{\Lambda}$, we find that the symbolic coordinate of $(z,g)$ is $(x,S^{-1}(\Pi(x),u)g,u)$, and the symbolic coordinate of $(z^{\prime},\mathcal{T}^{s}(z,z^{\prime})g)$ is $(x^{\prime},S^{-1}(\Pi(x^{\prime}),u+\Delta^{s}(x,x^{\prime}))\mathcal{T}^{s}(z,z^{\prime})g,u+\Delta^{s}(x,x^{\prime}))$. Now, we have
$$
\begin{aligned}
&\lim_{t\to\infty}d\big((x,S^{-1}(\Pi(x),u)g,u+t),(x^{\prime},S^{-1}(\Pi(x^{\prime}),u+\Delta^{s}(x,x^{\prime}))\mathcal{T}^{s}(z,z^{\prime})g,u+\Delta^{s}(x,x^{\prime})+t)\big)\\
=&\lim_{t\to\infty}d(f_{t}(z,g),f_{t}(z^{\prime},\mathcal{T}^{s}(z,z^{\prime})g))=0.
\end{aligned}
$$
This now   implies that $$\mathcal{T}^{s}(z,z^{\prime})=S(\Pi(x^{\prime}),u+\Delta^{s}(x,x^{\prime}))\lim_{n\to\infty}(\Theta_{n}(x^{\prime}))^{-1}\Theta_{n}(x)S^{-1}(\Pi(x),u)$$
which proves the case where $z^{\prime}\in W^{s}(z)$.

Now, assume $z^{\prime}\in W^{u}(z)$. If $d(z,z^{\prime})$ is sufficiently small, then $u+\Delta^{u}(x,x^{\prime})$ is likewise small. Thus, the symbolic coordinate of $(z,g)$ is $(x,S^{-1}(\Pi(x),u)g,u)$, and the symbolic coordinate of $(z^{\prime},\mathcal{T}^{u}(z,z^{\prime})g)$ is $(x^{\prime},S^{-1}(\Pi(x^{\prime}),u+\Delta^{u}(x,x^{\prime}))\mathcal{T}^{u}(z,z^{\prime})g,u+\Delta^{u}(x,x^{\prime}))$. Now, we have
$$
\begin{aligned}
&\lim_{t\to\infty}d\big((x,S^{-1}(\Pi(x),u)g,u-t),(x^{\prime},S^{-1}(\Pi(x^{\prime}),u+\Delta^{u}(x,x^{\prime}))\mathcal{T}^{u}(z,z^{\prime})g,u+\Delta^{u}(x,x^{\prime})-t)\big)\\
=&\lim_{t\to\infty}d(f_{-t}(z,g),f_{-t}(z^{\prime},\mathcal{T}^{s}(z,z^{\prime})g))=0.
\end{aligned}
$$
The above implies that $$\mathcal{T}^{u}(z,z^{\prime})=S(\Pi(x^{\prime}),u+\Delta^{u}(x,x^{\prime}))\lim_{n\to\infty}\Theta_{n}(\sigma^{-n}(x^{\prime}))(\Theta_{n}(\sigma^{-n}(x)))^{-1}S^{-1}(\Pi(x),u)$$
which proves the case where $z^{\prime}\in W^{u}(z)$ and thus completing the proof.
\end{proof}

Recall  the definition of a Diophantine subset in Definition \ref{Definition of Diophantine condition on Brin group} and the definition of $\Gamma(x,n_{0},p_{0})$ in Definition \ref{Definition of Brin group in the symbolic level}. Also recall the definition of the Brin group $H(z)$ at a point $z\in\Lambda$ in \S \ref{subsec 4.3}.

\begin{Lemma}\label{Diophantine subset of extension flow to suspension flow}
If there exist $z\in\Lambda$ and a finite subset $\Gamma\subset H(z)$ such that $\Gamma$ is Diophantine, then there exist $x\in X$, $n_{0}\in\mathbb{N}$ and $p_{0}\in\mathbb{N}^{+}$ such that $\Gamma(x,n_{0},p_{0})$ is Diophantine.
\end{Lemma}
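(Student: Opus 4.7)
The plan is to lift each closed chain defining an element of $\Gamma\subset H(z)$ to a closed symbolic chain at a preimage of $z$, and then to invoke Lemma~\ref{Symbolic coordinate of stable and unstable twists for compact group extension flow} to show that the resulting symbolic twists are a global $G$-conjugate of the original Brin twists, so that the Diophantine property passes through. Enumerate $\Gamma=\{g_{W_j}\}_{j=1}^{N}$ where each $W_j=\{x_k^{(j)}\}_{k=0}^{p_j}$ is a closed chain at $z$. First I would reduce to the case that $z$ has a unique symbolic coordinate $(x,u_0)\in X_{r}$ (i.e.\ lies in the interior of some parallelepiped): Brin groups at different base points of $\Lambda$ are related by conjugation in $G$ via holonomy along a u-s path, and the Diophantine condition is invariant under conjugation in $G$ (as noted after Definition~\ref{Definition of Diophantine condition on Brin group}), so this is a harmless reduction.

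Next, using Lemma~\ref{Symbolic coordinate for stable and unstable manifold}, the symbolic coordinate of each $x_k^{(j)}$ is $(y_k^{(j)},u_k^{(j)})$ with $y_{k+1}^{(j)}\in W^{\delta}(y_k^{(j)})$ and $u_{k+1}^{(j)}=u_k^{(j)}+\Delta^{\delta}(y_k^{(j)},y_{k+1}^{(j)})$. Closedness of $W_j$ together with uniqueness of the symbolic coordinate of $z$ forces $y_{p_j}^{(j)}=y_0^{(j)}=x$ and $\sum_{k}\Delta^{\delta}(y_k^{(j)},y_{k+1}^{(j)})=0$; hence $V_j:=\{y_k^{(j)}\}_{k=0}^{p_j}$ is a closed chain at $x$ satisfying the balance condition of Definition~\ref{Definition of Brin group in the symbolic level}. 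Taking $n_0$ to uniformly bound the symbolic size of every transition occurring in any $V_j$ and $p_0:=\max_j p_j$ (both finite since $\Gamma$ is finite), we have $g_{V_j}\in\Gamma(x,n_0,p_0)$ for each $j$.

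Writing $S_k:=S(\Pi(y_k^{(j)}),u_k^{(j)})$, Lemma~\ref{Symbolic coordinate of stable and unstable twists for compact group extension flow} gives
\[
\mathcal{T}^{\delta}(x_k^{(j)},x_{k+1}^{(j)})=S_{k+1}\,\Theta^{\delta}(y_k^{(j)},y_{k+1}^{(j)})\,S_k^{-1},
\]
and multiplying these together telescopes to $g_{W_j}=S_{p_j}\,g_{V_j}\,S_0^{-1}$. Closedness of the lift yields $S_{p_j}=S_0$, so $\{g_{V_j}\}=S_0^{-1}\,\Gamma\,S_0$ is $G$-conjugate to $\Gamma$, and is therefore itself Diophantine. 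Since the Diophantine condition is existential in the set (requiring \emph{some} element to displace each $h\in H_{\pi}$), any set containing a Diophantine subset is itself Diophantine; in particular $\Gamma(x,n_0,p_0)\supseteq\{g_{V_j}\}$ is Diophantine, as required.

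The main technical obstacle is ensuring that the lifted chains close up in $X$ rather than merely in the quotient $X_{r}$, which is exactly why the reduction to a $z$ with a unique symbolic representative is essential; this is also where the conjugacy invariance of both $H(z)$ and of the Diophantine property does real work. A secondary bookkeeping point is matching trivializations: the twist $g_{W_j}$ is independent of intermediate trivializations by Lemma~\ref{Independence of closed chain for local coordiante}, and if the trivialization at $z$ is taken to be the one induced by the symbolic model then the telescoping identity involving the cocycle $S$ holds without further adjustment.
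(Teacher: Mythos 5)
Your argument is essentially the paper's own proof: lift each closed chain to symbolic coordinates via Lemma \ref{Symbolic coordinate for stable and unstable manifold}, express the stable/unstable twists through Lemma \ref{Symbolic coordinate of stable and unstable twists for compact group extension flow}, telescope to get $g_{W}=S(\Pi(x),u)\,g_{V}\,S^{-1}(\Pi(x),u)$, and conclude by conjugacy invariance of the Diophantine property plus the fact that a superset of a Diophantine set is Diophantine. One caveat: Lemma \ref{Symbolic coordinate of stable and unstable twists for compact group extension flow} is stated only for $d(z,z^{\prime})$ sufficiently small, so before applying it you must (as the paper does) cut each chain $W_j$ into finitely many short segments, which still yields finite $n_{0},p_{0}$; also, your reduction to a point $z$ with a unique symbolic coordinate is harmless but unnecessary, since fixing one symbolic coordinate $(x,u)$ of $z$ and using it at both endpoints of each chain already forces $y^{(j)}_{p_j}=y^{(j)}_{0}=x$ and the balance condition.
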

\begin{proof} 
Each $g_{W}\in\Gamma$ associated a closed chain $W=\{z_{i}\}_{i=0}^{p}$ with $z_{0}=z_{p}=z$. Since $\Gamma$ is finite, by cutting each $W$ into smaller segments if necessary, we can assume there exists $p_{0}\in\mathbb{N}^{+}$ such that for each $g_{W}\in\Gamma$ we have $p\le p_{0}$ and $d(z_{i},z_{i+1})$ is sufficiently small for any $0\le i\le p-1$. Assume $z$ has symbolic coordinate $(x,u)$ and $z^{i}$ has symbolic coordinate $(x_{i},u_{i})$ for each $0\le i\le p$. We can assume there exists $n_{0}\in\mathbb{N}$ such that for each $g_{W}\in\Gamma$ we have $x_{i+1}\in W^{s}_{n_{0}}(x_{i})$ or $W^{u}_{n_{0}}(x_{i})$ for each $0\le i\le p-1$.
	
Since $W$ is a closed chain, we have $V=\{x_{i}\}_{i=0}^{p}$ is a closed chain of size $n_{0}$ and length $p_{0}$. Therefore $\{g_{V}:g_{W}\in\Gamma\}\subset\Gamma(x,n_{0},p_{0})$.  Note that, we have $u_{i}=u+\sum_{k=0}^{i-1}\Delta^{\delta_{x_{k+1},x_{k}}}(x_{k},x_{k+1})$ for $1\le i\le p$, and $\sum_{k=0}^{p-1}\Delta^{\delta_{x_{k+1},x_{k}}}(x_{k},x_{k+1})=0$. Furthermore, by Lemma \ref{Symbolic coordinate of stable and unstable twists for compact group extension flow}, we have 
$$
\begin{aligned}
&g_{W}\\
=&\mathcal{T}^{j_{z_{p},z_{p-1}}}(z_{p-1},z_{p})\cdots\mathcal{T}^{j_{z_{1},z_{0}}}(z_{0},z_{1})\\
=&S(\Pi(x),u)\Theta^{j_{x_{p},x_{p-1}}}(x_{p-1},x_{p})\cdots \Theta^{j_{x_{1},x_{0}}}(x_{0},x_{1})S^{-1}(\Pi(x),u)\\
=&S(\Pi(x),u)g_{V}S^{-1}(\Pi(x),u).
\end{aligned}
$$ 
Thus we have $\{g_{V}:g_{W}\in\Gamma\}=S^{-1}(\Pi(x),u)\Gamma S(\Pi(x),u)$. In particular, $\Gamma$ is Diophantine implies $\{g_{V}:g_{W}\in\Gamma\}$ is also Diophantine, and so by definition $\Gamma(x,n_{0},p_{0})$ is Diophantine.
\end{proof}

We will prove the following result in \S \ref{sec 7} 

\begin{Theorem}\label{Rapid mixing for symbolic flow of compact group extension}
If there exist $x\in X$, $n_{0}\in\mathbb{N}$ and $p_{0}\in\mathbb{N}^{+}$ such that $\Gamma(x,n_{0},p_{0})$ is Diophantine, then $\phi_{t}$ is rapidly mixing with respect to $\widehat{\mu}_{\varphi}\times Leb$.
\end{Theorem}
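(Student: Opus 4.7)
The proof executes the strategy outlined in the introduction.

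\medskip

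\noindent\textbf{Step 1 (Representation decomposition).} For $E, F \in F^{k}_{\lambda}(\widehat{X}_{r})$, apply the Peter--Weyl decomposition of Lemma \ref{Lemma 2.7.6} fibrewise to obtain $E = \sum_{\pi} E_{\pi}$ and $F = \sum_{\pi} F_{\pi}$. Because the skew product $\widehat{\sigma}(x,g) = (\sigma x, \Theta(x) g)$ commutes with right translation by $G$, each $E_{\pi} \circ \phi_{t}$ stays in the $\pi$-isotypic subspace of $L^{2}(G)$ along each fibre; combined with orthogonality of distinct isotypic components this gives
\begin{equation*}
\rho_{E, F}(t) = \sum_{\pi \in \widehat{G}} \rho_{\pi}(t),
\end{equation*}
where $\rho_{\pi}$ is the correlation of $E_{\pi}$ with $F_{\pi}$. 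The trivial component $\rho_{1}(t)$ is the base correlation under $\psi_{t}: X_{r} \to X_{r}$ of the $G$-averages of $E$ and $F$; it decays rapidly because $g_{t}$ (and hence $\psi_{t}$) is rapidly mixing by the standing assumption on $g_{t}$. For $\pi \neq 1$ both expectation terms vanish and $\rho_{\pi}$ is a purely oscillatory integral.

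\medskip

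\noindent\textbf{Step 2 (Laplace transforms and twisted transfer operators).} For each $\pi \neq 1$ replace $\rho_{\pi}$ by a convolution-smoothed auxiliary $\chi_{\pi}$ whose Laplace transform $\widehat{\chi}_{\pi}(s)$ is defined on $\{\mathrm{Re}\, s > 0\}$ and whose large-$t$ behaviour controls that of $\rho_{\pi}$. Unfolding the integral over $\phi_{t}$-orbits and passing to the one-sided shift $\sigma: X^{+} \to X^{+}$, one writes $\widehat{\chi}_{\pi}(s)$ in terms of the resolvent of the family of twisted transfer operators
\begin{equation*}
(L_{s, \pi} h)(x) = \sum_{\sigma y = x} e^{\varphi(y) - s r(y)}\, \pi(\Theta(y))^{*}\, h(y), \qquad h \in F_{\lambda}(X^{+}, H_{\pi}),
\end{equation*}
with $s = a + ib$. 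Classical Ruelle--Perron--Frobenius arguments handle the regime of bounded $|b|$ and $|\lambda_{\pi}|$ and reduce the question to a quantitative spectral gap for $L_{s,\pi}$ when $|b| + |\lambda_{\pi}|$ is large.

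\medskip

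\noindent\textbf{Step 3 (Dolgopyat-type estimates from the symbolic Diophantine hypothesis).} The core is a uniform bound of the form
\begin{equation*}
\| L_{a + ib, \pi}^{n} \|_{\lambda} \leq C\, (|b| + |\lambda_{\pi}|)^{M}\, e^{-\varepsilon n}
\end{equation*}
valid for $n$ large and for $a$ in a thin strip $|a| \leq \eta (|b| + |\lambda_{\pi}|)^{-N}$. Each closed chain $V = \{x_{i}\}_{i=0}^{p}$ counted in $\Gamma(x, n_{0}, p_{0})$ gives rise, after $n$ iterations of $\sigma$ (and approximate shadowing of its segments), to a pair of $\sigma^{n}$-preimages $y, y'$ of a common base point for which Lemma \ref{Lemma 6.2.2} gives $r_{n}(y) - r_{n}(y') = \sum_{i} \Delta^{\delta_{x_{i},x_{i+1}}}(x_{i}, x_{i+1}) + O(\lambda^{cn}) = O(\lambda^{cn})$ and Lemma \ref{Lemma 6.3.4} gives $d_{G}(\Theta_{n}(y) (\Theta_{n}(y'))^{-1}, g_{V}) = O(\lambda^{cn})$. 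Hence the oscillatory symbols $e^{-ibr_{n}(\cdot)}\pi(\Theta_{n}(\cdot))$ cannot align across these branches inside a single cylinder: the Diophantine property of $\Gamma(x, n_{0}, p_{0})$ (Definition \ref{Definition of Diophantine condition on Brin group}) yields $\| h(y) - \pi(g_{V}) h(y') \| \geq \delta |\lambda_{\pi}|^{-C} \| h \|$ for a suitable $V$, while Corollary \ref{Corollary 2.7.4} bounds the error committed when $g_{V}$ is replaced by the true rotation. Feeding this pointwise decorrelation into Dolgopyat's cone argument on $F_{\lambda}(X^{+}, H_{\pi})$, equipped with a norm rescaled by a fixed power of $|\lambda_{\pi}|$, produces the spectral contraction above.

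\medskip

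\noindent\textbf{Step 4 (Assembly).} The Step 3 bound analytically continues $\widehat{\chi}_{\pi}(s)$ to the region $\{ a + ib : a > -\eta (|b| + |\lambda_{\pi}|)^{-N} \}$ with polynomial growth in $b$ and $|\lambda_{\pi}|$. A Paley--Wiener contour deformation as in \cite{Dol98b} then turns this into
\begin{equation*}
|\rho_{\pi}(t)| \leq C\, |\lambda_{\pi}|^{M'}\, \| E_{\pi} \|_{\lambda, k}\, \| F_{\pi} \|_{\lambda, k}\, t^{-n}
\end{equation*}
for any prescribed $n$, once $k$ has been chosen large enough depending on $n$. By Lemma \ref{Lemma 2.7.8} one has $\| E_{\pi} \|_{\lambda, k_{0}} \leq C \| E \|_{\lambda, k} |\lambda_{\pi}|^{-(2k - 3 m_{G}/2)}$ and similarly for $F_{\pi}$, so summation over $\pi$ using Lemma \ref{Lemma 2.7.5} gives the rapid mixing bound of Definition \ref{Definiiton 6.3.2}. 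The main obstacle is the uniformity in $\pi$ of the Dolgopyat estimate in Step 3: arranging that the errors from Lemmas \ref{Lemma 6.2.2} and \ref{Lemma 6.3.4} together with Corollary \ref{Corollary 2.7.4} combine with the Diophantine lower bound to produce a loss only polynomial in $|\lambda_{\pi}|$ is the delicate technical point, and it forces the particular choice of norm on $F_{\lambda}(X^{+}, H_{\pi})$ flagged in the introduction.
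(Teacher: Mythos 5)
Your proposal is correct in outline and follows essentially the same route as the paper: Peter--Weyl decomposition of the correlation function, replacement of $\rho_\pi$ by a modified $\chi_\pi$, expression of the Laplace transform through the twisted transfer operators $\mathcal{L}_{s,\pi}$ on $F_\lambda(X^+,H_\pi)$, a Dolgopyat-type cancellation driven by the Diophantine set $\Gamma(x,n_0,p_0)$ with the shadowing errors controlled by Lemmas \ref{Lemma 6.2.2}, \ref{Lemma 6.3.4} and Corollary \ref{Corollary 2.7.4}, a norm weighted by a power of $|\lambda_\pi|$ (the $\|\cdot\|_{b_\pi}$-norm), analytic continuation to a polynomially thin strip, inverse Laplace with extra $u$-derivatives to absorb the polynomial growth in $b$, summation over $\pi$ via Lemmas \ref{Lemma 2.7.8} and \ref{Lemma 2.7.5}, and the trivial representation handled by rapid mixing of the base flow. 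The only caveat is that your displayed Step 3 bound $\|L^n_{a+ib,\pi}\|_\lambda\le C(|b|+|\lambda_\pi|)^M e^{-\varepsilon n}$ with $\varepsilon$ independent of $(b,\pi)$ overstates what the Diophantine hypothesis delivers -- the actual contraction (Proposition \ref{Dolgopyat type estimate for rapid mixing of compact group extension}, Corollary \ref{Corollary 3.3.5}) is by a factor $(1-b_\pi^{-C})$ per block of $O(\log b_\pi)$ iterates, so the rate degrades polynomially in $b_\pi$ -- but since your Step 4 only uses the resulting continuation to a strip of width $b_\pi^{-N}$ with polynomial bounds, which is exactly the consequence of the correct weaker estimate, the argument as assembled matches the paper's proof.
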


Given Theorem \ref{Rapid mixing for symbolic flow of compact group extension} it is easy to deduce
Theorem
\ref{Rapid mixing for compact group extension of hyperbolic flow}.

\begin{proof}[\textbf{Proof of Theorem \ref{Rapid mixing for compact group extension of hyperbolic flow} assuming Theorem \ref{Rapid mixing for symbolic flow of compact group extension}}]
The proof can be established by combining Lemmas \ref{Suspension flow to extension flow: rapid mixing}, \ref{Diophantine subset of extension flow to suspension flow}, and Theorem \ref{Rapid mixing for symbolic flow of compact group extension}.
\end{proof}

\section{Rapid mixing for the symbolic flow}\label{sec 7}

It remains to prove  Theorem \ref{Rapid mixing for symbolic flow of compact group extension}, and thus we always assume $\phi_{t}$ satisfies the assumption of Theorem \ref{Rapid mixing for symbolic flow of compact group extension} in this section. We will use a standard strategy which  involves  studying  the Laplace transform of the correlation function. By demonstrating that the Laplace transform can be analytically extended to a specific region in the left half-plane, one can derive a decay rate for the correlation function, which depends on the shape of this region. The Laplace transform can be expressed as a sum of iterations of transfer operators (Lemma \ref{Lemma 3.3.3}), and the analytical extension can be established through a Dolgopyat-type estimate of these transfer operators (Proposition \ref{Dolgopyat type estimate for rapid mixing of compact group extension}). 

\subsection{Decomposing and modifying correlation functions}\label{subsec 7.1}

Recall that  the correlation function for $E, F\in F_{\lambda}^{k}(\widehat{X}_{r})$ is defined by
$$\rho_{E,F}(t)=\int_{\widehat{X}_{r}}E\circ\phi_{t}\cdot\overline{F} d({\widehat{\mu}_{\varphi}\times\text{Leb}})-\int_{\widehat{X}_{r}}E d({\widehat{\mu}_{\varphi}\times\text{Leb}})\int_{\widehat{X}_{r}}\overline{F} d({\widehat{\mu}_{\varphi}\times\text{Leb}}).$$
For simplicity, we may sometimes write $\rho_{E,F}$ as $\rho$ when there is no confusion. As usual, by replacing $E$ by  $E- \int_{\widehat{X}_{r}}E d({\widehat{\mu}_{\varphi}\times\text{Leb}})$
and 
$F$ by 
$F- \int_{\widehat{X}_{r}}F d({\widehat{\mu}_{\varphi}\times\text{Leb}})$,  we can assume $\int_{\widehat{X}_{r}}E d{\widehat{\mu}_{\varphi}\times\text{Leb}}=\int_{\widehat{X}_{r}}\overline{F} d{\widehat{\mu}_{\varphi}\times\text{Leb}}=0$. Fix $k_{1}\in\mathbb{N}$, and we write $k=2k_{1}+k_{2}$. We can  interpret $E:X_{r}\to C^{2k_{1}}(G)$ by setting $E(x,u)(\cdot)=E(x,\cdot,u)$. Since $E, F\in F_{\lambda}^{k}(\widehat{X}_{r})$, we have $E, F\in F_{\lambda}(X_{r}, C^{2k_{1}}(G))$. By Lemma \ref{Lemma 2.7.6}, we can write
$$
E(x,u)=\sum_{\pi\in\widehat{G}}E_{\pi}(x,u)\quad\text{and}\quad F(x,u)=\sum_{\pi\in\widehat{G}}F_{\pi}(x,u)
$$
where $E_{\pi}(x,u)=\dim_{\pi}\text{Tr}(\widehat{E(x,u)}(\pi)\pi)\in H_{\pi}$ and $F_{\pi}(x,u)=\dim_{\pi}\text{Tr}(\widehat{F(x,u)}(\pi)\pi)\in H_{\pi}$. Recall that $\widehat{E(x,u)}(\pi)=\int_{G}\pi(g^{-1})E(x,g,u)d\text{Haar}_{G}(g)$. In particular, from that, we obtain $E_{\pi}$ is $C^{k_{2}}$ with respect to $u$ and $\frac{\partial^{j} E}{\partial u^{j}}=\sum_{\pi\in\widehat{G}}\frac{\partial^{j} E_{\pi}}{\partial u^{j}}$ for each $0\le j\le k_{2}$. We need the following estimate for $E_{\pi}$ and $F_{\pi}$.

\begin{Lemma}\label{Lemma 3.3.1}
For any $E,F\in F_{\lambda}^{k}(\widehat{X}_{r})\subset F_{\lambda}(X_{r}, C^{2k_{1}}(G))$ and any $1\not=\pi\in\widehat{G}$, we have $E_{\pi},F_{\pi}\in F_{\lambda}(X_{r}, H_{\pi})$. Moreover,
$$||E_{\pi}||_{\lambda}\le\dfrac{C_{3}||E||_{\lambda,2k_{1}}}{|\lambda_{\pi}|^{2k_{1}-3m_{G}/2}}\quad\text{and}\quad||F_{\pi}||_{\lambda}\le\dfrac{C_{3}||F||_{\lambda,2k_{1}}}{|\lambda_{\pi}|^{2k_{1}-3m_{G}/2}}.$$
\end{Lemma}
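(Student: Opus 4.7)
The plan is to reduce the estimate to a pointwise application of Lemma \ref{Lemma 2.7.8} on the fibre $C^{2k_{1}}(G)$, combined with linearity of the Peter--Weyl projection $F\mapsto F_{\pi}$ applied to the difference $E(x,\cdot,u)-E(y,\cdot,u)$. The key observation is that the decomposition $E=\sum_{\pi}E_{\pi}$ acts only in the $g$-variable, so both the $d_{\lambda}$-dependence on $(x,u)$ and the $u$-smoothness are simply inherited by $E_{\pi}$.

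First, I would unpack Definition \ref{Definition of test function space of symbolic flow for compact group} to observe that, viewing $(x,u)\mapsto E(x,\cdot,u)$ as a map $X_{r}\to C^{2k_{1}}(G)$, both the uniform bound $\sup_{(x,u)}\|E(x,\cdot,u)\|_{C^{2k_{1}}(G)}$ and the $d_{\lambda}$-Lipschitz constant of this map with respect to the $C^{2k_{1}}$-norm on the target are controlled by $\|E\|_{\lambda,2k_{1}}$, up to a finite combinatorial factor depending only on $\dim_{G}$ and $k_{1}$ (which I will absorb into $C_{3}$). This is because $\|E\|_{\lambda,2k_{1}}$ takes a supremum over all mixed $g$- and $u$-derivatives of total order at most $2k_{1}$, which in particular dominates each pure $g$-derivative of order at most $2k_{1}$.

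Second, for each fixed $(x,u)\in X_{r}$ I would apply Lemma \ref{Lemma 2.7.8} to the function $g\mapsto E(x,g,u)\in C^{2k_{1}}(G)$, which yields
$$
\bigl|E_{\pi}(x,u)\bigr|_{\infty}\;\le\; C_{3}\,\|E(x,\cdot,u)\|_{C^{2k_{1}}(G)}\,|\lambda_{\pi}|^{-(2k_{1}-3m_{G}/2)}\;\le\; C_{3}\,\|E\|_{\lambda,2k_{1}}\,|\lambda_{\pi}|^{-(2k_{1}-3m_{G}/2)}.
$$
Since $\mathrm{Haar}_{G}$ is a probability measure, the $L^{2}(G)$-norm is dominated by the uniform norm on $G$, so this bounds $\|E_{\pi}(x,u)\|_{L^{2}}$ and hence the $\|\cdot\|_{\infty}$-part of $\|E_{\pi}\|_{\lambda}$ in $F_{\lambda}(X_{r},H_{\pi})$.

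Third, for the Lipschitz seminorm I would exploit linearity of the Peter--Weyl projection: because $E_{\pi}(x,u)-E_{\pi}(y,u)$ equals the projection onto $H_{\pi}$ of the difference $E(x,\cdot,u)-E(y,\cdot,u)\in C^{2k_{1}}(G)$, whose $C^{2k_{1}}$-norm is at most $\|E\|_{\lambda,2k_{1}}\,d_{\lambda}(x,y)$ by the first step, a second application of Lemma \ref{Lemma 2.7.8} gives
$$
\|E_{\pi}(x,u)-E_{\pi}(y,u)\|_{L^{2}}\;\le\; C_{3}\,\|E\|_{\lambda,2k_{1}}\,d_{\lambda}(x,y)\,|\lambda_{\pi}|^{-(2k_{1}-3m_{G}/2)}.
$$
Combining the two estimates, and running the identical argument with $F$ in place of $E$, completes the proof. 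There is no genuine obstacle here; the only subtlety is absorbing the finite combinatorial constant relating the $C^{2k_{1}}$-norm to $\|\cdot\|_{\lambda,2k_{1}}$ into the constant $C_{3}$, which is harmless since the decay factor $|\lambda_{\pi}|^{-(2k_{1}-3m_{G}/2)}$ is the only $\pi$-dependence we care about.
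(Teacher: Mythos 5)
Your proposal is correct and follows essentially the same route as the paper: a pointwise application of Lemma \ref{Lemma 2.7.8} to $g\mapsto E(x,g,u)$ for the sup-norm bound (using $\|\cdot\|_{L^{2}}\le|\cdot|_{\infty}$), and linearity of the Peter--Weyl projection applied to the difference $E(x,\cdot,u)-E(y,\cdot,u)$, again with Lemma \ref{Lemma 2.7.8}, for the Lipschitz seminorm. The paper's proof is merely a terser version of the same argument, so nothing further is needed.
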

\begin{proof}
By Lemma \ref{Lemma 2.7.8}, 
$$||E_{\pi}(x,u)|| \le |E_{\pi}(x,u)|_{\infty} \le C_{3} \dfrac{||E(x,u)||_{C^{2k_{1}}}}{|\lambda_{\pi}|^{2k_{1}-3m_{G}/2}}$$
which implies $||E_{\pi}||_{\infty} \le \frac{C_{3}||E||_{\lambda,2k_{1}}}{|\lambda_{\pi}|^{2k_{1}-3m_{G}/2}}$.
Meanwhile, we have
$$\dfrac{E(x,u)-E(y,u)}{d_{\lambda}(x,y)} = \sum_{\pi \in \widehat{G}} \dfrac{E_{\pi}(x,u)-E_{\pi}(y,u)}{d_{\lambda}(x,y)}.$$
Thus, by definition, $|E_{\pi}|_{\lambda} \le \frac{C_{3}||E||_{\lambda,2k_{1}}}{|\lambda_{\pi}|^{2k_{1}-3m_{G}/2}}$.
\end{proof}

From the above estimate, if we choose $2k_{1} > \dim_{G}$, implying $2k_{1}-3m_{G}/2 > r_{G}$, then by Lemma \ref{Lemma 2.7.5}, the convergent series  $\sum_{\pi \in \widehat G}E_{\pi}$ and $\sum_{\pi \in \widehat G}F_{\pi}$ are absolutely convergent  and uniform in $(x,g,u)$. The decomposition of $E$ and $F$ gives us the following decomposition of the correlation function.

\begin{Lemma}\label{Lemma 3.3.2}
For any $E, F \in F_{\lambda}^{k}(\widehat{X}_{r})\subset F_{\lambda}(X_{r}, C^{2k_{1}}(G))$ with $2k_{1}> \dim_{G}$, we have $\rho_{E,F} = \sum_{\pi\in\widehat{G}}\rho_{E_{\pi},F_{\pi}}$.
\end{Lemma}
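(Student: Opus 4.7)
My plan is to exploit the Peter--Weyl orthogonal decomposition $L^{2}(G)=\bigoplus_{\pi\in\widehat{G}}H_{\pi}$ together with the fact that the flow $\phi_{t}$ acts on the $G$-factor by left multiplication, which preserves each $H_{\pi}$. The assumption $2k_{1}>\dim_{G}$ together with Lemma \ref{Lemma 3.3.1} and Lemma \ref{Lemma 2.7.5} ensures that the series $\sum_{\pi}E_{\pi}$ and $\sum_{\pi}F_{\pi}$ converge absolutely and uniformly in $(x,g,u)$, so termwise interchange of sum and integral will be legitimate.

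\medskip

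First I would make explicit how $\phi_{t}$ interacts with the decomposition. For any $(x,g,u)\in\widehat{X}_{r}$ and $t\ge 0$ one has $\phi_{t}(x,g,u)=(\sigma^{n}x,\Theta_{n}(x)g,u+t-r_{n}(x))$ for the unique $n$ with $u+t\in[r_{n}(x),r_{n+1}(x))$. Hence, fixing $x$ and $u$, the map $g\mapsto E_{\pi}\circ\phi_{t}(x,g,u)$ equals the left translate by $\Theta_{n}(x)$ of $g\mapsto E_{\pi}(\sigma^{n}x,\,\cdot\,,u+t-r_{n}(x))$. Since $H_{\pi}$ is an invariant subspace for the left regular representation $L$, the translate remains in $H_{\pi}$. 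Thus $E_{\pi}\circ\phi_{t}$ and $F_{\pi}$ lie in $H_{\pi}$ as functions of $g$ for every fixed $(x,u)$, and by Peter--Weyl the spaces $H_{\pi}$ are pairwise orthogonal in $L^{2}(G,\mathrm{Haar}_{G})$.

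\medskip

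Next I would compute, using the definition $\int_{\widehat{X}_{r}}H\,d(\widehat{\mu}_{\varphi}\times\mathrm{Leb})=\frac{1}{\int r\,d\mu_{\varphi}}\int_{X}\int_{0}^{r(x)}\int_{G}H(x,g,u)\,d\mathrm{Haar}_{G}(g)\,du\,d\mu_{\varphi}(x)$, and uniform absolute convergence to exchange sums and integrals,
\begin{align*}
\int_{\widehat{X}_{r}}E\circ\phi_{t}\cdot\overline{F}\,d(\widehat{\mu}_{\varphi}\times\mathrm{Leb})
&=\sum_{\pi,\pi'}\int_{\widehat{X}_{r}}E_{\pi}\circ\phi_{t}\cdot\overline{F_{\pi'}}\,d(\widehat{\mu}_{\varphi}\times\mathrm{Leb}).
\end{align*}
For each $(x,u)$ the inner Haar integral $\int_{G}E_{\pi}\circ\phi_{t}(x,g,u)\,\overline{F_{\pi'}(x,g,u)}\,d\mathrm{Haar}_{G}(g)$ vanishes when $\pi\neq\pi'$ by the orthogonality observed above, so only the diagonal terms survive and the right-hand side equals $\sum_{\pi}\int_{\widehat{X}_{r}}E_{\pi}\circ\phi_{t}\cdot\overline{F_{\pi}}\,d(\widehat{\mu}_{\varphi}\times\mathrm{Leb})$.

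\medskip

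Finally I would handle the product of means. Since constants on $G$ form $H_{1}$, for any $\pi\neq 1$ the orthogonality $H_{\pi}\perp H_{1}$ gives $\int_{G}E_{\pi}(x,g,u)\,d\mathrm{Haar}_{G}(g)=0$, hence $\int_{\widehat{X}_{r}}E_{\pi}\,d(\widehat{\mu}_{\varphi}\times\mathrm{Leb})=0$, and similarly for $F_{\pi}$. Meanwhile $E_{1}(x,u)=\int_{G}E(x,g,u)\,d\mathrm{Haar}_{G}(g)$, so $\int E_{1}\,d=\int E\,d$ and $\int F_{1}\,d=\int F\,d$. Consequently
\[
\sum_{\pi}\int E_{\pi}\,d\cdot\overline{\int F_{\pi}\,d}=\int E\,d\cdot\overline{\int F\,d},
\]
and subtracting this from the displayed identity yields $\rho_{E,F}=\sum_{\pi}\rho_{E_{\pi},F_{\pi}}$. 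The only nontrivial point is justifying the interchange of $\sum_{\pi}$ with the integrals, which is immediate from the uniform absolute convergence furnished by Lemma \ref{Lemma 3.3.1} combined with Lemma \ref{Lemma 2.7.5}; the rest is bookkeeping with Peter--Weyl orthogonality.
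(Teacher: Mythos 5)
Your proof is correct and follows essentially the same route as the paper's: exchange the (uniformly and absolutely convergent) Peter--Weyl sums with the integrals and kill the off-diagonal terms via $H_{\pi}\perp H_{\pi'}$ in $L^{2}(G)$, noting that $E_{\pi}\circ\phi_{t}(x,\cdot,u)$ stays in $H_{\pi}$ because the flow acts on the fibre by left translation. The only cosmetic difference is that the paper normalizes $\int E\,d(\widehat{\mu}_{\varphi}\times\mathrm{Leb})=\int \overline{F}\,d(\widehat{\mu}_{\varphi}\times\mathrm{Leb})=0$ beforehand and so never discusses the product of means, whereas you treat that term explicitly (and you also spell out the left-translation invariance of $H_{\pi}$, which the paper leaves implicit) -- both are fine.
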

\begin{proof}
Since $\sum_{\pi\in\widehat{G}}F_{\pi}$ is absolutely convergent, we have $\overline{F} = \sum_{\pi\in\widehat{G}}\overline{F_{\pi}}$. Moreover, as the convergences $\sum_{\pi \in \widehat G}E_{\pi}$ and $\sum_{\pi}F_{\pi \in \widehat G}$ are uniform in $(x,g,u)$, we can compute

$$
\begin{aligned}
&\rho_{E,F}(t)\\
=&\int_{\widehat{X}_{r}}E\circ\phi_{t}\overline{F} d(\widehat{\mu}_{\varphi}\times\text{Leb})\\
=&\int_{G}\int_{X}\int_{0}^{r(x)}E(x,g,u+t)\overline{F}(x,g,u)dud\mu_{\varphi}(x)d\text{(Haar)}_{G}(g)\\
=&\sum_{\pi\in\widehat{G}}\sum_{\pi^{\prime}\in\widehat{G}}\int_{G}\int_{X}\int_{0}^{r(x)}E_{\pi}(x,g,u+t)\overline{F_{\pi^{\prime}}}(x,g,u)dud\mu_{\varphi}(x)d\text{(Haar)}_{G}(g)\\
=&\sum_{\pi\in\widehat{G}}\sum_{\pi^{\prime}\in\widehat{G}}\int_{X}\int_{0}^{r(x)}\int_{G}E_{\pi}(x,g,u+t)\overline{F_{\pi^{\prime}}}(x,g,u)d\text{(Haar)}_{G}(g)dud\mu_{\varphi}(x)\\
=&\sum_{\pi\in\widehat{G}}\int_{X}\int_{0}^{r(x)}\int_{G}E_{\pi}(x,g,u+t)\overline{F_{\pi}}(x,g,u)d\text{(Haar)}_{G}(g)dud\mu_{\varphi}(x)\\
=&\sum_{\pi\in\widehat{G}}\rho_{E_{\pi},F_{\pi}}(t).
\end{aligned}
$$
\end{proof}

Since $E_{\pi}$ and $F_{\pi}$ are $C^{k_{2}}$ with respect to $u$, the correlation function $\rho_{E_{\pi},F_{\pi}}$ is $C^{k_{2}}$. It satisfies, for any $t > 0$ and any $0 \leq j \leq k_{2}$, the following conditions:
\begin{equation}\label{3.3.1}
	|\rho_{E_{\pi},F_{\pi}}|_{\infty} \leq ||E_{\pi}||_{\infty}||F_{\pi}||_{\infty}, \quad \rho_{E_{\pi},F_{\pi}}^{(j)}(t) = \rho_{\frac{\partial^{j}E_{\pi}}{\partial u^{j}},F_{\pi}}(t).
\end{equation}
We want to study the Laplace transform of $\rho_{E_{\pi},F_{\pi}}$. However, for technical reasons, we first need to make a slight modification to $\rho_{E_{\pi},F_{\pi}}$. To this end, we can construct a $C^{k_{2}}$ modification function $\chi_{E_{\pi},F_{\pi}}$ of $\rho_{E_{\pi},F_{\pi}}$ satisfying the following properties:
\begin{equation}\label{3.3.2}
\begin{aligned}
&|\chi_{E_{\pi},F_{\pi}}|_{\infty} \leq ||E_{\pi}||_{\infty}||F_{\pi}||_{\infty};\\
&\chi_{E_{\pi},F_{\pi}}^{(j)}(0) = 0, \quad \text{for each}\ 0 \leq j \leq k_{2};\\
&\chi_{E_{\pi},F_{\pi}}(t) = \rho_{E_{\pi},F_{\pi}}(t), \quad \text{for any}\ t \geq 1; \hbox{ and }\\
&|\chi^{(j)}_{E_{\pi},F_{\pi}}(t)| \leq 2^{j}||E_{\pi}||_{\infty}||F_{\pi}||_{\infty}, \quad \text{for any}\ 0 \leq t \leq 1 \ \text{and each}\ 0 \leq j \leq k_{2}.
\end{aligned}
\end{equation}

        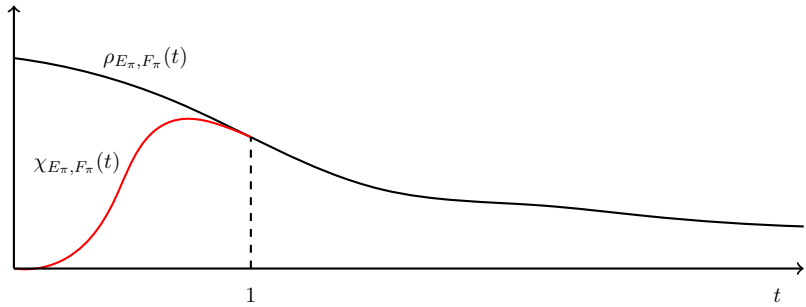
\begin{figure}[h!]
          \centerline{
    \begin{tikzpicture}[thick,scale=0.70, every node/.style={scale=0.75}]
 \draw plot [hobby] coordinates {(0,4) (3,3.2) (7,1.5) (10,1.2) (12,1) (15,0.8)};
  \draw[color=red] plot [hobby] coordinates {(0,0) (1,0.2) (2,1.5) (3,2.8) (4,2.7) (4.5,2.5)};
\draw[->, black] (0,0)-- (15,0);
\draw[->, black] (0,0)-- (0,5);
\draw[-, dashed] (4.5,0)-- (4.5,2.5);
    \node at (14.5, -0.5) {$t$};
      \node at (4.5, -0.5) {$1$};
       \node at (2.5, 4) {$\rho_{E_\pi, F_\pi}(t)$};
              \node at (1.2, 2.0) {$\chi_{E_\pi, F_\pi}(t)$};
\end{tikzpicture}
}
\caption{The correlation function $\rho_{E_\pi, F_\pi}$  is replaced by $\chi_{E_\pi, F_\pi}$.}
\end{figure}

For instance, we can let $\chi_{E_{\pi},F_{\pi}}(t)=0$ for $0\le t\le 1/2$, $\chi_{E_{\pi},F_{\pi}}(t)=\rho_{E_{\pi},F_{\pi}}(t)$ for $t\ge1$, and $\{\chi_{E_{\pi},F_{\pi}}(t)\}_{1/2\le t\le1}$ is a suitable smooth curve.   
Since $\chi_{E_{\pi},F_{\pi}}(t)$ and $\rho_{E_{\pi},F_{\pi}}(t)$ agree for $t > 1$ they clearly
have the same asymptotic. We may sometimes write $\chi_{E_{\pi},F_{\pi}}$ as $\chi_{\pi}$ when there is no confusion.

\subsection{The Laplace transform and transfer operators}\label{subsec 7.2}

Given $s \in \mathbb{C}$, we write $s = a+ib$. Consider the Laplace transform $\widehat{\chi}_{\pi}(s) = \int_{0}^{\infty} e^{-st}\chi_{\pi}(t)dt$. As $\chi_{\pi}$ is bounded, $\widehat{\chi}_{\pi}$ is analytic on $\{s=a+ib: a > 0\}$. It is classical \cite{Pol85} that we can express $\widehat{\chi}_{\pi}(s)$ as a sum of transfer operators over the one-sided space $X^{+}$. However, before delving into that, we need to make some necessary preparations. 

We want to define a transfer  operator associated with the representation $\pi$.

\begin{Definition}
	For any $\pi\in\widehat{G}$ and any $s\in\mathbb{C}$, we define the following transfer operator:
	$$\mathcal{L}_{s,\pi}:F_{\lambda^{1/4}}(X^{+},H_{\pi})\to F_{\lambda^{1/4}}(X^{+},H_{\pi}),\quad \mathcal{L}_{s,\pi}h(x)=\sum_{\sigma(y)=x}e^{\varphi(y)-sr(y)}\pi(\Theta(y))h(y).$$
\end{Definition}

Recall that as a subspace of \(L^{2}(G)\), the metric on \(H_{\pi}\) is the natural \(L^{2}\) norm. For convenience, instead of using \(\|\cdot\|_{L^{2}}\), we use \(\|\cdot\|\) to represent this metric. For \(h\in F_{\lambda^{1/4}}(X^{+},H_{\pi})\), denote \(\|h\|_{\infty}=\sup_{x}\|h(x)\|\) and \(|h|_{\text{Lip}}\) as the Lipschitz constant with respect to the metric \(d_{\lambda^{1/4}}\) on \(X^{+}\) and the metric \(\|\cdot\|\) on \(H_{\pi}\). The proof of the following result is similar to \cite[Proposition 3]{Dol98a} and will be given in \S \ref{Appendix A}.

\begin{Lemma}\label{Lemma 3.3.3}
	There exists $C_{14}>0$ such that for any $\pi\in\widehat{G}$ and any integer $p\ge0$, there exists a family of operators $\{M_{\pi,p}(s):F_{\lambda}(X_{r},H_{\pi})\to F_{\lambda^{1/4}}(X^{+},H_{\pi})\}_{s\in\mathbb{C}}$ such that
	\begin{enumerate}
		\item For any $s=a+ib$ with $a>0$ and any $E,F\in F_{\lambda}^{k}(\widehat{X}_{r})$ with $k>\dim_{G}$, we can write 
		$$\widehat{\chi}_{\pi}(s)=\Phi_{1}(s)+\Phi_{2}(s)+\Phi_{3}(s)+\Phi_{4}(s)$$
		where
		\begin{align*}
			\Phi_{1}(s) &= \int_{0}^{1}e^{-st}(\chi_{\pi}(t)-\rho_{E_{\pi},F_{\pi}}(t))dt,\\
			\Phi_{2}(s) &= \int_{0}^{|r|_{\infty}}\int_{G}\int_{X}\int_{0}^{\max\{0,r-t\}}e^{-st}
			\langle E_{\pi}\circ\phi_{t}\overline{F}_{\pi} \rangle
			dud\mu_{\varphi} d(\hbox{\rm Haar})_{G}(g)dt,\\
			\Phi_{3}(s) &= \sum_{n\ge1}\sum_{q\ge0}\sum_{p<n+q}\langle M_{\pi,p}(s)E_{\pi}], \mathcal{L}_{\overline{s},\pi}^{n+q-p}[M_{\pi,q}(-\overline{s})F_{\pi}\rangle_{\mu\times\text{Haar}_{G}},\\
			\Phi_{4}(s) &= \sum_{n\ge1}\sum_{q\ge0}\sum_{p\ge n+q}
			\langle
			\mathcal{L}_{s,\pi}^{p-n-q}[M_{\pi,p}(s)E_{\pi}], [M_{\pi,q}(-\overline{s})F_{\pi}]
			\rangle_{\mu\times\text{Haar}_{G}},
		\end{align*}
and $	\langle\cdot, \cdot\rangle_{\mu\times\text{Haar}_{G}}$	means the inner product in the Hilbert space $L^{2}(\mu\times\text{Haar}_{G})$.

		\item The map $s\mapsto M_{\pi,p}(s)$ is analytic, and 
		$$||M_{\pi,p}(s)E_{\pi}||_{\infty}\le C_{14}\lambda^{p}e^{|a||r|_{\infty}(p+1)}||E_{\pi}||_{\lambda},$$
		as well as
		$$|M_{\pi,p}(s)E_{\pi}|_{Lip}\le C_{14}(|s|+|\lambda_{\pi}|^{1+m_{G}/2})\lambda^{p/2}e^{|a||r|_{\infty}(p+1)}||E_{\pi}||_{\lambda}.$$
	\end{enumerate}
\end{Lemma}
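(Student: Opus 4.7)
The plan is to adapt the standard Laplace transform expansion via transfer operators (as in \cite{Pol85}, \cite{Dol98a}) to the compact group extension setting, taking care of the representation $\pi$ appearing in the weights. First I would write
$$\widehat\chi_\pi(s) = \int_0^1 e^{-st}\bigl(\chi_\pi(t)-\rho_{E_\pi,F_\pi}(t)\bigr)\,dt + \int_0^\infty e^{-st}\rho_{E_\pi,F_\pi}(t)\,dt,$$
identifying the first term as $\Phi_1(s)$. For the remaining Laplace transform, insert $\rho_{E_\pi,F_\pi}(t)=\int E_\pi\circ\phi_t\cdot\overline{F_\pi}\,d(\widehat\mu_\varphi\times\mathrm{Leb})$ and use the suspension structure: for $(x,g,u)\in\widehat X_r$ we have $\phi_t(x,g,u)=(\sigma^n x,\Theta_n(x)g,u+t-r_n(x))$ where $n$ is uniquely determined by $r_n(x)\le u+t<r_{n+1}(x)$. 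Partition the time integral according to $n$. The $n=0$ piece (where $u+t<r(x)$, so $\widehat\sigma$ is not iterated) collects exactly the terms in $\Phi_2(s)$ after Fubini.

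For $n\ge 1$, perform the substitution $v=u+t-r_n(x)$, so that $e^{-st}=e^{su}e^{-sv}e^{-sr_n(x)}$. The unitarity of $\pi$ yields
$$\int_G E_\pi(\sigma^n x,\Theta_n(x)g,v)\cdot\overline{F_\pi(x,g,u)}\,d\mathrm{Haar}_G(g)=\bigl\langle E_\pi(\sigma^n x,v),\ \pi(\Theta_n(x))F_\pi(x,u)\bigr\rangle_{H_\pi},$$
so the weight $e^{-sr_n(x)}\pi(\Theta_n(x))$ is exactly what appears in $\mathcal L_{s,\pi}^n$. Applying the transfer-operator duality together with the explicit form $\mathcal L_{s,\pi}^n F(x)=\sum_{\sigma^n y=x}e^{\varphi_n(y)-sr_n(y)}\pi(\Theta_n(y))F(y)$, each fixed-$n$ term becomes an inner product on $L^2(\mu_\varphi\times\mathrm{Haar}_G)$ involving $\mathcal L_{s,\pi}^n$ acting either on the $E_\pi$-side or (by conjugation) on the $F_\pi$-side.

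The remaining technical step is that $\mathcal L_{s,\pi}$ is defined on $F_{\lambda^{1/4}}(X^+,H_\pi)$ whereas $E_\pi,F_\pi$ live on the two-sided space $X_r$. To pass to $X^+$ I would introduce the $M_{\pi,p}(s)$ as a Sinai-type telescoping of $E_\pi$ into scale-$p$ approximations depending only on future coordinates, following verbatim the construction of \cite[Proposition 3]{Dol98a}: concretely, $M_{\pi,p}(s)E_\pi(x)$ is (up to an integration in $v$ that absorbs $e^{-sv}$) the average of $E_\pi$ over past coordinates beyond position $-p$, arranged so that $\sum_p M_{\pi,p}$ reconstructs $E_\pi$. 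Analyticity in $s$ is immediate from the explicit formula, and the sup-norm bound follows from the Lipschitz hypothesis at scale $\lambda^p$ together with $|e^{-sv}|\le e^{|a||r|_\infty}$. The split of the triple sum into $\Phi_3$ (for $p<n+q$) and $\Phi_4$ (for $p\ge n+q$) records whether the residual transfer operator is brought to the $F_\pi$-side via $\mathcal L_{\overline s,\pi}^{n+q-p}$ or to the $E_\pi$-side via $\mathcal L_{s,\pi}^{p-n-q}$ when reassembling.

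The main obstacle will be verifying the Lipschitz bound $|M_{\pi,p}(s)E_\pi|_{\mathrm{Lip}}\le C_{14}\bigl(|s|+|\lambda_\pi|^{1+m_G/2}\bigr)\lambda^{p/2}e^{|a||r|_\infty(p+1)}\|E_\pi\|_\lambda$. The factor $|s|$ arises in the standard way from differentiating the exponential $e^{-sv}$, but the factor $|\lambda_\pi|^{1+m_G/2}$ is specific to the group-extension setting: it comes from controlling the Lipschitz variation of the weight $\pi(\Theta(\cdot))$ implicit in the definition of $M_{\pi,p}$, and is precisely what Corollary \ref{Corollary 2.7.4} supplies. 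The weaker rate $\lambda^{p/2}$ (rather than $\lambda^p$) is due to a Cauchy--Schwarz-type interpolation balancing the scale-$p$ approximation error on $X$ against the smoothing required to pass to $X^+$, and is the reason the target space uses the metric constant $\lambda^{1/4}$.
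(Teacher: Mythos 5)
Your proposal follows essentially the same route as the paper's proof in \S\ref{Appendix A}: split off $\Phi_{1}$, use the suspension structure and the (left regular) representation to reduce the remaining Laplace transform to $\Phi_{2}$ plus the series $\sum_{n}\langle e^{-sr_{n}}\pi(\Theta_{n})^{-1}E_{\pi,s}\circ\sigma^{n},F_{\pi,-\overline{s}}\rangle$, then apply Sinai's decomposition from \cite[Proposition 10]{Dol98a} to define $M_{\pi,p}(s)=e^{-sr_{p}}\pi(\Theta_{p})^{-1}E_{\pi,s,p}\circ\sigma^{p}$ and split the resulting triple sum into $\Phi_{3}$ and $\Phi_{4}$, with Corollary \ref{Corollary 2.7.4} supplying the $|\lambda_{\pi}|^{1+m_{G}/2}$ factor exactly as you indicate. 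The only cosmetic differences are that in the paper the $|s|$ in the Lipschitz bound comes from the $x$-variation of $e^{-sr_{p}(x)}$ (not of $e^{-sv}$), and the rate $\lambda^{p/2}$ comes simply from $E_{\pi,s,p}\circ\sigma^{p}$ being constant on $2p$-cylinders with sup-norm $O(\lambda^{p})$, measured in the metric $d_{\lambda^{1/4}}$, rather than from any Cauchy--Schwarz interpolation.
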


\subsection{A Dolgopyat type estimate}\label{subsec 7.3}

In view of Lemma \ref{Lemma 3.3.3}, we need to analyze  the operator $\mathcal{L}_{s,\pi}$ acts on $F_{\lambda}(X^{+},H_{\pi})$. Note that, it is sufficient to analysis $\mathcal{L}_{ib,\pi}$, since $\mathcal{L}_{s,\pi}$ can be estimated via a small perturbation of $\mathcal{L}_{ib,\pi}$. To this end, in this subsection, we prove a Dolgopyat type estimate: Proposition \ref{Dolgopyat type estimate for rapid mixing of compact group extension}. The main ingredient in the proof is a cancellation estimate stated in Lemma \ref{Lemma 3.4.4} and will be proved in \S \ref{subsec 7.5}. 

For any $b\in\mathbb{R}$ and each $\pi\in\widehat{G}$, let  \(b_{\pi}=|b|+C_{15}|\lambda_{\pi}|^{1+m_{G}/2}\) for suitable $C_{15} > 0$. Since $\inf_{\pi\not=1}|\lambda_{\pi}|>0$, we choose \(C_{15}>0\) such that \(b_{\pi}\) is large enough for any \((b,\pi)\) with \(\pi\neq 1\). We begin with the following Lasota-Yorke inequality. \footnote{We use the term Lasota-Yorke inequality in accordance with the modern usage. However, earlier manifestations of
such inequalities are due to Doeblin-Fortet and Ionescu Tulcea-Marinescu.}

\begin{Lemma}\label{Lemma 3.4.1}
There exists $C_{16}>0$ such that for any $(b,\pi)$ with $\pi\not=1$, any $h\in F_{\lambda}(X^{+}, H_{\pi})$ and any $n\ge1$,
$$|\mathcal{L}_{ib,\pi}^{n}h|_{Lip}\le C_{16}b_{\pi}||h||_{\infty}+\lambda^{n}|h|_{Lip}.$$
\end{Lemma}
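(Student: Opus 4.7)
The plan is to run the classical Lasota–Yorke argument on the iterated twisted transfer operator
$$\mathcal{L}_{ib,\pi}^{n}h(x)=\sum_{\sigma^{n}y=x}e^{\varphi_{n}(y)-ibr_{n}(y)}\,\pi(\Theta_{n}(y))\,h(y),$$
where $\varphi_{n},r_{n}$ are Birkhoff sums and $\Theta_{n}(y)=\Theta(\sigma^{n-1}y)\cdots\Theta(y)$. Since $\varphi$ is the normalized Gibbs potential ($P(\varphi)=0$, which can be assumed by subtracting a coboundary and a constant), $\mathcal{L}_{\varphi}^{n}\mathbf{1}$ is uniformly bounded. For two points $x_{1},x_{2}\in X^{+}$ with $d_{\lambda}(x_{1},x_{2})=\lambda^{m}$, I pair each preimage $y_{1}$ of $x_{1}$ with the unique preimage $y_{2}$ of $x_{2}$ agreeing with $y_{1}$ in the first $n$ coordinates, so that $d_{\lambda}(y_{1},y_{2})\le\lambda^{n}d_{\lambda}(x_{1},x_{2})$.

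Writing $A(y)=e^{\varphi_{n}(y)-ibr_{n}(y)}\pi(\Theta_{n}(y))$, I split each summand using
$$A(y_{1})h(y_{1})-A(y_{2})h(y_{2})=\bigl(A(y_{1})-A(y_{2})\bigr)h(y_{1})+A(y_{2})\bigl(h(y_{1})-h(y_{2})\bigr).$$
The second piece gives the contractive term: $\|h(y_{1})-h(y_{2})\|\le|h|_{\mathrm{Lip}}\lambda^{n}d_{\lambda}(x_{1},x_{2})$, and summing $|A(y_{2})|$ over preimages yields $\mathcal{L}_{\varphi}^{n}\mathbf{1}(x_{2})$, bounded uniformly; this produces the $\lambda^{n}|h|_{\mathrm{Lip}}$ contribution (with a harmless constant that can be absorbed by slightly shrinking $\lambda$).

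For the first piece I further factor $A(y_{1})-A(y_{2})$ into three differences and bound each:
\begin{itemize}
\item $|e^{\varphi_{n}(y_{1})}-e^{\varphi_{n}(y_{2})}|$: a standard telescoping estimate gives $|\varphi_{n}(y_{1})-\varphi_{n}(y_{2})|\le\frac{|\varphi|_{\mathrm{Lip}}}{1-\lambda}d_{\lambda}(x_{1},x_{2})$, contributing an $n$-independent constant.
\item $|e^{-ibr_{n}(y_{1})}-e^{-ibr_{n}(y_{2})}|\le|b||r_{n}(y_{1})-r_{n}(y_{2})|\le|b|\frac{|r|_{\mathrm{Lip}}}{1-\lambda}d_{\lambda}(x_{1},x_{2})$, contributing a factor of $|b|$.
\item $\|\pi(\Theta_{n}(y_{1}))-\pi(\Theta_{n}(y_{2}))\|$: by Corollary \ref{Corollary 2.7.4} this is at most $C_{3}|\lambda_{\pi}|^{1+m_{G}/2}d_{G}(\Theta_{n}(y_{1}),\Theta_{n}(y_{2}))$, and using bi-invariance of $d_{G}$ together with a telescoping sum over the factors of $\Theta_{n}$, $d_{G}(\Theta_{n}(y_{1}),\Theta_{n}(y_{2}))\le\sum_{k=0}^{n-1}d_{G}(\Theta(\sigma^{k}y_{1}),\Theta(\sigma^{k}y_{2}))\le\frac{|\Theta|_{\mathrm{Lip}}}{1-\lambda}d_{\lambda}(x_{1},x_{2})$, giving a factor of $|\lambda_{\pi}|^{1+m_{G}/2}$.
\end{itemize}
Multiplying by $\|h(y_{1})\|\le\|h\|_{\infty}$ and summing over preimages (again using uniform boundedness of $\mathcal{L}_{\varphi}^{n}\mathbf{1}$) yields a bound $C(1+|b|+|\lambda_{\pi}|^{1+m_{G}/2})\|h\|_{\infty}d_{\lambda}(x_{1},x_{2})$, which is $\le C_{16}b_{\pi}\|h\|_{\infty}d_{\lambda}(x_{1},x_{2})$ by choosing $C_{15}$ in the definition of $b_{\pi}$ large enough (recall $\inf_{\pi\ne1}|\lambda_{\pi}|>0$, so the constant $1$ can be absorbed into $|\lambda_{\pi}|^{1+m_{G}/2}$).

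The only delicate point is ensuring that the twist factor contributes a bound linear in $|\lambda_{\pi}|^{1+m_{G}/2}$ rather than one that grows exponentially in $n$; this is precisely where bi-invariance of $d_{G}$ plus the telescoping argument plays the crucial role, turning what could have been a product of Lipschitz estimates into a single convergent geometric series. Everything else is routine bookkeeping of Lipschitz and supremum norms.
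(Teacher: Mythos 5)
Your proof is correct, and it reaches the estimate by a somewhat different organization than the paper. The paper proves the case $n=1$ with exactly your three-way triangle-inequality splitting (weight $e^{\varphi-ibr}$, then the two twist/function differences), and then obtains general $n$ by induction, using $\|\mathcal{L}^{k}_{ib,\pi}h\|_{\infty}\le\|h\|_{\infty}$ at each step and summing the geometric series $\sum_{j}\lambda^{j}$ to produce the uniform constant $C_{16}$. You instead estimate the $n$-th iterate in one pass, pairing $n$-preimages and letting the geometric series appear inside the telescoped Birkhoff sums for $\varphi$ and $r$ and inside the cocycle bound $d_{G}(\Theta_{n}(y_{1}),\Theta_{n}(y_{2}))\le\sum_{k=0}^{n-1}d_{G}(\Theta(\sigma^{k}y_{1}),\Theta(\sigma^{k}y_{2}))$, which requires bi-invariance of $d_{G}$ (this is consistent with the paper, which uses the same manipulations of $\Theta_{n}$ in Lemma \ref{Lemma 6.3.4}). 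The trade-off: the inductive route needs nothing about $d_{G}$ beyond Lipschitzness of $\Theta$ and delivers the coefficient $\lambda^{n}$ in front of $|h|_{Lip}$ automatically, while your direct route is a single computation that makes the dependence $C(1+|b|+|\lambda_{\pi}|^{1+m_{G}/2})$ completely explicit. Two small repairs: with the normalization $\mathcal{L}_{\varphi}1=1$ (which the paper assumes, and which your coboundary reduction actually gives) the sum $\sum_{\sigma^{n}y=x}e^{\varphi_{n}(y)}$ equals $1$ exactly, so the coefficient of $\lambda^{n}|h|_{Lip}$ is exactly $1$ and your aside about ``slightly shrinking $\lambda$'' should be dropped --- it is not available anyway, since the statement and the space $F_{\lambda}(X^{+},H_{\pi})$ are tied to the fixed metric constant $\lambda$; and the stray additive constant $1$ should be absorbed into $C_{16}$ via $\inf_{\pi\not=1}|\lambda_{\pi}|>0$, not by retuning $C_{15}$, which is fixed before the lemma. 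Neither point affects the validity of the argument.
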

\begin{proof}
	We prove  the lemma   by induction. For the case $n=1$, by a basic triangle inequality, we have for any $x, x^{\prime}\in X^{+}$,
	$$
	\begin{aligned}
		&||\mathcal{L}_{ib,\pi}h(x)-\mathcal{L}_{ib,\pi}h(x^{\prime})||\\
		=&||\sum_{\sigma(y)=x}e^{(\varphi-ibr)(y)}\pi(\Theta(y))h(y)-\sum_{\sigma(y^{\prime})=x^{\prime}}e^{(\varphi-ibr)(y^{\prime})}\pi(\Theta(y^{\prime}))h(y^{\prime})||\\
		\le&||\sum_{y,y^{\prime}}(e^{(\varphi-ibr)(y)}-e^{(\varphi-ibr)(y^{\prime})})\pi(\Theta(y))h(y)||\\
		&+||\sum_{y,y^{\prime}}e^{(\varphi-ibr)(y^{\prime})}(\pi(\Theta(y))h(y)-\pi(\Theta(y^{\prime}))h(y^{\prime}))||\\
		\le&\lambda(2+e^{|\varphi|_{Lip}})(|\varphi|_{Lip}+|b||r|_{Lip}) d_{\lambda}(x,x^{\prime})\sum_{y}e^{\varphi(y)}||h(y)||\\
		&+||\sum_{y,y^{\prime}}e^{(\varphi-ibr)(y^{\prime})}(\pi(\Theta(y))h(y)-\pi(\Theta(y))h(y^{\prime}))||\\
		&+||\sum_{y,y^{\prime}}e^{(\varphi-ibr)(y^{\prime})}(\pi(\Theta(y))h(y^{\prime})-\pi(\Theta(y^{\prime}))h(y^{\prime}))||.
	\end{aligned}
	$$
	Then, since we can assume  $\mathcal{L}_{\varphi}1=1$ and using  Corollary \ref{Corollary 2.7.4}, we can bound the above by
	$$
	\begin{aligned}
		&\lambda(2+e^{|\varphi|_{Lip}})(|\varphi|_{Lip}+|b||r|_{Lip})||h||_{\infty} d_{\lambda}(x,x^{\prime})+|h|_{Lip}\lambda d_{\lambda}(x,x^{\prime})\\
		&+\sum_{y^{\prime}}e^{f(y^{\prime})}C_{3}|\lambda_{\pi}|^{1+m_{G}/2}|\Theta|_{Lip}d_{\lambda}(x,x^{\prime})||h(y^{\prime})||\\
		\le&\lambda(2+e^{|\varphi|_{Lip}})(|\varphi|_{Lip}+|b||r|_{Lip})||h||_{\infty} d_{\lambda}(x,x^{\prime})+|h|_{Lip}\lambda d_{\lambda}(x,x^{\prime})\\
		&+\lambda C_{3}|\lambda_{\pi}|^{1+m_{G}/2}|\Theta|_{Lip}||h||_{\infty}d_{\lambda}(x,x^{\prime}).
	\end{aligned}
	$$
	Thus, we have
	\begin{equation}\label{3.4.1}
		|\mathcal{L}_{ib,\pi}h|_{Lip}\le C_{17}b_{\pi}\lambda||h||_{\infty}+|h|_{Lip}\lambda
	\end{equation}
	for some uniform constant $C_{17}>0$. Inductively, for $n=k$ with $k\ge1$, we assume 
	\begin{equation}\label{3.4.2}
		|\mathcal{L}_{ib,\pi}h|^{k}_{Lip}\le C_{17}b_{\pi}\sum_{j=1}^{k}\lambda^{j}||h||_{\infty}+|h|_{Lip}\lambda^{k}.
	\end{equation}
	Now for $n=k+1$, by \eqref{3.4.1} and \eqref{3.4.2}, 
	$$
	\begin{aligned}
		|\mathcal{L}^{k+1}_{ib,\pi}h|_{Lip}\le& C_{17}b_{\pi}\sum_{j=1}^{k}\lambda^{j}||\mathcal{L}_{ib,\pi}h||_{\infty}+\lambda^{k}|\mathcal{L}_{ib,\pi}h|_{Lip}\\
		\le&C_{17}b_{\pi}\sum_{j=1}^{k}\lambda^{j}||h||_{\infty}+\lambda^{k+1}C_{17}(1+b_{\pi})||h||_{\infty}+\lambda^{k+1}|h|_{Lip}\\
		=&C_{17}b_{\pi}\sum_{j=1}^{k+1}\lambda^{j}||h||_{\infty}+\lambda^{k+1}|h|_{Lip}
	\end{aligned}
	$$
	which proves the induction. Finally, we can set $C_{16}=C_{17}\lambda(1-\lambda)^{-1}$ to complete the proof.
\end{proof}

Fix a $\lambda^{\prime}\in(\lambda,1)$, and then we choose \(C_{18}\geq 1\) such that \(C_{16}/C_{18}+\lambda\le\lambda^{\prime}\). For any \((b,\pi)\) with \(\pi\neq 1\), we introduce a norm on $F_{\lambda}(X^{+}, H_{\pi})$ by \(||h||_{b_{\pi}}=\max\{||h||_{\infty},|h|_{\text{Lip}}/C_{18}b_{\pi}\}\). 

\begin{Corollary}\label{Cor 3.4.2}
	For any \((b,\pi)\) with \(\pi\neq 1\), any \(n\ge1\), and any \(h\in F_{\lambda}(X^{+}, H_{\pi})\), we have \(\frac{|\mathcal{L}^{n}_{ib,\pi}h|_{\text{Lip}}}{C_{18}b_{\pi}}\le\lambda^{\prime}||h||_{b_{\pi}}\) and \(||\mathcal{L}^{n}_{ib,\pi}h||_{b_{\pi}}\le||h||_{b_{\pi}}\).
\end{Corollary}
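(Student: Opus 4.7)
The plan is to obtain both inequalities as essentially immediate consequences of the Lasota--Yorke estimate in Lemma \ref{Lemma 3.4.1}, together with a simple sup-norm contraction coming from the assumption $\mathcal{L}_{\varphi}1=1$ and the unitarity of $\pi$.

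For the first inequality, I would start by applying Lemma \ref{Lemma 3.4.1} to $h$, giving
$$
|\mathcal{L}_{ib,\pi}^{n}h|_{\mathrm{Lip}} \le C_{16}b_{\pi}\|h\|_{\infty}+\lambda^{n}|h|_{\mathrm{Lip}}.
$$
Dividing through by $C_{18}b_{\pi}$ and bounding $\|h\|_{\infty}\le \|h\|_{b_{\pi}}$ and $|h|_{\mathrm{Lip}}/(C_{18}b_{\pi})\le \|h\|_{b_{\pi}}$ yields
$$
\frac{|\mathcal{L}_{ib,\pi}^{n}h|_{\mathrm{Lip}}}{C_{18}b_{\pi}} \le \Bigl(\frac{C_{16}}{C_{18}}+\lambda^{n}\Bigr)\|h\|_{b_{\pi}} \le \Bigl(\frac{C_{16}}{C_{18}}+\lambda\Bigr)\|h\|_{b_{\pi}} \le \lambda'\|h\|_{b_{\pi}},
$$
where the last step uses the defining choice $C_{16}/C_{18}+\lambda\le \lambda'$.

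For the second inequality I need to control $\|\mathcal{L}_{ib,\pi}^{n}h\|_{\infty}$. Since $\pi(\Theta(y))$ is unitary on $H_{\pi}$, pointwise
$$
\|\mathcal{L}_{ib,\pi}h(x)\| \le \sum_{\sigma(y)=x}e^{\varphi(y)}\|\pi(\Theta(y))h(y)\| = \sum_{\sigma(y)=x}e^{\varphi(y)}\|h(y)\| \le \|h\|_{\infty}(\mathcal{L}_{\varphi}1)(x) = \|h\|_{\infty},
$$
using the normalisation $\mathcal{L}_{\varphi}1=1$. Iterating gives $\|\mathcal{L}_{ib,\pi}^{n}h\|_{\infty}\le \|h\|_{\infty}\le \|h\|_{b_{\pi}}$. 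Combined with the first inequality (and the fact that $\lambda'<1$), this yields
$$
\|\mathcal{L}_{ib,\pi}^{n}h\|_{b_{\pi}} = \max\Bigl\{\|\mathcal{L}_{ib,\pi}^{n}h\|_{\infty},\ \tfrac{|\mathcal{L}_{ib,\pi}^{n}h|_{\mathrm{Lip}}}{C_{18}b_{\pi}}\Bigr\} \le \|h\|_{b_{\pi}},
$$
which is exactly the second inequality.

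There is no real obstacle here: the corollary is purely a bookkeeping consequence of the previous lemma together with the choice of the weighted norm $\|\cdot\|_{b_{\pi}}$, designed precisely so that the $b_{\pi}$-dependence in the Lasota--Yorke estimate is absorbed and a uniform contraction rate $\lambda'<1$ emerges on the Lipschitz component.
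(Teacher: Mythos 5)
Your proof is correct and follows essentially the same route as the paper: apply the Lasota--Yorke estimate of Lemma \ref{Lemma 3.4.1}, divide by $C_{18}b_{\pi}$, absorb both terms into $\|h\|_{b_{\pi}}$ using the choice $C_{16}/C_{18}+\lambda\le\lambda'$, and combine with the sup-norm bound $\|\mathcal{L}^{n}_{ib,\pi}h\|_{\infty}\le\|h\|_{\infty}$ (which the paper states without proof and you justify via $\mathcal{L}_{\varphi}1=1$ and unitarity of $\pi$).
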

\begin{proof}
	By Lemma \ref{Lemma 3.4.1}, for any \((b,\pi)\) with \(\pi\neq 1\), any \(n\ge1\), and any \(h\in F_{\lambda}(X^{+}, H_{\pi})\),
	\begin{equation*}
		\dfrac{|\mathcal{L}^{n}_{ib,\pi}h|_{\text{Lip}}}{C_{18}b_{\pi}}\le \dfrac{C_{16}}{C_{18}}||h||_{\infty}+\dfrac{\lambda^{n}}{C_{18}b_{\pi}}|h|_{\text{Lip}}.
	\end{equation*}
	In particular,
	\begin{equation*}
		\dfrac{|\mathcal{L}^{n}_{ib,\pi}h|_{\text{Lip}}}{C_{18}b_{\pi}}\le\bigg(\dfrac{C_{16}}{C_{18}}+\lambda^{n}\bigg)||h||_{b_{\pi}}.
	\end{equation*}
	Note that \(||\mathcal{L}^{n}_{ib,\pi}h||_{\infty}\le||h||_{\infty}\). Thus, provided \(\frac{C_{16}}{C_{18}}+\lambda\le\lambda^{\prime}\), we have \(||\mathcal{L}^{n}_{ib,\pi}h||_{b_{\pi}}\le||h||_{b_{\pi}}\) which completes the proof.
\end{proof}

\begin{Lemma}\label{Lemma 3.4.3}
	For any $(b, \pi)$ with $\pi\not=1$, any $h\in F_{\lambda}(X^{+}, H_{\pi})$ with $|h|_{Lip}\ge 2C_{18}b_{\pi}||h||_{\infty}$ and any $n\ge1$, we have $||\mathcal{L}^{n}_{ib,\pi}h||_{b_{\pi}}\le\lambda^{\prime}||h||_{b_{\pi}}$.
\end{Lemma}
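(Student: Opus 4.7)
The plan is to observe that the hypothesis $|h|_{\mathrm{Lip}}\ge 2C_{18}b_{\pi}\|h\|_{\infty}$ forces the weighted norm $\|h\|_{b_{\pi}}$ to be realized by its Lipschitz component rather than its $L^{\infty}$ component, and then to combine the Lasota--Yorke type contraction of Corollary \ref{Cor 3.4.2} on the Lipschitz piece with the trivial $L^{\infty}$ contraction coming from $\mathcal{L}_{\varphi}1=1$ together with the unitarity of $\pi$.

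First I would note that, by definition of $\|\cdot\|_{b_{\pi}}$, the hypothesis gives
\[
\|h\|_{b_{\pi}}\;=\;\frac{|h|_{\mathrm{Lip}}}{C_{18}b_{\pi}}\qquad\text{and}\qquad \|h\|_{\infty}\;\le\;\tfrac12\,\|h\|_{b_{\pi}}.
\]
Next, since $\pi$ is unitary (so $\|\pi(\Theta(y))h(y)\|=\|h(y)\|$) and $\mathcal{L}_{\varphi}1=1$, a pointwise triangle inequality yields $\|\mathcal{L}^{n}_{ib,\pi}h\|_{\infty}\le\|h\|_{\infty}$ for every $n\ge 1$. Combined with the inequality above this produces
\[
\|\mathcal{L}^{n}_{ib,\pi}h\|_{\infty}\;\le\;\tfrac12\,\|h\|_{b_{\pi}}.
\]
Meanwhile, Corollary \ref{Cor 3.4.2} immediately supplies the Lipschitz side:
\[
\frac{|\mathcal{L}^{n}_{ib,\pi}h|_{\mathrm{Lip}}}{C_{18}b_{\pi}}\;\le\;\lambda'\,\|h\|_{b_{\pi}}.
\]

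Finally, taking the maximum of the two bounds, and using that we may assume (by our earlier freedom in choosing $\lambda'\in(\lambda,1)$ close enough to $1$) that $\lambda'\ge \tfrac12$, gives
\[
\|\mathcal{L}^{n}_{ib,\pi}h\|_{b_{\pi}}\;\le\;\max\{\tfrac12,\lambda'\}\,\|h\|_{b_{\pi}}\;=\;\lambda'\,\|h\|_{b_{\pi}},
\]
which is the desired estimate. No genuine obstacle arises here; the lemma is a bookkeeping consequence of (i) the Lasota--Yorke bound proved in Lemma \ref{Lemma 3.4.1} (and repackaged in Corollary \ref{Cor 3.4.2}) and (ii) the dominance of the Lipschitz component in $\|\cdot\|_{b_{\pi}}$ imposed by the hypothesis. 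The real work has already been absorbed into establishing Lemma \ref{Lemma 3.4.1}, where the factor $b_{\pi}=|b|+C_{15}|\lambda_{\pi}|^{1+m_{G}/2}$ is carefully calibrated via Corollary \ref{Corollary 2.7.4} so that the $\pi(\Theta(y))$--induced oscillation can be absorbed into $C_{18}b_{\pi}\|h\|_{\infty}$.
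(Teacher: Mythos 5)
Your argument is correct and is essentially the paper's own proof: the hypothesis makes $\|h\|_{b_{\pi}}$ the Lipschitz part and bounds $\|h\|_{\infty}\le\tfrac12\|h\|_{b_{\pi}}$, the sup norm contracts trivially, and Corollary \ref{Cor 3.4.2} handles the Lipschitz component, with the factor $\tfrac12\le\lambda'$ absorbed exactly as in the paper (which phrases it via $\lambda\ge\tfrac12$ rather than $\lambda'\ge\tfrac12$, an immaterial difference).
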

\begin{proof}
Since $\lambda>0$ is close to 1, we can assume $\lambda\ge 1/2$. We have $||\mathcal{L}^{n}_{ib,\pi}h||_{\infty}\le||h||_{\infty}\le|h|_{Lip}/2C_{18}b_{\pi}\le2^{-1}||h||_{b_{\pi}}\le\lambda||h||_{b_{\pi}}$. Thus, by Corollary \ref{Cor 3.4.2}, we have $||\mathcal{L}^{n}_{ib,\pi}h||_{b_{\pi}}\le\lambda^{\prime}||h||_{b_{\pi}}$.
\end{proof}

The Diophantine assumption for the suspension flow $\phi_{t}$ is enough to ensure we get the following cancellation of terms in a transfer operator.

\begin{Lemma}\label{Lemma 3.4.4}
There exist $C_{19}>0, C_{20}>0$ and $C_{21}>0$ such that for any $(b, \pi)$ with $\pi\not=1$ and any $h\in F_{\lambda}(X^{+}, H_{\pi})$ with $|h|_{Lip}\le 2C_{18}b_{\pi}||h||_{\infty}$, there exists a subset $U\subset X^{+}$ with $\mu(U)\ge b_{\pi}^{-C_{19}}$ such that $||\mathcal{L}^{C_{21}\log b_{\pi}}_{ib,\pi}h(x)||\le(1-b_{\pi}^{-C_{20}})||h||_{\infty}$ for any $x\in U$.
\end{Lemma}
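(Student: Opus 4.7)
The plan is to implement a Dolgopyat-style cancellation argument, adapted to the compact group extension. Expanding
$$\mathcal{L}^N_{ib,\pi}h(x) = \sum_{\sigma^N y = x} e^{\varphi_N(y) - ibr_N(y)}\pi(\Theta_N(y))h(y),$$
the normalization $\mathcal{L}_{\varphi}1 = 1$ gives the trivial bound $\|\mathcal{L}^N_{ib,\pi}h(x)\| \le \|h\|_\infty$ by the triangle inequality. To force a strict drop I would exhibit, for $x$ in a controlled set $U$, a pair of preimages whose contributions are provably misaligned as vectors in $H_\pi$, yielding a definite amount of cancellation in the sum.

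The source of this misalignment is the Diophantine hypothesis on $\Gamma(x_\ast, n_0, p_0)$: there exists a closed chain $V = \{v_0, \ldots, v_p\}$ at some $x_\ast \in X$ of size $n_0$ and length at most $p_0$, with $\sum_i \Delta^{\delta_{v_i, v_{i+1}}}(v_i, v_{i+1}) = 0$, whose associated twist $g_V$ satisfies $\|\pi(g_V)h_0 - h_0\| \ge \delta\|h_0\|/|\lambda_\pi|^C$ for every $h_0 \in H_\pi$. The vanishing of the $\Delta$-sum decouples the group cancellation from any temporal contribution: I would construct a pair $y_1, y_2 \in \sigma^{-N}(x)$ whose twist ratio $\Theta_N(y_1)\Theta_N(y_2)^{-1}$ approximates $g_V$ with error $O(\lambda^N)$ (Lemma \ref{Lemma 6.3.4}), and whose return times satisfy $r_N(y_1) - r_N(y_2) = O(\lambda^N)$ (Lemma \ref{Lemma 6.2.2}). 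Passing through $\pi$ via Corollary \ref{Corollary 2.7.4} gives $\|\pi(\Theta_N(y_1))\pi(\Theta_N(y_2))^{-1} - \pi(g_V)\| = O(|\lambda_\pi|^{1+m_G/2}\lambda^N)$, while the time mismatch produces a phase discrepancy $O(|b|\lambda^N)$.

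The preimage pair is built by threading the symbolic moves of $V$ into the preimage structure: given $x$ in a suitable cylinder that connects to $x_\ast$ via the topological mixing of $\sigma$, one produces two branches of $\sigma^{-N}$ that agree on $\sim N - p_0$ initial symbols and differ only in a block of length $O(p_0)$ where the closed chain is realized symbolically, with both mapping to $x$ under $\sigma^N$. Choosing $N = C_{21}\log b_\pi$ with $C_{21}$ sufficiently large, and using that $b_\pi \ge C_{15}|\lambda_\pi|^{1+m_G/2}$ by construction, both error terms $O(|\lambda_\pi|^{1+m_G/2}\lambda^N)$ and $O(|b|\lambda^N)$ become smaller than any prescribed negative power of $b_\pi$ and are in particular dominated by $\delta/|\lambda_\pi|^C$. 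The mildness hypothesis $|h|_{\text{Lip}} \le 2C_{18}b_\pi\|h\|_\infty$ combined with $d_\lambda(y_1, y_2) = O(\lambda^N)$ also gives $\|h(y_1) - h(y_2)\| = O(b_\pi\lambda^N)\|h\|_\infty$, which is equally negligible. Consequently the two paired contributions $e^{-ibr_N(y_j)}\pi(\Theta_N(y_j))h(y_j)$ are misaligned by at least $c\|h\|_\infty/|\lambda_\pi|^C$ for some fixed $c > 0$.

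Combining this with the Gibbs property (Lemma \ref{Gibbs property}), the pair produces a cancellation in $\mathcal{L}^N_{ib,\pi}h(x)$ of size at least $b_\pi^{-C_{20}}\|h\|_\infty$, where $C_{20}$ absorbs the exponent $C$ together with a lower bound $e^{\varphi_N(y_j)} \gtrsim e^{-O(N)} = b_\pi^{-O(1)}$ on the cylinder weight of the relevant branch. The set $U$ is taken as the union of cylinders at depth $O(\log b_\pi + n_0 + p_0)$ on which the threading is admissible; by the Gibbs property each such cylinder has $\mu$-measure $\gtrsim b_\pi^{-C_{19}}$. The principal technical obstacle is the preimage construction itself: the closed chain $V$, which lives in the two-sided stable/unstable geometry of $X$, must be realized as a symbolic threading between two preimages of the same one-sided point $x$, and the error estimates from Lemmas \ref{Lemma 6.2.2} and \ref{Lemma 6.3.4} must be propagated uniformly in $(b,\pi)$ to yield the polynomial scaling in $b_\pi$ demanded by the conclusion.
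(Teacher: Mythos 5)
Your proposal breaks down at its central step, in two ways. First, you misquote the Diophantine hypothesis: Definition \ref{Definition of Diophantine condition on Brin group} supplies, \emph{for each} vector $h_0\in H_{\pi}$, \emph{some} $g\in\Gamma$ with $\|h_0-\pi(g)h_0\|\ge\delta\|h_0\|/|\lambda_{\pi}|^{C}$; there is no single $g_V$ that works for every $h_0$ (a fixed unitary $\pi(g_V)$ generally has vectors it barely moves). In your scheme the vector that must be rotated is essentially $e^{ibr_N(y_2)}\pi(\Theta_N(y_2))\eta(y_2)$, which depends on the preimage pair, while the pair is built from $g_V$ — a circularity you do not resolve. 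Second, the engineering claim is false as stated: if $y_1,y_2\in\sigma^{-N}(x)$ agree on the first $\sim N-p_0$ symbols and differ only in a terminal block of length $O(p_0)$ (which is what you need so that $|\eta|_{Lip}\le 2C_{18}b_{\pi}$ makes $\eta(y_1)\approx\eta(y_2)$), then along the long common prefix the iterates $\sigma^{j}y_1,\sigma^{j}y_2$ are at distance $\lambda^{N-p_0-j}$, so the discrepancies in the products $\Theta_{N}(y_i)$ and in the Birkhoff sums $r_N(y_i)$ accumulate to $O\!\left(\sum_{k\ge1}\lambda^{k}\right)=O(1)$, not $O(\lambda^{N})$. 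Lemmas \ref{Lemma 6.2.2} and \ref{Lemma 6.3.4} do not give what you invoke them for: they estimate the convergence of $\Delta^{u}$ and $\Theta^{u}$ along unstable sets of the two-sided shift, not any approximation of a prescribed Brin-group element by a twist ratio of two inverse branches; and the time-matching $r_N(y_1)-r_N(y_2)=o(1/|b|)$ is precisely the hard synchronization constraint (the reason the symbolic Brin set carries the condition $\sum_i\Delta^{\delta}(x_i,x_{i+1})=0$), which cannot simply be asserted. There is also a secondary gap: converting ``misaligned summands'' into a drop of $\|\mathcal{L}^{N}_{ib,\pi}h(x)\|$ by a fixed fraction of $\|h\|_{\infty}$ requires the preliminary reduction to $\|h(x)\|\ge\tfrac34\|h\|_{\infty}$ everywhere (otherwise the cancellation is only proportional to the smaller of $\|h(y_1)\|,\|h(y_2)\|$), which you omit.

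The paper's proof is organized precisely to avoid constructing such a pair directly. It first reduces to cancellation at a single point (spread to a set of measure $b_{\pi}^{-C_{19}}$ via the Lipschitz bound on $\mathcal{L}^{n}_{ib,\pi}h$ and the Gibbs property) and to $\|h(x)\|\ge\tfrac34\|h\|_{\infty}$; then it pushes the given closed chain far into the past, defines $I_j=e^{ibr_{n_{b_\pi}-n_0}(x_j)}\pi(\Theta_{n_{b_\pi}-n_0}(x_j))\eta(x_j)$ at the chain vertices, applies the Diophantine property to the specific vector $I_w$ (so the chain is chosen \emph{after} the vector, removing the circularity), and telescopes $I_w-\pi(g_W)I_w$ over the edges. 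The unstable-edge terms are shown to be negligible — this is where Lemmas \ref{Lemma 6.2.2} and \ref{Lemma 6.3.4} are actually used, together with $|\eta|_{Lip}\le 2C_{18}b_{\pi}$ and $n_{b_\pi}=C_{21}\log b_{\pi}$ large — so a stable edge must carry a term of size $\gtrsim b_{\pi}^{-C}$; for a stable edge the identities for $\Delta^{s},\Theta^{s}$ are exact (future-dependence of $r,\Theta$) and the two endpoints lie in the same $\sigma^{n_{b_\pi}}$-fibre, which yields exactly the misaligned pair \eqref{3.5.1} that Lemma \ref{Lemma 3.5.1} converts into the required contraction. Your outline would need to be rebuilt along these lines; as written, the key approximation and synchronization claims do not hold.
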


The above lemma will be proved in \S \ref{Appendix A}. As a corollary of Lemma \ref{Lemma 3.4.4}, we can obtain a cancellation of the oscillatory integral $\int_{X^{+}}||\mathcal{L}_{ib,\pi}^{C_{21}\log b_{\pi}}h||d\mu$ as follows:
\begin{equation}\label{3.4.3}
	\begin{aligned}
		\int_{X^{+}}||\mathcal{L}_{ib,\pi}^{C_{21}\log b_{\pi}}h||d\mu=&\int_{U}||\mathcal{L}_{ib,\pi}^{C_{21}\log b_{\pi}}h||d\mu+\int_{X^{+}-U}||\mathcal{L}_{ib,\pi}^{C_{21}\log b_{\pi}}h||d\mu\\
		\le&(1-b_{\pi}^{-C_{20}})||h||_{\infty}\mu(U)+||h||_{\infty}\mu(X^{+}-U)\\
		=&(1-b_{\pi}^{-C_{20}}\mu(U))||h||_{\infty}\le(1-b_{\pi}^{-C_{19}-C_{20}})||h||_{\infty}.
	\end{aligned}
\end{equation}
To strengthen the above $L^{1}$ contraction to a $||\cdot||_{\infty}$-contraction, we require the following lemma. 

\begin{Lemma}\label{Lemma 3.4.5}
There exist $C_{22}>0$ and $\delta_{1}\in(0,1)$ such that for any $h\in F_{\lambda}(X^{+})$ and any $k\ge1$,
$$||\mathcal{L}_{\varphi}^{k}h||_{Lip}\le \int_{X^{+}}|h|d\mu+C_{22}\delta_{1}^{k}||h||_{Lip}.$$
\end{Lemma}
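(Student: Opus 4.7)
The statement is a standard consequence of the spectral gap for the Ruelle transfer operator $\mathcal{L}_\varphi$ on $F_\lambda(X^+)$. I assume $\varphi$ has been normalized (by adding a coboundary, as indicated in \S\ref{subsec 6.1}) so that $\mathcal{L}_\varphi \mathbf{1}=\mathbf{1}$ and $\mu$ is the corresponding $\mathcal{L}_\varphi^{*}$-invariant Gibbs probability measure.

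The first step is to establish a Lasota-Yorke (Doeblin-Fortet) inequality: there exist $A>0$ and $\theta\in(0,1)$ such that
$$|\mathcal{L}_\varphi^{k}h|_{Lip}\le A\theta^{k}|h|_{Lip}+A\,||h||_{\infty}$$
for every $h\in F_\lambda(X^{+})$ and every $k\ge1$. This is obtained by essentially the same calculation as in the proof of Lemma \ref{Lemma 3.4.1} (taking $b=0$ and $\pi$ trivial): splitting differences of $\mathcal{L}_\varphi^{k}h(x)-\mathcal{L}_\varphi^{k}h(x')$ into a piece that sees the Lipschitz constant of $\varphi$ (absorbed into the $A\,||h||_\infty$ term) and a piece that contracts by $\lambda^{k}$ because the inverse branches of $\sigma^{k}$ contract by $\lambda^{k}$. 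Together with the compact embedding $F_\lambda(X^{+})\hookrightarrow C(X^{+})$ provided by Arzel\`a-Ascoli, the Ionescu-Tulcea-Marinescu theorem then yields quasi-compactness of $\mathcal{L}_\varphi$ on $F_\lambda(X^{+})$.

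The second step is to invoke the Ruelle-Perron-Frobenius theorem (see \cite{Bow75}): the leading eigenvalue is $1$ and is simple with eigenfunction $\mathbf{1}$, and the remainder of the spectrum lies in a disk of radius $\delta_1<1$. This yields a spectral decomposition $\mathcal{L}_\varphi=P+N$ on $F_\lambda(X^{+})$, where $Ph:=\bigl(\int h\,d\mu\bigr)\mathbf{1}$ is the rank-one projector onto the leading eigenspace, $PN=NP=0$, and the spectral radius of $N$ on $F_\lambda(X^{+})$ is at most $\delta_1$. A standard argument (e.g.\ iterating Gelfand's formula inside the Lasota-Yorke inequality) then gives a constant $C>0$ with $||N^{k}h||_{Lip}\le C\delta_1^{k}||h||_{Lip}$ for all $k\ge1$ and all $h\in F_\lambda(X^{+})$.

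Finally, iterating the decomposition gives $\mathcal{L}_\varphi^{k}h=\bigl(\int h\,d\mu\bigr)\mathbf{1}+N^{k}h$. Since constants have vanishing Lipschitz semi-norm, $||Ph||_{Lip}=|\int h\,d\mu|\le\int|h|\,d\mu$, and the triangle inequality yields
$$||\mathcal{L}_\varphi^{k}h||_{Lip}\le \int_{X^{+}}|h|\,d\mu+C\delta_1^{k}||h||_{Lip},$$
which is the claim with $C_{22}:=C$. The only real work is the Lasota-Yorke step; everything afterwards is bookkeeping of the classical Ruelle spectral picture, so I do not anticipate a substantive obstacle.
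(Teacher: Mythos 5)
Your proof is correct and is essentially the paper's argument: the paper proves this lemma in one line by citing the spectral gap of $\mathcal{L}_{\varphi}$ acting on $F_{\lambda}(X^{+})$ (referring to \cite{Bal00}), and your write-up simply fills in the standard details behind that citation — the Lasota-Yorke inequality, quasi-compactness, the Ruelle-Perron-Frobenius decomposition $\mathcal{L}_{\varphi}=P+N$ with $Ph=(\int h\,d\mu)\mathbf{1}$, and the bound $||N^{k}h||_{Lip}\le C\delta_{1}^{k}||h||_{Lip}$. No gap; the only cosmetic remark is that once the spectral radius of $N$ is known to be below $1$, Gelfand's formula alone gives the geometric bound, so no further appeal to the Lasota-Yorke inequality is needed at that stage.
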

\begin{proof}
	This is a directly corollary of the spectral gap of $\mathcal{L}_{\varphi}$ acts on $F_{\lambda}(X^{+})$ \cite{Bal00}.
\end{proof}

\begin{Lemma}\label{Lemma 3.4.6}
	There exist $C_{23}, C_{24}>0$ such that for any $(b, \pi)$ with $\pi\not=1$ and any $h\in F_{\lambda}(X^{+}, H_{\pi})$ with $|h|_{Lip}\le 2C_{18}b_{\pi}||h||_{\infty}$,
	$$||\mathcal{L}_{ib,\pi}^{(C_{24}+C_{21})\log b_{\pi}}h||_{\infty}\le\bigg(1-b_{\pi}^{-C_{23}}\bigg)||h||_{\infty}.$$
\end{Lemma}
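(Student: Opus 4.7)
The plan is to upgrade the $L^1$-type contraction already obtained in \eqref{3.4.3} to an $L^\infty$-type contraction by exploiting the spectral gap of $\mathcal{L}_\varphi$ encoded in Lemma \ref{Lemma 3.4.5}. Concretely, I would iterate $\mathcal{L}_{ib,\pi}$ an additional $C_{24}\log b_\pi$ times after the $C_{21}\log b_\pi$ iterations from Lemma \ref{Lemma 3.4.4}, and compare the norm of the resulting function to the iteration of the real, unperturbed operator $\mathcal{L}_\varphi$ applied to the scalar function $x\mapsto \|h'(x)\|$, where $h':=\mathcal{L}_{ib,\pi}^{C_{21}\log b_\pi}h$.

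The key pointwise comparison is that, since $\pi(\Theta(y))$ is unitary, the triangle inequality gives
$$\|\mathcal{L}_{ib,\pi}^{n}g(x)\|=\Big\|\sum_{\sigma^{n}y=x}e^{\varphi_{n}(y)-ibr_{n}(y)}\pi(\Theta_{n}(y))g(y)\Big\|\le \mathcal{L}_{\varphi}^{n}\|g\|(x),$$
where $\|g\|$ denotes the scalar function $x\mapsto \|g(x)\|\in F_{\lambda}(X^{+})$. Applying this with $g=h'$ and $n=C_{24}\log b_{\pi}$, and then using $\|\cdot\|_{\infty}\le \|\cdot\|_{\mathrm{Lip}}$ together with Lemma \ref{Lemma 3.4.5}, one finds
$$\|\mathcal{L}_{ib,\pi}^{(C_{21}+C_{24})\log b_\pi}h\|_{\infty}\le \int_{X^{+}}\|h'\|\,d\mu+C_{22}\delta_{1}^{C_{24}\log b_{\pi}}\,\bigl|\,\|h'\|\,\bigr|_{\mathrm{Lip}}.$$

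The first term is already bounded, by \eqref{3.4.3}, by $(1-b_{\pi}^{-C_{19}-C_{20}})\|h\|_{\infty}$. For the second term, I would observe that the scalar Lipschitz norm of $\|h'\|$ is dominated by the vector Lipschitz norm of $h'$ (by a reverse-triangle inequality argument), and then apply the Lasota--Yorke bound in Lemma \ref{Lemma 3.4.1}: combined with the hypothesis $|h|_{\mathrm{Lip}}\le 2C_{18}b_{\pi}\|h\|_{\infty}$ and $\lambda<1$, this yields $|h'|_{\mathrm{Lip}}\le (C_{16}+2C_{18})\,b_{\pi}\|h\|_{\infty}$. Hence the error term is at most
$$C_{22}(C_{16}+2C_{18})\,b_{\pi}^{\,1+C_{24}\log\delta_{1}}\,\|h\|_{\infty}.$$

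The main (but mild) obstacle is choosing $C_{24}$ correctly: since $\log\delta_{1}<0$, taking $C_{24}\ge (2+C_{19}+C_{20})/|\log\delta_{1}|$ ensures $1+C_{24}\log\delta_{1}\le -C_{19}-C_{20}-1$, so for $b_{\pi}$ large (which we may assume by taking $C_{15}$ large in the definition of $b_{\pi}$) the error term is at most $\tfrac{1}{2}b_{\pi}^{-C_{19}-C_{20}}\|h\|_{\infty}$. Combining gives
$$\|\mathcal{L}_{ib,\pi}^{(C_{21}+C_{24})\log b_\pi}h\|_{\infty}\le \Bigl(1-\tfrac{1}{2}b_{\pi}^{-C_{19}-C_{20}}\Bigr)\|h\|_{\infty},$$
and absorbing the factor $1/2$ into the exponent by enlarging the constant to $C_{23}:=C_{19}+C_{20}+1$ (and adjusting if $b_{\pi}$ lies in a bounded range) delivers the conclusion.
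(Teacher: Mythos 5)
Your proof is correct and follows essentially the same route as the paper: dominate the extra $C_{24}\log b_{\pi}$ iterates by $\mathcal{L}_{\varphi}^{C_{24}\log b_{\pi}}$ applied to $\|\mathcal{L}_{ib,\pi}^{C_{21}\log b_{\pi}}h\|$, invoke Lemma \ref{Lemma 3.4.5} together with the $L^{1}$ bound \eqref{3.4.3}, control the Lipschitz term by an $O(b_{\pi}\|h\|_{\infty})$ bound, and choose $C_{24}$ large to make the error term negligible. The only (immaterial) difference is that you get the Lipschitz bound on $\mathcal{L}_{ib,\pi}^{C_{21}\log b_{\pi}}h$ from Lemma \ref{Lemma 3.4.1} directly, whereas the paper cites Corollary \ref{Cor 3.4.2}.
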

\begin{proof}
	By \eqref{3.4.3}, Corollary \ref{Cor 3.4.2} and Lemma \ref{Lemma 3.4.5}, have
	$$
	\begin{aligned}
		&||\mathcal{L}_{ib,\pi}^{(C_{24}+C_{21})\log b_{\pi}}h||_{\infty}\le|\mathcal{L}_{\varphi}^{C_{24}\log b_{\pi}}||\mathcal{L}_{ib,\pi}^{C_{21}\log b_{\pi}}h|||_{\infty}\\
		\le&\int_{X^{+}}||\mathcal{L}_{ib,\pi}^{C_{21}\log b_{\pi}}h||d\mu+C_{22}\delta_{1}^{C_{24}\log b_{\pi}}||\mathcal{L}_{ib,\pi}^{C_{21}\log b_{\pi}}h||_{Lip}\\
		\le&(1-b_{\pi}^{-C_{19}-C_{20}})||h||_{\infty}+2C_{22}\delta^{C_{24}\log b_{\pi}}C_{18}b_{\pi}||h||_{\infty}\\
		\le&(1-b_{\pi}^{-C_{23}})||h||_{\infty}
	\end{aligned}
	$$
	which completes the proof.
\end{proof}

The above $||\cdot||_{\infty}$-contraction implies the following $||\cdot||_{b_{\pi}}$-contraction.

\begin{Corollary}\label{Cor 3.4.7}
	For any $(b, \pi)$ with $\pi\not=1$ and any $h\in F_{\lambda}(X^{+}, H_{\pi})$ with $|h|_{Lip}\le 2C_{18}b_{\pi}||h||_{\infty}$,
	$$||\mathcal{L}_{ib,\pi}^{(C_{24}+C_{21})\log b_{\pi}}h||_{b_{\pi}}\le\bigg(1-b_{\pi}^{-C_{23}}\bigg)||h||_{b_{\pi}}.$$
\end{Corollary}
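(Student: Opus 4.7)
The plan is to combine the $\|\cdot\|_\infty$-contraction from Lemma \ref{Lemma 3.4.6} with the Lasota-Yorke type bound from Corollary \ref{Cor 3.4.2} and unfold the definition of $\|\cdot\|_{b_\pi} = \max\{\|\cdot\|_\infty,\, |\cdot|_{\mathrm{Lip}}/(C_{18}b_\pi)\}$. Write $N = (C_{24}+C_{21})\log b_\pi$ and set $\tilde h = \mathcal{L}_{ib,\pi}^{N} h$. It suffices to bound each of the two quantities inside the max defining $\|\tilde h\|_{b_\pi}$ by $(1 - b_\pi^{-C_{23}})\|h\|_{b_\pi}$.

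For the sup-norm part, Lemma \ref{Lemma 3.4.6} gives directly
\[
\|\tilde h\|_\infty \le (1 - b_\pi^{-C_{23}})\|h\|_\infty \le (1 - b_\pi^{-C_{23}})\|h\|_{b_\pi},
\]
since $\|h\|_\infty \le \|h\|_{b_\pi}$ by definition. For the Lipschitz part, Corollary \ref{Cor 3.4.2} (applied with $n = N \ge 1$) gives
\[
\frac{|\tilde h|_{\mathrm{Lip}}}{C_{18} b_\pi} \le \lambda' \|h\|_{b_\pi}.
\]
Recall $\lambda' \in (\lambda,1)$ is a fixed constant. Since we have already arranged (through the choice of $C_{15}$) that $b_\pi$ is bounded below by a sufficiently large constant whenever $\pi\neq 1$, we may ensure $b_\pi^{-C_{23}} \le 1 - \lambda'$, equivalently $\lambda' \le 1 - b_\pi^{-C_{23}}$. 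Hence the Lipschitz part is also bounded by $(1 - b_\pi^{-C_{23}})\|h\|_{b_\pi}$.

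Taking the maximum of the two bounds yields
\[
\|\tilde h\|_{b_\pi} = \max\bigl\{\|\tilde h\|_\infty,\, |\tilde h|_{\mathrm{Lip}}/(C_{18}b_\pi)\bigr\} \le (1 - b_\pi^{-C_{23}})\|h\|_{b_\pi},
\]
which is the desired contraction. There is no real obstacle: the only thing to watch is the implicit assumption that $b_\pi$ is large enough, which is guaranteed by the choice of $C_{15}>0$ in the definition $b_\pi = |b| + C_{15}|\lambda_\pi|^{1+m_G/2}$ together with $\inf_{\pi\neq 1}|\lambda_\pi|>0$; if necessary one enlarges $C_{15}$ (and correspondingly the exponent $C_{23}$) so that $b_\pi^{-C_{23}} \le 1-\lambda'$ uniformly in $(b,\pi)$ with $\pi\neq 1$.
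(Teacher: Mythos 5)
Your proposal is correct and is essentially the paper's own argument: the paper proves this corollary by combining exactly Lemma \ref{Lemma 3.4.6} (for the sup-norm branch of $\|\cdot\|_{b_{\pi}}$) with Corollary \ref{Cor 3.4.2} (for the Lipschitz branch). You merely make explicit the small point, implicit in the paper, that $\lambda'\le 1-b_{\pi}^{-C_{23}}$ because $b_{\pi}$ is uniformly large for $\pi\neq 1$ by the choice of $C_{15}$.
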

\begin{proof}
	This comes from Corollary \ref{Cor 3.4.2} and Lemma \ref{Lemma 3.4.6}.
\end{proof}

The core technical result of this paper is the following Dolgopyat type estimate.

\begin{Proposition}\label{Dolgopyat type estimate for rapid mixing of compact group extension}
There exists $C_{25} > 0$ such that for any $1 \neq \pi \in \widehat{G}$, any $b \in \mathbb{R}$, and any $h \in F_{\lambda}(X^{+}, H_{\pi})$, we have $||\mathcal{L}_{ib,\pi}^{n_{b_{\pi}}}h||_{b_{\pi}} \le (1 - b_{\pi}^{-C_{23}}) ||h||_{b_{\pi}}$, where $n_{b_{\pi}} = C_{25} \log b_{\pi}$.
\end{Proposition}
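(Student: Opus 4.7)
The plan is to obtain the Proposition via a clean dichotomy on the ratio $|h|_{\text{Lip}}/(b_{\pi}\|h\|_{\infty})$, using the Lasota–Yorke machinery already assembled (Corollary~\ref{Cor 3.4.2}, Lemma~\ref{Lemma 3.4.3}, Corollary~\ref{Cor 3.4.7}). The norm $\|\cdot\|_{b_{\pi}}$ was deliberately chosen with threshold $2C_{18}b_{\pi}$ precisely to make this dichotomy meaningful, so the proof amounts to selecting $C_{25}$ large enough to cover both regimes and bridging the gap with the non-expansion estimate.

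First I would fix $n_{b_{\pi}}:=C_{25}\log b_{\pi}$ with $C_{25}\ge C_{24}+C_{21}$, to be further tuned later, and split into two cases according to whether $|h|_{\text{Lip}}\le 2C_{18}b_{\pi}\|h\|_{\infty}$ or $|h|_{\text{Lip}}> 2C_{18}b_{\pi}\|h\|_{\infty}$. In the \emph{small Lipschitz} case, Corollary~\ref{Cor 3.4.7} directly yields the contraction
\[
\|\mathcal{L}_{ib,\pi}^{(C_{24}+C_{21})\log b_{\pi}}h\|_{b_{\pi}}\le\bigl(1-b_{\pi}^{-C_{23}}\bigr)\|h\|_{b_{\pi}}.
\]
The remaining $n_{b_{\pi}}-(C_{24}+C_{21})\log b_{\pi}\ge0$ iterations are then absorbed by the non-expansion bound $\|\mathcal{L}_{ib,\pi}^{m}(\cdot)\|_{b_{\pi}}\le\|\cdot\|_{b_{\pi}}$ coming from Corollary~\ref{Cor 3.4.2}, and the desired bound follows.

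In the \emph{large Lipschitz} case, I would apply Lemma~\ref{Lemma 3.4.3} to obtain $\|\mathcal{L}_{ib,\pi}h\|_{b_{\pi}}\le\lambda'\|h\|_{b_{\pi}}$ after a single iteration, then again use Corollary~\ref{Cor 3.4.2} to extend this to $\|\mathcal{L}_{ib,\pi}^{n_{b_{\pi}}}h\|_{b_{\pi}}\le\lambda'\|h\|_{b_{\pi}}$. To reconcile this with the target factor $(1-b_{\pi}^{-C_{23}})$, note that since $\inf_{\pi\ne1}|\lambda_{\pi}|>0$, the constant $C_{15}$ in the definition of $b_{\pi}$ can be enlarged so that $b_{\pi}$ exceeds any prescribed threshold uniformly in $(b,\pi)$; in particular one may ensure $b_{\pi}\ge(1-\lambda')^{-1/C_{23}}$, i.e.\ $\lambda'\le 1-b_{\pi}^{-C_{23}}$, so the large-Lipschitz contraction is even stronger than the target.

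The proof therefore concludes by choosing $C_{25}:=C_{24}+C_{21}$ and enlarging $C_{15}$ once more if needed to guarantee $\lambda'\le 1-b_{\pi}^{-C_{23}}$ for all relevant $(b,\pi)$. There is no genuine obstacle beyond bookkeeping: all the work has been done in proving the cancellation Lemma~\ref{Lemma 3.4.4} and its upgrade to the $L^{\infty}$/$\|\cdot\|_{b_{\pi}}$-contraction in Corollary~\ref{Cor 3.4.7}; the Proposition is essentially a packaging step in which the two regimes of the dichotomy are unified into a single statement valid uniformly in $h$.
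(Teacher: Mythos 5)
Your proposal is correct and follows essentially the same route as the paper: the paper's proof of the Proposition is precisely the dichotomy you describe, invoking Lemma \ref{Lemma 3.4.3} for the regime $|h|_{Lip}\ge 2C_{18}b_{\pi}\|h\|_{\infty}$ (where the $\lambda'$-contraction beats $1-b_{\pi}^{-C_{23}}$ because $b_{\pi}$ is uniformly large by the choice of $C_{15}$) and Corollary \ref{Cor 3.4.7} for the complementary regime, with $C_{25}=C_{24}+C_{21}$. Your extra bookkeeping (absorbing surplus iterates via the non-expansion bound of Corollary \ref{Cor 3.4.2}, applied after the contraction so the Lipschitz condition need not be re-verified) is harmless and consistent with the paper's argument.
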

\begin{proof}
		This comes from Lemma \ref{Lemma 3.4.3} and Corollary \ref{Cor 3.4.7}.
\end{proof}

The above estimate on $\mathcal{L}_{ib,\pi}$ implies the following estimate on $\mathcal{L}_{s,\pi}$.

\begin{Corollary}\label{Corollary 3.3.5}
There exist $C_{26} > 0$, $C_{27} > 0$ such that for any $1 \neq \pi \in \widehat{G}$, any $s = a + ib$ with $|a| \le b_{\pi}^{-C_{26}}$, and any $h \in F_{\lambda}(X^{+}, H_{\pi})$, we have $||\mathcal{L}_{s,\pi}^{n_{b_{\pi}}}h||_{b_{\pi}} \le (1 - b_{\pi}^{-C_{27}}) ||h||_{b_{\pi}}$.
\end{Corollary}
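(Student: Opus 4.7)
\noindent\textbf{Proof plan for Corollary \ref{Corollary 3.3.5}.} The plan is to treat $\mathcal{L}_{s,\pi}$ as a small perturbation of $\mathcal{L}_{ib,\pi}$ and reduce to Proposition \ref{Dolgopyat type estimate for rapid mixing of compact group extension}. Starting from the explicit iterated form
$$\mathcal{L}_{s,\pi}^{n}h(x)=\sum_{\sigma^{n}y=x}e^{\varphi_{n}(y)-ibr_{n}(y)}e^{-ar_{n}(y)}\pi(\Theta_{n}(y))h(y),$$
I would write
$$\big(\mathcal{L}_{s,\pi}^{n}-\mathcal{L}_{ib,\pi}^{n}\big)h(x)=\sum_{\sigma^{n}y=x}e^{\varphi_{n}(y)-ibr_{n}(y)}\big(e^{-ar_{n}(y)}-1\big)\pi(\Theta_{n}(y))h(y)$$
and estimate the difference in the norm $\|\cdot\|_{b_{\pi}}$ from the two sides.

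First I would control the sup norm. Using the elementary bound $|e^{-ar_{n}(y)}-1|\le |a|n|r|_{\infty}e^{|a|n|r|_{\infty}}$, the normalization $\mathcal{L}_{\varphi}^{n}\mathbf{1}=\mathbf{1}$ (achieved after adding a coboundary to $\varphi$ if necessary) and the fact that $\pi$ is unitary, I get
$$\|\mathcal{L}_{s,\pi}^{n}h-\mathcal{L}_{ib,\pi}^{n}h\|_{\infty}\le |a|n|r|_{\infty}e^{|a|n|r|_{\infty}}\|h\|_{\infty}.$$
For the Lipschitz seminorm I would pair preimages of $x$ and $x'$ with matching first $n$ symbols, exploit the Bowen-type estimate $|r_{n}(y)-r_{n}(y')|\le C|r|_{\mathrm{Lip}}d_{\lambda}(x,x')$ uniformly in $n$, apply Corollary \ref{Corollary 2.7.4} to handle the factor $\pi(\Theta_{n})$, and repeat the argument of Lemma \ref{Lemma 3.4.1}. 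This produces an estimate of the form
$$|\mathcal{L}_{s,\pi}^{n}h-\mathcal{L}_{ib,\pi}^{n}h|_{\mathrm{Lip}}\le C'\,b_{\pi}\,|a|n|r|_{\infty}e^{|a|n|r|_{\infty}}\,\|h\|_{b_{\pi}},$$
where the $b_{\pi}$ comes from differentiating $e^{-ibr_{n}}$ and from $|\pi(\Theta_{n})|_{\mathrm{Lip}}\lesssim |\lambda_{\pi}|^{1+m_{G}/2}$. Dividing by $C_{18}b_{\pi}$ cancels this $b_{\pi}$, so the same bound (up to constants) holds for $\|\cdot\|_{b_{\pi}}$.

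Finally, I would choose $n=n_{b_{\pi}}=C_{25}\log b_{\pi}$ and $|a|\le b_{\pi}^{-C_{26}}$ with $C_{26}$ sufficiently large (depending on $C_{23}$, $C_{25}$, $|r|_{\infty}$) so that $|a|n|r|_{\infty}e^{|a|n|r|_{\infty}}\le \tfrac{1}{2}b_{\pi}^{-C_{23}}$. Combining with Proposition \ref{Dolgopyat type estimate for rapid mixing of compact group extension} yields
$$\|\mathcal{L}_{s,\pi}^{n_{b_{\pi}}}h\|_{b_{\pi}}\le \|\mathcal{L}_{ib,\pi}^{n_{b_{\pi}}}h\|_{b_{\pi}}+\|\mathcal{L}_{s,\pi}^{n_{b_{\pi}}}h-\mathcal{L}_{ib,\pi}^{n_{b_{\pi}}}h\|_{b_{\pi}}\le \big(1-\tfrac{1}{2}b_{\pi}^{-C_{23}}\big)\|h\|_{b_{\pi}},$$
which is $\le (1-b_{\pi}^{-C_{27}})\|h\|_{b_{\pi}}$ for any $C_{27}>C_{23}$ once $b_{\pi}$ is large (which we may assume by the choice of $C_{15}$).

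The main obstacle is the Lipschitz estimate: both the oscillation $e^{-ibr_{n}}$ and the representation factor $\pi(\Theta_{n})$ contribute Lipschitz growth of order $b_{\pi}$, so one must verify carefully that these contributions combine with the perturbation factor $|a|n$ in such a way that choosing $C_{26}$ large enough suffices; this is the step where the polynomial threshold between $|a|$ and $b_{\pi}$ is fixed.
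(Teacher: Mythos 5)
Your proposal is correct and rests on the same underlying idea as the paper (treat $\mathcal{L}_{s,\pi}$ as a perturbation of $\mathcal{L}_{ib,\pi}$ and invoke Proposition \ref{Dolgopyat type estimate for rapid mixing of compact group extension} with $n_{b_\pi}=O(\log b_\pi)$ and $|a|\le b_\pi^{-C_{26}}$, so that $|a|\,n_{b_\pi}\lesssim b_\pi^{-C_{26}}\log b_\pi\ll b_\pi^{-C_{23}}$), but your implementation differs from, and is heavier than, the paper's. The paper does not estimate the operator difference at all: it uses the exact identity $\mathcal{L}_{s,\pi}^{n}h=\mathcal{L}_{ib,\pi}^{n}\bigl(e^{-ar_{n}}h\bigr)$, applies the Proposition directly to the function $e^{-ar_{n_{b_\pi}}}h$, and then only needs the elementary multiplicative bounds $\|e^{-ar_{n_{b_\pi}}}h\|_{\infty}\le(1+2b_\pi^{-C_{26}+1})\|h\|_{\infty}$ and $|e^{-ar_{n_{b_\pi}}}h|_{\mathrm{Lip}}\le(1+2b_\pi^{-C_{26}+1})\bigl(b_\pi^{-C_{26}+1+2\log\lambda^{-1}}\|h\|_{\infty}+|h|_{\mathrm{Lip}}\bigr)$, which immediately give $\|\mathcal{L}_{s,\pi}^{n_{b_\pi}}h\|_{b_\pi}\le(1-b_\pi^{-C_{23}})(1+\text{small})\|h\|_{b_\pi}\le(1-b_\pi^{-C_{27}})\|h\|_{b_\pi}$. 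This sidesteps exactly the step you flag as the main obstacle: since the factor $e^{-ar_n}$ is moved onto the test function, the oscillation $e^{-ibr_n}$ and the representation $\pi(\Theta_n)$ never need to be re-differentiated, so no Lasota--Yorke-type computation for the difference operator is required. Your additive route (triangle inequality plus a Lipschitz estimate for $\mathcal{L}_{s,\pi}^{n}-\mathcal{L}_{ib,\pi}^{n}$ in the spirit of Lemma \ref{Lemma 3.4.1}) does close, with the constants working out as you indicate; if you write it up, make explicit that for paired preimages $d_\lambda(y,y')\le\lambda^{n}d_\lambda(x,x')$ and that $|h|_{\mathrm{Lip}}\le C_{18}b_\pi\|h\|_{b_\pi}$, so the $h(y)-h(y')$ term also fits your claimed bound after dividing by $C_{18}b_\pi$. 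In short: both arguments prove the corollary; the paper's conjugation-of-the-test-function trick reduces it to two lines, while yours costs an extra Lasota--Yorke-type estimate.
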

\begin{proof}
	By Proposition \ref{Dolgopyat type estimate for rapid mixing of compact group extension}, we have
	\begin{equation}\label{3.3.3}
		||\mathcal{L}_{s,\pi}^{n_{b_{\pi}}}h||_{b_{\pi}}=||\mathcal{L}_{ib,\pi}^{n_{b_{\pi}}}(e^{-ar_{n_{b_{\pi}}}}h)||_{b_{\pi}}\le (1-b_{\pi}^{-C_{23}})||e^{-ar_{n_{b_{\pi}}}}h||_{b_{\pi}}.
	\end{equation}
	On the one hand, when $|a|\le b_{\pi}^{-C_{26}}$,
	\begin{equation}\label{3.3.4}
		||e^{-ar_{n_{b_{\pi}}}}h||_{\infty} \le (1+2b_{\pi}^{-C_{26}+1})||h||_{\infty}.
	\end{equation}
	On the other hand, also provided $|a|\le b_{\pi}^{-C_{26}}$,
	\begin{equation}\label{3.3.5}
		|e^{-ar_{n_{b_{\pi}}}}h|_{\text{Lip}}\le (1+2b_{\pi}^{-C_{26}+1})(b_{\pi}^{-C_{26}+1+2\log\lambda^{-1}}||h||_{\infty}+|h|_{\text{Lip}}).
	\end{equation}
	Substituting \eqref{3.3.4} and \eqref{3.3.5} into \eqref{3.3.3} and choosing $C_{26}>0$ large, we obtain
	\begin{equation*}
		||\mathcal{L}_{s,\pi}^{n_{b_{\pi}}}h||_{b_{\pi}}\le(1-b_{\pi}^{-C_{23}})(1+2b_{\pi}^{-C_{26}+1}) (1+b_{\pi}^{-C_{26}+1+2\log\lambda^{-1}})||h||_{b_{\pi}}\le (1-b_{\pi}^{-C_{27}})||h||_{b_{\pi}},
	\end{equation*}
	for some uniform constant $C_{27} > 0$.
\end{proof}

\subsection{Proof of  Theorem \ref{Rapid mixing for symbolic flow of compact group extension}}\label{subsec 7.4}

With the results we have  assembled we are at last in a position to prove Theorem \ref{Rapid mixing for symbolic flow of compact group extension} (although  the proofs of Lemmas \ref{Lemma 3.3.3} and \ref{Lemma 3.4.4} still have to be given in \S\S \ref{subsec 7.5} and \ref{Appendix A}).

	The analytic continuation of $\widehat{\chi}_{\pi}$ is given by $(1)$ of Lemma \ref{Lemma 3.3.3}. By changing the  summation order, we obtain
	\begin{equation}\label{3.3.6}
		\Phi_{3}(s)=\sum_{p\ge1}\sum_{q\ge0}\sum_{n\ge\max\{p-q,1\}}
			\langle
			[M_{\pi,n+q-p}(s)E_{\pi}], \mathcal{L}_{\overline{s},\pi}^{p}[M_{\pi,q}(-\overline{s})F_{\pi}]
			\rangle
		_{\mu\times\text{Haar}_{G}}.
	\end{equation}
	Similarly, we can also write,
	\begin{equation}\label{3.3.7}
		\Phi_{4}(s)=\sum_{p\ge0}\sum_{q\ge0}\sum_{n\ge1}
			\langle
			\mathcal{L}_{s,\pi}^{p}[M_{\pi,p+n+q}(s)E_{\pi}], [M_{\pi,q}(-\overline{s})F_{\pi}]
		\rangle_{\mu\times\text{Haar}_{G}}.
	\end{equation}
	For any $s=a+ib$ with $|a|\le b_{\pi}^{-C_{26}}$, using 
	Corollaries 
	\ref{Cor 3.4.2} and  \ref{Corollary 3.3.5}, 
	and $(2)$ of Lemma \ref{Lemma 3.3.3}, we can bound \eqref{3.3.6} as follows:
	\begin{equation}\label{3.3.8}
		\begin{aligned}
			|\Phi_{3}(s)| &\le \sum_{p\ge0}\sum_{j=0}^{n_{b_{\pi}}-1}\sum_{q\ge0}\sum_{n\ge\max\{pn_{b_{\pi}}+j-q,1\}} \\
			&\quad ||M_{\pi,n+q-pn_{b_{\pi}}-j}(s)E_{\pi}||_{\infty}2(1-b_{\pi}^{-C_{27}})^{p} ||M_{\pi,q}(-\overline{s})F_{\pi}||_{b_{\pi}} \\
			&\le \sum_{p\ge0}\sum_{q\ge0}\sum_{n\ge0}2n_{b_{\pi}}C_{14}\lambda^{3n/4}||E_{\pi}||_{\lambda}(1-b_{\pi}^{-C_{27}})^{p}C_{14}\lambda^{q/4}||F_{\pi}||_{\lambda} \\
			&\le C_{28}b_{\pi}^{C_{27}+1}||E_{\pi}||_{\lambda}||F_{\pi}||_{\lambda},
		\end{aligned}
	\end{equation}
	where $C_{28}>0$ is a uniform constant. In particular, $\Phi_{3}$ is analytic in the region $s=a+ib$ with $|a|\le b_{\pi}^{-C_{26}}$. Similarly, as shown in \eqref{3.3.7}, one can infer that $\Phi_{4}$ is analytic in the region $s=a+ib$ with $|a|\le b_{\pi}^{-C_{26}}$, and it satisfies
	\begin{equation}\label{3.3.9}
		|\Phi_{4}(s)|\le C_{28}b_{\pi}^{C_{27}+1}||E_{\pi}||_{\lambda}||F_{\pi}||_{\lambda}.
	\end{equation}
	The analysis of $\Phi_{1}$ and $\Phi_{2}$ is straightforward. By their definitions, $\Phi_{1}$ and $\Phi_{2}$ are analytic on $\mathbb{C}$. Moreover, within the region $s=a+ib$ where $|a|\le b_{\pi}^{-C_{26}}$, as indicated by \eqref{3.3.1} and \eqref{3.3.2}, we can bound $|\Phi_{1}|+|\Phi_{2}|\le C_{28}||E_{\pi}||_{\infty}||F_{\pi}||_{\infty}$. Combining this with \eqref{3.3.8} and \eqref{3.3.9}, we conclude that $\widehat{\chi}_{\pi}$ is analytic in the region $s=a+ib$ with $|a|\le b_{\pi}^{-C_{26}}$, and it satisfies
	\begin{equation}\label{3.3.10}
		|\widehat{\chi}_{\pi}(s)|\le\sum_{i=1}^{4}|\Phi_{i}(s)|\le 3C_{28}b_{\pi}^{C_{27}+1}||E_{\pi}||_{\lambda}||F_{\pi}||_{\lambda}.
	\end{equation}
	
	The above bound is excessively large, rendering it impractical for the use of the inverse Laplace transform. To overcome this complication,  by analogy with  part VI in \cite[Section 4]{Dol98a}, we need to differentiate $\chi_{\pi}$. Recall that $E$ and $F$ belong to $F_{\lambda}^{k}(\widehat{X}_{r})$, and $E_{\pi}$ and $F_{\pi}$ are $C^{k_{2}}$ with respect to $u$, where $k=2k_{1}+k_{2}$. Previously, we used $\chi_{\pi}$ to represent $\chi_{E_{\pi},F_{\pi}}$. For any $s=a+ib$ with $a>0$ and any $\pi\in\widehat{G}$, using \eqref{3.3.1} and \eqref{3.3.2}, we have
	$$
	\begin{aligned}
		&\widehat{\chi}_{E_{\pi},F_{\pi}}(s)\\
		=&\int_{0}^{\infty}e^{-st}\chi_{E_{\pi},F_{\pi}}(t)dt\\
		=&\sum_{j=1}^{k_{2}}\dfrac{1}{s^{j}}e^{-st}\chi_{E_{\pi},F_{\pi}}^{(j-1)}(t)\bigg|_{0}^{\infty}+\dfrac{1}{s^{k_{2}}}\int_{0}^{\infty}e^{-st}\chi_{E_{\pi},F_{\pi}}^{(k_{2})}(t)dt\\
		=&\dfrac{1}{s^{k_{2}}}\int_{0}^{\infty}e^{-st}\rho_{E_{\pi},F_{\pi}}^{(k_{2})}(t)dt+\dfrac{1}{s^{k_{2}}}\int_{0}^{\infty}e^{-st}(\chi_{E_{\pi},F_{\pi}}^{(k_{2})}-\rho_{E_{\pi},F_{\pi}}^{(k_{2})})(t)dt\\
		=&\dfrac{1}{s^{k_{2}}}\int_{0}^{\infty}e^{-st}\rho_{\frac{\partial^{k_{2}}E_{\pi}}{\partial u^{k_{2}}},F_{\pi}}(t)dt+\dfrac{1}{s^{k_{2}}}\int_{0}^{1}e^{-st}(\chi_{E_{\pi},F_{\pi}}^{(k_{2})}-\rho_{E_{\pi},F_{\pi}}^{(k_{2})})(t)dt\\
		=&\dfrac{1}{s^{k_{2}}}\int_{0}^{\infty}e^{-st}\chi_{\frac{\partial^{k_{2}}E_{\pi}}{\partial u^{k_{2}}},E_{\pi}}(t)dt+\dfrac{1}{s^{k_{2}}}\int_{0}^{\infty}e^{-st}\bigg(\rho_{\frac{\partial^{k_{2}}E_{\pi}}{\partial u^{k_{2}}},F_{\pi}}-\chi_{\frac{\partial^{k_{2}}E_{\pi}}{\partial u^{k_{2}}},F_{\pi}}\bigg)(t)dt\\
		&+\dfrac{1}{s^{k_{2}}}\int_{0}^{1}e^{-st}(\chi_{E_{\pi},F_{\pi}}^{(k_{2})}-\rho_{E_{\pi},F_{\pi}}^{(k_{2})})(t)dt\\
		=&\dfrac{1}{s^{k_{2}}}\widehat{\chi}_{\frac{\partial^{k_{2}}E_{\pi}}{\partial u^{k_{2}}},F_{\pi}}(s)+\dfrac{1}{s^{k_{2}}}\int_{0}^{1}e^{-st}\bigg(\rho_{\frac{\partial^{k_{2}}E_{\pi}}{\partial u^{k_{2}}},F_{\pi}}-\chi_{\frac{\partial^{k_{2}}E_{\pi}}{\partial u^{k_{2}}},F_{\pi}}\bigg)(t)dt\\
		&+\dfrac{1}{s^{k_{2}}}\int_{0}^{1}e^{-st}(\chi_{E_{\pi},F_{\pi}}^{(k_{2})}-\rho_{E_{\pi},F_{\pi}}^{(k_{2})})(t)dt\\
		=&\dfrac{1}{s^{k_{2}}}\widehat{\chi}_{\frac{\partial^{k_{2}}E_{\pi}}{\partial u^{k_{2}}},F_{\pi}}(s)+\dfrac{1}{s^{k_{2}}}\int_{0}^{1}e^{-st}\bigg(\chi_{E_{\pi},F_{\pi}}^{(k_{2})}-\chi_{\frac{\partial^{k_{2}}E_{\pi}}{\partial u^{k_{2}}},F_{\pi}}\bigg)(t)dt.
	\end{aligned}
	$$
	Therefore, according to \eqref{3.3.2}, \eqref{3.3.10}, and the condition of analyticity in the region $s=a+ib$ with $|a|\le b_{\pi}^{-C_{26}}$, for any $s\neq 0$ we have
	$$|\widehat{\chi}_{E_{\pi},F_{\pi}}(s)|\le 3C_{28}\dfrac{b_{\pi}^{C_{27}+1}}{|s|^{k_{2}}}\bigg|\bigg|\frac{\partial^{k_{2}}E_{\pi}}{\partial u^{k_{2}}}\bigg|\bigg|_{\lambda}||F_{\pi}||_{\lambda}+\dfrac{2^{k_{2}}}{|s|^{k_{2}}}\bigg(\bigg|\bigg|\frac{\partial^{k_{2}}E_{\pi}}{\partial u^{k_{2}}}\bigg|\bigg|_{\infty}+||E_{\pi}||_{\infty}\bigg)||F_{\pi}||_{\infty}.$$
	Recalling that $b_{\pi}=|b|+C_{15}|\lambda_{\pi}|^{1+m_{G}/2}$, and using Lemma \ref{Lemma 3.3.1} in conjunction with the above bound, in the region $s=a+ib$ with $|a|\le b_{\pi}^{-C_{26}}$ and $|b|\ge1$, we obtain
	\begin{equation}\label{3.3.11}
		|\widehat{\chi}_{E_{\pi},F_{\pi}}(s)|\le C_{29}|b|^{-k_{2}+C_{27}+1}|\lambda_{\pi}|^{-4k_{1}+2(C_{27}+1)m_{G}}||E||_{\lambda,k}||F||_{\lambda,k},
	\end{equation}
	and by \eqref{3.3.10}, in the case of $|b|\le1$,
	\begin{equation}\label{3.3.12}
		|\widehat{\chi}_{E_{\pi},F_{\pi}}(s)|\le C_{29}|\lambda_{\pi}|^{-4k_{1}+2(C_{27}+1)m_{G}}||E||_{\lambda,k}||F||_{\lambda,k},
	\end{equation}
	where $C_{29}>0$ is a uniform constant.

	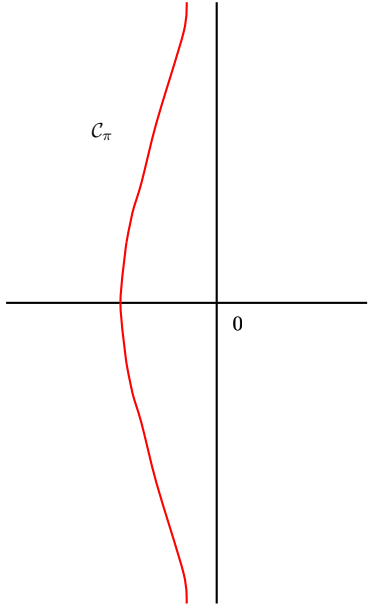
\begin{figure}[h!]
		\centerline{
			\begin{tikzpicture}[thick,scale=0.40, every node/.style={scale=0.7}]
				\draw[-, black] (0,-10)-- (0,10);
				\draw[-, black] (-7,0)-- (5,0);
				\draw[red] plot [smooth] coordinates {(-3,2) (-2.8, 3) (-2.5, 4) (-2, 6)  (-1.1,9)  (-1,10)}; 
				\node at (0.7, -0.7) {$0$};
				\draw[red] plot [smooth] coordinates {(-3,-2) (-2.8, -3) (-2.5,-4)  (-2,-6) (-1.1,-9)  (-1,-10)}; 
				\node at (0.7, -0.7) {$0$};
				\draw[red] plot [smooth] coordinates {(-3,2) (-3.2,0) (-3,-2)}; 
				\node at (-3.8, 5.7) {$\mathcal{C}_\pi$};
			\end{tikzpicture}
		}
		\caption{The curve $\mathcal{C}_\pi$}
	\end{figure}
	

	Now, we apply the inverse Laplace formula to the curve $\mathcal{C}_{\pi}=\{a= -b_{\pi}^{-C_{26}}: b\in\mathbb{R}\}$,  to obtain $\chi_{E_{\pi},F_{\pi}}(t)=\int_{\mathcal{C}_{\pi}}e^{st}\widehat{\chi}_{E_{\pi},F_{\pi}}(s)ds$. For any $n\ge1$, employing the bounds \eqref{3.3.11} and \eqref{3.3.12}, we can choose $k_{1},k_{2}$ to be large, and consequently $k$ to be large such that for some uniform constant $C_{30}>0$, the following holds: $\chi_{E_{\pi},F_{\pi}}(t)\le C_{30}|\lambda_{\pi}|^{-r_{G}-1}||E||_{\lambda,k}||F||_{\lambda,k}t^{-n}$ for any $t>0$ and any $1\not=\pi\in\widehat{G}$. Then, by \eqref{3.3.1} and \eqref{3.3.2}, we have $\rho_{E_{\pi},F_{\pi}}(t)\le (C_{30}+1)|\lambda_{\pi}|^{-r_{G}-1}||E||_{\lambda,k}||F||_{\lambda,k}t^{-n}$ for any $t>0$ and any $1\not=\pi\in\widehat{G}$. By Lemmas \ref{Lemma 2.7.5} and \ref{Lemma 3.3.2}, we obtain $\rho_{E, F}(t)\le \rho_{E_{1},F_{1}}(t)+C_{31}||E||_{\lambda,k}||F||_{\lambda,k}t^{-n}$ for any $t>0$, where $C_{31}=(C_{30}+1)\sum_{\pi\not=1}|\lambda_{\pi}|^{-r_{G}-1}$. The function $\rho_{E_{1},F_{1}}$ is the correlation function for the suspension flow $\psi_{t}$ of the hyperbolic flow $g_{t}$. Since $g_{t}$ is Diophantine or a jointly non-integrable Anosov flow, it is proved in \cite{Dol98b} that $\rho_{E_{1},F_{1}}$ has rapid decay.\footnote{The assumption of \cite{Dol98b} requires two closed orbits with a specific period ratio known as a bad Diophantine number, and this implies a estimate of transfer operators which shows $\rho_{E_{1},F_{1}}$ has rapid decay. Note that, using the same method, it is proved in \cite{Pol01} that one can also choose three closed orbits with the associated point is a bad Diophantine number, i.e. $g_{t}$ is Diophantine, to achieve the same effect by proving the same estimate of transfer operators.} Therefore, the above bound indicates that $\rho_{E, F}(t)$ also has rapid decay, completing the proof of Theorem \ref{Rapid mixing for symbolic flow of compact group extension}.

\subsection{Proof of Lemma \ref{Lemma 3.4.4}}\label{subsec 7.5}

We begin with some simplifying observations.
The first point to observe  is that achieving cancellation at a single point is sufficient. Indeed, according to Lemma \ref{Lemma 3.4.1}, the estimate $|h|_{\text{Lip}} \le 2C_{18}b_{\pi} ||h||_{\infty}$ implies $|\mathcal{L}_{ib,\pi}^{C_{21}\log b_{\pi}}h|_{\text{Lip}} \le 2C_{18}b_{\pi} ||h||_{\infty}$. Therefore, if $||\mathcal{L}_{ib,\pi}^{C_{21}\log b_{\pi}}h(x)|| \le (1-b_{\pi}^{-C_{20}})||h||_{\infty}$, then for any $y \in X^{+}$ with $d_{\lambda}(x,y) \le (4C_{18}b_{\pi}^{C_{20}+1})^{-1}$, we have
\[
\begin{aligned}
||\mathcal{L}_{ib,\pi}^{C_{21}\log b_{\pi}}h(y)|| &\le ||\mathcal{L}_{ib,\pi}^{C_{21}\log b_{\pi}}h(y) - \mathcal{L}_{ib,\pi}^{C_{21}\log b_{\pi}}h(x)|| + (1-b_{\pi}^{-C_{20}})||h||_{\infty}\\
&\le  \dfrac{1}{2}b_{\pi}^{-C_{20}}||h||_{\infty} + (1-b_{\pi}^{-C_{20}})||h||_{\infty} \\
&= (1-2^{-1}b_{\pi}^{-C_{20}})||h||_{\infty}.
\end{aligned}
\]
Using the Gibbs property of $\mu$ (in Lemma \ref{Gibbs property}),  it is  easy to show that the set of these points $y$ has $\mu$-measure $\ge b_{\pi}^{-C_{19}}$ for some uniform constant $C_{19} > 0$. Then  Lemma \ref{Lemma 3.4.4}  would follow.

The second point we want to note is that we can always assume $h$ satisfies $||h(x)|| \ge ||h||_{\infty}3/4$ for any $x \in X^{+}$. Indeed, if not, then for some point $x \in X^{+}$, we would have $||h(x)|| \le 3||h||_{\infty}/4$. Consequently, we would obtain
\[
\begin{aligned}
||\mathcal{L}_{ib,\pi}^{C_{21}\log b_{\pi}}h(\sigma^{C_{21}\log b_{\pi}}(x))||
&\le  \sum_{\substack{\sigma^{C_{21}\log b_{\pi}}(y)=\sigma^{C_{21}\log b_{\pi}}(x)\\y\not=x}}e^{\varphi_{C_{21}\log b_{\pi}}(y)}||h||_{\infty} + e^{\varphi_{C_{21}\log b_{\pi}}(x)}3||h||_{\infty}/4\\
&\le  (1-e^{-C_{21}\log b_{\pi}|\varphi|_{\infty}}/4)||h||_{\infty} \le (1-b_{\pi}^{-C_{20}})||h||_{\infty}.
\end{aligned}
\]
Then the required result would follow  from the argument presented in the previous paragraph.

We will  use the following elementary  inequality to cancel the terms in the  transfer operator.

\begin{Lemma}\label{Lemma 3.5.1}
Let $H$ be a Hilbert space, and $0\not=v_{1},v_{2}\in H$. If $||\frac{v_{1}}{||v_{1}||}-\frac{v_{2}}{||v_{2}||}||\ge\varepsilon$ and $||v_{1}||\le||v_{2}||$, then $||v_{1}+v_{2}||\le (1-\varepsilon^{2}/4)||v_{1}||+||v_{2}||$. 
\end{Lemma}

Denote $n_{b_{\pi}} = C_{21}\log b_{\pi}$ and $\eta = h/||h||$. Note that $||\eta|| \equiv 1$. Given that $|h|_{\text{Lip}} \le 2C_{18}b_{\pi}||h||_{\infty}$ and $||h(x)|| \ge 3||h||_{\infty}/4$ for any $x \in X^{+}$, it is straightforward to show that $|\eta|_{\text{Lip}} \le 2C_{18}b_{\pi}$. Our objective is to demonstrate the existence of $x \in X^{+}$ and $y_{1}, y_{2} \in \sigma^{-n_{b_{\pi}}}(x)$ such that
\begin{equation}\label{3.5.1}
	||e^{ibr_{n_{b_{\pi}}}(y_{1})}\pi(\Theta_{n_{b_{\pi}}}(y_{1}))\eta(y_{1}) - e^{ibr_{n_{b_{\pi}}}(y_{2})}\pi(\Theta_{n_{b_{\pi}}}(y_{2}))\eta(y_{2})|| \ge b_{\pi}^{-C_{32}},
\end{equation}
for some uniform constant $C_{32} > 0$. 
Then, by Lemma \ref{Lemma 3.5.1},
\[
\begin{aligned}
&||\mathcal{L}_{ib,\pi}^{n_{b_{\pi}}}h(x)||\\
\le & \sum_{\sigma^{n_{b_{\pi}}}(y)=x; y \neq y_{1},y_{2}}e^{\varphi_{n_{b_{\pi}}}(y)}||h(y)||+ ||e^{\varphi_{n_{b_{\pi}}}(y_{1})}e^{ibr_{n_{b_{\pi}}}(y_{1})}\pi(\Theta_{n_{b_{\pi}}}(y_{1}))h(y_{1})\\ &+e^{\varphi_{n_{b_{\pi}}}(y_{2})}e^{ibr_{n_{b_{\pi}}}(y_{2})}\pi(\Theta_{n_{b_{\pi}}}(y_{2}))h(y_{2})|| \\
\le & \sum_{\sigma^{n_{b_{\pi}}}(y)=x; y \neq y_{1},y_{2}}e^{\varphi_{n_{b_{\pi}}}(y)}||h(y)|| + (1-b_{\pi}^{-2C_{32}}/4)e^{\varphi_{n_{b_{\pi}}}(y_{1})}||h(y_{1})||\\
 &+e^{\varphi_{n_{b_{\pi}}}(y_{2})}||h(y_{2})|| \\
\le & (1 - e^{-n_{b_{\pi}}|\varphi|_{\infty}}b_{\pi}^{-2C_{32}})||h||_{\infty} \le (1-b_{\pi}^{-C_{20}})||h||_{\infty},
\end{aligned}
\]
for some uniform constant $C_{20} > 0$. Then Lemma \ref{Lemma 3.4.4} would follow from the argument presented at the beginning of this subsection.

By the assumption of Theorem \ref{Rapid mixing for symbolic flow of compact group extension}, there exist $w \in X$, $n_{0}\in \mathbb{N}$, and $p_{0}\in \mathbb{N}^{+}$ such that the subset $\Gamma(w,n_{0},p_{0})$ of $G$ is Diophantine. Let $x_{w} = \Pi_{+}\sigma^{-(n_{b_{\pi}}-n_{0})}(w) \in X^{+}$ and 
$$I_{w} =e^{ibr_{n_{b_{\pi}}-n_{0}}(x_{w})}\pi(\Theta_{n_{b_{\pi}}-n_{0}}(x_{w}))\eta(x_{w}) \in H_{\pi}.$$ 
Note that $||I_{w}||=1$. By the definition of $\Gamma(w,n_{0},p_{0})$ being Diophantine (Definition \ref{Definition of Diophantine condition on Brin group}), there exists $g_{W} \in \Gamma(w,n_{0},p_{0})$ with a closed chain $W=\{w_{j}\}_{j=0}^{p}$ at $w$ with $p \le p_{0}$, such that 
\begin{equation}\label{3.5.2}
||I_{w} - \pi(g_{W})I_{w}|| \ge \dfrac{\delta_{3}}{|\lambda_{\pi}|^{C_{33}}},
\end{equation}
for some uniform constants $\delta_{3}>0$ and $C_{33}>0$.
For this closed chain $W=\{w_{j}\}_{j=0}^{p}$, for each $0 \le j \le p$, we define $x_{j} = \Pi_{+}\sigma^{-(n_{b_{\pi}}-n_{0})}(w_{j}) \in X^{+}$ and
$$I_{j}=e^{ibr_{n_{b_{\pi}}-n_{0}}(x_{j})}\pi(\Theta_{n_{b_{\pi}}-n_{0}}(x_{j}))\eta(x_{j}) \in H_{\pi}.$$
Note that, as $W$ is a closed chain at $w$, we have $x_{p} = x_{0} = x_{w}$ and $I_{p} = I_{0} = I_{w}$. By Definition \ref{Definition of Brin group in the symbolic level}, we have $\sum_{j=0}^{p-1}\Delta^{\delta_{w_{j},w_{j+1}}}(w_{j},w_{j+1})=0$ and $g_{W}=\prod_{j=0}^{p-1}\Theta^{\delta_{w_{j},w_{j+1}}}(w_{j},w_{j+1})$,\footnote{Recalling, $\prod_{j=0}^{p-1}\Theta^{\delta_{w_{j},w_{j+1}}}(w_{j},w_{j+1})=\Theta^{\delta_{w_{p-1},w_{p}}}(w_{p-1},w_{p})\cdots\Theta^{\delta_{w_{0},w_{1}}}(w_{0},w_{1})$, and $\delta_{x,y}=s$ if $y\in W^{s}(x)$; otherwise, $\delta_{x,y}=u$.}. Using this, we can express
$$
\begin{aligned}
& I_{w} - \pi(g_{W})I_{w}\\
=&\sum_{j=0}^{p-1}e^{-ib\sum_{n=1}^{p-1-j}\Delta^{\delta_{w_{j+n},w_{j+n+1}}}(w_{j+n},w_{j+1+n})}\pi\bigg(\prod_{n=1}^{p-1-j}\Theta^{\delta_{w_{j+n},w_{j+n+1}}}(w_{j+n},w_{j+1+n})\bigg)\\
& (I_{j+1} - e^{-ib\Delta^{\delta_{w_{j},w_{j+1}}}(w_{j},w_{j+1})}\pi(\Theta^{\delta_{w_{j},w_{j+1}}}(w_{j},w_{j+1}))I_{j}).
\end{aligned}
$$
Then, by \eqref{3.5.2} and $p\le p_{0}$, there exists $0\le j\le p-1$ such that
\begin{equation}\label{3.5.3}
\begin{aligned}
&||I_{j+1}-e^{-ib\Delta^{\delta_{w_{j},w_{j+1}}}(w_{j},w_{j+1})}\pi(\Theta^{\delta_{w_{j},w_{j+1}}}(w_{j},w_{j+1}))I_{j}||\\
\ge&\dfrac{\delta_{3}}{p_{0}|\lambda_{\pi}|^{C_{33}}}\ge\dfrac{\delta_{3}}{p_{0}b_{\pi}^{C_{33}/(1+m_{G}/2)}}.
\end{aligned}
\end{equation}

\begin{Claim}
We must have $w_{j+1}\in W^{s}_{n_{0}}(w_{j})$, i.e. $w_{j+1}\not\in W^{u}_{n_{0}}(w_{j})$.
\end{Claim}
\begin{proof}[Proof of Claim]
If $w_{j+1}\in W^{u}_{n_{0}}(w_{j})$, then $(x_{j+1})_{i}=(x_{j})_{i}$ for any $i\le n_{b_{\pi}}-2n_{0}$. Since $|\eta|_{\text{Lip}}\le 2C_{18}b_{\pi}$, we have
\begin{equation}\label{3.5.4}
||\eta(x_{j+1})-\eta(x_{j})||\le 2C_{18}b_{\pi}\lambda^{n_{b_{\pi}}-2n_{0}}\le \dfrac{1}{4}\dfrac{\delta_{3}}{p_{0}b_{\pi}^{C_{33}/(1+m_{G}/2)}}
\end{equation}
provided $C_{21}>0$ is large enough where $n_{b_{\pi}}=C_{21}\log b_{\pi}$. Meanwhile, by Lemmas \ref{Lemma 6.2.2} and \ref{Lemma 6.3.4}, 
\begin{equation}\label{3.5.5}
\begin{aligned}
&|\Delta^{u}(w_{j},w_{j+1})-[r_{n_{b_{\pi}}-n_{0}}(x_{j})-r_{n_{b_{\pi}}-n_{0}}(x_{j+1})]|\\
=&|\Delta^{u}(w_{j},w_{j+1})-[r_{n_{b_{\pi}}-n_{0}}(\sigma^{-n_{b_{\pi}}+n_{0}}(w_{j}))-r_{n_{b_{\pi}}-n_{0}}(\sigma^{-n_{b_{\pi}}+n_{0}}(w_{j+1}))]|\\
\le &C_{10}\lambda^{n_{b_{\pi}}-n_{0}}\le\dfrac{1}{4}\dfrac{\delta_{3}}{p_{0}b_{\pi}^{C_{33}/(1+m_{G}/2)+1}},
\end{aligned}
\end{equation}
and
\begin{equation}\label{3.5.6}
\begin{aligned}
&d_{G}(\Theta^{u}(w_{j},w_{j+1}),\Theta_{n_{b_{\pi}}-n_{0}}(x_{j+1})\Theta_{n_{b_{\pi}}-n_{0}}(x_{j})^{-1})\\
=&d_{G}(\Theta^{u}(w_{j},w_{j+1}),\Theta_{n_{b_{\pi}}-n_{0}}(\sigma^{-n_{b_{\pi}}+n_{0}}(w_{j+1}))\Theta_{n_{b_{\pi}}-n_{0}}(\sigma^{-n_{b_{\pi}}+n_{0}}(w_{j}))^{-1})\\
\le&C_{12}\lambda^{n_{b_{\pi}}-n_{0}}\le\dfrac{1}{4}\dfrac{\delta_{3}}{p_{0}b_{\pi}^{C_{33}/(1+m_{G}/2)+1}},
\end{aligned}	\end{equation}
provided $C_{21}>0$ is large enough.

Now, we can relate the bounds \eqref{3.5.5} and \eqref{3.5.6} to the estimate below.
$$
\begin{aligned}
&||\eta(x_{j+1})-\eta(x_{j})||\\
=&||e^{-ibr_{n_{b_{\pi}}-n_{0}}(x_{j+1})}\pi(\Theta_{n_{b_{\pi}}-n_{0}}(x_{j+1})^{-1})I_{j+1}-e^{-ibr_{n_{b_{\pi}}-n_{0}}(x_{j})}\pi(\Theta_{n_{b_{\pi}}-n_{0}}(x_{j})^{-1})I_{j}||\\
=&||I_{j+1}-e^{-ib[r_{n_{b_{\pi}}-n_{0}}(x_{j})-r_{n_{b_{\pi}}-n_{0}}(x_{j+1})]}\pi(\Theta_{n_{b_{\pi}}-n_{0}}(x_{j+1})\Theta_{n_{b_{\pi}}-n_{0}}(x_{j})^{-1})I_{j}||\\
\ge&||I_{j+1}-e^{-ib\Delta^{u}(w_{j},w_{j+1})}\pi(\Theta^{u}(w_{j},w_{j+1}))I_{j}||\\
&-C_{15}|\lambda_{\pi}|^{1+m_{G}/2}d_{G}(\Theta^{u}(w_{j},w_{j+1}),\Theta_{n_{b_{\pi}}-n_{0}}(x_{j+1})\Theta_{n_{b_{\pi}}-n_{0}}(x_{j})^{-1})\\
&-|b||\Delta^{u}(w_{j},w_{j+1})-[r_{n_{b_{\pi}}-n_{0}}(x_{j})-r_{n_{b_{\pi}}-n_{0}}(x_{j+1})]|\\
\ge&||I_{j+1}-e^{-ib\Delta^{u}(w_{j},w_{j+1})}\pi(\Theta^{u}(w_{j},w_{j+1}))I_{j}||-\dfrac{1}{4}\dfrac{\delta_{3}}{p_{0}b_{\pi}^{C_{33}/(1+m_{G}/2)}}.
\end{aligned}
$$
Together with \eqref{3.5.4}, we have
$$||I_{j+1}-e^{-ib\Delta^{u}(w_{j},w_{j+1})}\pi(\Theta^{u}(w_{j},w_{j+1}))I_{j}||\le\dfrac{1}{2}\dfrac{\delta_{3}}{p_{0}b_{\pi}^{C_{33}/(1+m_{G}/2)}}$$
which contradicts \eqref{3.5.3} and thus completes the proof of the claim.
\end{proof}

By the aforementioned claim and estimate \eqref{3.5.3}, we obtain
\begin{equation}\label{3.5.7}
||I_{j+1}-e^{-ib\Delta^{s}(w_{j},w_{j+1})}\pi(\Theta^{s}(w_{j},w_{j+1}))I_{j}||\ge\dfrac{\delta_{3}}{p_{0}b_{\pi}^{C_{33}/(1+m_{G}/2)}}.
\end{equation}
Since $r$ and $\Theta$ depend only on future coordinates, by definition, we have $\Delta^{s}(w_{j},w_{j+1})=r_{n_{0}}(\Pi_{+}(w_{j+1}))-r_{n_{0}}(\Pi_{+}(w_{j}))$ and $\Theta^{s}(w_{j},w_{j+1})=\Theta_{n_{0}}(\Pi_{+}(w_{j+1}))^{-1}\Theta_{n_{0}}(\Pi_{+}(w_{j}))$. In particular,
$$
\begin{aligned}
&||I_{j+1}-e^{-ib\Delta^{s}(w_{j},w_{j+1})}\pi(\Theta^{s}(w_{j},w_{j+1}))I_{j}||\\
=&||I_{j+1}-e^{-ib[r_{n_{0}}(\Pi_{+}(w_{j+1}))-r_{n_{0}}(\Pi_{+}(w_{j}))]}\pi(\Theta_{n_{0}}(\Pi_{+}(w_{j+1}))^{-1}\Theta_{n_{0}}(\Pi_{+}(w_{j})))I_{j}||\\
=&||e^{ibr_{n_{b_{\pi}}}(x_{j+1})}\pi(\Theta_{n_{b_{\pi}}}(x_{j+1}))\eta(x_{j+1})-e^{ibr_{n_{b_{\pi}}}(x_{j})}\pi(\Theta_{n_{b_{\pi}}}(x_{j}))\eta(x_{j})||,
\end{aligned}
$$
which proves \eqref{3.5.1} by \eqref{3.5.7} since $\sigma^{n_{b_{\pi}}}(x_{j+1})=\sigma^{n_{b_{\pi}}}(x_{j})$, thus completing the proof of Lemma \ref{Lemma 3.4.4}.

\subsection{Proof of Lemma \ref{Lemma 3.3.3}}\label{Appendix A}

The proof is similar to that of \cite[Proposition 3]{Dol98a}. Recalling that $E, F\in F_{\lambda}^{k}(\widehat{X}_{r})$, we can regard $E:X_{r}\to C^{2k_{1}}(G)$ by defining $E(x,u)(\cdot)=E(x,\cdot,u)$. We can express $E(x,u)=\sum_{\pi\in\widehat{G}}E_{\pi}(x,u)$ and $F(x,u)=\sum_{\pi\in\widehat{G}}F_{\pi}(x,u)$, where $E_{\pi}(x,u)$ and $F_{\pi}(x,u)\in H_{\pi}$. By utilizing Lemma \ref{Lemma 3.3.1}, we can confirm that $E_{\pi}$ and $F_{\pi}\in F_{\lambda}(X_{r}, H_{\pi})$.

Let's denote
$$E_{\pi,s}(x):=\int_{0}^{r(x)}e^{-su}E_{\pi}(x,u)du\quad\text{and}\quad  F_{\pi,s}(x):=\int_{0}^{r(x)}e^{-su}F_{\pi}(x,u)du.$$
Clearly, $E_{\pi,s}$ and $F_{\pi,s}\in F_{\lambda}(X, H_{\pi})$. Through a straightforward calculation, we obtain the following estimate:
\begin{equation}\label{A1}
	||E_{\pi,s}||_{Lip}\le C_{34}e^{|a||r|_{\infty}}||E_{\pi}||_{\lambda}\quad\text{and}\quad||F_{\pi,s}||_{Lip}\le C_{34}e^{|a||r|_{\infty}}||F_{\pi}||_{\lambda}
\end{equation}
for some uniform constant $C_{34}>0$ where $s=a+ib$.

\begin{Lemma}\label{Lemma A.1}
For any $s=a+ib$ with $a>0$, we have
$$\widehat{\chi}_{\pi}(s)=\Phi_{1}(s)+\Phi_{2}(s)+\sum_{n=1}^{\infty}
\langle
e^{-sr_{n}}\pi(\Theta_{n})^{-1}E_{\pi,s}\circ\sigma^{n},F_{\pi,-\overline{s}}
\rangle_{\mu_{\varphi}\times\text{Haar}_{G}}$$
where $\Phi_{1}(s)=\int_{0}^{1}e^{-st}(\chi_{\pi}(t)-\rho_{E_{\pi},F_{\pi}}(t))dt$, $\Phi_{2}(s)=\int_{0}^{|r|_{\infty}}e^{-st}\Delta_{\pi}(t)dt$ and $	\Delta_{\pi}(t)=\int_{G}\int_{X}\int_{0}^{\max\{0,r-t\}}E_{\pi}\circ\phi_{t}\overline{F}_{\pi} dud\mu_{\varphi}dg$.
\end{Lemma}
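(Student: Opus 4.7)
The plan is to split $\widehat{\chi}_\pi(s)$ into the piece where $\chi_\pi$ differs from $\rho_{E_\pi,F_\pi}$ (only on $[0,1]$) and the genuine Laplace transform of $\rho_{E_\pi,F_\pi}$. Since $\chi_\pi$ agrees with $\rho_{E_\pi,F_\pi}$ on $[1,\infty)$, one immediately has $\widehat{\chi}_\pi(s) = \Phi_1(s) + \int_0^\infty e^{-st}\rho_{E_\pi,F_\pi}(t)\,dt$. The substance of the lemma is therefore to unfold this second integral into $\Phi_2(s)$ plus the series.

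The key structural input is that, at each $(x,g,u)\in\widehat{X}_r$ and $t\geq 0$, we have $\phi_t(x,g,u) = (\sigma^n x,\,\Theta_n(x)g,\,u+t-r_n(x))$ for the unique $n\geq 0$ with $r_n(x)\leq u+t < r_{n+1}(x)$. Viewing $E_\pi(x,u)$ as an element of $H_\pi\subset L^2(G)$ and using that $\pi$ is the restriction of the left regular representation, the translation by $\Theta_n(x)$ in the $G$-argument becomes
\[
E_\pi(\sigma^n x, u+t-r_n(x))(\Theta_n(x)g) = \bigl(\pi(\Theta_n(x))^{-1}E_\pi(\sigma^n x, u+t-r_n(x))\bigr)(g).
\]
Integrating against $\overline{F_\pi(x,u)}$ with respect to the Haar measure in $g$ therefore produces the pointwise $H_\pi$ inner product $\langle \pi(\Theta_n(x))^{-1}E_\pi(\sigma^n x, u+t-r_n(x)), F_\pi(x,u)\rangle_{H_\pi}$.

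Next I would perform two changes of variable inside $\int_0^\infty e^{-st}\,dt\int_0^{r(x)}du\,d\mu_\varphi$. First substitute $\tau=u+t$, so the $t$-integral becomes $\int_u^\infty e^{-s(\tau-u)}d\tau$, picking up a factor $e^{su}$. Split this $\tau$-integral according to which interval $[r_n(x),r_{n+1}(x))$ it lies in: the $n=0$ piece, where $\tau<r(x)$, corresponds exactly to orbits that have not yet crossed a section, and rewriting back in the variable $t=\tau-u$ recovers $\Phi_2(s)$ since the definition of $\Delta_\pi(t)$ is precisely the integral on $\{0\leq u\leq r(x)-t\}$ of $E_\pi\circ\phi_t\cdot\overline{F}_\pi$, and on that set $\phi_t$ leaves the $(x,g)$-coordinate fixed.

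For each $n\geq 1$ the second substitution $w=\tau-r_n(x)\in[0,r(\sigma^n x))$ pulls out the factor $e^{-s r_n(x)}$ and identifies $\int_0^{r(\sigma^n x)}e^{-sw}E_\pi(\sigma^n x,w)\,dw = E_{\pi,s}(\sigma^n x)$, leaving
\[
e^{su}e^{-sr_n(x)}\bigl\langle \pi(\Theta_n(x))^{-1}E_{\pi,s}(\sigma^n x),\ F_\pi(x,u)\bigr\rangle_{H_\pi}.
\]
The remaining $u$-integral against $e^{su}$ is absorbed into the second slot via $\overline{e^{su}}=e^{\bar s u}$, producing $\int_0^{r(x)} e^{\bar s u}F_\pi(x,u)\,du = F_{\pi,-\bar s}(x)$. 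Summing over $n\geq 1$ and integrating in $x$ against $\mu_\varphi$ yields the stated series, where the combined $(x,g)$-integration against $\mu_\varphi\times\mathrm{Haar}_G$ arises naturally from the pointwise $H_\pi$ inner product. The whole argument is essentially bookkeeping: the only delicate step is the translation of the right action of $\Theta_n(x)$ on the $G$-coordinate into the operator $\pi(\Theta_n(x))^{-1}$ acting on $H_\pi$ via the left regular representation, together with the correct handling of complex conjugation when exchanging $F_\pi$ for $F_{\pi,-\bar s}$; no convergence issues arise for $\mathrm{Re}(s)>0$ since $\Theta_n$ and $\pi(\Theta_n)^{-1}$ are unitary and $E_{\pi,s}, F_{\pi,-\bar s}$ are bounded by \eqref{A1}.
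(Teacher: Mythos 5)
Your proposal is correct and follows essentially the same route as the paper's proof in \S\ref{Appendix A}: split off $\Phi_{1}$ using $\chi_{\pi}=\rho_{E_{\pi},F_{\pi}}$ for $t\ge1$, apply Fubini with the substitution $t^{\prime}=u+t$, separate the piece before the first return (giving $\Phi_{2}$, since $\Delta_{\pi}(t)=0$ for $t\ge|r|_{\infty}$) from the pieces on $[r_{n},r_{n+1})$, and use $\pi=L|_{H_{\pi}}$ together with the definitions of $E_{\pi,s}$ and $F_{\pi,-\overline{s}}$ to obtain the series of inner products. You simply spell out more explicitly the identification $\phi_{t}(x,g,u)=(\sigma^{n}x,\Theta_{n}(x)g,u+t-r_{n}(x))$ and the translation of the $G$-action into $\pi(\Theta_{n})^{-1}$, which the paper compresses into the remark that $\pi$ is the restriction of the left regular representation.
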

\begin{proof}
By definition,
\begin{equation}\label{A2}
\begin{aligned}
&\widehat{\chi}_{\pi}(s)=\int_{0}^{\infty}e^{-st}\chi_{\pi}(t)dt\\
=&\int_{0}^{\infty}e^{-st}(\chi_{\pi}(t)-\rho_{E_{\pi},F_{\pi}}(t))dt+\int_{0}^{\infty}e^{-st}\rho_{E_{\pi},F_{\pi}}(t)dt\\
=&\int_{0}^{\infty}e^{-st}(\chi_{\pi}(t)-\rho_{E_{\pi},F_{\pi}}(t))dt+\int_{0}^{\infty}e^{-st}\Delta_{\pi}(t)dt\\
&+\int_{0}^{\infty}\int_{X}\int_{G}\int_{\max\{0,r(x)-t\}}^{r(x)}e^{-st}E_{\pi}(x,u+t)\overline{F}_{\pi}(x,u)dudgd\mu_{\varphi}(x)dt.
\end{aligned}
\end{equation}
By \eqref{3.3.2}, we have
\begin{equation}\label{A3}
\int_{0}^{\infty}e^{-st}(\chi_{\pi}(t)-\rho_{E_{\pi},F_{\pi}}(t))dt=\int_{0}^{1}e^{-st}(\chi_{\pi}(t)-\rho_{E_{\pi},F_{\pi}}(t))dt.
\end{equation}
By the definition of $\Delta_{\pi}(t)$, we have $\Delta_{\pi}(t)=0$ when $t\ge|r|_{\infty}$, and therefore
\begin{equation}\label{A4}
\int_{0}^{\infty}e^{-st}\Delta_{\pi}(t)dt=\int_{0}^{|r|_{\infty}}e^{-st}\Delta_{\pi}(t)dt.
\end{equation}
Finally, by $\pi=L|_{H_{\pi}}$ where $L$ is the left regular representation, we have
\begin{equation*}
		\begin{aligned}
			&\int_{0}^{\infty}\int_{X}\int_{G}\int_{\max\{0,r(x)-t\}}^{r(x)}e^{-st}E_{\pi}(x,u+t)\overline{F}_{\pi}(x,u)dudgd\mu_{\varphi}(x)dt\\
			=&\int_{X}\int_{G}\int_{0}^{r(x)}\int_{t\ge r(x)-u}e^{-st}E_{\pi}(x,u+t)\overline{F}_{\pi}(x,u)dtdudgd\mu_{\varphi}(x)\\
			=&\int_{X}\int_{G}\int_{t^{\prime}\ge r(x)}e^{-st^{\prime}}E_{\pi}(x,t^{\prime})dt^{\prime}\int_{0}^{r(x)}e^{su}\overline{F}_{\pi}(x,u)dudgd\mu_{\varphi}(x)\\
			=&\int_{X}\int_{G}\sum_{n=1}^{\infty}\bigg[\int_{r_{n}(x)}^{r_{n+1}(x)}e^{-st^{\prime}}E_{\pi}(x,t^{\prime})dt^{\prime}\bigg]\bigg[\int_{0}^{r(x)}e^{su}\overline{F}_{\pi}(x,u)du\bigg]dgd\mu_{\varphi}(x)\\
			=&\sum_{n=1}^{\infty}
			\langle
			e^{-sr_{n}}\pi(\Theta_{n})^{-1}E_{\pi,s}\circ\sigma^{n},F_{\pi,-\overline{s}}
			\rangle_{\mu_{\varphi}\times\text{Haar}_{G}}.
		\end{aligned}
\end{equation*}
Now we can substitute the above equality, \eqref{A3} and \eqref{A4} into \eqref{A2} to complete the proof.
\end{proof}

Next, we want to express the series on the right side of Lemma \ref{Lemma A.1} as a sum of integrals over the one-sided space $X^{+}$. To achieve this, we rely on the following variant   on a result by Sinai, also presented in \cite[Proposition 10]{Dol98a}. While the proof in \cite{Dol98a} is formulated for $h\in F_{\lambda}(X, \mathbb{C})$, it is straightforward to observe that the same reasoning applies to $h\in F_{\lambda}(X, H_{\pi})$ as well.

\begin{Lemma}\label{Lemma A.2}
For any $h\in F_{\lambda}(X, H_{\pi})$, we can write $h=\sum_{p\ge0}h_{p}$ with the following properties:
\begin{enumerate}
\item For each non-negative integer $p$, we have $h_{p}\in F_{p}(X, H_{\pi})$.
\item For each non-negative integer $p$, we have $||h_{p}||_{\infty}\leq ||h||_{Lip}\lambda^{p}$.
\end{enumerate}
\end{Lemma}

According to the lemma above, we can express
$$E_{\pi, s}=\sum_{p\ge0}E_{\pi,s,p}\quad\text{and}\quad F_{\pi,-\overline{s}}=\sum_{p\ge0}F_{\pi, -\overline{s},p}$$
where $E_{\pi,s,p}$ and $F_{\pi,-\overline{s},p}\in F_{q}(X, H_{\pi})$. Since $E_{\pi,s,p}\circ\sigma^{p}$ and $F_{\pi,-\overline{s},p}\circ\sigma^{p}\in F_{2p}(X^{+}, H_{\pi})\subset F_{\lambda^{1/4}}(X^{+}, H_{\pi})$, by $(2)$ of Lemma \ref{Lemma A.2}, we have
\begin{equation}\label{A5}
||E_{\pi,s,p}\circ\sigma^{p}||_{Lip}\le 3||E_{\pi,s}||_{Lip}\lambda^{p/2}\quad\text{and}\quad||F_{\pi,-\overline{s},p}\circ\sigma^{p}||_{Lip}\le 3||F_{\pi,-\overline{s}}||_{Lip}\lambda^{p/2}.
\end{equation}
Using the decompositions of $E_{\pi,s}$ and $F_{\pi,-\overline{s}}$, and since $\sigma^{*}\mu_{\varphi}=\mu_{\varphi}$, we obtain
$$
\begin{aligned}
&
\langle 
e^{-sr_{n}}\pi(\Theta_{n})^{-1}E_{\pi,s}\circ\sigma^{n}, F_{\pi,-\overline{s}}
\rangle_{\mu_{\varphi}\times\text{Haar}_{G}}\\
=&\sum_{q\ge0}\sum_{p\ge0}
\langle
e^{-sr_{n}}\pi(\Theta_{n})^{-1}E_{\pi,s,p}\circ\sigma^{n},F_{\pi,-\overline{s},q}
\rangle_{\mu_{\varphi}\times\text{Haar}_{G}}\\
=&\sum_{q\ge0}\sum_{p<n+q}
\langle
e^{-sr_{n}\circ\sigma^{q}}\pi(\Theta_{n})^{-1}\circ\sigma^{q}[E_{\pi,s,p}\circ\sigma^{p}]\circ\sigma^{n+q-p}, [F_{\pi,-\overline{s},q}\circ\sigma^{q}]
\rangle_{\mu_{\varphi}\times\text{Haar}_{G}}\\
&+\sum_{q\ge0}\sum_{p\ge n+q}
\langle
e^{-sr_{n}\circ\sigma^{p-n}}\pi(\Theta_{n})^{-1}\circ\sigma^{p-n}[E_{\pi,s,p}\circ\sigma^{p}], [F_{\pi,-\overline{s},q}\circ\sigma^{q}]\circ\sigma^{p-n-q}
\rangle_{\mu_{\varphi}\times\text{Haar}_{G}}\\
=&\sum_{q\ge0}\sum_{p<n+q}\\
&
\langle
e^{-sr_{n+q-p}}\pi(\Theta_{n+q-p})^{-1}[e^{-sr_{p}}\pi(\Theta_{p})^{-1}E_{\pi,s,p}\circ\sigma^{p}]\circ\sigma^{n+q-p}, [e^{\overline{s}r_{q}}\pi(\Theta_{q})^{-1}F_{\pi,-\overline{s},q}\circ\sigma^{q}]
\rangle_{\mu_{\varphi}\times\text{Haar}_{G}}\\
&+\sum_{q\ge0}\sum_{p\ge n+q}\\
&
\langle
e^{-sr_{p-n-q}}[e^{-sr_{p}}\pi(\Theta_{p})^{-1}E_{\pi,s,p}\circ\sigma^{p}], \pi(\Theta_{p-n-q})^{-1}[e^{\overline{s}r_{q}}\pi(\Theta_{q})^{-1}F_{\pi,-\overline{s},q}\circ\sigma^{q}]\circ\sigma^{p-n-q}
\rangle_{\mu_{\varphi}\times\text{Haar}_{G}}.
\end{aligned}
$$
Thus, we define an operator
$$M_{\pi,p}(s):F_{\lambda}(X_{r}, H_{\pi})\to F_{\lambda^{1/4}}(X^{+}, H_{\pi})
\hbox{ by }
 M_{\pi,p}(s)E_{\pi}=e^{-sr_{p}}\pi(\Theta_{p})^{-1}E_{\pi,s,p}\circ\sigma^{p}.$$
Since $\mathcal{L}_{\varphi}^{*}\mu=\mu$ and $\Pi_{+}^{*}\mu_{\varphi}=\mu$, the above identity gives us
$$
\begin{aligned}
&
\langle
e^{-sr_{n}}\pi(\Theta_{n})E_{\pi,s}\circ\sigma^{n}, F_{\pi,-\overline{s}}
\rangle_{\mu_{\varphi}\times\text{Haar}_{G}}\\
=&\sum_{q\ge0}\sum_{p<n+q}
\langle
[M_{\pi,p}(s)E_{\pi}], \mathcal{L}_{\overline{s},\pi}^{n+q-p}[M_{\pi,q}(-\overline{s})F_{\pi}]
\rangle_{\mu\times\text{Haar}_{G}}\\
&+\sum_{q\ge0}\sum_{p\ge n+q}
\langle
\mathcal{L}_{s,\pi}^{p-n-q}[M_{\pi,p}(s)E_{\pi}], [M_{\pi,q}(-\overline{s})F_{\pi}]\circ\sigma^{p-n-q}
\rangle_{\mu\times\text{Haar}_{G}}.
\end{aligned}
$$
We can substitute the above formula into Lemma \ref{Lemma A.1} to complete the proof of $(1)$ of Lemma \ref{Lemma 3.3.3}.

By definition, applying a basic triangle inequality yields
$$
\begin{aligned}
&||M_{\pi,p}(s)E_{\pi}(x)-M_{\pi,p}(s)E_{\pi}(y)||\\
\le&|e^{-sr_{p}(x)}-e^{-sr_{p}(y)}|\ ||E_{\pi,s,p}\circ\sigma^{p}||_{\infty}+e^{|a||r|_{\infty}n}||\pi(\Theta_{p}(x)-\pi(\Theta_{p}(y)))||\ ||E_{\pi,s,p}\circ\sigma^{p}||_{\infty}\\
&+e^{|a||r|_{\infty}n}||E_{\pi,s,p}\circ\sigma^{p}(x)-E_{\pi,s,p}\circ\sigma^{p}(y)||.
\end{aligned}
$$
Then, by \eqref{A1}, \eqref{A5}, and Corollary \ref{Corollary 2.7.4}, we obtain
$$||M_{\pi,p}(s)E_{\pi}||_{\infty}\le C_{35}\lambda^{p}e^{|a||r|_{\infty}(n+1)}||E_{\pi}||_{\infty}$$
and
$$|M_{\pi,p}(s)E_{\pi}|_{Lip}\le C_{35}(|s|+|\lambda_{\pi}|^{1+m_{G}/2})\lambda^{p/2}e^{|a||r|_{\infty}(n+1)}||E_{\pi}||_{\lambda},$$
for some uniform constant $C_{35}>0$ which completes the proof of $(2)$ of Lemma \ref{Lemma 3.3.3}.

\section{Superpolynomial equidistribution}\label{sec 8}

In this section, we prove Theorem \ref{Superpolynomial equidistribution for compact group extension of hyperbolic flow}. This is a second application of the Dolgopyat type estimate: Proposition \ref{Dolgopyat type estimate for rapid mixing of compact group extension} or Corollary \ref{Corollary 3.3.5}.

The proof follows a similar structure to that of \cite{Pol01} and \cite{Pol08}. Fix a Markov partition of the underlying hyperbolic flow \(g_{t}\). Let \(\psi_{t}\) be the symbolic flow of \(g_{t}\), and let \(\phi_{t}\) be the symbolic flow of \(f_{t}\), constructed in \S \ref{subsec 6.5}. Denote by \(\tau^{\prime}\) a closed orbit or prime closed orbit of \(\psi_{t}\). Similarly, we use \(\ell_{\tau^{\prime}}\) and \([\tau^{\prime}]\) to denote the period and the holonomy class of \(\tau^{\prime}\) respectively. Denote by \(V(T)\) the collection of prime closed orbits \(\tau^{\prime}\) of \(\psi_{t}\) with \(\ell_{\tau^{\prime}}\le T\). We have the following analogous result for \(\phi_{t}\).

\begin{Proposition}\label{Prop 3.6.1}
For any $n\ge1$ there exist $C_{36}, C_{37}>0$ such that for any $1\not=\pi\in\widehat{G}$ and any $T>0$,
$$\dfrac{1}{\# V(T)}\sum_{\tau^{\prime}\in V(T)}\xi_{\pi}([\tau^{\prime}])\le C_{36}|\lambda_{\pi}|^{C_{37}}T^{-n}$$
where $\xi_{\pi}$ is the character of $\pi$.
\end{Proposition}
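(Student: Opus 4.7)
The plan is to adapt the Perron-type contour argument used in \cite{Pol01} and \cite{Pol08}, with the Dolgopyat-type spectral bound from Corollary \ref{Corollary 3.3.5} providing the analytic extension of the relevant L-series. Normalize so that $P(\varphi) = 0$ (equivalently, $h_{\text{top}}(\psi_{t}) = 0$). For each $1 \ne \pi \in \widehat{G}$, introduce the generating series $\eta_{\pi}(s) := \sum_{n\ge 1}\frac{1}{n}\sum_{\sigma^{n}x=x} e^{\varphi_{n}(x) - s r_{n}(x)} \xi_{\pi}(\Theta_{n}(x))$, which converges absolutely for $\mathrm{Re}(s) > 0$. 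By the standard trace--zeta identity for subshifts of finite type (Parry--Pollicott), $\eta_{\pi}(s)$ agrees with $\sum_{n \ge 1}\frac{1}{n}\mathrm{tr}(\mathcal{L}_{s,\pi}^{n})$ and hence formally equals $-\log \det(I - \mathcal{L}_{s,\pi})$. Corollary \ref{Corollary 3.3.5} shows that for $\pi \neq 1$ and $|a|\le b_{\pi}^{-C_{26}}$ the spectral radius of $\mathcal{L}_{s,\pi}$ on $F_{\lambda}(X^{+},H_{\pi})$ is bounded by $(1 - b_{\pi}^{-C_{27}})^{1/n_{b_{\pi}}}<1$. Consequently $\eta_{\pi}(s)$ admits an analytic extension to the region $\{a > -b_{\pi}^{-C_{26}}\}$, with a bound of the form $|\eta_{\pi}(s)| \le C\, b_{\pi}^{A}$ for a uniform $A$; recalling $b_{\pi} \asymp |b| + |\lambda_{\pi}|^{1+m_{G}/2}$ this becomes a polynomial bound in $|b|$ and $|\lambda_{\pi}|$.

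Next I would invoke Perron's formula applied to $\eta_{\pi}'(s)$ to express the weighted count $\Psi_{\pi}(T) := \sum_{\tau'}\sum_{k\ge 1,\, k\ell_{\tau'}\le T} \xi_{\pi}([\tau']^{k})\ell_{\tau'}$ as a contour integral of $\eta_{\pi}'(s)\, e^{sT}/s$ over a vertical line $\mathrm{Re}(s) = c > 0$. Using the analyticity above, shift the contour to a curve $\mathcal{C}_{\pi}$ analogous to the one in Figure~2 of \S \ref{subsec 7.4}, namely $\mathrm{Re}(s) = -b_{\pi}^{-C_{26}}$ joined by a small arc around the origin. Since $\pi \neq 1$ there is no pole at $s=0$, so only the contour contribution remains. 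Integrating by parts $m$ times to produce a factor $|s|^{-m}$ (exactly as in \S \ref{subsec 7.4}) and truncating at $|b| \le T^{M}$ for $M$ sufficiently large yields $|\Psi_{\pi}(T)| \le C_{36}|\lambda_{\pi}|^{C_{37}} T^{-n}$. Extract $\sum_{\tau' \in V(T)}\xi_{\pi}([\tau'])$ from $\Psi_{\pi}(T)$ by Abel summation, subtracting the $k \ge 2$ multi-orbit contributions; these are controlled by $|\xi_{\pi}|_{\infty} \le C_{3}|\lambda_{\pi}|^{m_{G}}$ (Lemma \ref{Lemma 2.7.10}) together with the prime orbit theorem $\#V(T/2) \ll e^{h_{\mathrm{top}}T/2}/T$, hence of lower order. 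Dividing by $\#V(T) \asymp e^{h_{\mathrm{top}}T}/(h_{\mathrm{top}}T)$ produces the stated bound.

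The principal obstacle is bookkeeping of the two competing scales in the shifted-contour estimate. The width of the analytic strip is $\sim b_{\pi}^{-C_{26}}$, which shrinks polynomially in both $|b|$ and $|\lambda_{\pi}|$; simultaneously, on this contour the integrand $\eta_{\pi}'(s)\, e^{sT}/s$ carries polynomial growth in $b_{\pi}$. To reconcile these with only polynomial-in-$|\lambda_{\pi}|$ dependence in the final constant, one has to integrate by parts enough times to absorb both the contour-width reciprocal and the polynomial growth of $\eta_{\pi}$, while ensuring the truncation error (from cutting off at $|b|=T^{M}$) is itself $O(|\lambda_{\pi}|^{C_{37}}T^{-n})$. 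This balancing is precisely parallel to the bookkeeping done in \S \ref{subsec 7.4} for the rapid mixing theorem, and handling the $s$-derivatives of $\eta_{\pi}$ (each of which introduces a further $\mathrm{tr}(\mathcal{L}_{s,\pi}^{n})$-type sum that must also be bounded via Corollary \ref{Corollary 3.3.5}) is the key technical step.
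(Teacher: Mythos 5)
Your route is essentially the paper's: a closed-orbit $L$-series twisted by $\xi_{\pi}$, analytically continued a distance $\sim b_{\pi}^{-C}$ past the critical line with polynomial-in-$b_{\pi}$ bounds via Corollary \ref{Corollary 3.3.5}, then a smoothed Perron/contour-shift argument (your repeated integration by parts plays the role of the paper's kernels $M_{x,k}$ in the functions $N_{\pi,k}$), followed by de-smoothing, the prime orbit theorem, and $|\xi_{\pi}|_{\infty}\le C_{3}|\lambda_{\pi}|^{m_{G}}$ from Lemma \ref{Lemma 2.7.10}. So in structure this matches \S \ref{sec 8}.

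Two points need repair. First, your generating series carries the Gibbs weights $e^{\varphi_{n}(x)}$ for the potential $\varphi$ attached to the given $\Phi$, while the quantity you feed into Perron, $\Psi_{\pi}(T)=\sum_{k\ell_{\tau'}\le T}\xi_{\pi}([\tau']^{k})\ell_{\tau'}$, is unweighted; for general $\Phi$ the contour integral of $\eta_{\pi}'$ produces orbit sums weighted by $e^{k\varphi(\tau')}$, not $\Psi_{\pi}(T)$, so that step fails as written. The correct series is the counting-normalized one used in the paper, with exponent $-s\,h_{\mathrm{top}}\,r_{n}(x)$ and no $\varphi$-weight (equivalently, run the transfer-operator machinery with the potential $-h_{\mathrm{top}}r$, i.e.\ the $\Phi\equiv 0$ instance of Corollary \ref{Corollary 3.3.5}); relatedly, your normalization claim that $P(\varphi)=0$ is ``equivalently $h_{\mathrm{top}}(\psi_{t})=0$'' is false — $P(\varphi)=0$ holds by construction for every $\Phi$ and does not make the flow's entropy vanish, so the critical line sits at $\mathrm{Re}(s)=h_{\mathrm{top}}$ (or $1$ after rescaling), not at $0$. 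Second, the identity $\eta_{\pi}(s)=\sum_{n}\frac{1}{n}\mathrm{tr}(\mathcal{L}_{s,\pi}^{n})=-\log\det(I-\mathcal{L}_{s,\pi})$ is only formal: $\mathcal{L}_{s,\pi}$ on $F_{\lambda}(X^{+},H_{\pi})$ is not trace class. The rigorous substitute, which is what Lemma \ref{Lemma 3.6.3} does following \cite{Pol01}, is to bound the periodic-orbit sums $Z_{\pi,n}(s)$ directly by operator-norm estimates, yielding $|Z_{\pi,n}(s)|\le C b_{\pi}(1-b_{\pi}^{-C})^{n}$ in the strip, from which analyticity, non-vanishing, and the bound on $L_{\pi}'/L_{\pi}$ follow. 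With these corrections your argument coincides with the one given in the paper.
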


We postpone the proof of this proposition until later in the section.  

It is not difficult to prove that Proposition \ref{Prop 3.6.1} implies Theorem \ref{Superpolynomial equidistribution for compact group extension of hyperbolic flow}. To this end, we need the following classical result of Bowen \cite{Bow73}.

\begin{Lemma}\label{Lemma 3.6.2}
For each prime closed orbit $\tau$ of $g_{t}$ which does not intersect the boundary $\partial R$, there exists a prime closed orbit $\tau^{\prime}$ of $\psi_{t}$ such that $\ell_{\tau}=\ell_{\tau^{\prime}}$ and $[\tau]=[\tau^{\prime}]$. 
Let $E(T)$, $T > 0$, denote the number of closed orbits $\tau$ which do intersect $\partial R$.  
Then there exist $C_{38}>0$ and $\delta_{2}>0$ such that for any $T>0$, 
$$|\#\pi(T)-\#V(T)|\le C_{38}e^{(h_{top}-\delta_{2})T} \hbox{ and } \#E(T) \le C_{38}e^{(h_{top}-\delta_{2})T}$$
where $h_{top}$ is the topological entropy of $g_{t}$.
\end{Lemma}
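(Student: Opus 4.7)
The plan has two independent pieces. The first (periodicity and holonomy correspondence) is essentially bookkeeping from the construction in \S \ref{subsec 6.5}; the serious content is the entropy drop on the boundary, which controls both $\#E(T)$ and $|\#\pi(T)-\#V(T)|$.

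For the first assertion, I would use the semi-conjugacy $\Pi_{r}:X_{r}\to\Lambda$ (together with its lift $\widehat{\Pi}_{r}$). If a prime closed orbit $\tau$ of $g_{t}$ avoids $\partial R$, then every point of $\tau$ lies in the interior of some parallelepiped $R_{i}^{r}$, so $\Pi_{r}^{-1}(\tau)$ is well-defined and yields a unique closed $\psi_{t}$-orbit $\tau'$. Primality and the equality $\ell_{\tau}=\ell_{\tau'}$ follow because $\Pi_{r}$ intertwines the flows and is a bijection on the interior points. The holonomy identity $[\tau]=[\tau']$ is immediate from the defining relation $\widehat{P}(w,e)=(P(w),\Theta(w))$ in \eqref{3.2.1}: iterating $\widehat{P}$ along the combinatorial loop of $\tau'$ yields precisely the product of the $\Theta$'s defining $[\tau']$, which by construction is the same compact-group element as the holonomy of $\tau$ computed in $\widehat{\Lambda}$.

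For the quantitative statement, I would reduce both $\#E(T)$ and $|\#\pi(T)-\#V(T)|$ to controlling the topological entropy of the set of points whose $g_{t}$-orbit is trapped on the boundary of the Markov section. Write $\partial R=\partial^{s}R\cup\partial^{u}R$ (the stable and unstable boundaries of the parallelograms), and let $N=\bigcap_{t\in\mathbb{R}}g_{t}(\partial R)$. The classical fact from \cite{Bow73} (used there in the same form) is that $h_{\mathrm{top}}(g_{t}|N)<h_{\mathrm{top}}(g_{t})$; this is the heart of the argument, since any closed orbit intersecting $\partial R$ must lie entirely in $N$ (because $\partial^{s}R$ and $\partial^{u}R$ are $P$-forward-, resp.\ backward-invariant, so a single intersection propagates to all times). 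Standard counting estimates for closed orbits with topological entropy as the exponential growth rate then give $\#E(T)\le C_{38}e^{(h_{\mathrm{top}}-\delta_{2})T}$ for some $\delta_{2}>0$.

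For $|\#\pi(T)-\#V(T)|$, every prime closed orbit of $g_{t}$ not intersecting $\partial R$ corresponds to exactly one prime closed orbit of $\psi_{t}$ with the same period, so the only discrepancies come from (i) orbits of $g_{t}$ intersecting $\partial R$ and (ii) orbits of $\psi_{t}$ whose $\Pi_{r}$-image is non-prime or lies in $N$; in both cases the number is absorbed into a constant multiple of $\#E(T)$ (the multiplicity of $\Pi_{r}$ over $\Lambda$ is uniformly bounded by the number of Markov rectangles meeting at a boundary point). Hence the same exponential bound applies.

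The main obstacle is the entropy gap $h_{\mathrm{top}}(g_{t}|N)<h_{\mathrm{top}}(g_{t})$. A clean way to establish it is via the variational principle combined with uniqueness of the measure of maximal entropy $m$ for a transitive hyperbolic flow: $m$ charges every non-empty open set, so $m(N)=0$ (since $N$ is a closed proper invariant set with empty interior, as each $\partial R_{i}$ has empty interior in $\Lambda$ by definition of a proper parallelogram), and by upper semicontinuity of the entropy map a standard compactness argument yields a strict drop in entropy on $N$. This is exactly the step carried out in \cite{Bow73} for hyperbolic flows and in \cite{Bow75} for the MME. All other steps are direct applications of results already quoted in the paper.
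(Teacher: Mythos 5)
The paper itself offers no argument for this lemma: it is stated as a classical result and attributed to Bowen \cite{Bow73}, so the relevant comparison is with the standard proof, and your outline does follow that classical route (unique lifting of boundary-avoiding orbits through $\Pi_{r}$, matching of periods and of holonomy via \eqref{3.2.1}, propagation of a single boundary intersection to the whole periodic orbit using $P(\partial^{s}R)\subset\partial^{s}R$ and $P^{-1}(\partial^{u}R)\subset\partial^{u}R$, an entropy gap on the trapped set, and bounded multiplicity of the coding to absorb the discrepancy $|\#\pi(T)-\#V(T)|$ into a constant times $\#E(T)$). Two steps, however, are not correct as written. First, the trapped set should not be $N=\bigcap_{t}g_{t}(\partial R)$: since $\partial R$ sits inside a cross-section, a flow orbit meets it only at discrete times and this intersection is essentially empty. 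What you want is the union of $g_{t}$-orbits of those points of $R$ all of whose $P$-iterates (forward and backward, as appropriate for $\partial^{s}R$ and $\partial^{u}R$) remain in $\partial R$; your invariance-plus-periodicity argument shows every closed orbit meeting $\partial R$ lies in this set.

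Second, the inference ``$m$ charges every non-empty open set, so $m(N)=0$ because $N$ is closed, invariant and has empty interior'' is not valid: full support does not imply that a closed set with empty interior is null (think of a fat Cantor set). Fortunately $m(N)=0$ is not needed. The correct (and standard) deduction of the entropy gap uses exactly the ingredients you name, assembled differently: if $h_{\mathrm{top}}(g|N)=h_{\mathrm{top}}(g)$, then by the variational principle and upper semicontinuity of the entropy map (expansivity of the hyperbolic flow) there is a measure of maximal entropy supported in $N$; by uniqueness it equals $m$, so $\mathrm{supp}(m)\subset N$, contradicting full support because $\Lambda\setminus N$ contains the non-empty relative interior of the rectangles. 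This gives $h_{\mathrm{top}}(g|N)<h_{\mathrm{top}}(g)$ directly, and then the expansivity-based bound on the number of closed orbits in a compact invariant set in terms of its topological entropy yields $\#E(T)\le C e^{(h_{\mathrm{top}}-\delta_{2})T}$, after which your bounded-multiplicity accounting for $|\#\pi(T)-\#V(T)|$ goes through. With these two repairs your proposal is a faithful reconstruction of the argument the paper delegates to \cite{Bow73}.
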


\begin{proof}[\textbf{Proof of Theorem \ref{Superpolynomial equidistribution for compact group extension of hyperbolic flow} assuming Proposition \ref{Prop 3.6.1}}]	
By Lemma \ref{Lemma 2.7.9}, for any \(F\in C^{2k}(G)\), we have \(F=\sum_{\pi\in\widehat{G}}f_{\pi}\xi_{\pi}\), and thus
\[
\dfrac{1}{\# V(T)}\sum_{\tau^{\prime}\in V(T)}F([\tau^{\prime}])-\int_{G}Fd(\text{\rm Haar})_{G}(g)=\sum_{\pi\not=1}\dfrac{1}{\# V(T)}\sum_{\tau^{\prime}\in V(T)}f_{\pi}\xi_{\pi}([\tau^{\prime}]).
\]
Then, by Proposition \ref{Prop 3.6.1}, 
\[
\bigg|\dfrac{1}{\# V(T)}\sum_{\tau^{\prime}\in V(T)}F([\tau^{\prime}])-\int_{G}Fd(\text{\rm Haar})_{G}(g)\bigg|\le \sum_{\pi\not=1}|f_{\pi}|C_{36}|\lambda_{\pi}|^{C_{37}}T^{-n}.
\]
By Lemma \ref{Lemma 2.7.9}, we then have
\[
\bigg|\dfrac{1}{\# V(T)}\sum_{\tau^{\prime}\in V(T)}F([\tau^{\prime}])-\int_{G}Fd(\text{\rm Haar})_{G}(g)\bigg|\le C_{3}C_{36}||F||_{C^{2k}}T^{-n}\sum_{\pi\not=1}|\lambda_{\pi}|^{-2k+2m_{G}+C_{37}}.
\]
If we choose \(k\) large such that \(2k-2m_{G}-C_{37}>r_{G}\), then by Lemma \ref{Lemma 2.7.5}, there exists \(C_{39}>0\) such that for any \(F\in C^{2k}(G)\),
\begin{equation}\label{3.6.1}
\bigg|\dfrac{1}{\# V(T)}\sum_{\tau^{\prime}\in V(T)}F([\tau^{\prime}])-\int_{G}Fd(\text{\rm Haar})_{G}(g)\bigg|\le C_{39}||F||_{C^{2k}}T^{-n},
\end{equation}
which is the superpolynomial equidistribution for \(\phi_{t}\).

Now, we can prove \eqref{3.6.1} holds for $f_{t}$ as well. To this end, by \eqref{3.6.1}, Lemma \ref{Lemma 3.6.2} and a basic triangle inequality, 
\begin{equation}\label{3.6.2}
\begin{aligned}
&\bigg|\dfrac{1}{\#\pi(T)}\sum_{\tau\in\pi(T)}F([\tau])-\dfrac{1}{\# V(T)}\sum_{\tau^{\prime}\in V(T)}F([\tau^{\prime}])\bigg|\\
\le&\dfrac{1}{\#\pi(T)}\bigg|\sum_{\tau\in\pi(T)}F([\tau])-\sum_{\tau^{\prime}\in V(T)}F([\tau^{\prime}])\bigg|+|F|_{\infty}\bigg|\dfrac{\# V(T)}{\#\pi(T)}-1\bigg|\\
\le&|F|_{\infty}\dfrac{\# E(T)}{\#\pi(T)}
 +|F|_{\infty}\bigg|\dfrac{\# V(T)}{\#\pi(T)}-1\bigg|\\
 \le&|F|_{\infty}C_{38}e^{-\delta_{2}T}
 +|F|_{\infty}\bigg|\dfrac{\# V(T)}{\#\pi(T)}-1\bigg|.
\end{aligned}
\end{equation}	
By Lemma \ref{Lemma 3.6.2} and the prime orbit theorem \cite{Par90}, we have
\begin{equation}\label{3.6.3}
	\bigg|\dfrac{\# V(T)}{\#\pi(T)}-1\bigg|\le C_{40}e^{-\delta_{2} T}T
\end{equation}
for some uniform constant $C_{40}>0$. Thus, we can combine
\eqref{3.6.2} and \eqref{3.6.3} to complete the proof.
\end{proof}

In the reminder of this section, we prove Proposition \ref{Prop 3.6.1}. Given $1\not=\pi\in\widehat{G}$, we define a $L$-function by 
$$L_{\pi}:\mathbb{C}\to\mathbb{C},\quad L_{\pi}(s)=\prod_{\text{prime }\tau^{\prime}}\det(I-\pi([\tau^{\prime}])e^{-sh_{top}\ell_{\tau^{\prime}}})^{-1}.$$
The $L$-function $L_{\pi}$ is analytic for $s=a+ib$ with $a>1$. The (prime) closed orbits $\tau^{\prime}$ of $\psi_{t}$ correspond one-one to (prime) periodic orbits $\{\sigma^{n}(x)=x\}$ of $\sigma:X^{+}\to X^{+}$. The period $\ell_{\tau^{\prime}}$ of $\tau^{\prime}$ is equals to $r_{n}(x)$, and the holonomy class $[\tau^{\prime}]$ of $\tau$ is equals to $[\Theta_{n}(x)]$ \cite{Par90}. Using this relationship, it is not difficult to obtain that for any $\pi\not=1$ and any $s=a+ib$ with $a>1$, 
\begin{equation}\label{3.6.4}
\begin{aligned}
L_{\pi}(s)=&\prod_{\text{prime }\tau^{\prime}}e^{\sum_{n=1}^{\infty}\frac{1}{n}\text{Tr}(\pi([\tau^{\prime}])^{n})e^{-sh_{top}n\ell_{\tau^{\prime}}}}\\
=&e^{\sum_{\text{prime }\tau^{\prime}}\sum_{n=1}^{\infty}\frac{1}{n}\xi_{\pi}([\tau^{\prime}]^{n})e^{-sh_{top}n\ell_{\tau^{\prime}}}}=e^{\sum_{n=1}^{\infty}\frac{1}{n}Z_{\pi,n}(s)}
\end{aligned}
\end{equation}
where $Z_{\pi,n}(s)=\sum_{\sigma^{n}(x)=x}\xi_{\pi}(\Theta_{n}(x))e^{-sh_{top}r_{n}(x)}$.

\begin{Lemma}\label{Lemma 3.6.3}
There exists $C_{41}>0$ such that for any $\pi\not=1$, we have $L_{\pi}$ is analytic and non-zero in the region of $s=a+ib$ with $a> 1-b_{\pi}^{-C_{41}}$ and $|b| \geq 1$. Furthermore, in this region, we have 
$$\bigg|\dfrac{L_{\pi}^{\prime}(s)}{L_{\pi}(s)}\bigg|\le C_{41}b_{\pi}^{C_{41}}.$$
\end{Lemma}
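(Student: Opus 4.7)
The plan is to realise $L_\pi$ as the reciprocal of a Fredholm determinant of a rescaled transfer operator, and then to deduce analyticity, non-vanishing, and the logarithmic derivative bound directly from the Dolgopyat-type spectral estimate in Corollary \ref{Corollary 3.3.5}. First I would specialise the setup of Section \ref{sec 7} to $\Phi \equiv 0$ (so that the symbolic potential is $\varphi = -h_{top}\,r$) and perform the affine change of variable $\tilde s = (s-1)h_{top}$, which maps the critical line $\mathrm{Re}(s) = 1$ to $\mathrm{Re}(\tilde s) = 0$. The standard flat-trace identity for transfer operators over subshifts of finite type, combined with (\ref{3.6.4}), then gives
\[
Z_{\pi,n}(s) \;=\; \mathrm{tr}^{\flat}\bigl(\mathcal{L}_{\tilde s, \pi}^{n}\bigr) \quad\text{and}\quad L_\pi(s) \;=\; \det\bigl(I - \mathcal{L}_{\tilde s, \pi}\bigr)^{-1},
\]
where the determinant is taken in the Grothendieck sense on $F_\lambda(X^+, H_\pi)$; this is legitimate because $\mathcal{L}_{\tilde s, \pi}$ is nuclear of order zero on the Lipschitz space, with nuclearity constants depending polynomially on $\dim_\pi$ and hence on $|\lambda_\pi|$ by Lemma \ref{Lemma 2.7.1}.

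Next I would apply Corollary \ref{Corollary 3.3.5}. Writing $\tilde s = \tilde a + i\tilde b$ with $\tilde b = h_{top} b$ and $\tilde a = (a-1)h_{top}$, the hypothesis $a > 1 - b_\pi^{-C_{41}}$ translates into $\tilde a > -h_{top}\, b_\pi^{-C_{41}}$; choosing $C_{41}$ at least $C_{26}$ places us in the domain where the Dolgopyat-type contraction
\[
\bigl\| \mathcal{L}_{\tilde s, \pi}^{n_{b_\pi}} h \bigr\|_{b_\pi} \;\leq\; \bigl(1 - b_\pi^{-C_{27}}\bigr)\|h\|_{b_\pi}
\]
applies, the rescaling factor $h_{top}$ for $b$ being harmless since $b_\pi$ is defined up to a multiplicative constant. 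Iterating the contraction yields the spectral radius bound $\rho(\mathcal{L}_{\tilde s, \pi}) \leq (1 - b_\pi^{-C_{27}})^{1/n_{b_\pi}} < 1$, so $I - \mathcal{L}_{\tilde s, \pi}$ is invertible and $L_\pi$ is both analytic and non-vanishing throughout the claimed region.

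For the logarithmic derivative bound, differentiating the determinant identity gives
\[
\frac{L_\pi'(s)}{L_\pi(s)} \;=\; h_{top} \,\mathrm{tr}\!\left( (I - \mathcal{L}_{\tilde s, \pi})^{-1} \, \partial_{\tilde s} \mathcal{L}_{\tilde s, \pi}\right).
\]
The operator $\partial_{\tilde s} \mathcal{L}_{\tilde s, \pi}$ is obtained by multiplying the weight by $-r$, and its $\|\cdot\|_{b_\pi}$-norm is bounded by a uniform constant times $|r|_\infty$, while the Neumann series together with the spectral gap above yields
\[
\bigl\|(I - \mathcal{L}_{\tilde s, \pi})^{-1}\bigr\|_{b_\pi} \;\leq\; \sum_{k\geq 0} \bigl\|\mathcal{L}_{\tilde s, \pi}^{k}\bigr\|_{b_\pi} \;=\; O\bigl(n_{b_\pi}\, b_\pi^{C_{27}}\bigr) \;=\; O\bigl(b_\pi^{C_{27}+1}\bigr).
\]
Absorbing the nuclear-trace factor, polynomial in $|\lambda_\pi|$, into the constant yields $|L_\pi'/L_\pi| \leq C_{41} b_\pi^{C_{41}}$ after enlarging $C_{41}$ suitably.

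The main obstacle is maintaining uniform (in $\pi \in \widehat{G}$) control of the nuclearity and trace-class constants of $\mathcal{L}_{\tilde s, \pi}$: one must check that they grow only polynomially in $\dim_\pi$, so that they are harmlessly absorbed into $b_\pi^{C_{41}}$. A related difficulty is the passage from the $\|\cdot\|_{b_\pi}$ contraction to a genuine operator-norm estimate on $(I - \mathcal{L}_{\tilde s, \pi})^{-1}$ suitable for the trace formula; this is classical, but it relies on the nuclearity of transfer operators on Lipschitz function spaces over subshifts of finite type and must be carried out carefully enough to keep every constant polynomial in $b_\pi$.
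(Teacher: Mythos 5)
Your argument breaks down at the very first step: the transfer operator $\mathcal{L}_{\tilde s,\pi}$ acting on the Lipschitz space $F_{\lambda}(X^{+},H_{\pi})$ over a subshift of finite type is \emph{not} nuclear of order zero -- it is not even compact. The Lasota--Yorke inequality (Lemma \ref{Lemma 3.4.1}) only gives quasi-compactness, with a nontrivial essential spectral radius; nuclearity of transfer operators is a phenomenon of the analytic category (Ruelle), not of H\"older/Lipschitz spaces on symbolic spaces. Consequently there is no Grothendieck--Fredholm determinant identity $L_{\pi}(s)=\det(I-\mathcal{L}_{\tilde s,\pi})^{-1}$ available on this space, the identification $Z_{\pi,n}(s)=\mathrm{tr}(\mathcal{L}_{\tilde s,\pi}^{n})$ is purely formal, and the formula $L_{\pi}'/L_{\pi}=h_{top}\,\mathrm{tr}\big((I-\mathcal{L}_{\tilde s,\pi})^{-1}\partial_{\tilde s}\mathcal{L}_{\tilde s,\pi}\big)$ has no meaning (the operator inside is not trace class). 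So both the analyticity/non-vanishing claim and the logarithmic-derivative bound, as you derive them, collapse; the difficulty you flag at the end is not one of uniformity of trace constants in $\pi$, but of the trace-class framework failing altogether.

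What does work -- and is what the paper does, following \cite{Pol01} -- is to bypass determinants and estimate the periodic-orbit sums $Z_{\pi,n}(s)$ directly from Corollary \ref{Corollary 3.3.5}. Your reduction to the potential $\varphi=-h_{top}r$ and the change of variable $\tilde s=(s-1)h_{top}$ is the right normalization; one then compares $Z_{\pi,n}(s)=\sum_{\sigma^{n}x=x}\xi_{\pi}(\Theta_{n}(x))e^{-sh_{top}r_{n}(x)}$ with iterates $\mathcal{L}^{n}_{(s-1)h_{top},\pi}$ applied to suitable Lipschitz test functions (writing $\xi_{\pi}(\Theta_{n}(x))$ as a sum of $\dim_{\pi}$ matrix coefficients, which costs only a factor polynomial in $|\lambda_{\pi}|$ by Lemma \ref{Lemma 2.7.1}, and using the Gibbs property to control the cylinder weights). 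The Dolgopyat-type contraction then yields $|Z_{\pi,n}(s)|\le C\,b_{\pi}(1-b_{\pi}^{-C})^{n}$ in the region $a>1-b_{\pi}^{-C_{41}}$, so the series $\sum_{n\ge1}\frac{1}{n}Z_{\pi,n}(s)$ in \eqref{3.6.4} converges uniformly; since $L_{\pi}$ is the exponential of this analytic function, it is automatically analytic and non-vanishing there. The bound on $L_{\pi}'/L_{\pi}=\sum_{n\ge1}\frac{1}{n}Z_{\pi,n}'(s)$ then follows by differentiating the series term by term (using $r_{n}\le n|r|_{\infty}$, or Cauchy estimates on a slightly smaller region), giving the polynomial bound $C_{41}b_{\pi}^{C_{41}}$ without any recourse to traces. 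Your Neumann-series estimate for $(I-\mathcal{L}_{\tilde s,\pi})^{-1}$ in the $\|\cdot\|_{b_{\pi}}$-norm is essentially correct and is the operator-theoretic content one needs, but it must be fed into the periodic-orbit comparison just described rather than into a trace formula.
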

\begin{proof}
The proof is very similar to the case of zeta functions \cite{Pol01}. The analytical extension of $L_{\pi}$ is given by \eqref{3.6.4}. Then, using the 
estimate in  Corollary \ref{Corollary 3.3.5}, in this region, similarly to \cite[Proposition 3]{Pol01}, one can bound
$$|Z_{\pi,n}(s)|\le C_{42}b_{\pi}(1-b_{\pi}^{C_{42}})^{n}$$
for some uniform constants $C_{42}>0$ which implies the first part. We can then derive the bound on the logarithmic derivative of $L_{\pi}$, using an analogous method to that used for zeta functions \cite[Proposition 4]{Pol01}.
\end{proof}

Recalling, $\ell_{\tau^{\prime}}$ is the period of $\tau^{\prime}$. We will use $\Lambda_{\tau^{\prime}}$ to denote the least period of $\tau^{\prime}$. For any $k\ge1$, we can employ Lemma \ref{Lemma 3.6.3} to obtain an estimate of the following function
$$N_{\pi,k}:[e,\infty)\to\mathbb{C},\quad N_{\pi,k}(x)=\sum_{e^{h_{top}\ell_{\tau^{\prime}}}\le x}\xi_{\pi}([\tau^{\prime}])\Lambda_{\tau^{\prime}}(x-e^{\ell_{\tau^{\prime}}})^{k}$$
where the sum covers all closed orbits of $\psi_{t}$. For any $x\ge e$, we define a function
$$M_{x,k}:\mathbb{C}\to\mathbb{C},\quad M_{x,k}(s)=\dfrac{x^{s+k}}{\prod_{j=0}^{k}(s+j)}.$$
It is easy to see $M_{x,k}$ is analytic in the region of $s=a+ib$ with $a>0$. By the definition of $L_{\pi}$, we have the following, see also \cite{Pol08}.

\begin{Lemma}\label{Lemma 3.6.4}
For any $k\in\mathbb{N}^{+}$, any $x\ge e$ and any $d>1$, we have
$$N_{\pi,k}(x)=\dfrac{k!}{ih_{top}}\int_{d-i\infty}^{d+i\infty}\dfrac{L_{\pi}^{\prime}(s)}{L_{\pi}(s)}M_{x,k}(s)ds.$$
\end{Lemma}

By Lemma \ref{Lemma 3.6.3}, $L_{\pi}$ is analytic and non-zero in the region of $s=a+ib$ with $a>1-b_{\pi}^{-C_{41}}$. In particular, we have $\frac{L_{\pi}^{\prime}}{L_{\pi}}M_{x,k}$ is analytic in this region as well. Thus, we can move the curve of integration in Lemma \ref{Lemma 3.6.4} to $\mathcal{C}_{\pi}=\{a= 1-b_{\pi}^{-C_{41}}:b\in\mathbb{R}\}$  to obtain the following result.

\begin{Lemma}\label{Lemma 3.6.5}
For any $n\ge1$ there exists $C_{43}>0$ such that for any $x>e$,
$$|N_{\pi, \lfloor C_{43}\rfloor}(x)|\le C_{43}|\lambda_{\pi}|^{C_{43}}\dfrac{x^{\lfloor C_{43}\rfloor+1}}{(\log x)^{n}}.$$
\end{Lemma}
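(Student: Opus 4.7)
The plan is a contour-shift argument applied to the Perron-type integral representation of $N_{\pi,k}$ in Lemma \ref{Lemma 3.6.4}, exploiting the analytic continuation and the logarithmic-derivative bound from Lemma \ref{Lemma 3.6.3}. Fix $n \geq 1$ and set $k = \lfloor C_{43} \rfloor$ where $C_{43}$ will be chosen sufficiently large (depending on $n$, $C_{41}$, and $m_{G}$) at the end. Applying Lemma \ref{Lemma 3.6.4} with $d = 1 + 1/\log x$, I would deform the vertical line $\Re s = d$ onto the curve $\mathcal{C}_{\pi} = \{\Re s = 1 - b_{\pi}^{-C_{41}}\}$. This deformation is legitimate because Lemma \ref{Lemma 3.6.3} guarantees analyticity of $L_{\pi}'/L_{\pi}$ throughout the swept-out region for $|b| \geq 1$; the bounded piece $|b| < 1$ is covered by the standard fact that $L_{\pi}$ has no pole at $s = 1$ when $\pi \neq 1$. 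The horizontal connecting segments at $|\Im s| \to \infty$ vanish, because $|M_{x,k}(s)| = O(x^{k+1}/|b|^{k+1})$ for $k \geq 1$.

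On $\mathcal{C}_{\pi}$, the bound from Lemma \ref{Lemma 3.6.3} gives $|L_{\pi}'(s)/L_{\pi}(s)| \leq C_{41}\, b_{\pi}^{C_{41}}$, and a direct computation shows $|M_{x,k}(s)| \leq C\, x^{k+1}\, e^{-b_{\pi}^{-C_{41}}\log x}/(1+|b|)^{k+1}$, where $b_{\pi} = |b| + C_{15}|\lambda_{\pi}|^{1+m_{G}/2}$. Consequently
\begin{equation*}
|N_{\pi,k}(x)| \;\leq\; C\, x^{k+1} \int_{-\infty}^{\infty} \frac{b_{\pi}^{C_{41}}\, e^{-b_{\pi}^{-C_{41}}\log x}}{(1+|b|)^{k+1}}\, db.
\end{equation*}
I would estimate this integral by splitting the range according to whether $b_{\pi} \leq (\log x)^{1/C_{41}}$ (where the exponential factor is small and can be controlled via $e^{-t} \leq n!/t^{n}$) or $b_{\pi} > (\log x)^{1/C_{41}}$ (where the exponential is harmless but the algebraic factor $b_{\pi}^{C_{41}-k-1}$ is integrable as soon as $k > C_{41}$). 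Taking $k$ of order $(n+1)C_{41}$, both contributions combine to give a bound of the form
\begin{equation*}
C \bigl(1 + |\lambda_{\pi}|^{1+m_{G}/2}\bigr)^{(n+1)C_{41} + 1} (\log x)^{-n},
\end{equation*}
so that choosing $C_{43} \geq \bigl((n+1)C_{41} + 1\bigr)\bigl(1 + m_{G}/2\bigr)$ absorbs the $|\lambda_{\pi}|$-dependence into $|\lambda_{\pi}|^{C_{43}}$ and delivers the claimed inequality.

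The main technical obstacle is the $b$-dependence of $b_{\pi}$, which makes $\mathcal{C}_{\pi}$ a curve that approaches the line $\Re s = 1$ as $|b| \to \infty$. This couples the polynomial growth of the logarithmic-derivative bound $b_{\pi}^{C_{41}}$ to the superexponential savings $e^{-b_{\pi}^{-C_{41}}\log x}$ in $M_{x,k}$, and forces $k$ (hence $C_{43}$) to be taken large enough to simultaneously secure the integrability of $b_{\pi}^{C_{41}-k-1}$ at infinity and the domination of $(\log x)^{-n}$ in the moderate-$b_{\pi}$ regime. Once the optimal balance is struck (again at $k \asymp (n+1)C_{41}$), the remainder of the proof is routine.
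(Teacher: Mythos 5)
Your proposal is correct and takes essentially the same route as the paper: apply the Perron-type formula of Lemma \ref{Lemma 3.6.4}, shift the contour to $\mathcal{C}_{\pi}$ using the analyticity and the bound $|L_{\pi}'(s)/L_{\pi}(s)|\le C_{41}b_{\pi}^{C_{41}}$ from Lemma \ref{Lemma 3.6.3}, convert the factor $x^{-b_{\pi}^{-C_{41}}}$ into $(\log x)^{-n}$ times a power of $b_{\pi}$ via $e^{-t}\le n!/t^{n}$, and take $k=\lfloor C_{43}\rfloor$ large enough that the $b$-integral converges and the leftover powers of $b_{\pi}$ become a polynomial in $|\lambda_{\pi}|$. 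One small repair in your large-$b_{\pi}$ regime: since $b_{\pi}\ge 1+|b|$ there, the integrand $b_{\pi}^{C_{41}}(1+|b|)^{-(k+1)}$ is not bounded by $b_{\pi}^{C_{41}-k-1}$, but writing $b_{\pi}^{C_{41}}\le 2^{C_{41}}\big(|b|^{C_{41}}+(C_{15}|\lambda_{\pi}|^{1+m_{G}/2})^{C_{41}}\big)$ restores integrability and yields the same polynomial $|\lambda_{\pi}|$-dependence.
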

\begin{proof}
We can bound
$$
\begin{aligned}
&|N_{\pi,k}(x)|\\
=&\dfrac{k!}{h_{top}}\bigg|\int_{\mathcal{C}_{\pi}}\dfrac{L_{\pi}^{\prime}(s)}{L_{\pi}(s)}M_{x,k}(s)ds\bigg|\\
\le&C_{41}x^{k+1}\int_{|b|\ge1}b_{\pi}^{C_{41}}\dfrac{1}{|b|^{k+1}}x^{-b_{\pi}^{-C_{41}}}db+C_{41}2x^{k+1}\int_{|b|\le1}b_{\pi}^{C_{41}}x^{-b_{\pi}^{-C_{41}}}db\\
\le&C_{44}\dfrac{x^{k+1}}{(\log x)^{n}}\int_{|b|\ge1}b_{\pi}^{C_{45}}\dfrac{1}{|b|^{k+1}}db+C_{44}2\dfrac{x^{k+1}}{(\log x)^{n}}\int_{|b|\le1}b_{\pi}^{C_{45}}db
\end{aligned}
$$
for some uniform constant $C_{44}, C_{45}>0$. Since $b_{\pi}=|b|+C_{15}|\lambda_{\pi}|^{1+m_{G}/2}$, we can then choose $k=\lfloor C_{43}\rfloor$ for some large enough $C_{43}>0$ to complete the proof.
\end{proof}

\begin{proof}[\textbf{Proof of Proposition \ref{Prop 3.6.1}}]
This is a standard argument, as discussed in \cite{Pol01} or \cite{Pol08}. We define 
$$\Psi_{\pi}(T):=\sum_{\tau^{\prime}\in V(T)}\xi_{\pi}([\tau^{\prime}])\ell_{\tau^{\prime}}\quad\text{and}\quad\Phi_{\pi}(T):=\sum_{\tau^{\prime}\in V(T)}\xi_{\pi}([\tau^{\prime}]).$$
By the argument in \cite{Pol08}, the estimate of $N_{\pi,\lfloor C_{43}\rfloor}$ in Lemma \ref{Lemma 3.6.5} implies the following estimate on $\Psi_{\pi}(T)$:
\begin{equation}\label{3.6.5}
	\Psi_{\pi}(T)=O\bigg(|\lambda_{\pi}|^{C_{43}}\dfrac{e^{h_{top}T}}{T^{n/2}}\bigg).
\end{equation}
The relationship between $\Psi_{\pi}(T)$ and $\Phi_{\pi}(T)$ is as follows:
$$\Phi_{\pi}(T)=\int^{T}_{1}\dfrac{1}{u}d\Psi_{\pi}(u)+O(|\xi_{\pi}|_{\infty}).$$
By the above estimate and \eqref{3.6.5}, we have
$$
\begin{aligned}
	\Phi_{\pi}(T)=&\dfrac{\Psi_{\pi}(u)}{u}\bigg|_{u=1}^{T}+\int^{T}_{1}\Psi_{\pi}(u)\dfrac{1}{u^{2}}du+O(|\xi_{\pi}|_{\infty})\\
	=&O\bigg(|\lambda_{\pi}|^{C_{43}}\dfrac{e^{h_{top}T}}{T^{n/2+1}}\bigg)+O\bigg(|\lambda_{\pi}|^{C_{43}}\dfrac{e^{h_{top}T}}{T^{n/2+2}}\bigg)\int^{T}_{1} \frac{1}{u^2}du+O(|\xi_{\pi}|_{\infty})\\
	=&O\bigg(|\xi_{\pi}|_{\infty}|\lambda_{\pi}|^{C_{43}}\dfrac{e^{h_{top}T}}{T^{n/2+1}}\bigg).
\end{aligned}
$$
Now, by the prime orbit theorem \cite{Par90}, we have
$$\dfrac{1}{\# V(T)}\sum_{\tau^{\prime}\in V(T)}\xi_{\pi}([\tau^{\prime}])=\dfrac{1}{\# V(T)}\Phi_{\pi}(T)=O(|\xi_{\pi}|_{\infty}|\lambda_{\pi}|^{C_{43}}T^{-n/2})$$
which completes the proof by Lemma \ref{Lemma 2.7.10}.
\end{proof}




\addcontentsline{toc}{section}{References}

\Addresses


\begin{thebibliography}{100}



\bibitem{App14}
D. Applebaum, \emph{Probability on compact Lie groups}. With a foreword by Herbert Heyer. Probability Theory and Stochastic Modelling, 70. Springer, Cham, 2014. xxvi+217 pp. 



\bibitem{Bal00}
V. Baladi, \emph{Positive transfer operators and decay of correlations}. \textit{Advanced Series in Nonlinear Dynamics}, 16. World Scientific Publishing Co., Inc., River Edge, NJ, 2000. x+314 pp.



\bibitem{Bow73}
R. Bowen, Symbolic dynamics for hyperbolic flows. \textit{Amer. J. Math.} 95 (1973), 429-460. 

\bibitem{Bow75}
R. Bowen, D. Ruelle, The ergodic theory of Axiom A flows. \textit{Invent. Math.} 29 (1975), no. 3, 181-202. 

\bibitem{Bow08}
R. Bowen, \emph{Equilibrium states and the ergodic theory of Anosov diffeomorphisms}. Second revised edition. \textit{Lecture Notes in Mathematics}, 470. Springer-Verlag, Berlin, 2008. viii+75 pp.

\bibitem{Bri75a}
M. Brin, The topology of group extensions of C systems. \textit{Mathematical Notes of the Academy of Sciences of the USSR} 18 (1975), 858-864.

\bibitem{Bri75b}
M. Brin, Topological transitivity of a certain class of dynamical systems, and flows of frames on manifolds of negative curvature. \textit{Funct. Anal. Appl.} 9 (1975), no. 1, 9-19.

\bibitem{Bri80}
M. Brin and M. Gromov, On the ergodicity of frame flows. \textit{Invent. Math.} 60 (1980), no. 1, 1-7.

\bibitem{Bri82}
M. Brin, Ergodic theory of frame flows. in \emph{Ergodic theory and dynamical systems}, II (College Park, Md., 1979/1980), pp. 163–183, \textit{Progr. Math.}, 21, Birkhäuser, Boston, Mass., 1982.

\bibitem{Bri84}
M. Brin and H. Karcher, Frame flows on manifolds with pinched negative curvature. \textit{Compositio Math.} 52 (1984), no. 3, 275-297.

\bibitem{Bug05}
Y. Bugeaud and M. Laurent, On exponents of homogeneous and inhomogeneous Diophantine approximation. \textit{Mosc. Math. J.} 5 (2005), no. 4, 747-766, 972.

\bibitem{Bur03}
K. Burns and M. Pollicott, Stable ergodicity and frame flows. \textit{Geom. Dedicata} 98 (2003), 189-210.


\bibitem{Cek21}
M. Cekić, T. Lefeuvre, A. Moroianu and U. Semmelmann, On the ergodicity of the frame flow on even-dimensional manifolds. ArXiv:2111.14811


\bibitem{CF}
Mihajlo Ceki\'c  and 
Thibault Lefeuvre, personal communication (Chapter 4 of their forthcoming book).









\bibitem{Dol98a}
D. Dolgopyat, On decay of correlations in Anosov flows. \textit{Ann. of Math. (2)} 147 (1998), no. 2, 357-390.

\bibitem{Dol98b}
D. Dolgopyat, Prevalence of rapid mixing in hyperbolic flows. \textit{Ergodic Theory Dynam. Systems} 18 (1998), no. 5, 1097-1114.

\bibitem{Dol02}
D. Dolgopyat, On mixing properties of compact group extensions of hyperbolic systems. \textit{Israel J. Math.} 130 (2002), 157-205. 



\bibitem{Fie05}
M. Field, I. Melbourne, M. Nicol and A. Török, Statistical properties of compact group extensions of hyperbolic flows and their time one maps. (English summary)
\textit{Discrete Contin. Dyn. Syst.} 12 (2005), no. 1, 79-96.

\bibitem{Fis19}
T. Fisher and B. Hasselblatt, Hyperbolic flows. Zurich Lectures in Advanced Mathematics. \textit{EMS Publishing House}, Berlin, [2019], ©2019. xiv+723 pp.








\bibitem{Kah12}
J. Kahn and V. Markovic, Immersing almost geodesic surfaces in a closed hyperbolic three manifold. \textit{Ann. of Math. (2)} 175 (2012), no. 3, 1127-1190.






\bibitem{Par90}
W. Parry and M. Pollicott, \emph{Zeta functions and the periodic orbit structure of hyperbolic dynamics}. \textit{Asterisque} No. 187-188 (1990), 268 pp.

\bibitem{Pla72}
J.F. Plante, Anosov flows. \textit{Amer. J. Math.} 94 (1972), 729-754.

\bibitem{Pol85}
M. Pollicott, On the rate of mixing of Axiom A flows. \textit{Invent. Math.} 81 (1985), no. 3, 413-426.
 


\bibitem{Pol01}
M. Pollicott and R. Sharp, Error terms for closed orbits of hyperbolic flows. \textit{Ergodic Theory Dynam. Systems} 21 (2001), no. 2, 545-562. 
 
\bibitem{Pol08}
M. Pollicott and R. Sharp, Periodic orbits and holonomy for hyperbolic flows. Geometric and probabilistic structures in dynamics, 289–302, \textit{Contemp. Math.}, 469, \textit{Amer. Math. Soc.}, Providence, RI, 2008. 

\bibitem{Rat73}
M. Ratner, Markov partitions for Anosov flows on n-dimensional manifolds. \textit{Israel J. Math.} 15 (1973), 92-114. 


\bibitem{SarWak99} 
P. Sarnak and M. Wakayama, 
Equidistribution of holonomy about closed geodesics, 
\textit{Duke Math. J.}100 (1999) 1-57.







\bibitem{Sug71}
M. Sugiura, Fourier series of smooth functions on compact Lie groups. \textit{Osaka Math. J.} 8 (1971), 33-47.


\bibitem{Wal82}
P. Walters, \emph{An introduction to ergodic theory}. \textit{Graduate Texts in Mathematics}, 79. Springer-Verlag, New York-Berlin, 1982. ix+250 pp.

\end{thebibliography}
 \end{document}